\pgfplotsset{width=10cm,compat=newest}
\NewDocumentCommand{\xnewtheorem}{m o m}
 {%
  \IfNoValueTF{#2}
   {\newtheorem{#1}{#3}}
   {%
    \newaliascnt{#1}{#2}%
    \newtheorem{#1}[#1]{#3}%
    \aliascntresetthe{#1}%
    \expandafter\newcommand\csname #1autorefname\endcsname{#3}%
   }%
 
}
\newtheorem{theorem}{Theorem}[section]
\newcommand\numberthis{\addtocounter{equation}{1}\tag{\theequation}}
\pgfplotsset{width=15cm,compat=newest}
\pgfplotsset{colormap={CM}{rgb(-500)=(.9,.45,.1) color(0)=(red) rgb255(1700)=(15,18,238)}}
\email[Current]{hsingh@uttyler.edu}
\title[A novel rkhs for data-driven discovery with limited data Acquisition]{Data-driven discovery with limited data Acquisition for fluid flow across cylinder}
\author[Singh, Himanshu]{Himanshu Singh}
\address[Visiting Assistant Professor for Academic Year Aug 2023-May 2024]{Department of Mathematics, The University of Texas at Tyler, TX-75799}
\newcolumntype{C}{>{\centering\arraybackslash}X} 
\newcommand{\ess}{\operatorname{e}}
\newcommand{\Cn}{\mathbb{C}^n}
\newcommand{\bmz}{\bm{z}}
\newcommand{\koop}{\mathcal{K}_{\varphi}}
\begin{document}
\maketitle
\begin{abstract}
One of the central challenge for extracting governing principles of dynamical system via Dynamic Mode Decomposition (DMD) is about the limit data availability or formally called as \emph{limited data acquisition} in the present paper. In the interest of discovering the governing principles for a dynamical system with limited data acquisition, we provide a variant of Kernelized Extended DMD (KeDMD) based on the Koopman operator which employ the notion of Gaussian random matrix to recover the dominant Koopman modes for the standard fluid flow across cylinder experiment. It turns out that the traditional kernel function, Gaussian Radial Basis Function Kernel, unfortunately, is not able to generate the desired Koopman modes in the scenario of executing KeDMD with limited data acquisition. However, the Laplacian Kernel Function successfully generates the desired Koopman modes when limited data is provided in terms of data-set snapshot for the aforementioned experiment and this manuscripts serves the purpose of reporting these exciting experimental insights. 
This paper also explores the functionality of the Koopman operator when it interacts with the reproducing kernel Hilbert space (RKHS) that arises from the normalized probability Lebesgue measure $d\mu_{\sigma,1,\Cn}(\bmz)\coloneqq(2\pi\sigma^2)^{-n}\exp\left(-\nicefrac{\|\bmz\|_2}{\sigma}\right)dV(\bmz)$ when it is embedded in $L^2-$sense for the holomorphic functions over $\Cn$, in the aim of determining the Koopman modes for fluid flow across cylinder experiment. 
We explore the operator-theoretic characterizations of the Koopman operator on the RKHS generated by the normalized Laplacian measure $d\mu_{\sigma,1,\Cn}(\bmz)$ in the $L^2-$sense. In doing so, we provide the \emph{compactification \& closable} characterization of Koopman operator over the RKHS generated by the normalized Laplacian measure in the $L^2-$sense. 
\end{abstract}
\section{Introduction}
\subsection{Dynamical Systems \& Hilbert space}
Dynamical systems provides the mathematical framework for the understanding of the physical reality in which we are preoccupied. Since about the conceptualization of the dynamical systems, the ability to characterize or achieving the best-possible prediction for the future state-variables is the key challenge faced by various scientific practitioners across the field of engineering and significantly others. However, modern mathematical framework which includes machine learning algorithms such as reduced-ordered modeling has now indeed, grown at such a fast pace, especially in the past few decades, that these techniques are now regarded as the cornerstone to develop the data-driven features of the dynamical systems.
\begin{align}\label{eq_1}
    \underbrace{\frac{d}{dt}\mathbf{x}(t)=\mathbf{f}\left(\mathbf{x}(t)\right)}_{\textsc{Dynamical System}}~\overset{\textsc{Discretization}}{\implies}~\mathbf{F}_t\left(\mathbf{x}(t_0)\right)=\mathbf{x}\left(t_0\right)+\int_{t_0}^{t_0+t}\mathbf{f}\left(\mathbf{x}(\mathfrak{t})\right)d\mathfrak{t}.
\end{align}
Machine learning architectures in artificial intelligence, in particular, \emph{deep learning and neural networks (NN)} usually offer a competitive platform to simulate and forecast complex, chaotic and non-linear dynamical systems as demonstrated in \autoref{fig:NNxyz} for the Lorenz dynamical system (L63, \cite{lorenz1963deterministic}) \autoref{fig:Lorentz}.
\begin{figure}[H]
    \centering
    \includegraphics[scale=.5]{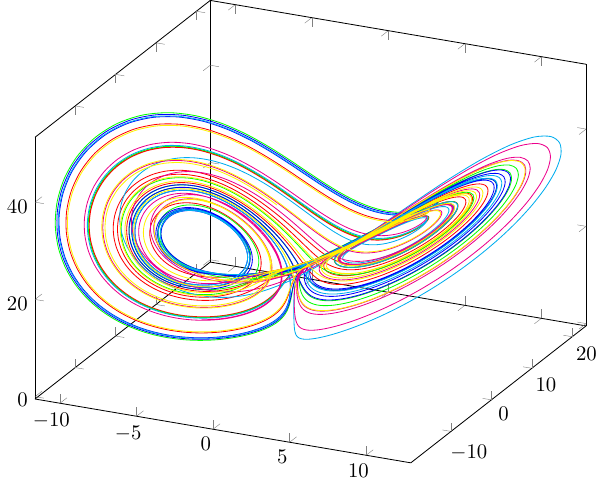}
    \caption{L63 dynamical system}
    \label{fig:Lorentz}
\end{figure}
\begin{figure}[H]
    \centering
    \includegraphics[scale=.5]{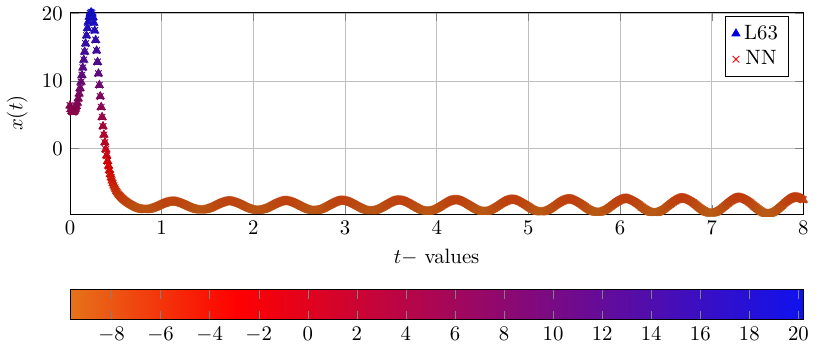}
    \includegraphics[scale=.5]{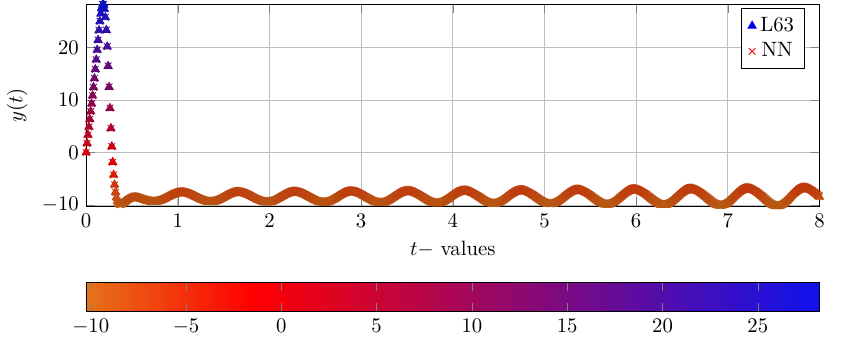}
    \includegraphics[scale=.5]{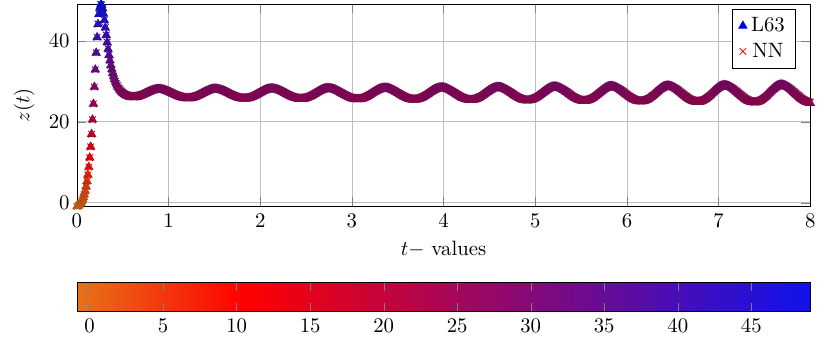}
    \caption{NN Simulation of L63 in $x(t),~y(t)$ and $z(t)-$coordinates}
    \label{fig:NNxyz}
\end{figure}
However, deep learning architecture faces serious issues for instant dynamics changing context as they lack the tendency to operate or evolve with respect to \emph{time} \cite{mezic2021koopman,sarker2021deep}. The scope of discovering intrinsic information from the dynamical systems evolving with respect to time perfectly aligns with the reduced-ordered modeling techniques via the operator-theoretic approaches when intertwined with the theory of Hilbert spaces generated by the reproducing kernels \cite{aronszajn1950theory} RKHS. We define RKHS followed by the basic definition of \emph{kernel function}.
\begin{dfn}[Kernel Function]
    Let $X=\emptyset$, then a function $k:X\times X\to\mathbf{K}$ is called the kernel on $X$ if there exists a $\mathbf{K}-$Hilbert space $\left(H,\langle\cdot,\cdot\rangle_H\right)$ accompanied by a map $\Phi:X\to H$ such that $\forall x,x'\in X$, we have
    \begin{align}
        k(x,x')=\langle\Phi(x'),\Phi(x)\rangle_H.\label{eq_kernel_H}
    \end{align}
    We regard $\Phi$ as the feature map and $H$ as the feature space of $k$.
\end{dfn}
\begin{example}
    The most common kernel function that arises from the class of radial basis function\footnote{
        A function $\Phi:\mathbb{R}^n\to\mathbb{R}$ is called as \emph{radial} if there exists a \emph{univariate} function $\phi:\{0\}\cup\mathbb{R}_+\to\mathbb{R}$ such that 
        \begin{align*}
            \Phi\left(\bm{x}\right)=\phi(r),~\text{where $r=\|\bm{x}\|$}.
        \end{align*}Here, $\|\cdot\|$ is some norm defined on $\mathbb{R}^n$, usually $\|\cdot\|_2$ which is \emph{Euclidean norm}.
    } (cf. \cite{fasshauer2007meshfree}) used in modern machine learning and artificial intelligence routine such as speech enhancement is the class of \emph{exponential power kernels} \cite{giraud2005positive,hui2018kernel} given as 
\begin{align}\label{eq_expkernel}
    K_{\text{exp}}^{\gamma,\sigma}(\bm{x},\bm{z})\coloneqq\exp\left(-\frac{\|\bm{x}-\bm{z}\|_2^\gamma}{\sigma}\right);~\text{where $\gamma,\sigma>0$ and $\bm{x},\bm{z}\in\mathbb{C}^n$}.
\end{align}
The constants present in \eqref{eq_expkernel}, that is, $\sigma$ is referred as \emph{kernel bandwidth} and $\gamma$ is often called as \emph{shape parameter}. If $\gamma=1$ then we get $K_{\text{exp}}^{1,\sigma}(\bm{x},\bm{z})\coloneqq\exp\left(-\frac{\|\bm{x}-\bm{z}\|_2}{\sigma}\right)$, which is referred as \emph{Laplace Kernel}. If $\gamma=2$ in \eqref{eq_expkernel}, we get $K_{\text{exp}}^{2,\sigma}(\bm{x},\bm{z})\coloneqq\exp\left(-\frac{\|\bm{x}-\bm{z}\|_2^2}{\sigma}\right)$, which is commonly referred as the \emph{Gaussian Radial Basis Function (GRBF) Kernel}. We direct interested readers \cite{rasmussen2006gaussian} to learn more about the feature map of GRBF Kernel. It should be noted that the choice of kernel functions can dramatically change the performance of the (supervised) machine learning routine \cite{geifman2020similarity,NEURIPS2020_1006ff12}, specifically in those situation when shorter training time or limited information is available.
\end{example}
We are now ready to define what RKHS is along with its necessary details and examples.
\begin{dfn}[Reproducing Kernel Hilbert Space]\label{dfn_RKHS}
    Let $X=\emptyset$ and $\left(H,\langle\cdot,\cdot\rangle_H\right)$ be the Hilbert function space over $X$.
    \begin{enumerate}
        \item The space $H$ is called as the \emph{\textbf{reproducing kernel Hilbert space}} (RKHS) if $\forall x\in X$, the evaluation functional $\mathcal{E}_x:H\to\mathbf{K}$ defined as $\mathcal{E}_x(f)\coloneqq f(x),~f\in H$ is continuous.
        \item A function $k:X\times X\to\mathbf{K}$ is called \emph{reproducing kernel} of $H$ if we have:
        \begin{enumerate}
            \item $k(\cdot,x)\in H~\forall x\in X$, that is $\|k(\cdot,x)\|_H<\infty$, and
            \item $k(\cdot,\cdot)$ has the reproducing property; that is
          \begin{align*}
                f(x)=\langle f,k(\cdot,x)\rangle_H~\forall f\in H\text{~and~} x\in X.
            \end{align*}
        \end{enumerate}
        \end{enumerate}
\end{dfn}
It is worth-full to mention that the norm convergence yields the point-wise convergence inside RKHS. This fact can be readily learned due to the continuity of evaluation functional. This is demonstrated as follows for an arbitrary $f\in H$ and $\left\{f_n\right\}_n\in H$ with $\|f-f_n\|_H\to0$ as $n\to\infty$, then
\begin{align*}
\lim_{n\to\infty}f_n(x)=\lim_{n\to\infty}\mathcal{E}_x\left(f_n\right)
    =_{\text{(continuity of $\mathcal{E}_x$)}}\mathcal{E}_x\left(f\right)
    =f(x).
\end{align*} 
\begin{theorem}[\textsc{Moore-Aronszajn} Theorem \cite{aronszajn1950theory}]\label{theorem_aronsjan}
    Let $H$ be an RKHS over an nonempty set $X$, Then $k:X\times X\to\mathbf{K}$ defined as $k(x,x')\coloneqq \langle\mathcal{E}_x,\mathcal{E}_{x'}\rangle_H$ for $x,x'\in X$ is the only reproducing kernel of $H$. Additionally, for some index set $\mathcal{I}$, if we have $\left\{\mathbf{e}_i\right\}_{i\in\mathcal{I}}$ as an orthonormal basis then for all $x,x'\in X$, we have
    \begin{align}\label{eq_5theorem1.4}
        k(x,x')=\sum_{i\in\mathcal{I}}\mathbf{e}_i(x)\overline{\mathbf{e}_i(x')},
    \end{align}
    with an absolute convergence.
    \end{theorem}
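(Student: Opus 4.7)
The plan is to invoke the Riesz representation theorem to convert the continuity hypothesis on $\mathcal{E}_x$ into a bona fide Hilbert-space element, then verify the two properties in \autoref{dfn_RKHS} and finally deduce the basis expansion from Parseval's identity. Since $H$ is an RKHS, for each $x\in X$ the functional $\mathcal{E}_x:H\to\mathbf{K}$ is linear and continuous, so the Riesz representation theorem furnishes a unique $k_x\in H$ with $\mathcal{E}_x(f)=\langle f,k_x\rangle_H$ for every $f\in H$. The function of the statement is then $k(x,x'):=k_{x'}(x)=\mathcal{E}_x(k_{x'})=\langle k_{x'},k_x\rangle_H$, which is the natural reading of $\langle\mathcal{E}_x,\mathcal{E}_{x'}\rangle_H$ after the Riesz identification of $H^\ast$ with $H$. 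Condition (a) of \autoref{dfn_RKHS}(2) is immediate, $k(\cdot,x)=k_x\in H$, and condition (b) is the defining relation of $k_x$ itself: $f(x)=\mathcal{E}_x(f)=\langle f,k(\cdot,x)\rangle_H$.

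For uniqueness, suppose $\tilde{k}$ is another reproducing kernel. Then for each fixed $x$ both $k(\cdot,x)$ and $\tilde{k}(\cdot,x)$ represent the same bounded functional $\mathcal{E}_x$; the uniqueness clause of the Riesz theorem forces $k(\cdot,x)=\tilde{k}(\cdot,x)$ as elements of $H$, hence as functions on $X$, and since $x$ was arbitrary $k\equiv\tilde{k}$.

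For the expansion \eqref{eq_5theorem1.4}, I would fix $x'\in X$ and expand $k(\cdot,x')\in H$ in the orthonormal basis $\{\mathbf{e}_i\}_{i\in\mathcal{I}}$. Applying the reproducing property to compute the Fourier coefficients yields
\begin{align*}
\langle k(\cdot,x'),\mathbf{e}_i\rangle_H=\overline{\langle\mathbf{e}_i,k(\cdot,x')\rangle_H}=\overline{\mathbf{e}_i(x')},
\end{align*}
so $k(\cdot,x')=\sum_{i\in\mathcal{I}}\overline{\mathbf{e}_i(x')}\,\mathbf{e}_i$ converges in the norm of $H$. Evaluating at $x$ and exchanging $\mathcal{E}_x$ with the sum (permissible by continuity of $\mathcal{E}_x$, i.e., the RKHS hypothesis) gives \eqref{eq_5theorem1.4}. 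Absolute convergence follows from Parseval applied to $k(\cdot,x)$: for each $x\in X$,
\begin{align*}
\sum_{i\in\mathcal{I}}|\mathbf{e}_i(x)|^2=\sum_{i\in\mathcal{I}}|\langle\mathbf{e}_i,k(\cdot,x)\rangle_H|^2=\|k(\cdot,x)\|_H^2=k(x,x)<\infty,
\end{align*}
so Cauchy--Schwarz in $\ell^2(\mathcal{I})$ bounds $\sum_{i\in\mathcal{I}}|\mathbf{e}_i(x)\,\overline{\mathbf{e}_i(x')}|$ by $k(x,x)^{1/2}k(x',x')^{1/2}$.

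The main obstacle is conceptual rather than computational: justifying the interchange of the evaluation functional with the infinite Fourier sum. This is precisely the step at which the RKHS assumption is indispensable, and without the continuity of each $\mathcal{E}_x$ the pointwise identity \eqref{eq_5theorem1.4} would be unreachable even when norm convergence of the series holds. Everything else amounts to bookkeeping around the Riesz isomorphism, and the argument is insensitive to whether $\mathcal{I}$ is countable or not because both Parseval's identity and absolute convergence of the kernel series hold for arbitrary orthonormal bases.
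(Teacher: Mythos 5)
Your proof is correct. The paper does not actually prove \autoref{theorem_aronsjan} at all---it is stated as a classical result and attributed to \cite{aronszajn1950theory}---so there is no in-text argument to compare against; your route (Riesz representation to produce and uniquely characterize $k_x$, verification of the two defining properties, Fourier expansion of $k(\cdot,x')$ in the orthonormal basis with the interchange of $\mathcal{E}_x$ and the norm-convergent sum justified by continuity of evaluation, and Parseval plus Cauchy--Schwarz in $\ell^2(\mathcal{I})$ for the absolute convergence bound $k(x,x)^{1/2}k(x',x')^{1/2}$) is the standard one and every step is adequately justified.
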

\begin{example}
In this example we present the RKHS for the GRBF Kernel by considering a holomorphic function $f:\mathbb{C}^n\to\mathbb{C}$. Then, we define the norm as
\begin{align}\label{eq_6GRBFfunctionspace}
    \|f\|_{\sigma}^2\coloneqq\frac{2^n\sigma^{2n}}{\pi^n}\int_{\mathbb{C}^n}|f(\bm{z})|^2e^{\sigma^2\sum_{i=1}^n\left(z_i-\overline{z}_i\right)^2}dV(\bm{z}),
\end{align}
where $dV(\bm{z})$ is the usual Lebesgue volume measure on $\mathbb{C}^{n}\equiv\mathbb{R}^{2n}$. The RKHS for $K_{\text{exp}}^{2,\sigma}(\bm{x},\bm{z})$ is given as follows:
\begin{align}\label{eq_7GRBFHilbertspace}
    H_{\sigma}\coloneqq\left\{f:\mathbb{C}^n\to\mathbb{C}:\text{$f$ is holomorphic and $\|f\|_{\sigma}<\infty$}\right\}.
\end{align}
We appeal interested readers to follow \cite{steinwart2006explicit,steinwart2008support} more for the RKHS on GRBF Kernel.
\end{example}
If, for instance, let $k_{x'}(x)=k(x,x')$ in \eqref{eq_5theorem1.4}, then we have an important result for the RKHS in terms of a \emph{weakly converging sequence}. As a matter of fact, the sequence is weakly convergent if and only if it is bounded in norm and it converges point-wise. Upon the use of this, we can formulate following lemma.
\begin{lemma}[Lemma 2.4 in \cite{le2017composition}]\label{lemma_weaklyconverging}
    Let $H$ be the RKHS as defined in \autoref{dfn_RKHS} over $\mathbb{C}^n$ and let $k_x$ be its reproducing kernel. Then, following holds for the RKHS $H$:
    \begin{enumerate}
        \item $\lim_{\|x\|\to\infty}\nicefrac{k_x}{\|k_x\|}=0$;
        \item let $\nu_M$ be a sequence converging weakly to $0$ in $\mathbb{C}^n$ (that is, in particular, $\nu_M$ is bounded). For each $M$, put $f_M(\bmz)=\langle\bmz,\nu_M\rangle$ for $\bm{z}\in\mathbb{C}^n$, then $\lim_{M\to\infty}f_M=0$ weakly in $H$.
    \end{enumerate}
\end{lemma}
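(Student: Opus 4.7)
The plan is to apply the standard characterization that a norm-bounded sequence in a Hilbert space converges weakly to zero if and only if its inner products against some total subset vanish in the limit. In both parts I will take as the total set the family of reproducing kernels $\{k_y : y \in \mathbb{C}^n\}$, whose closed linear span is $H$ by the \textsc{Moore-Aronszajn} Theorem (\autoref{theorem_aronsjan}).

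For part (1), the normalized family satisfies $\left\|k_x/\|k_x\|_H\right\|_H = 1$ and so is automatically bounded. It then suffices to verify that for each fixed $y\in\mathbb{C}^n$, $\langle k_x/\|k_x\|_H, k_y\rangle_H \to 0$ as $\|x\|\to\infty$. By the reproducing property, this inner product equals $k_y(x)/\|k_x\|_H = k(x,y)/\sqrt{k(x,x)}$, which reduces the task to a kernel estimate. For the RKHS generated by the Laplacian measure $d\mu_{\sigma,1,\mathbb{C}^n}$, the weight $\exp(-\|\bm{z}\|/\sigma)$ forces $k(x,x)=\|k_x\|_H^2$ to grow in $\|x\|$ strictly faster than $|k(x,y)|$ does for $y$ fixed, so the ratio decays to zero.

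For part (2), since $\mathbb{C}^n$ is finite-dimensional, weak convergence of $\nu_M$ to $0$ coincides with $\|\nu_M\|\to 0$. Define $\Lambda:\mathbb{C}^n\to H$ by $\Lambda(\nu)(\bm{z}) := \langle \bm{z},\nu\rangle$; this map is linear, and provided each coordinate function $z_i$ lies in $H$ (as holds for $H_\sigma$ and for the Laplacian RKHS), $\Lambda$ is bounded, giving $\|f_M\|_H \le \|\Lambda\|\,\|\nu_M\| \to 0$. Hence $f_M\to 0$ in $H$-norm, a fortiori weakly. A direct check via the kernel criterion is also immediate: $\langle f_M, k_y\rangle_H = \overline{f_M(y)} = \overline{\langle y,\nu_M\rangle} \to 0$ for every fixed $y$, which together with norm-boundedness yields weak convergence.

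The principal obstacle is the kernel-asymptotic step in part (1). Unlike the Fock/GRBF setting where the reproducing kernel has the clean closed form $e^{\bm{z}\cdot\overline{\bm{w}}}$, the kernel for the Laplacian-weighted RKHS admits no comparably simple expression; its large-$\|x\|$ behavior must be extracted either from an integral representation against $d\mu_{\sigma,1,\mathbb{C}^n}$ or by analyzing an orthonormal-basis expansion via \eqref{eq_5theorem1.4}. Part (2), by contrast, reduces to routine estimates once the linear polynomials are confirmed to belong to $H$.
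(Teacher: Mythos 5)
The paper never actually proves this lemma: it imports it verbatim as Lemma~2.4 of \cite{le2017composition} and records only, in the sentence immediately preceding the statement, the criterion on which it rests — a sequence in an RKHS converges weakly iff it is norm-bounded and converges pointwise. Your proposal is built on exactly that criterion (testing against the total family $\{k_y\}$ is, by the reproducing property, the same as testing pointwise), so structurally you are doing precisely what the paper intends. Your part~(2) is complete and correct: in the finite-dimensional $\Cn$ weak and norm convergence of $\nu_M$ coincide, the coordinate functions are degree-one monomials and hence lie in the space (cf.\ \autoref{LEMMA_lemmaiii.3}), and you in fact obtain the stronger conclusion that $f_M\to0$ in norm.

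The one genuine gap is the decay $k(x,y)/\sqrt{k(x,x)}\to0$ in part~(1), which you assert and then defer. Two remarks. First, you are right that some kernel-specific input is unavoidable: as stated for an arbitrary RKHS over $\Cn$, claim~(1) is false (take $H$ spanned by the constant function, so that $k_x/\|k_x\|$ is a fixed unit vector for every $x$). Second, your stated reason for deferring — that the Laplacian-weighted kernel ``admits no comparably simple expression'' — is contradicted by the paper itself: \autoref{theorem_RKviaOrthonormalbasis} gives the closed form \eqref{eq_21}, and \eqref{eq_25} gives $\|K_{\bmz}^\sigma\|^2=\sinh\left(\|\bmz\|_2/\sigma\right)/\left(\|\bmz\|_2/\sigma\right)$. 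With these in hand the missing estimate is a two-line computation: by Cauchy--Schwarz applied termwise to the series $\sum_{N\ge0}\left(\langle x,y\rangle/\sigma^2\right)^N/(2N+1)!$ one gets $|K^\sigma(x,y)|\lesssim e^{\sqrt{\|x\|_2\|y\|_2}/\sigma}$ for fixed $y$, while $\sqrt{K^\sigma(x,x)}\gtrsim e^{\|x\|_2/(2\sigma)}\left(\|x\|_2/\sigma\right)^{-1/2}$, and the exponent $\sqrt{\|x\|_2\|y\|_2}/\sigma-\|x\|_2/(2\sigma)\to-\infty$ as $\|x\|_2\to\infty$. Supplying that computation closes your proof.
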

It should be noted that for the RKHS $H$ with reproducing kernel $k_x$ and its reproducing property (\textsc{rp}), we have following result:
\begin{align}\label{eq_4563}
    \|k_x\|^2=\langle k_x,k_x\rangle_H\overset{\textsc{rp}}{=}k_x(x)=k(x,x).
\end{align}
\subsection{Interface between Operator theory \& Dynamical System}
Koopman operators or \emph{Composition operators} theory by \textsc{Bernard Koopman} in 1930's provides an alternative mathematical framework at the operator theory level to understand the complex high-dimensional systems. At-least at the historical perspective, this operator theoretic framework was majorly borrowed by the traditional \emph{composition operators} approaches when it interacts with Hilbert spaces. It comes to the very strange understanding of the fact that the desire of determining the eigen-observables of the composition operators is as old as early 1840's due to \textsc{Ernst Schr\"{o}der}'s work in \cite{schroder1870ueber} but it was \textsc{Koopman}'s contribution in \cite{koopman1931hamiltonian} which made \emph{composition operators} synonymous to \emph{Koopman operators} that we now know of today.

To this end, we define the Koopman operator as follows followed immediately by the following necessary assumption.
\begin{assumption}[Sampling-flow assumption] Let $\mathcal{M}$ be a metric space and $\mathbf{F}_t:\mathcal{M}\to \mathcal{M}$ be the flow as defined in \eqref{eq_1} along with the Borel-probability measure $\mu$ whose support $\operatorname{supp}\mu=X$. Let the system be sampled at a fixed-time-instant, say $\Delta t~(>0)$ such that $\mathbf{F}_{n\Delta t}:\mathcal{M}\to \mathcal{M}$.
\label{assumption_1}\end{assumption}

\begin{dfn}[Koopman Operators]\label{def_KoopmanOperators}
In the light of action of \autoref{assumption_1}, the obvious choice of observables of dynamical system is the Lebesgue-square integrable functions with respect to the measure $\mu$, that is $L^2\left(\mu\right)$. $L^2\left(\mu\right)$ is the Hilbert space equipped by the inner product as $\langle f,g\rangle_{L^2\left(\mu\right)}=\int_Xf(\cdot)g^*(\cdot)d\mu(\cdot)$. The other natural selection for the Hilbert space can be the RKHS. The dynamical flow $\mathbf{F}_t$ induces a linear map $\mathcal{K}_{\mathbf{F}_t}$ on the vector space of complex-valued functions on $\mathcal{M}$ and on $X$ defined as 
\begin{align}\label{eq_9r}
    \mathcal{K}_{\mathbf{F}_t}:L^2\left(\mu\right)\to L^2\left(\mu\right)\overset{\textsc{via}}{\implies}\mathcal{K}_{\mathbf{F}_t}g\coloneqq\underbrace{ g\circ\mathbf{F}_t}_{\textsc{Composition}}.
\end{align}
We can also provide the continuous-time infinitesimal generator of the Koopman operator family for the given dynamical system \eqref{eq_1} as 
\begin{align}\label{eq_6}
A_{\mathbf{f}}g\coloneqq\lim_{t\to0}\frac{\mathcal{K}_{\mathbf{F}_t}g-g}{t}=\lim_{t\to0}\frac{g\circ\mathbf{F}_t-g}{t}.
\end{align}
However, the time derivative of $g$ in the direction of trajectories $\mathbf{x}(t)$ of dynamical system in $\eqref{eq_1}$ yields 
\begin{align}\label{eq_7}
    \frac{d}{dt}g\left(\mathbf{x}(t)\right)=\nabla g\cdot\dot{\mathbf{x}}(t)=\nabla g\cdot\mathbf{f}\left(\mathbf{x}(t)\right),
\end{align}
which if we equate in the following way
\begin{align*}
    \overbrace{\nabla g\left(\mathbf{x}(t)\right)\cdot\mathbf{f}\left(\mathbf{x}(t)\right)=\frac{d}{dt}g\left(\mathbf{x}(t)\right)}^{\text{\eqref{eq_7}}}&=\underbrace{\lim_{\mathfrak{t}\to0}\frac{g\left(\mathbf{x}(t+\mathfrak{t})\right)-g\left(\mathbf{x}(t)\right)}{\mathfrak{t}}=A_{\mathbf{f}}g\left(\mathbf{x}(t)\right)}_{\text{\eqref{eq_6}}}\\
    \therefore A_{\mathbf{f}}g&\overset{\text{def}}{\coloneqq}\nabla g\cdot\mathbf{f}.\numberthis\label{eq_liouvilleoperator}
\end{align*}
\end{dfn}
The operator `$A_{\mathbf{f}}$' in \eqref{eq_liouvilleoperator}
 is called as the \emph{Liouville operator} and is formally defined as follows over the underlying Hilbert space, in particular RKHS $H$.
 \begin{dfn}[Liouville Operator]
     Consider the dynamical system in \eqref{eq_1} where $\mathbf{f}:\mathbb{R}^n\to\mathbb{R}^n$ is Lipschitz continuous. Let $H$ be a RKHS over the non-empty compact set $X\subset\mathbb{R}^n$. Then, the Lipschitz continuous dynamics $\mathbf{f}$ induces a linear map $A_{\mathbf{f}}$ with its natural symbol as $\mathbf{f}$ defined as
     \begin{align*}
         A_{\mathbf{f}}:\mathcal{D}\left(A_{\mathbf{f}}\right)\to H&\overset{\textsc{via}}{\implies}A_{\mathbf{f}}g=\nabla g\cdot\mathbf{f},\quad{\text{where}}\\
         \mathcal{D}\left(A_{\mathbf{f}}\right)&\coloneqq\left\{g\in H:\nabla g\cdot\mathbf{f}\in H\right\}.
     \end{align*}
 \end{dfn}
The Liouville operator was introduced by \textsc{Rosenfeld} and his collaborators in the year 2019 in \cite{rosenfeld2019occupation} to understand the complex theory of system identification. The main difference between the Liouville operator and the Koopman operator is how the dynamics get encapsulated by these operators. As such, it is worthwhile to mention that the Liouville operator $A_{\mathbf{f}}$ directly encapsulate the dynamics, on the other hand, the Koopman operator $\mathcal{K}_{\mathbf{F}}$ encapsulate the flow of the dynamics which needs the dynamics to be discretizable.

In what follows, a formal relationship between the eigen-observables of Koopman operator with the possibility of model reduction for high-dimensional dynamical systems was devised by \textsc{Mezic} in \cite{mezic2004comparison,mezic2005spectral}. In the interest of performing reduced ordered modelling via either \emph{Proper Orthogonal Decomposition} or \emph{Principal Component Analysis} for a high-dimensional dynamical systems such as turbulent flow, extracting the spatio-temporal modes of the system remains a central challenge since either of these two techniques are unable to preserve these modes as the system evolves with respect to time. Hence, realizing this critical issue, it led to the development of physical and mathematical framework of \emph{Dynamic Mode Decomposition} (DMD). 

DMD was formulated by \textsc{Schmid} in \cite{schmid2010dynamic} along with its other parallel variants in \cite{schmid2011application,schmid2022dynamic} etc. to address the issue for the identification of spatio-temporal coherent structures for high-dimensional time series data in the fluid community area, in particular cavity flow and jet flow. Contemporary to the work of \textsc{Schmid} in \cite{schmid2010dynamic} in 2009, around the same period,  \textsc{Rowley} and \textsc{Mezic} together with their collaborators in \cite{rowley2009spectral} carved out indispensable connections between DMD and the spectral-observables of Koopman operators and demonstrated their results on a jet in cross-flow. DMD with other of its variants such as \emph{DMD with control} (DMDc) \cite{proctor2016dynamic} or \emph{multi-resolution DMD} (mrDMD) \cite{kutz2016multiresolution} finds extensive applications in characterizing epidemiological systems and fluid turbulence respectively. 

Collectively, the notion of casting the non-linear dynamical systems into the Koopman operator, indeed allows DMD to accurately characterize periodic/quasi-periodic behavior given that \emph{enough acquisition of data is ensured beforehand}. When the dynamical system is initiated by the large set of state variables, then Kernelized \emph{extended DMD} (KeDMD)  \cite{MatthewO.Williams2015JournalofComputationalDynamics} methods are invoked which are motivated by the \emph{kernel-trick}  \cite{scholkopf2000kernel} of some appropriatly chosen RKHS, which intrinsically approximates the DMD measurements via the evolution operator $\bm{A_Y}$ in $\bm{y}_{k+1}=\bm{A_Y}\bm{y}_{k}$ (cf. \cite[Page 327]{brunton2022data}). Looking ahead to the spectral convergence of the Koopman operators to identify the governing features of the complex dynamical systems via DMD, the convergence exhibits in the \emph{strong operator topology} (SOT) \cite{korda2018convergence} which is significantly similar as the \emph{point-wise convergence} \cite[Chapter 13]{halmos2012hilbert}. However, in the pursuit of achieving better convergence then SOT, one can identify the spectral information of the Liouville operators which provide the \emph{norm convergence} \cite{rosenfeld2022dynamic,rosenfeld2023singular,rosenfeld2021theoretical,morrison2023dynamic,gonzalez2023modeling,rosenfeld2021occupationACC,kamalapurkar2021occupation,russo2022liouville}. 
\subsection{What this paper offers?}Following is the practical utility of the present paper:
\subsubsection{Motivation for this paper}
Modern data science is unarguably going through a phase which can be easily called as a data revolution, where scientific machine learning algorithms such as DMD and its related variants are helping in creating data-driven models to comprehend the understanding of complex dynamical system. As such, we have already learned the importance of DMD in various scientific and engineering field as presented in before in this paper with obvious important references included, however, we also simultaneously see that we lack a \emph{proper and robust framework} which can execute the same if enough data-set or \emph{enough number of snapshot} is not present. Lack of investigation in this direction motivates the need of this paper, which provide the solution to this problem by taking the advantage of Kernelized eDMD and random matrix theory.
\subsubsection{Offerings of this paper}
We immediately provide the offerings of the current work which is in the direction of developing a novel methodological setup to execute the Kernelized eDMD but with limited data acquisition. Here we describe the chief contributions of this paper:
\begin{enumerate}
    \item This paper studies the operator theoretic interactions of the Koopman operators over the RKHS of holomorphic functions which is generated by the normalized Laplacian measure $d\mu_{\sigma,1,\mathbb{C}^n}(\bm{z})$ in the traditional $L^2-$sense  given in \eqref{eq_13Laplacemeasure}. In particular, we are interested in determining the \textbf{\emph{compactification}} behaviour of the Koopman operators over the newly developed RKHS associated with the Laplacian measure. In doing so, we demonstrated meticulously that how the Koopman operators are essentially going to act boundedly over the RKHS of the normalized Laplacian measure. Then we also investigates the \emph{essential norm estimates}
for the Koopman operators. Upon the optimization of the essential norm estimates for the Koopman operators over RKHS of the normalized Laplacian measure, we eventually prove that the desired compactification of the Koopman operators. 
\item Being the ability of the Koopman operators that they can be compact over the RKHS of the normalized Laplacian measure, it immediately allow us to extract the \emph{finite rank representation} of the Koopman operators over the RKHS of the Laplacian measure. Thus, once we are aware of this structure, we can proceed further to perform the eDMD with limited data acquisition by using the Koopman operators and RKHS of the normalized Laplacian measure.
\item It should be noted that at the time of this research investigation, we do not have any reference in which the both the reproducing kernel and the corresponding RKHS of the normalized Laplacian measure are discussed. Therefore, this manuscript immediately takes the action of developing the theory on grounds for the Laplacian measure acting in the $L^2-$sense over the holomorphic functions in multivariate case over the complex plane. In doing so, this manuscript provides following Hilbert function space theory details for the Laplacian measure:
\begin{enumerate}
    \item Inner product formulation for the Hilbert space arising from the Laplacian measure, 
    \item Establishment of \emph{point-evaluation inequality} which leads to the formation of the RKHS from the Laplacian measure,
    \item Determination of the orthonormal basis of the Hilbert space generated by the normalized Laplacian measure,
    \item Formulation of the closed-form-expression of the reproducing kernel for the RKHS generated by the normalized Laplacian measure,
    \item Determination of the weakly converging sequence in the RKHS of the normalized Laplacian measure followed by the bounds (both lower and upper) for the norm of reproducing kernel of the RKHS generated by the normalized Laplacian measure.
\end{enumerate}
\item We also investigated the property of the Koopman operator to be \textbf{closable} over the newly generated RKHS from the normalized Laplacian measure, which helps making this RKHS as novel and non-trivial choice for the data-science practice in the light of limited data acquisition. It was first pointed out by \cite{ikeda2022koopman} to exhibit the closable nature of the Koopman operators as a desirable operator property of it over the underlying Hilbert spaces which later on noted by \cite[Page 43]{colbrook2023multiverse} as a tough task to circumvent through.
\end{enumerate}
\subsubsection{Detail plan of the paper}
Needlessly to say that the paper is quite lengthy and this is perhaps due to the fact that many of mathematical details and technicalities are not present in the ready-to-use perspective. After that we have described the key offerings of this paper, we present now the detailed plan of the paper which addresses these offerings. 

The rest of the paper immediately opens up with reviewing the Kernelized Extended Dynamic Mode Decomposition in \autoref{section_DMDLDA} and also we understands the notion of limited data acquisition and Gaussian random matrix theory can help constructing the data matrix. This is where we provide the algorithm to perform the Laplacian Kernel variant of eDMD coupling with Gaussian random vectors along with certain theoretical justifications in \autoref{prpstn_anstoQues_1} and \autoref{prpstn_ansQues2}. The algorithm provided in this section is can be readily taken in hand to perform the experimental study for fluid flow across cylinder experiment, which is performed in \autoref{section_experimentsffac}. However, to get to that section, one has to perform the detail analysis of the RKHS generated by the Laplacian Kernel Function when it is viewed as in $L^2-$measure sense and then how the Koopman operator acts on this RKHS. Both of the study is performed simultaneously in the respective sections of \autoref{section_RKHSLaplaceMeasure} and \autoref{section_Koopman}. Lastly, we show why the Laplacian Kernel and its corresponding RKHS is novel in \autoref{section_novelLap} by determining the sequence of functions which makes Koopman operators to act closable in contrast to any other kernel function and its RKHS such as Gaussian Radial Basis Kernel Function.

\section{DMD with Limited Data Acquisition}\label{section_DMDLDA}
We provided various versions of DMD in the introduction, however, the present research investigation smooth aligns with one of the variant of DMD which relies on the choice of the reproducing kernel and its corresponding function space, which often, as already mentioned, is referred RKHS. The kernel variant of eDMD has one very strong benefit, that it has the strong ability to handle the \emph{curse of dimensionality}. Thanks to the \emph{kernel-trick}, which is a natural nature offered by the reproducing kernel. The ideology for extracting the dominant pieces of information from the time-changing physical phenomena is the approximation of Koopman operators by embedding it in the RKHS via its corresponding reproducing kernel functions and this can be immediately observed in \cite{baddoo2022kernel,philipp2023error,khosravi2023representer,klus2018kernel,klus2020kernel,giannakis2020extraction,das2020koopman,das2021reproducing,fujii2019dynamic,alexander2020operator,burov2021kernel,zhao2016analog}. 
\subsection{Extended Dynamic Mode Decomposition}
Now, that we have already provided the exposure of the dynamical systems in the RKHS setting via the action of the Koopman operator, we recall one of the variant of DMD which exploits the direct use of the Koopman operators over the choice of the RKHS, i.e. the extended-Dynamic Mode Decomposition (eDMD) \cite{kevrekidis2016kernel,williams2015data}.
\subsubsection{Review of eDMD} Our algorithm for the data-driven discovery heavily relies on the setting of the eDMD and this strongly motivates us to review the eDMD which is followed by the definition of the data-set of snapshots.
\begin{dfn}\label{dfn_4.1}
    Consider $(n,\mathcal{M},\mathbf{F})$ be the discrete dynamical system where $n\in\mathbb{Z}$ is time $\mathcal{M}\subseteq\mathbb{R}^n$ is the state space and $\mathbf{x}\mapsto\mathbf{F}(\mathbf{x})$ is the dynamics. Then the data-set of snapshots of pairs corresponding to the discrete dynamical system $(n,\mathcal{M},\mathbf{F})$ is given as following:
    \begin{align}\label{eq_data-setsnaps}
    \begin{bmatrix}
\bm{|} & \bm{|} & \bm{|}&\bm{|}\\
\mathbf{x}_1 & \mathbf{x}_2 &\cdots& \mathbf{x}_m\\
\bm{|} & \bm{|} &\bm{|}&\bm{|}
\end{bmatrix}\overset{\mathbf{x}\mapsto\mathbf{F}(\mathbf{x})}{\mapsto}\begin{bmatrix}
\bm{|} & \bm{|} & \bm{|}&\bm{|}\\
\mathbf{F}(\mathbf{x}_1) & \mathbf{F}(\mathbf{x}_2) &\cdots& \mathbf{F}(\mathbf{x}_m)\\
\bm{|} & \bm{|} &\bm{|}&\bm{|}
\end{bmatrix}\overset{\mathbf{y}=\mathbf{F}(\mathbf{x})}{\coloneqq}\begin{bmatrix}
\bm{|} & \bm{|} & \bm{|}&\bm{|}\\
\mathbf{y}_1 & \mathbf{y}_2 &\cdots& \mathbf{y}_m\\
\bm{|} & \bm{|} &\bm{|}&\bm{|}
\end{bmatrix}.
\end{align}
\end{dfn}
With a slight abuse of notation to the Koopman operator as $\mathcal{K}$ (instead of $\mathcal{K}_{\mathbf{F}_t}$ in \eqref{eq_9r}), we understand that the Koopman operator acting on the observable $\phi:\mathcal{M}\to\mathbb{C}$ as
\begin{align*}
    \mathcal{K}\phi(\mathbf{x})=\phi\circ\mathbf{F}(\mathbf{x})=\phi(\mathbf{F}(\mathbf{x})),
\end{align*}
yields a brand new scalar valued function that gives the value of $\phi$ \emph{one-step ahead in the future} against the discrete dynamical system $\left(n,\mathcal{M},\mathbf{F}\right)$, where $n\in\mathbb{Z}$, $\mathcal{M}\subseteq\mathbb{R}^N$ and $\mathbf{x}\mapsto\mathbf{F}(\mathbf{x})$. In the natural interest of determining the Koopman spectra-observables i.e. Koopman eigen-values $(\mu_k)$ and Koopman eigen-functionals $(\varphi_k)$, they are also accompanied by the Koopman modes $(\bm{\xi}_k)$ of a certain vector valued observable $\bm{g}:\mathcal{M}\to\mathbb{R}^{N_o},$ (${N_o}\in\mathbb{N}$), which is refer as the \emph{full state observable} given as $\bm{g}\left(\mathbf{x}\right)=\mathbf{x}$. Further, one can have a following decomposition in terms of the aforementioned the triple \emph{eigen-values, eigen-functionals \& modes} of the Koopman operator corresponding to the (unknown) dynamics $\mathbf{x}\mapsto\mathbf{F}(\mathbf{x})$: 
\begin{align*}
    \mathbf{x}=\sum_{k=1}^{N_k}\bm{\zeta}_k\varphi_k(\mathbf{x}),\quad\mathbf{F}(\mathbf{x})=\sum_{k=1}^{N_k}\mu_k\bm{\zeta}_k\varphi_k(\mathbf{x}),
\end{align*}
where, supposing that $N_k$ is the number of tuples required for the re-construction of the system from the data of the dynamical system.

In the light of choosing scalar observables for the eDMD process, we will define the important notion of \emph{feature map} and \emph{feature space} as follows:
\begin{dfn}
The eDMD is provided the choice of scalar observables and for that let $\mathcal{F}$ be the appropriate choice of scalar observables (such as RKHS). To do this, let $\psi_k:\mathcal{M}\to\mathbb{R}$ for $k=1,\ldots,N_k$ under the assumption that $\operatorname{span}(\mathcal{F}_{N_k})\subset\mathcal{F}$. In particular,  the space of scalar observables is approximated using $\left\{\psi_k\right\}_{k=1}^{N_k}$ functions  then feature space is $\mathbb{R}^{N_k}$. Additionally, the \emph{feature map} $\bm{\psi}$ will be the `stacked' column vector of entries $\left\{\psi_{k}\right\}$ formally given as follows:
\begin{align*}
    \bm{\psi}\left(\mathbf{x}\right)=\begin{bmatrix}
        \psi_{1}\left(\mathbf{x}\right)\\
        \psi_{1}\left(\mathbf{x}\right)\\
        \vdots\\
        \psi_{N_k}\left(\mathbf{x}\right)
    \end{bmatrix}.
\end{align*}
\end{dfn}
With the feature space $\mathbb{R}^{N_k}$ and considering the value of any functions $\phi,\Tilde{\phi}\in\mathcal{F}_{N_k}$, where (again) $\operatorname{span}\mathcal{F}_{N_k}\subset\mathbb{R}^{N_k}$, one can define the evaluation of both $\phi$ and $\Tilde{\phi}$ against the inner product with certain  coefficient vector $\bm{a}$ and $\Tilde{\bm{a}}$ in $\mathbb{C}^{N_k}$:
\begin{align*}
    \phi\left(\mathbf{x}\right)&=\langle\bm{a},\bm{\psi}(\mathbf{x})\rangle_{\mathbb{R}^{N_k}}=\bm{\psi}(\mathbf{x})^\top\bm{a}=\sum_{k=1}^{N_k}a_k\psi_{k}(\mathbf{x})\\
     \Tilde{\phi}\left(\mathbf{x}\right)&=\langle\Tilde{\bm{a}},\bm{\psi}(\mathbf{x})\rangle_{\mathbb{R}^{N_k}}=\bm{\psi}(\mathbf{x})^\top\Tilde{\bm{a}}=\sum_{k=1}^{N_k}\Tilde{a_k}\psi_{k}(\mathbf{x}).
\end{align*}
It should be noted that the goal of the eDMD is to employ the pair of data-set of snapshots defined in \eqref{eq_data-setsnaps} to generate the compactified\footnote{finite rank representation of the infinite dimensional Koopman operator} version of the Koopman operator denoted by $\bm{\mathcal{K}}\in\mathbb{R}^{N_k}\times\mathbb{R}^{N_k}$ for some given coefficients $\bm{a}$ and $\Tilde{\bm{a}}$ such that $\mathfrak{r}=\left(\mathcal{K}\phi-\Tilde{\phi}\right)\in\mathcal{F}$, is minimum. We now provide the algorithm of eDMD.
{\small\begin{algo}The algorithm for the extended-Dynamic Mode Decomposition is given as follows:
    {\begin{enumerate}
    \item[Step 1] With the pair of data-set of snapshots as defined in \eqref{eq_data-setsnaps}, compute the following observation matrices with respect to the scalar observables (kernel) $\bm{\psi}$:
    \begin{align*}
        \bm{\Psi}_{\mathbf{x}}\triangleq\begin{bmatrix}
            \bm{\psi}\left(\mathbf{x}_1\right)^\top\\
            \bm{\psi}\left(\mathbf{x}_2\right)^\top\\
            \vdots\\
            \bm{\psi}\left(\mathbf{x}_M\right)^\top
        \end{bmatrix},\qquad \bm{\Psi}_{\mathbf{y}}\triangleq\begin{bmatrix}
            \bm{\psi}\left(\mathbf{y}_1\right)^\top\\
            \bm{\psi}\left(\mathbf{y}_2\right)^\top\\
            \vdots\\
            \bm{\psi}\left(\mathbf{y}_M\right)^\top
        \end{bmatrix}, 
    \end{align*}
    where $\bm{\Psi}_{\mathbf{x}}$ and $\bm{\Psi}_{\mathbf{y}}$ constitutes the matrix in $\mathbb{R}^{M\times N_k}$.
    \item[Step 2] Compute the following two matrices:
    \begin{align*}
        \bm{\mathfrak{G}}&=\bm{\Psi}_{\mathbf{x}}^\top\bm{\Psi}_{\mathbf{x}}=\sum_{m=1}^M\bm{\Psi}\left(\mathbf{x}_m\right)\bm{\Psi}^\top\left(\mathbf{x}_m\right)\\
        \bm{\mathds{A}}&=\bm{\Psi}_{\mathbf{x}}^\top\bm{\Psi}_{\mathbf{y}}=\sum_{m=1}^M\bm{\Psi}\left(\mathbf{x}_m\right)\bm{\Psi}^\top\left(\mathbf{y}_m\right).
    \end{align*}
    \item[Step 3]Determine the pseudo-inverse of $\bm{\mathfrak{G}}$ and denote it by $\bm{\mathfrak{G}}^{(-)\mathfrak{p}}$.
    \item[Step 4]Determine $\bm{\mathcal{K}}$ by setting 
    \begin{align*}
        \bm{\mathcal{K}}\triangleq\bm{\mathfrak{G}}^{(-)\mathfrak{p}}\bm{\mathds{A}}.
    \end{align*}
\end{enumerate}}
\end{algo}
}
In the spirit of singular value decomposition (SVD) based DMD via \cite{schmid2010dynamic}, the SVD of $\bm{\Psi}_{\mathbf{x}}$ can be used to construct a matrix similar to $\bm{\mathcal{K}}$; this is given as follows:
\begin{proposition}\label{prpstin_williamsKoopman}
    Let the SVD of $\bm{\Psi}_{\mathbf{x}}$ takes the following mathematical structures:
    \begin{align*}
        \bm{\Psi}_{\mathbf{x}}\triangleq\bm{Q\Sigma Z}^{\top}, 
    \end{align*}
    where $\bm{Q}$ and $\bm{\Sigma}\in \mathbb{R}^{M\times M}$ and $\bm{Z}\in\mathbb{R}^{N_k\times M}$. The pair of non-negative $\mu$ and $\hat{\bm{v}}$ are respective an eigenvalue and eigenvector of
    \begin{align}\label{eq_27rr}
        \hat{\bm{\mathcal{K}}}\triangleq\left(\bm{\Sigma}^{(-)\bm{\mathfrak{p}}}\bm{Q}^\top\right)\left(\bm{\Psi}_{\mathbf{y}}\bm{\Psi}_{\mathbf{x}}^\top\right)\left(\bm{Q}\bm{\Sigma}^{(-)\bm{\mathfrak{p}}}\right)=\left(\bm{\Sigma}^{(-)\bm{\mathfrak{p}}}\bm{Q}^\top\right)\hat{\bm{\mathds{A}}}\left(\bm{Q\Sigma}^{(-)\bm{\mathfrak{p}}}\right),
    \end{align}
if and only if $\mu$ and $\bm{v}=\bm{Z}\hat{\bm{v}}$ are an eigen-value and eigen-vector of $\bm{\mathcal{K}}$.
\end{proposition}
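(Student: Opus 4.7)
The plan is to exhibit a simple intertwining identity between $\bm{\mathcal{K}}$ and $\hat{\bm{\mathcal{K}}}$ through the right singular factor $\bm{Z}$, and then read off both implications immediately. Concretely, I will show that
\begin{align*}
    \bm{\mathcal{K}}\,\bm{Z} \;=\; \bm{Z}\,\hat{\bm{\mathcal{K}}},
\end{align*}
which, because $\bm{Z}\in\mathbb{R}^{N_k\times M}$ has orthonormal columns ($\bm{Z}^\top\bm{Z}=\bm{I}_M$), forces the two operators to share eigenvalues and lets one transport eigenvectors between them via $\bm{v}=\bm{Z}\hat{\bm{v}}$ and $\hat{\bm{v}}=\bm{Z}^\top\bm{v}$.

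To produce this identity, I would first substitute the SVD $\bm{\Psi}_{\mathbf{x}}=\bm{Q\Sigma Z}^\top$ into the two building blocks of $\bm{\mathcal{K}}$. A short calculation gives $\bm{\mathfrak{G}}=\bm{\Psi}_{\mathbf{x}}^\top\bm{\Psi}_{\mathbf{x}}=\bm{Z\Sigma}^2\bm{Z}^\top$, and (under the full-rank hypothesis on $\bm{\Sigma}$) its Moore-Penrose pseudo-inverse is $\bm{\mathfrak{G}}^{(-)\mathfrak{p}}=\bm{Z\Sigma}^{-2}\bm{Z}^\top$, using $\bm{Z}^\top\bm{Z}=\bm{I}_M$ to verify the four Penrose conditions. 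Combining this with $\bm{\mathds{A}}=\bm{\Psi}_{\mathbf{x}}^\top\bm{\Psi}_{\mathbf{y}}=\bm{Z\Sigma Q}^\top\bm{\Psi}_{\mathbf{y}}$ collapses $\bm{\mathcal{K}}=\bm{\mathfrak{G}}^{(-)\mathfrak{p}}\bm{\mathds{A}}$ down to
\begin{align*}
    \bm{\mathcal{K}} \;=\; \bm{Z}\,\bm{\Sigma}^{-1}\bm{Q}^\top\bm{\Psi}_{\mathbf{y}}.
\end{align*}
The analogous substitution in the definition of $\hat{\bm{\mathcal{K}}}$ in \eqref{eq_27rr} yields, after cancelling $\bm{Q}^\top\bm{Q}=\bm{I}_M$ and one $\bm{\Sigma}\bm{\Sigma}^{-1}$, the identity $\hat{\bm{\mathcal{K}}}=\bm{\Sigma}^{-1}\bm{Q}^\top\bm{\Psi}_{\mathbf{y}}\bm{Z}$. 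Right-multiplying the first expression by $\bm{Z}$ then gives exactly $\bm{\mathcal{K}}\bm{Z}=\bm{Z}\hat{\bm{\mathcal{K}}}$.

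With the intertwining in hand the two directions are immediate. If $\hat{\bm{\mathcal{K}}}\hat{\bm{v}}=\mu\hat{\bm{v}}$, then $\bm{\mathcal{K}}(\bm{Z}\hat{\bm{v}})=\bm{Z}\hat{\bm{\mathcal{K}}}\hat{\bm{v}}=\mu\bm{Z}\hat{\bm{v}}$, and the vector $\bm{v}=\bm{Z}\hat{\bm{v}}$ is nonzero whenever $\hat{\bm{v}}$ is, since $\bm{Z}$ has full column rank. Conversely, if $\bm{\mathcal{K}}\bm{v}=\mu\bm{v}$ with $\bm{v}=\bm{Z}\hat{\bm{v}}$, then $\bm{Z}\hat{\bm{\mathcal{K}}}\hat{\bm{v}}=\mu\bm{Z}\hat{\bm{v}}$, and applying $\bm{Z}^\top$ to both sides together with $\bm{Z}^\top\bm{Z}=\bm{I}_M$ recovers $\hat{\bm{\mathcal{K}}}\hat{\bm{v}}=\mu\hat{\bm{v}}$.

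The only delicate step I anticipate is the pseudo-inverse computation when $\bm{\Sigma}$ is rank-deficient, since in that case $\bm{\Sigma}^{(-)\mathfrak{p}}\bm{\Sigma}$ is an orthogonal projector rather than the identity and the chain of simplifications above does not cancel cleanly. I would handle this by restricting attention to the range of the projector $\bm{P}=\bm{\Sigma}^{(-)\mathfrak{p}}\bm{\Sigma}$ (equivalently, by truncating the SVD to its nonzero singular values), which is the natural working subspace for DMD and is also where the recovered eigenvectors $\hat{\bm{v}}$ live. On that subspace the computations reduce to the full-rank case treated above, and the equivalence of spectra follows verbatim.
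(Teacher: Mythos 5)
The paper states this proposition without any proof --- it is imported verbatim from the kernel-DMD literature (Williams--Rowley--Kevrekidis, \cite{MatthewO.Williams2015JournalofComputationalDynamics,williams2015data}) and the text moves on immediately to the kernelized algorithm --- so there is no in-paper argument to compare yours against. Judged on its own merits, your proof is correct and supplies exactly the missing justification. The intertwining identity $\bm{\mathcal{K}}\bm{Z}=\bm{Z}\hat{\bm{\mathcal{K}}}$ is the right organizing principle: your computations $\bm{\mathfrak{G}}=\bm{Z\Sigma}^2\bm{Z}^\top$, $\bm{\mathfrak{G}}^{(-)\mathfrak{p}}=\bm{Z\Sigma}^{-2}\bm{Z}^\top$ (the four Penrose conditions do check out using $\bm{Z}^\top\bm{Z}=\bm{I}_M$), $\bm{\mathcal{K}}=\bm{Z\Sigma}^{-1}\bm{Q}^\top\bm{\Psi}_{\mathbf{y}}$ and $\hat{\bm{\mathcal{K}}}=\bm{\Sigma}^{-1}\bm{Q}^\top\bm{\Psi}_{\mathbf{y}}\bm{Z}$ are all verified, and the two directions of the equivalence then follow as you say, with the full-column-rank of $\bm{Z}$ guaranteeing $\bm{v}=\bm{Z}\hat{\bm{v}}\neq 0$ and the left inverse $\bm{Z}^\top$ recovering $\hat{\bm{v}}$ in the converse. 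Your closing caveat about a rank-deficient $\bm{\Sigma}$ is well taken and is the standard resolution (truncate to the nonzero singular values); note also that in the invertible case the range of $\bm{\mathcal{K}}=\bm{Z\Sigma}^{-1}\bm{Q}^\top\bm{\Psi}_{\mathbf{y}}$ lies in the column space of $\bm{Z}$, so for $\mu\neq 0$ every eigenvector of $\bm{\mathcal{K}}$ is automatically of the assumed form $\bm{Z}\hat{\bm{v}}$, which slightly strengthens the converse beyond what the statement requires. The only cosmetic mismatch with the statement is its phrase ``non-negative $\mu$,'' which your argument neither needs nor establishes (eigenvalues here are generally complex); that is an artifact of the statement, not a gap in your proof.
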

Now, that we have defined the basic ingredients including the action of the Koopman operator on the discrete dynamical system for the eDMD, we present the reproducing kernel variant of the eDMD.
{\small{\begin{algo}\label{algo_kerneleDMD}
The eDMD powered by the reproducing kernel function $K(\cdot,\cdot)$ is given as follows:
    \begin{enumerate}
        \item[Step 1] In the light of data-set snapshots given in \eqref{eq_data-setsnaps}, choose the reproducing kernel function $K(\cdot,\cdot)$, which generate the corresponding and unique RKHS.
        \item[Step 2] Compute the elements of $\hat{\bm{\mathfrak{G}}}\coloneqq\left[\hat{\bm{\mathfrak{G}}}\right]_{i\times j}$ and $\hat{\bm{\mathds{A}}}\coloneqq\left[\hat{\bm{\mathds{A}}}\right]_{i\times j}$ that are defined as follows:
        \begin{align*}
            \left[\hat{\bm{\mathfrak{G}}}\right]_{i\times j}&=K\left(\mathbf{x}_i,\mathbf{x}_j\right)\\
            \left[\hat{\bm{\mathds{A}}}\right]_{i\times j}&=K\left(\mathbf{y}_i,\mathbf{x}_j\right).
        \end{align*}
        \item[Step 3] Determine the spectral observables of the Gram-matrix $\hat{\bm{\mathfrak{G}}}$, i.e. $\bm{Q}$ and $\bm{\Sigma}$.
        \item[Step 4] Construct $\hat{\bm{\mathcal{K}}}$ via \eqref{eq_27rr}.
    \end{enumerate}
\end{algo}}}
In \cite{williams2015data}, the choice of kernel function were the polynomial kernel function which is given as $K_\alpha(\bm{x},\bm{y})=\left(1+\nicefrac{\langle\bm{x},\bm{y}\rangle}{d^2}\right)^\alpha$ and the Gaussian Radial Basis Kernel Function. Apparently, for some reasons the aforementioned kernel functions has always been the general choice of performing the DMD which involves the computation of Gram-matrix $\hat{\bm{\mathfrak{G}}}$, for example in \cite{rosenfeld2022dynamic}.
\subsection{On Limited Data Acquisition} 
All the variants of DMD expects to run on the availability of data-set snapshots that we have defined formally in \eqref{eq_data-setsnaps} under the additional assumption that moderate number of snapshots are actually provided. Explicitly, in \eqref{eq_data-setsnaps}, we can see that we have a total of $m-$data snapshots for the discrete dynamical system $(n,\mathcal{M},\mathbf{{F}})$ given in \autoref{dfn_4.1}. Now, the question in which we are interested is that would we be able to perform the extended-DMD if only limited data-set snapshots are available for any general discrete dynamical system. Following is the simple definition for the limited data acquisition on which the present paper is based upon.
\begin{dfn}\label{dfn_LDA}
    For the general discrete dynamical system $(n,\mathcal{M},\mathbf{F})$ as given in \autoref{dfn_4.1}, we regard the data-set snapshots as ideally the \emph{full data acquisition}, in which we have, here, as $m-$snapshots. If, we have the same data-set snapshots but for some positive $m_0\in\mathbb{Z}_+$ which satisfy $m_0<m$, then we simply say these data-set snapshots as \emph{limited data acquisition}.
\end{dfn}
\begin{example}\label{example_ffac}
    For the standard fluid flow across cylinder experiment (as such in \cite{bagheri2013koopman} etc.) with its Reynolds number as $100$ in \cite{kutz2016dynamic}, there are $151-$data-set snapshots and each snapshots is the column vector whose dimension is \emph{89,351}. In this case, the full data acquisition corresponds to data-set snapshots of matrix of dimension \emph{89,351-by-151}. However, for the same physical experiment, if we have data-set snapshots of matrix whose dimension is \emph{89,351-by-}$m_0$, where $0<m_0<151$, then we will regard it simply as the limited data acquisition.
\end{example}
Besides the crucial challenge of uncertainty in knowing the about the exact mathematical structure of $\mathbf{f}$ in \eqref{eq_1} that governs the concerned dynamical system, the next immediate challenge that we face in the theme of data-driven science is about the information of dynamical system when we do not have the luxury of enough data-set snapshots. It turns out that for the data-driven methodology, one can use the knowledge from the important topics of random matrix theory \cite{edelman2005random,tao2023topics} to construct the data-driven models for the related dynamical systems. 
\subsection{Using random matrix theory for limited data acquisition}
Assuming that the case of limited data acquisition in the direction of \autoref{dfn_LDA} for a certain dynamical system in which $0<m_0<m$ for the data-set snapshots as in \eqref{eq_data-setsnaps}, we will construct a data-set snapshots matrix in which random vector from certain probability density function will be drawn and augmented so that the final matrix of the data-set snapshots eventually contains (again) $m$ column vectors (snapshots). 
\begin{figure}[H]
    \centering
    \frame{\includegraphics[scale=.09]{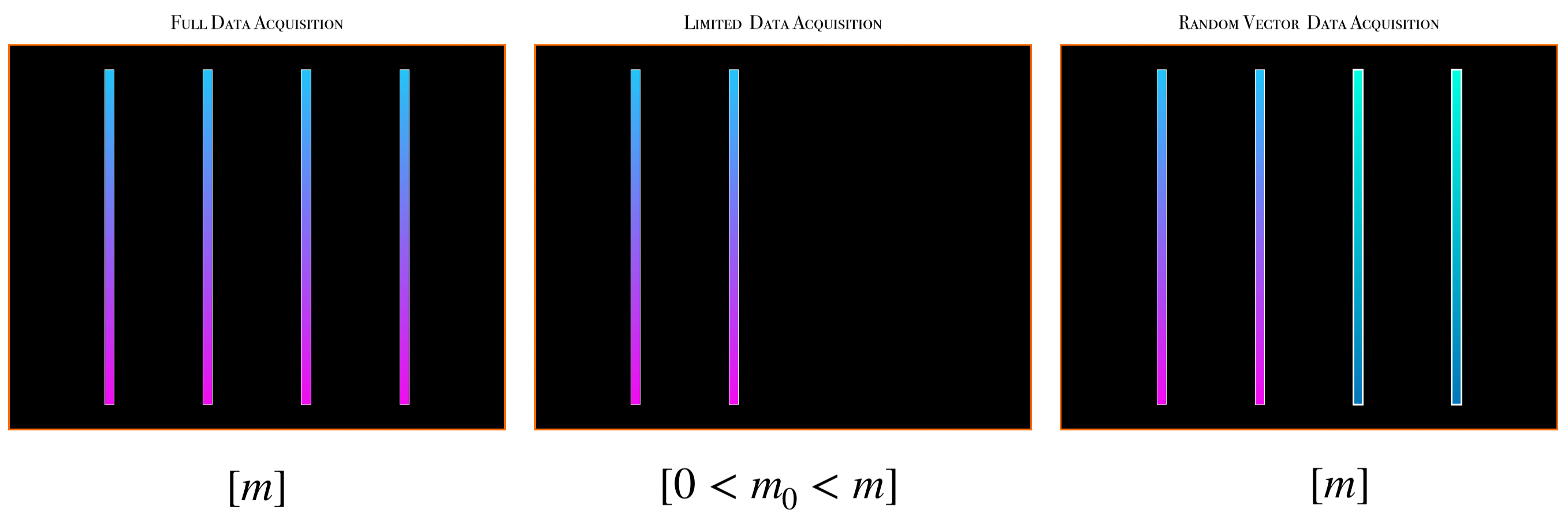}}
    \caption{A conceptual diagram to intercept the different notions of data acquisition. The very-first matrix in the \underline{\textsc{extreme left}} symbolised the ideal or at-least expected condition of having-supposedly $m-$snapshots. The unfortunate situation of acquiring only $m_0-$snapshots out of those $m-$snapshots is depicted in the \underline{\textsc{middle figure}}. In order to construct the ideal situation matrix in which $m-$snapshots are present, a simple padding of $m-m_0$ random vectors is inserted and this is symbolized in \underline{\textsc{extreme right}}.}
    \label{fig:Data-set snaps with FDA LDA and RV DA}
\end{figure}
\subsubsection{Preliminary to the Random Matrix Theory} 
The ensembles of well-studied random matrix for example \textsc{manova}, \emph{Gaussian, \& Wishart} etc. from the realm of random matrix theory has always played an integral role in various areas of mathematical probability and its application in mathematical physics \cite{zirnbauer1996riemannian,mehta2004random,caselle2004random,ivanov2002random}. We will present the definition of the Gaussian random matrix ensemble in the interest of our experiment.
\begin{dfn}[Gaussian Random Matrix \cite{edelman2005random}]
    For some positive integers $m$ and $n$, the Gaussian random matrix $G_1(m,n)$ is simply the $m\times n$ matrix of independent and identically distributed standard random normals. If $\bm{A}$ is the Gaussian random matrix of $m\times n$ dimension, then its joint element density is given as follows:
    \begin{align}\label{eq_jointgaussianPDF}
        \frac{1}{(2\pi)^{\nicefrac{mn}{2}}}\exp\left(-\frac{1}{2}\|\bm{A}\|_{F}^2\right),
    \end{align}
    where $\|\bullet\|_{F}$ is the Frobenius norm.
\end{dfn}
More concretely, one can also define the probability density function for some random vector $X$ in more rigorous way as follows.
\begin{dfn}[Multi-variate Gaussian random vector {\cite[Definition 3.2.1; Page 26]{tong1990fundamental}}]
    Let $X\coloneqq(X_1,\ldots,X_n)$ be a random vector. Then we say that $X$ is a Gaussian random vector if we can write $X=\bm{\mu}+\bm{A}\bm{Z}$, where $\bm{\mu}\in\mathbb{R}^n$, $\bm{A}$ is $n\times k-$matrix and $\bm{Z}$ is a $k-$vector of independently identically distributed standard normal random variables. We use the notation $X\sim \mathcal{N}_n\left(\bm{\mu},\bm{\Sigma}\right)$ where $\bm{\Sigma}=\bm{AA}^\top$, to imply that $X$ is the Gaussian random vector whose distribution is determined by $\bm{\mu}$ and $\bm{\Sigma}$. We also say that $X\sim\mathcal{N}_n\left(\bm{\mu},\bm{\Sigma}\right)$ is non-degenerate when $\bm{\Sigma}=\bm{AA}^\top$ is is positive definite (equivalently invertible). 

    In the case, when ${\Tilde{\mathbf{x}}_n}\sim\mathcal{N}_n\left(\bm{\mu},\bm{\Sigma}\right)$ and is non-degenerate, then the probability density function of $X$ is given as 
    \begin{align}\label{eq_gaussianPDF}
    \operatorname{pdf}_{\Tilde{\mathbf{x}}_n}\left(\mathbf{x}_0\right)=\frac{1}{(2\pi)^{\nicefrac{n}{2}}|\mathbf{\Sigma}|^{\nicefrac{1}{2}}}\exp\left((\mathbf{x}_0-\bm{\mu})^{\top}\mathbf{\Sigma}^{-1}(\mathbf{x}_0-\bm{\mu})\right),
    \end{align}
    for all $\mathbf{x}_0\in\mathbb{R}^n.$
\end{dfn}
\subsubsection{Choice of random matrix}It should be noted that there are several choices of considering random matrix other than what we have described for the Gaussian random matrix in either of the definitions given above. The most important property of (any) Gaussian random matrix is its \emph{orthogonal in-variance} \cite{edelman2005random} which means that if $\bm{A}$ is a Gaussian random matrix, then one fails to distinguish among $Q_1\bm{A}$, $\bm{A}$ and $\bm{A}Q_2$, where $Q_1$ and $Q_2$ are non-random orthogonal. The other choices of random matrix that one can use could be \emph{Uniform random matrices \& Rademacher matrices \cite{hitczenko1994rademacher}}, however the empirical evidence in the present manuscript establishes that augmenting the data-set snapshots with the Gaussian random matrix yields better results in contrast with any other random matrix.

Now, we will present the compelling theory behind the process of augmenting the data-set snapshots when limited data acquisition holds with the Gaussian random vector whose probability density function in general is given in \eqref{eq_gaussianPDF}.
\subsection{Theory for augmenting data-set snapshots with random matrix} As \autoref{fig:Data-set snaps with FDA LDA and RV DA} already demonstrates about the situation of limited data acquisition in which we tend to augment the data-set snapshots matrix with the suitable matrix size of random matrix, in particular Gaussian random matrix to mimic the exact correspondence of full data acquisition. However, the figurative description is merely not enough to explain how one can simply augment the data-set snapshots matrix with Gaussian random matrix and resume the Kernelized eDMD process to extract the governing feature of the underlying dynamical system. In particular, in doing so, we encounter with following two crucial questions which needed to answered in the best-robust possible way.
\begin{quesaug}\label{ques_1}
Does there exist a subspace inside a measurable space on which the sub-sequence of snapshots generated by the discrete dynamical system $\mathbf{x}_{n+1}=\mathbf{F}(\mathbf{x}_n)$ (here, $\mathbf{y}_n\coloneqq\mathbf{x}_{n+1}$ in \eqref{eq_data-setsnaps}) can be made arbitrarily close to the Gaussian random vector $\Tilde{\mathbf{x}}_n$ whose probability density function is defined in \eqref{eq_gaussianPDF}? Precisely, if the observables is in the domain of Koopman operator as $L^2(\mu)$ over a measurable space $X$, then given that the sequence of observables $\left\{\mathfrak{g}_M\right\}_M\to\operatorname{pdf}_{\Tilde{\mathbf{x}}_n}$ almost everywhere over $L^2(\mu)$, then does this convergence happens in measure as well?
\end{quesaug}
If $\mathbf{x}_n$ is the actual $n-$th snapshot which is evolved by the above discrete dynamical system, and let $\Tilde{\mathbf{x}}_n$ be Gaussian random vector whose probability density function is $\operatorname{pdf}_{\Tilde{\mathbf{x}}_n}$, then the above question is simply asking that can the difference between these two vectors can be made arbitrarily small in measure-theoretic sense. If so, then the answer (given in \autoref{prpstn_anstoQues_1}) to this provide the immediate justification of using the Gaussian random matrix of appropriate matrix dimensions in the light of limited data acquisition.
\begin{quesaug}\label{ques_2}
    If $\mathbf{x}_n$ is supposedly the actual $n-$th snapshot evolved by the above discrete dynamical system,  and let $\Tilde{\mathbf{x}}_n$ be Gaussian random vector whose probability density function is $\operatorname{pdf}_{\Tilde{\mathbf{x}}_n}$, then is it possible to minimize the difference between the observable $g$ evaluated at $\mathbf{x}_n$ and this  $\operatorname{pdf}_{\Tilde{\mathbf{x}}_n}$ as the difference between $\mathbf{x}_n$ and $\Tilde{\mathbf{x}}_n$ is negligibly small?
\end{quesaug}
The above question is raising the concern for the interpretation of the probability density function as the choice of some observable space on which the Koopman operator $\mathcal{K}$ can act upon, when we have constructed the data-set snapshot matrix by augmenting, here with Gaussian random vector at the $n-$th snapshot entry. If so, then the answer (given in \autoref{prpstn_ansQues2}) to this question allow us to treat the probability density function for the Gaussian random vectors to interpret as the observables for the Koopman operators.

Now, finally we provide the solutions and answers to these question as follows.
\subsubsection{Answer to Question 1} To answer the \autoref{ques_1}, we need to recall the notion of \emph{convergence in measure}. It is done so as follows:
\begin{dfn}\label{dfn_measureconvergence}
    Let $\mu$ be a positive measure on measurable space $X$. A sequence $f_N$ of complex measurable functions on $X$ is said to \emph{converge in measure} to the measurable function $f$ is to every $\epsilon>0$ there corresponds an $N$ such that 
    \begin{align*}
        \mu\left\{x\in X:|f_N(x)-f(x)|>\epsilon\right\}<\epsilon,
    \end{align*}
    for all $\mathfrak{n}>N$. The notation that we use to symbolize convergence of $f_N$ to $f$ in measure is $f_N\overset{\mu}{\to}f$.
\end{dfn}

The important application of the notion of \emph{convergence in measure} is capture in following theorem, whose proof can be found in the standard references such as \emph{Scheffe\'{e}'s Lemma} from \cite[Page 55]{williams1991probability} or \cite{scheffe1947useful} or \cite[Probelm 18, Page 74]{rudin1987real}. The following characterization is needed for this manuscript to provide the convergence in measure.
\begin{theorem}\label{theorem4.2}
    Let $f_N$ be sequence of $L^p\left(\mu\right)$ functions as defined in \autoref{dfn_measureconvergence} and if $\|f_N-f\|_{L^p(\mu)}\to0$ then $f_N\to f$ in measure.
\end{theorem}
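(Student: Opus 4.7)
The plan is to reduce this to a direct application of Chebyshev--Markov's inequality for $L^p$ spaces. The core observation is that pointwise largeness of $|f_N-f|$ on a set $E$ forces the $L^p$ integral to be bounded below by a constant multiple of $\mu(E)$, so shrinking the $L^p$ norm forces such large-deviation sets to shrink in measure.

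First I would fix an arbitrary $\epsilon>0$ and define, for each $N$, the exceptional set
\begin{align*}
E_N(\epsilon) \coloneqq \left\{x\in X : |f_N(x) - f(x)| > \epsilon\right\}.
\end{align*}
On $E_N(\epsilon)$ the integrand $|f_N - f|^p$ is pointwise larger than $\epsilon^p$, so
\begin{align*}
\|f_N - f\|_{L^p(\mu)}^p \;=\; \int_X |f_N - f|^p\, d\mu \;\geq\; \int_{E_N(\epsilon)} |f_N - f|^p\, d\mu \;\geq\; \epsilon^p\, \mu\!\left(E_N(\epsilon)\right).
\end{align*}
Rearranging gives the Chebyshev-type bound $\mu(E_N(\epsilon)) \leq \epsilon^{-p}\,\|f_N-f\|_{L^p(\mu)}^p$.

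Next I would exploit the hypothesis $\|f_N-f\|_{L^p(\mu)}\to 0$. Given the same $\epsilon>0$, pick $N_0$ large enough that $\|f_N-f\|_{L^p(\mu)}^p < \epsilon^{p+1}$ for all $N>N_0$; then the bound above yields $\mu(E_N(\epsilon)) < \epsilon$ for all $N>N_0$, which is precisely the definition of $f_N \xrightarrow{\mu} f$ given in the statement of convergence in measure. Since $\epsilon$ was arbitrary, this completes the argument.

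Honestly, there is no real obstacle here: the whole theorem is a one-line application of Markov's inequality, and the only care needed is matching the threshold inside the measure to the $\epsilon$ appearing on the right-hand side of the definition (which is why I chose the cruder bound $\epsilon^{p+1}$ rather than $\epsilon^p$ when selecting $N_0$, so that a single $\epsilon$ controls both the level set and its measure). I would not bother invoking Scheff\'e's lemma, which is a deeper statement about $L^1$-convergence from a.e.\ convergence plus norm convergence of integrals; the direction asked for here is the elementary one.
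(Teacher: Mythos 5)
Your proof is correct and complete: the Chebyshev--Markov bound $\mu\left(E_N(\epsilon)\right)\leq\epsilon^{-p}\|f_N-f\|_{L^p(\mu)}^p$ is exactly the right tool, and your choice of threshold $\epsilon^{p+1}$ correctly matches the single-$\epsilon$ form of the definition of convergence in measure used in the paper. The paper itself does not supply a proof of this theorem; it only points to standard references (Scheff\'e's lemma, Rudin), so there is no in-text argument to compare against, but the argument you give is precisely the standard one those references contain. Your closing remark is also well taken: Scheff\'e's lemma concerns the converse-type implication (a.e.\ convergence plus convergence of integrals yielding $L^1$ convergence) and is not the relevant citation for the direction asserted here, so the paper's pointer is somewhat misleading while your elementary route is the correct and self-contained one.
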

We would like to recall an important measure theory related fact, which is true for any general $L^p$ spaces with respect to (positive) $\mu-$measure and $1\leq p<\infty$. However, here we stated only for $p=2$ case.
\begin{lemma}\label{lemma_simple_functions}
    Let $S$ be the class of all complex, measurable, simple functions on the $L^2\left(\mu\right)$ measurable space $X$ such that for $s\in S$, $\mu\left\{x\in X:s(x)\neq0\right\}<\infty.$
    Then, $S$ is dense in $L^2(\mu)$.
\end{lemma}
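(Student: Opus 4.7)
The plan is to reduce to the case of a nonnegative real-valued $f \in L^2(\mu)$ by splitting into real/imaginary parts and then into positive/negative parts; since each piece lies in $L^2(\mu)$ and simple functions form a vector space closed under the relevant operations, it suffices to approximate a nonnegative $f \in L^2(\mu)$ in $L^2$-norm by simple functions vanishing off a set of finite measure.

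For such an $f$, I would invoke the standard dyadic construction: define
\begin{align*}
s_n(x) \coloneqq \begin{cases} \dfrac{k}{2^n}, & \text{if } \dfrac{k}{2^n} \le f(x) < \dfrac{k+1}{2^n},\ 0 \le k < n 2^n,\\[4pt] n, & \text{if } f(x) \ge n.\end{cases}
\end{align*}
These are measurable simple functions with $0 \le s_1 \le s_2 \le \cdots \le f$ and $s_n(x) \to f(x)$ pointwise on $X$. The first verification is that each $s_n \in S$: on the set $\{s_n \ne 0\}$ we have $s_n \ge 2^{-n}$, hence $f \ge 2^{-n}$ on this set, and by Chebyshev's inequality
\begin{align*}
\mu\{x \in X : s_n(x) \ne 0\} \;\le\; \mu\{x \in X : f(x) \ge 2^{-n}\} \;\le\; 4^n \int_X f^2 \, d\mu \;<\; \infty,
\end{align*}
since $f \in L^2(\mu)$. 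Thus $s_n$ has support of finite measure, so $s_n \in S$.

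The final step is $L^2$-convergence. Because $0 \le s_n \le f$, we have $|f - s_n|^2 \le f^2 \in L^1(\mu)$, so the dominated convergence theorem applies and yields
\begin{align*}
\|f - s_n\|_{L^2(\mu)}^2 = \int_X |f - s_n|^2 \, d\mu \;\longrightarrow\; 0,
\end{align*}
since $|f - s_n|^2 \to 0$ pointwise. For complex-valued $f$, approximating each of the four nonnegative pieces and summing gives simple functions in $S$ approximating $f$ in $L^2(\mu)$.

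The only mild subtlety is the finite-measure-support verification, which is exactly where the $L^2$ hypothesis enters through Chebyshev; without it, the simple functions constructed would approximate $f$ pointwise but might not lie in $S$. Everything else is routine dyadic truncation plus dominated convergence, so I do not anticipate a serious obstacle.
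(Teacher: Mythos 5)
Your proof is correct and complete: the dyadic approximation, the Chebyshev-type estimate showing each $s_n$ has support of finite measure (which is exactly where the $L^2$ hypothesis is needed for membership in $S$), and the dominated convergence step are all sound, and the reduction to nonnegative $f$ is handled properly. The paper itself offers no proof of this lemma — it is recalled as a standard fact (cf. Theorem 3.13 in Rudin's \emph{Real and Complex Analysis}) — and your argument is precisely that standard one, so there is nothing to reconcile.
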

We provide the following proposition which provides the conclusion that how $\mathbf{x}_N$ can get arbitrarily close to the $\Tilde{\mathbf{x}}_N$ which follows the multivariate Gaussian probability density function and ultimately answering the \autoref{ques_1}. Essentially the following proposition can be realized as the nice application of the aforementioned \autoref{lemma_simple_functions}.
\begin{proposition}\label{prpstn_anstoQues_1}
    Let $\left\{\mathfrak{g}_M\right\}_M$ be the sequence of real (or complex, if needed) measurable functions on $X$ whose domain is the range of $\mathbf{F}_t$ as in \autoref{assumption_1} or as in $\mathbf{x}_{n+1}=\mathbf{F}(\mathbf{x}_n)$. If $\left\{\mathfrak{g}_M\right\}_M\to\operatorname{pdf}_{\Tilde{\mathbf{x}}_n}$ almost everywhere in $L^2\left(\mu\right)-$sense where $\operatorname{pdf}_{\Tilde{\mathbf{x}}_n}$ is the probability density function for the Gaussian random vector ${\Tilde{\mathbf{x}}_n}$ as given in \eqref{eq_gaussianPDF}, then $\left\{\mathfrak{g}_M\right\}_M$ converges to $\operatorname{pdf}_{\Tilde{\mathbf{x}}_n}$ in measure.
\end{proposition}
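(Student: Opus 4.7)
The plan is to deduce convergence in measure by first upgrading the hypothesized almost-everywhere convergence to $L^2(\mu)$-norm convergence and then invoking \autoref{theorem4.2}. Concretely, fixing $\epsilon>0$, the goal per \autoref{dfn_measureconvergence} is to produce an integer $N_0$ such that $\mu\{x\in X:|\mathfrak{g}_M(x)-\operatorname{pdf}_{\Tilde{\mathbf{x}}_n}(x)|>\epsilon\}<\epsilon$ for all $M>N_0$, and the most direct route is to bound the $L^2$-norm difference and apply the $L^2$-Chebyshev inequality embedded in \autoref{theorem4.2}.

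The first step is to invoke \autoref{lemma_simple_functions}: for each $\delta>0$, pick a measurable simple function $s\in S$ with $\mu\{s\neq 0\}<\infty$ and $\|\operatorname{pdf}_{\Tilde{\mathbf{x}}_n}-s\|_{L^2(\mu)}<\delta$. This localizes the problem to the finite-measure set $E_s\coloneqq\{s\neq 0\}$, on which Egorov's theorem (applicable because $\mu\lfloor E_s$ is finite by \autoref{assumption_1} together with the choice of $s$) converts the almost-everywhere convergence $\mathfrak{g}_M\to\operatorname{pdf}_{\Tilde{\mathbf{x}}_n}$ into uniform convergence off a subset $B_\delta\subset E_s$ with $\mu(B_\delta)<\delta$. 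On the complement $X\setminus E_s$, the tail of $\operatorname{pdf}_{\Tilde{\mathbf{x}}_n}$ is controlled in $L^2$-norm by $\delta$ through the inequality $\|\operatorname{pdf}_{\Tilde{\mathbf{x}}_n}-s\|_{L^2(\mu)}<\delta$, and a triangle-inequality split $|\mathfrak{g}_M-\operatorname{pdf}_{\Tilde{\mathbf{x}}_n}|\leq|\mathfrak{g}_M-s|+|s-\operatorname{pdf}_{\Tilde{\mathbf{x}}_n}|$ transports this tail control to the full difference.

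Combining the uniform estimate on $E_s\setminus B_\delta$ with the $L^2$-tail estimate on $X\setminus E_s$ and the small-measure bound $\mu(B_\delta)<\delta$, one obtains $\|\mathfrak{g}_M-\operatorname{pdf}_{\Tilde{\mathbf{x}}_n}\|_{L^2(\mu)}\to 0$ as $M\to\infty$ by choosing $\delta$ sufficiently small as a function of $\epsilon$. \autoref{theorem4.2} then converts this $L^2$-norm convergence into the desired $\mathfrak{g}_M\overset{\mu}{\to}\operatorname{pdf}_{\Tilde{\mathbf{x}}_n}$.

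The main obstacle will be the absence of an integrable dominator for the sequence $\{\mathfrak{g}_M\}$, which would otherwise allow dominated convergence to supply $L^2$ convergence in a single line. The finite-measure structure inherited from \autoref{assumption_1} is exactly what makes the Egorov step available, but care is needed to argue that uniform closeness on $E_s\setminus B_\delta$ actually forces $L^2$-smallness (and not merely pointwise smallness) once the complementary tail on $X\setminus E_s$ has been absorbed into the simple-function approximation error; balancing the exceptional-set measure $\delta$, the uniform gap, and the approximation error $\|\operatorname{pdf}_{\Tilde{\mathbf{x}}_n}-s\|_{L^2(\mu)}$ against the target threshold $\epsilon$ is where the bookkeeping must be done carefully.
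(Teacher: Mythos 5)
Your high-level strategy coincides with the paper's: both arguments attempt to establish $\|\mathfrak{g}_M-\operatorname{pdf}_{\Tilde{\mathbf{x}}_n}\|_{L^2(\mu)}\to 0$ via simple-function approximation (\autoref{lemma_simple_functions}) and then invoke \autoref{theorem4.2} to convert $L^2$-convergence into convergence in measure. The genuine difference is that you try to exploit the almost-everywhere convergence hypothesis through Egorov's theorem on the finite-measure set $E_s=\{s\neq 0\}$, whereas the paper's proof works with Lusin-type approximations of the individual functions.

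There is, however, a real gap, and it sits precisely where you defer to ``bookkeeping'': the intermediate claim $\|\mathfrak{g}_M-\operatorname{pdf}_{\Tilde{\mathbf{x}}_n}\|_{L^2(\mu)}\to 0$ does not follow from almost-everywhere convergence, and no balancing of $\delta$ against $\epsilon$ can rescue it. Take $\mathfrak{g}_M=\operatorname{pdf}_{\Tilde{\mathbf{x}}_n}+M\,\mathbf{1}_{E_M}$ with $\mu(E_M)=M^{-1}$ and $E_M$ shrinking to a null set: then $\mathfrak{g}_M\to\operatorname{pdf}_{\Tilde{\mathbf{x}}_n}$ $\mu$-almost everywhere, yet $\|\mathfrak{g}_M-\operatorname{pdf}_{\Tilde{\mathbf{x}}_n}\|_{L^2(\mu)}^2=M\to\infty$. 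Concretely, your Egorov step controls the difference uniformly only on $E_s\setminus B_\delta$; on the exceptional set $B_\delta$ small measure yields no $L^2$ bound without uniform integrability, and on $X\setminus E_s$ the triangle inequality leaves you needing to bound $\|\mathfrak{g}_M\|_{L^2(X\setminus E_s)}$, for which there is no hypothesis whatsoever. The natural repair is to abandon the $L^2$ detour entirely: since $\mu$ is a Borel \emph{probability} measure by \autoref{assumption_1}, almost-everywhere convergence already implies convergence in measure directly. For fixed $\epsilon>0$ the sets $A_M\coloneqq\{x\in X:\sup_{m\ge M}|\mathfrak{g}_m(x)-\operatorname{pdf}_{\Tilde{\mathbf{x}}_n}(x)|>\epsilon\}$ decrease to a subset of the null set of non-convergence, so $\mu(A_M)\to 0$ by continuity from above (which is where finiteness of $\mu$ is used). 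This proves the proposition without \autoref{theorem4.2}, without simple functions, and without the false $L^2$ claim.
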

\begin{proof}
Define $S$ as in \autoref{lemma_simple_functions} and let $\epsilon_i>0$ for $i=1$ and $2$. We begin by assuming that the sequence of $\left\{\mathfrak{g}_M\right\}_M$ that are defined on locally compact Hausdorff measurable space $X$ such as $\mathbb{R}^n$. Now, fix $M\in\mathbb{Z}_+$ and for this $\mathfrak{g}_M$ defined over locally compact Hausdorff measurable space $X$, we have $\mathfrak{s}_M\in S$ so that $\mathfrak{g}_M=\mathfrak{s}_M$ except on a set of $\mu$-measure $<\epsilon_1$ for this fixed $M$. Also, $|\mathfrak{g}_M|\leq\|\mathfrak{s}_M\|_{\infty}$ by the direct virtue of the Lusin's theorem\footnote{Suppose $f$ is a complex measurable on a measurable space $X$, $\mu(A)<\infty$, $f(x)=0$ if $x\in A$, and $\epsilon>0$. Then there exists a $g\in C_c(X)$ such that $\mu\left(\left\{x:f(x)\neq g(x)\right\}\right)<\epsilon$. Furthermore, we may arrange it so that $\sup_{x\in X}|g(x)|\leq\sup_{x\in X}|f(x)|$.} (cf. \cite[Page 55]{rudin1987real}). Hence, 
\begin{align*}
    \|\mathfrak{g}_M-\mathfrak{s}_M\|_{L^2(\mu)}\leq2\epsilon_1^{\nicefrac{1}{2}}\|\mathfrak{s}_M\|_{\infty}.
\end{align*}

Similarly, for the probability density function $\operatorname{pdf}_{\Tilde{\mathbf{x}}_n}$ for the Gaussian random vector ${\Tilde{\mathbf{x}}_n}$ defined over the locally compact Hausdorff measurable space $X$ (again) such as $\mathbb{R}^n$, we have $\bm{s}\in S$ so that $\operatorname{pdf}_{\Tilde{\mathbf{x}}_n}=\bm{s}$ except on a set of measure $<\epsilon_2$. Additionally, $|\operatorname{pdf}_{\Tilde{\mathbf{x}}_n}|\leq\|\bm{s}\|_{\infty}$ by employing the Lusin's theorem.
Hence, 
\begin{align*}
    \|\operatorname{pdf}_{\Tilde{\mathbf{x}}_n}-\bm{s}\|_{L^2(\mu)}\leq2\epsilon_2^{\nicefrac{1}{2}}\|\bm{s}\|_{\infty}.
\end{align*}
Let $\epsilon>0$ such that $\max\left\{\epsilon_1^{\nicefrac{1}{2}}\|\mathfrak{s}_M\|_{\infty},\epsilon_2^{\nicefrac{1}{2}}\|\bm{s}\|_{\infty}\right\}<4^{-1}\left(\epsilon-\|\mathfrak{s}_M-\bm{s}\|_{L^2(\mu)}\right)$. Now, observe that
\begin{align*}
    \|\mathfrak{g}_M-\operatorname{pdf}_{\Tilde{\mathbf{x}}_n}\|_{L^2(\mu)}=&\|\mathfrak{g}_M-\mathfrak{s}_M+\mathfrak{s}_M-\bm{s}+\bm{s}-\operatorname{pdf}_{\Tilde{\mathbf{x}}_n}\|_{L^2(\mu)}\\
    \leq&\|\mathfrak{g}_M-\mathfrak{s}_M\|_{L^2(\mu)}+\|\mathfrak{s}_M-\bm{s}\|_{L^2(\mu)}+\|\bm{s}-\operatorname{pdf}_{\Tilde{\mathbf{x}}_n}\|_{L^2(\mu)}\\
    =&\|\mathfrak{g}_M-\mathfrak{s}_M\|_{L^2(\mu)}+\|\bm{s}-\operatorname{pdf}_{\Tilde{\mathbf{x}}_n}\|_{L^2(\mu)}+\|\mathfrak{s}_M-\bm{s}\|_{L^2(\mu)}\\
    \leq&2\epsilon_1^{\nicefrac{1}{2}}\|\mathfrak{s}\|_{\infty}+2\epsilon_2^{\nicefrac{1}{2}}\|\bm{s}\|_{\infty}+\|\mathfrak{s}-\bm{s}\|_{L^2(\mu)}\\
    <&2\cdot2\max\left\{\epsilon_1^{\nicefrac{1}{2}}\|\mathfrak{s}\|_{\infty},\epsilon_2^{\nicefrac{1}{2}}\|\bm{s}\|_{\infty}\right\}+\|\mathfrak{s}_M-\bm{s}\|_{L^2(\mu)}\\
    <&4\cdot4^{-1}\left(\epsilon-\|\mathfrak{s}_M-\bm{s}\|_{L^2(\mu)}\right)+\|\mathfrak{s}_M-\bm{s}\|_{L^2(\mu)}.
\end{align*}
The above calculation results into $\|\mathfrak{g}_M-\operatorname{pdf}_{\Tilde{\mathbf{x}}_n}\|_{L^2(\mu)}<\epsilon$ for some fixed $M$ and as this $\epsilon$ was arbitrary, we eventually see that $\|\mathfrak{g}_M-\operatorname{pdf}_{\Tilde{\mathbf{x}}_n}\|_{L^2(\mu)}\to0$. Therefore, by applying the \autoref{theorem4.2}, we have that $\left\{\mathfrak{g}_M\right\}_M$ converges to $\operatorname{pdf}_{\Tilde{\mathbf{x}}_n}$ in measure.
\end{proof}
\subsubsection{Answer to Question 2}
We provide the solution for the \autoref{ques_2} as follows which relies on the \emph{projection valued measure form} of the spectral theorem \cite[Chapter 8, Thereom VIII.6, Page 263]{reedmethods} and with additional mathematical techniques from \cite[Chapter 7 and Chapter 8]{hall2013quantum}. 
\begin{proposition}\label{prpstn_ansQues2}
    Consider the discrete dynamical system as $\mathbf{x}_{n+1}=\mathbf{F}(\mathbf{x}_{n})$ or as the $\mathbf{F}_t$ defined in \autoref{assumption_1} and let $g$ be an observable function for the Koopman operator $\mathcal{K}$ whose domain is $\mathcal{D}(\mathcal{K})=L^2(\mu)$. Let $g_n\coloneqq g(\mathbf{x}_n)$ and let the $n-$th Gaussian random vector $\Tilde{\mathbf{x}}_n$ is drawn by the probability density function $\operatorname{pdf}_{\Tilde{\mathbf{x}}_n}(\mathbf{x}_{0})$. Then following holds true 
    \begin{align*}
        |g(\mathbf{x}_{n})-\operatorname{pdf}_{\Tilde{\mathbf{x}}_n}(\mathbf{x}_{0})|\to0\quad\text{where}\quad g_n\overset{\mu}{\to}\operatorname{pdf}_{\Tilde{\mathbf{x}}_n}.
    \end{align*}
\end{proposition}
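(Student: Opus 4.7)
The plan is to leverage the projection-valued measure form of the spectral theorem applied to the Koopman operator $\mathcal{K}$ on $L^2(\mu)$, in tandem with the convergence in measure of $\{\mathfrak{g}_M\}_M$ to $\operatorname{pdf}_{\tilde{\mathbf{x}}_n}$ already secured in \autoref{prpstn_anstoQues_1}. First, I would invoke the spectral theorem (Reed--Simon, Theorem VIII.6) to write $\mathcal{K} = \int_{\sigma(\mathcal{K})} \lambda \, dP(\lambda)$ for a projection-valued measure $P$ supported on $\sigma(\mathcal{K})$, and then use the Borel functional calculus to define $\phi(\mathcal{K}) = \int_{\sigma(\mathcal{K})} \phi(\lambda) \, dP(\lambda)$ for any bounded Borel $\phi$, satisfying the sharp bound $\|\phi(\mathcal{K}) h\|_{L^2(\mu)} \le \|\phi\|_{\infty}\|h\|_{L^2(\mu)}$ (the standard functional-calculus estimate from \cite{hall2013quantum}, Chapters 7--8).

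Second, I would rewrite $g(\mathbf{x}_n)$ in spectral form by exploiting that the Koopman action of $\mathbf{F}_t$ satisfies $\mathcal{K}^n g = g \circ \mathbf{F}_t$, so that $g(\mathbf{x}_n) = (\mathcal{K}^n g)(\mathbf{x}_0) = \big(\int \lambda^n \, dP(\lambda) g\big)(\mathbf{x}_0)$. The target quantity splits, via the triangle inequality, as
\begin{align*}
|g(\mathbf{x}_n) - \operatorname{pdf}_{\tilde{\mathbf{x}}_n}(\mathbf{x}_0)| \le |g(\mathbf{x}_n) - \mathfrak{g}_M(\mathbf{x}_0)| + |\mathfrak{g}_M(\mathbf{x}_0) - \operatorname{pdf}_{\tilde{\mathbf{x}}_n}(\mathbf{x}_0)|,
\end{align*}
where the first term is handled by the spectral calculus applied to $\mathcal{K}^n(g - \mathfrak{g}_M)$ (bounded in $L^2(\mu)$ by $\|\lambda^n\|_{\infty,\sigma(\mathcal{K})}\|g - \mathfrak{g}_M\|_{L^2(\mu)}$) and the second term vanishes in measure by \autoref{prpstn_anstoQues_1}.

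Third, to upgrade the $L^2(\mu)$ bound on $\mathcal{K}^n(g - \mathfrak{g}_M)$ into a pointwise statement at $\mathbf{x}_0$, I would pass to a subsequence $\{\mathfrak{g}_{M_k}\}$ along which the $L^2$-convergence, combined with the Riesz--Fischer extraction of a pointwise almost-everywhere convergent subsequence, yields convergence at $\mu$-almost every base-point $\mathbf{x}_0$. Then, sending $M \to \infty$ in the triangle inequality while simultaneously invoking the convergence in measure from \autoref{prpstn_anstoQues_1} delivers $|g(\mathbf{x}_n) - \operatorname{pdf}_{\tilde{\mathbf{x}}_n}(\mathbf{x}_0)| \to 0$ as required.

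The main obstacle will be making the pointwise evaluation at $\mathbf{x}_0$ rigorous, since the functional-calculus outputs $\phi(\mathcal{K})h$ are, a priori, only $L^2(\mu)$-equivalence classes rather than genuine functions. Circumventing this requires an Egorov-style or Riesz--Fischer extraction so that $\mathbf{x}_0$ is chosen within a set of full $\mu$-measure on which the representatives agree pointwise; a secondary subtlety is controlling $\|\lambda^n\|_\infty$ on $\sigma(\mathcal{K})$ uniformly in $n$, which is automatic if the Koopman dynamics preserves $\mu$ (giving $\|\mathcal{K}\|=1$) but would otherwise require an additional compactification argument of the sort developed later in \autoref{section_Koopman}.
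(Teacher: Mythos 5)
Your overall strategy coincides with the paper's: both proofs run on the projection-valued-measure form of the spectral theorem, write $g(\mathbf{x}_n)=[\mathcal{K}^n g](\mathbf{x}_0)=\left[\left(\int\lambda^n\,d\mathcal{E}(\lambda)\right)g\right](\mathbf{x}_0)$, and then convert the convergence-in-measure hypothesis into smallness of an $L^2(\mu)$ norm that controls the pointwise difference. Your added caveats --- that functional-calculus outputs are only $L^2$ equivalence classes, and that $\sup_{\lambda\in\operatorname{spec}(\mathcal{K})}|\lambda^n|$ needs uniform control in $n$ --- are legitimate and in fact more careful than the paper, which tacitly integrates over $\mathbb{T}$ (so $|\lambda|=1$) and evaluates at $\mathbf{x}_0$ without comment.

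However, your specific triangle-inequality split through $\mathfrak{g}_M$ does not close. \autoref{prpstn_anstoQues_1} gives $\mathfrak{g}_M\to\operatorname{pdf}_{\Tilde{\mathbf{x}}_n}$, not $\mathfrak{g}_M\to g$; hence $\|g-\mathfrak{g}_M\|_{L^2(\mu)}\to\|g-\operatorname{pdf}_{\Tilde{\mathbf{x}}_n}\|_{L^2(\mu)}$, which is precisely the quantity you are trying to control, so the first leg of your decomposition is circular. There is also an index mismatch: the spectral calculus applied to $\mathcal{K}^n(g-\mathfrak{g}_M)$ controls $|g(\mathbf{x}_n)-\mathfrak{g}_M(\mathbf{x}_n)|$, whereas your decomposition requires a bound on $|g(\mathbf{x}_n)-\mathfrak{g}_M(\mathbf{x}_0)|$. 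The paper avoids both problems by not introducing $\mathfrak{g}_M$ at all: it writes both $g(\mathbf{x}_n)$ and $\operatorname{pdf}_{\Tilde{\mathbf{x}}_n}(\mathbf{x}_0)$ in the form $\left[\left(\int_{\mathbb{T}}\lambda^n\,d\mathcal{E}(\lambda)\right)(\cdot)\right](\mathbf{x}_0)$ (the latter by declaring $\Tilde{\mathbf{x}}_n$ to be the $n$-th snapshot), subtracts, and bounds the result by $\left|\int_{\mathbb{T}}\lambda^n\,d\mathcal{E}(\lambda)\right|\,\|g-\operatorname{pdf}_{\Tilde{\mathbf{x}}_n}\|\,\|\mathbf{x}_0\|$, with the middle factor made small directly from the stated hypothesis $g_n\overset{\mu}{\to}\operatorname{pdf}_{\Tilde{\mathbf{x}}_n}$. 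To repair your argument, drop the intermediate $\mathfrak{g}_M$ and apply your spectral estimate directly to $g-\operatorname{pdf}_{\Tilde{\mathbf{x}}_n}$, using the hypothesis of \autoref{prpstn_ansQues2} itself (rather than \autoref{prpstn_anstoQues_1}) to make that norm small; your Riesz--Fischer extraction then remains the right tool for the pointwise evaluation at $\mathbf{x}_0$.
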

\begin{proof}
    The proof of the above theorem involves the detail involvement of the \emph{projection-valued measure} $\mathcal{E}$ defined over the support on the spectrum of $\mathcal{K}$ that is $\operatorname{spec}\left(\mathcal{K}\right)$. For $g\in\mathcal{D}\left(\mathcal{K}\right)=L^2(\mu)$, we can have following in the same way as we have in \cite[Section 2.1.2, Page 229]{colbrook2024rigorous},
    \begin{align*}
        g=\left(\int_{\mathbb{T}}d\mathcal{E}(\lambda)\right)g\quad\text{and}\quad\mathcal{K}g=\left(\int_{\mathbb{T}}\lambda d\mathcal{E}(\lambda)\right)g.
    \end{align*}
    In the spirit of above, one can have following in the light of $\operatorname{pdf}_{\Tilde{\mathbf{x}}_n}$ as follows:
    \begin{align}\label{eq_pdfobeservable}
        \operatorname{pdf}_{\Tilde{\mathbf{x}}_n}=\left(\int_{\mathbb{T}}d\mathcal{E}(\lambda)\right)\operatorname{pdf}_{\Tilde{\mathbf{x}}_n}\quad\text{and}\quad\mathcal{K}\operatorname{pdf}_{\Tilde{\mathbf{x}}_n}=\left(\int_{\mathbb{T}}\lambda d\mathcal{E}(\lambda)\right)\operatorname{pdf}_{\Tilde{\mathbf{x}}_n}
    \end{align}
    The very-defining action of the Koopman operator allow us to write following for the observable function $g$
    \begin{align}\label{eq_21r}
        g\left(\mathbf{x}_n\right)=\left[\mathcal{K}^ng\right]\left(\mathbf{x}_0\right)=\left[\left(\int_{\mathbb{T}}\lambda^nd\mathcal{E}(\lambda)\right)g\right]\left(\mathbf{x}_0\right).
    \end{align} As from the general theory of probability density functions that they are $L^1-$integrable with respect to the (arbitrary) positive finite Borel $\mu-$probability measure, thus this implies that $\operatorname{pdf}_{\Tilde{\mathbf{x}}_n}$ is in $L^2(\mu)$\footnote{due to the ordering or set inclusion of various $L^p$ with respect to the finite Borel measure, follow \cite{villani1985another}.} which is the observable space for the Koopman operator. Similarly for the $\operatorname{pdf}_{\Tilde{\mathbf{x}}_n}$ as an observable, we have following:
    \begin{align}\label{eq_22r}
        \operatorname{pdf}_{\Tilde{\mathbf{x}}_n}\left(\mathbf{x}_0\right)=\left[\mathcal{K}^n\operatorname{pdf}_{\Tilde{\mathbf{x}}_n}\right]\left(\mathbf{x}_0\right)=\left[\left(\int_{\mathbb{T}}\lambda^nd\mathcal{E}(\lambda)\right)\operatorname{pdf}_{\Tilde{\mathbf{x}}_n}\right]\left(\mathbf{x}_0\right).
    \end{align}
    where the first equality in above is because of viewing the $n-$th Gaussian random vector $\Tilde{\mathbf{x}}_n$ as the $n-$th snapshot and hence the relation of exponentiation the Koopman operator is justified. The second equality in above is simply the continuation of what we attempted for any arbitrary observable with the action of the Koopman operator to present in terms of its projected valued measure.
    Now, that we have formulated the decomposition of both $g$ and $\operatorname{pdf}_{\Tilde{\mathbf{x}}_n}$ according to the spectral content of the Koopman operator $\mathcal{K}$ in \eqref{eq_21r} and \eqref{eq_22r}, we can subtract both of them as follows:
    \begin{align*}
        g\left(\mathbf{x}_n\right)-\operatorname{pdf}_{\Tilde{\mathbf{x}}_n}\left(\mathbf{x}_0\right)=&\left[\left(\int_{\mathbb{T}}\lambda^nd\mathcal{E}(\lambda)\right)g\right]\left(\mathbf{x}_0\right)-\left[\left(\int_{\mathbb{T}}\lambda^nd\mathcal{E}(\lambda)\right)\operatorname{pdf}_{\Tilde{\mathbf{x}}_n}\right]\left(\mathbf{x}_0\right)\\
        =&\left[\left(\int_{\mathbb{T}}\lambda^nd\mathcal{E}(\lambda)\right)\left\{g-\operatorname{pdf}_{\Tilde{\mathbf{x}}_n}\right\}\right]\left(\mathbf{x}_0\right)\\
        \implies|g\left(\mathbf{x}_n\right)-\operatorname{pdf}_{\Tilde{\mathbf{x}}_n}\left(\mathbf{x}_0\right)|=&\left|\left[\left(\int_{\mathbb{T}}\lambda^nd\mathcal{E}(\lambda)\right)\left\{g-\operatorname{pdf}_{\Tilde{\mathbf{x}}_n}\right\}\right]\left(\mathbf{x}_0\right)\right|\\
        \leq&\left|\left(\int_{\mathbb{T}}\lambda^nd\mathcal{E}(\lambda)\right)\right|\left\|\left\{g-\operatorname{pdf}_{\Tilde{\mathbf{x}}_n}\right\}\right\|\|\mathbf{x}_0\|.
    \end{align*}
    With $g_n\coloneqq g(\mathbf{x}_n)$ as already defined and given that $g_n\overset{\mu}{\to}\operatorname{pdf}_{\Tilde{\mathbf{x}}_n}$ 
    let there exist an $\epsilon>0$ such that $\left\|\left\{g_n-\operatorname{pdf}_{\Tilde{\mathbf{x}}_n}\right\}\right\|<\epsilon\cdot(\mathcal{E}(\mathbb{T})\|\mathbf{x}_0\|)^{-1}$. Hence the desired result holds.
\end{proof}
Now, we are ready to introduce the normalized Laplacian measure over $\Cn$ formally through which the corresponding RKHS will be constructed in the subsequent section of this paper.
\subsection{Laplacian Kernel as \texorpdfstring{$L^2-$}{}measure}
As of now, the main challenge that we face while we try to understand the dynamical system is limited data availability. Having stated that, in the theme of already-existing DMD algorithms, surprisingly they do not provide a framework which deals with this challenge. Therefore, the present goal of this manuscript is to perform the DMD in the interest of limited data acquisition and analyze the developed framework for better understanding. To achieve this goal, we will leverage the understanding of Koopman operators and its operator-theoretic behavior over the RKHS.

We will be considering the RKHS of holomorphic functions $f:\mathbb{C}^n\to\mathbb{C}$ which are $L^2-$integrable over $\mathbb{C}^n$ against the normalized-probability $L^2-$ measure $d\mu_{\sigma,1,\mathbb{C}^n}(\bm{z})$ given as:
\begin{align}\label{eq_13Laplacemeasure}
    d\mu_{\sigma,1,\mathbb{C}^n}(\bm{z})\coloneqq(2\pi\sigma^2)^{-n}\exp\left(-\frac{\|\bm{z}\|_2}{\sigma}\right)dV(\bm{z}),
\end{align}
\text{~where $\sigma>0$ and $\bm{z}\in\mathbb{C}^n$} and $\|\bmz\|_2=\sqrt{|z_1|^2+\cdots+|z_n|^2}$ for $\bmz=(z_1,\ldots,z_n)\in\Cn$. Here, $dV(\bm{z})$ is the usual $L^2-$ measure over the entire complex plane $\mathbb{C}^n$. 

\subsubsection{A word on Laplacian Kernel viewed as \texorpdfstring{$L^2-$}{}measure}
This perspective of considering the RKHS is very-rich as it is based on determining the \emph{orthonormal basis} which immediately builds the RKHS \cite{aronszajn1950theory,saitoh1997integral,saitoh1983hilbert}. This ideology recently garnered attention in \cite{singh2023appointment} to discuss the functionality of a new kernel function \emph{Generalised Gaussian Radial Basis Function Kernel} on various AI learning architecture routine which was first introduced in \cite{singh2023new}. Additionally, this way of studying the corresponding function spaces has its deep-rooted history with \emph{quantization in quantum dynamic system in \cite{berezin1975general}} which is referred as \emph{Gauge transformations}.
\begin{dfn}[Gauge Transformations \cite{janson1987hankel,berezin1975general,peetre1990berezin}]
    Consider a closed subspace $\mathfrak{H}$ of $L^2(\mu)$ over a measurable space $\Omega$ equipped by the positive measure $\mu$. If we get an essentially equivalent function theory if we replace $f$ by $\phi f$ and $\mu$ by $|\phi|^{-2}\mu$ where $\phi$ is any non-vanishing measurable function, then this transformation is called as the Gauge transformation.
\end{dfn}
To provide more insights on this, we provide the following table \autoref{table-1exponentialtype} with various (yet limited) various kernels that generate their corresponding RKHS. 
{\tiny{
\begin{table}[H]
    \centering
    \caption{Kernel functions viewed as $L^2-$ measure via Gauge transformations}
    \begin{tabular}{p{2.55cm}||p{1.79cm}p{.57cm}p{3.45cm}p{3.1cm}
    l
        S[table-format = 3]
        S[table-format = 2]
        S[table-format = 1.3]
        S[table-format = -2.2]
        S[table-format = 1.3]
        S[table-format = 1.3]
        S[table-format = 0.0]
        }
        \toprule
        \midrule
        \multicolumn{3}{c}{} & 
        \multicolumn{3}{c}{\textsc{Reproducing Kernel Hilbert Spaces}}\\
        \cmidrule(lr){4-8}        
        &  &  & {\tiny{\textsc{Kernel via}}} & {\tiny{\textsc{Reproducing}}} & {\tiny{\textsc{}}} & {} &{} 
        \\
        \tiny{\textsc{$L^2-$ measure}} & {\tiny{\textsc{Parameters}}} & {\tiny{\textsc{Ref.}}} & {$|z|\mapsto\|\cdot-\cdot\|_2$} & {\tiny{\textsc{Kernel}} $K(\cdot,\cdot)$} & {\tiny{\textsc{Domain}}} 
        \\
        \midrule
        $e^{-\nicefrac{|\bm{z}|}{\sigma}}$ & {$\sigma>0$} & {\cite{chen2020deep}}  & {$e^{-\nicefrac{\|\bm{x}-\bm{z}\|_2}{\sigma}}$} & {-} & {$\mathbb{C}^n$} 
        \\
        $e^{-\sigma |\bm{z}|^2}$ & {$\sigma>0$} & {\cite{zhu2012analysis}}  & {$e^{-\sigma\|\bm{x}-\bm{z}\|_2^2}$} & {$\exp\left(\sigma \bm{x}^\top \overline{\bm{z}}\right)$} & {$\mathbb{C}^n$} 
        \\
        $e^{-\sigma^2|\bm{z}|^2}e^{e^{-\sigma_0^2|\bm{z}|^2}-1}$ &  {$\sigma>0,\sigma_0\geq0$} & {\cite{singh2023appointment}}  & {$e^{-\sigma^2\|\bm{x}-\bm{z}\|_2^2}e^{e^{-\sigma_0^2\|\bm{x}-\bm{z}\|_2^2}-1}$} & {$\sum_{N_1,\ldots,N_n=0}^\infty\lambda_{N}\left(\bm{x}^\top\overline{\bm{z}}\right)^N$} &{$\mathbb{C}^n$} 
        \\
        $|z|^{2/q-2}e^{-|z|^{2/q}}$ &  {$q>0$} & {\cite{rosenfeld2018mittag}} & {-} & {$\sum_{n=0}^\infty\nicefrac{x\overline{z}}{\Gamma(nq+1)}$} & {$\mathbb{C}$} 
        \\
        \midrule
        \bottomrule
    \end{tabular}\label{table-1exponentialtype}
\end{table}
}}
At this stage, we take the inspiration from \cite{NEURIPS2020_1006ff12,chen2020deep,geifman2020similarity} to employ the Laplacian Kernel in the $L^2-$measure theoretic sense into our study for performing DMD with limited data acquisition. It should be noted that the practitioners in \cite{belkin2018understand} demands the understanding of various kernel function, in particular Laplacian Kernel; which eventually is fulfilled in \cite{geifman2020similarity,chen2020deep,NEURIPS2020_1006ff12} however their domain of action for the kernel function is limited or should we say \emph{restricted} to $\mathbb{S}^{n-1}\coloneqq\left\{\bm{x}\in\mathbb{R}^n:\|\bm{x}\|_2=1\right\}$. This is due to their interest in collaborating their result with deep neural tangent kernel (NTK) \cite{jacot2018neural}. Following figure compiles with an interactive and attractive assembly of various exponential measure as mathematical functions which finds monumental application in powering machine learning and artificial intelligence algorithms.
\begin{figure}[H]
    \centering
    \includegraphics[scale=.19]{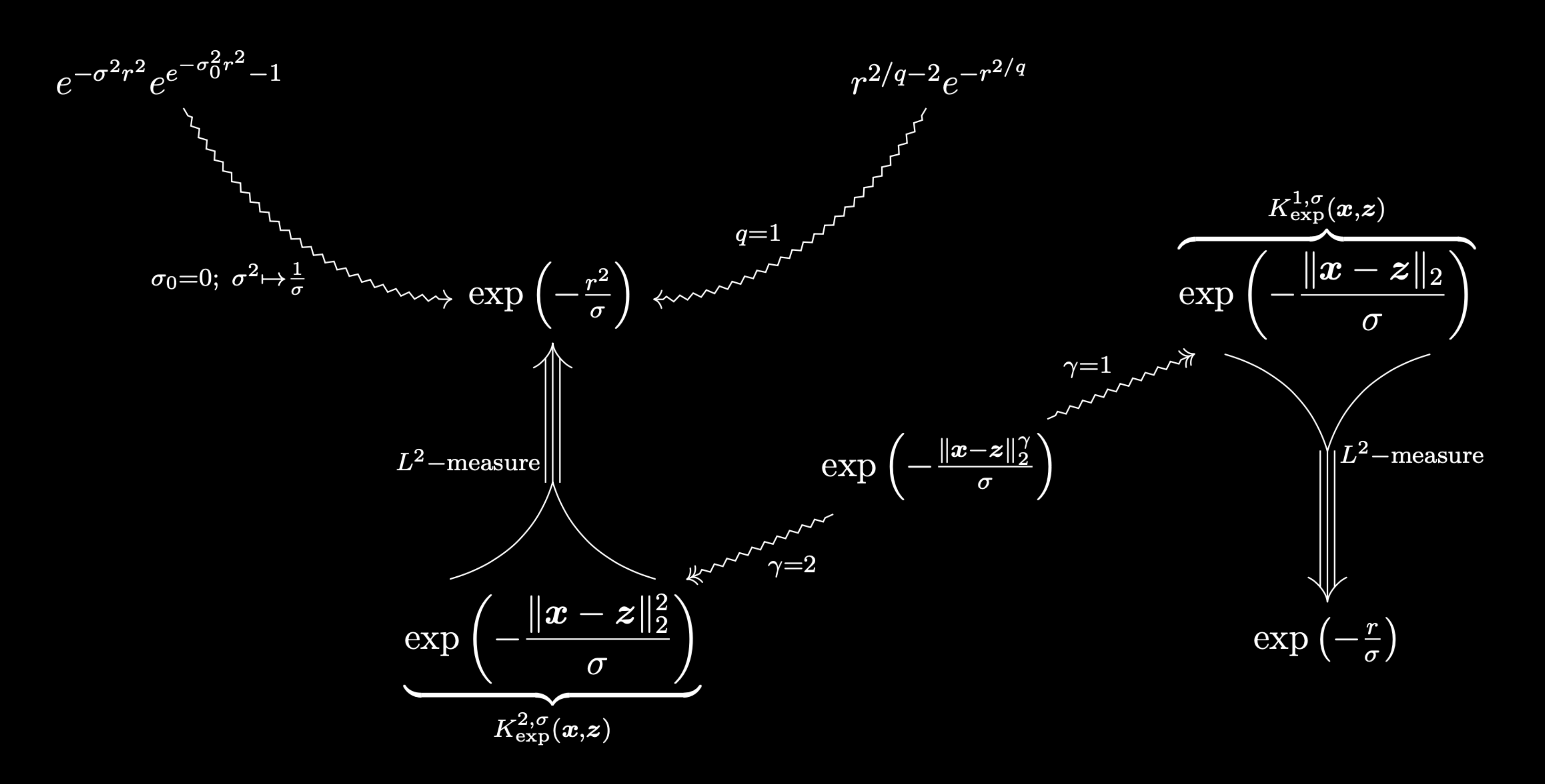}
    \caption{Interactive figure for inter-connection of various exponential type measures as depicted in \textsc{\autoref{table-1exponentialtype}}.}
    \label{fig:enter-label}
\end{figure}
\subsection{eDMD algorithm with Limited Data Acquisition} The algorithm for the Kernelized eDMD with limited data acquisition is actually powered by two key component. The first key component is using the Laplacian Kernel which is given as $K_{\text{exp}}^{1,\sigma}(\bm{x},\bm{z})\coloneqq\exp\left(-\frac{\|\bm{x}-\bm{z}\|_2}{\sigma}\right)$ and the second key component is augmenting the data-set snapshot matrix by padding the Gaussian random matrix of appropriate matrix dimension in the case of not having enough data-set snapshots. Once, this new data-set snapshot matrix is constructed, then follow the already provided Kernelized eDMD algorithm in \cite{williams2015data} or here given in \autoref{algo_kerneleDMD}.
{\small
{\begin{algo}\label{algo_lapkerneleDMD}
The eDMD with limited data acquisition executed by the Laplacian Kernel Function $K_{\text{exp}}^{1,\sigma}(\bm{\cdot},\bm{\cdot})$ and Gaussian random matrix is given as follows:
    \begin{enumerate}
    \item[Step 1] Construct data-set snapshots matrix by padding appropriate number of Gaussian random vectors whose probability density function is given as \eqref{eq_gaussianPDF} or with joint density given as in \eqref{eq_jointgaussianPDF}. Let a finite $\sigma>0$ is also chosen.
        \item[Step 2] Compute the elements of Gram-Matrix ${\bm{\mathfrak{G}}}\coloneqq\left[{\bm{\mathfrak{G}}}\right]_{i\times j}$ and Interaction-Matrix  ${\bm{\mathcal{I}}}\coloneqq\left[{\bm{\mathcal{I}}}\right]_{i\times j}$ that are defined as follows:
        \begin{align*}
            \left[{\bm{\mathfrak{G}}}\right]_{i\times j}&=K_{\text{exp}}^{1,\sigma}\left(\mathbf{x}_i,\mathbf{x}_j\right)\\
            \left[{\bm{\mathcal{I}}}\right]_{i\times j}&=K_{\text{exp}}^{1,\sigma}\left(\mathbf{y}_i,\mathbf{x}_j\right).
        \end{align*}
        \item[Step 3] Determine the spectral observables of the Gram-matrix $\hat{\bm{\mathfrak{G}}}$, i.e. $\bm{Q}$ and $\bm{\Sigma}$.
        \item[Step 4] Construct $\hat{\bm{\mathcal{K}}}$ via \eqref{eq_27rr}.
    \end{enumerate}
\end{algo}}}
\begin{remark}
    As one can easily observe that the above algorithm takes the direct advantage of the Kernelized eDMD algorithm as given in \cite{williams2015data,MatthewO.Williams2015JournalofComputationalDynamics}, however their work is either relied on the choice of polynomial kernel function or the Gaussian radial basis kernel function and so other kernel based DMD algorithm such as in \cite{rosenfeld2021theoretical,rosenfeld2022dynamic} where the aforementioned authors do not explore the same with the Laplacian Kernel Function coupled with the nuances of random matrix theory. Most importantly, it is worth-while to mention that the solutions to the crucial concerns raised in \autoref{ques_1} and \autoref{ques_2} which basically demands that how the Gaussian random matrix can be used to construct the data-set snapshot matrix of the desired matrix size when only a few number of snapshots are provided, helps in giving the robust justification of using the random matrix theory. Additionally, the concerns in \autoref{ques_1} and \autoref{ques_2} can be thought of as an underlying assumption to construct the data matrix so that the practitioner has enough number of data-set snapshots to operate on via the Laplacian Kernel based eDMD.
\end{remark}
\begin{note}
    In this section, we developed the Laplacian Kernel based eDMD which uses the Gaussian random vectors as its snapshots to discover the Koopman modes as directed in \eqref{eq_27rr} in \autoref{prpstin_williamsKoopman}. We will employ this algorithm to construct the governing features via the Koopman modes of fluid flow across cylinder experiment (whose data acquisition related details are given in \autoref{example_ffac}) in which we will provide true $3,~7,~20,~55$ and $151$ number of snapshots out of actual available $151-$snapshots and each snapshot is a column vector of size $89,351$. As already discussed, the respective remaining numbers i.e. $148,~144,~131,~96$ and $0$ of snapshots will be  Gaussian random column vectors which will be augmenting the data-set snapshots matrix. These results will directly be compared with the Gaussian Radial Basis Kernel Function as well. These results will be immediately presented once we have demonstrated the compactification of the Koopman operators over the RKHS generated by the normalized Laplacian measure $d\mu_{\sigma,1,\mathbb{C}^n}(\bm{z})$ in \eqref{eq_13Laplacemeasure}; this detailed analysis performed in the following two sections.
\end{note}
\section{RKHS from the Laplacian measure}\label{section_RKHSLaplaceMeasure}
\subsection{Preliminaries}
Now, with enough inspiration to understand Laplacian Kernel in the perspective of $L^2-$measure, we began to study the same after considering following two important notions, that will be used quite often in this paper.
\subsubsection{Notation}
The set of natural numbers is denoted by either $\mathbb{N}$ or by $\mathbb{Z}_+$ and in union with $0$ is denoted by $\mathbb{W}$, that is $\mathbb{W}\coloneqq0,1,2,\ldots$. We use Kronecker delta $\delta_{nm}$ on non-negative integers $n$ and $m$ to depict that, $\delta_{nm}=1$ whenever $n=m$ and $\delta_{nm}=0$ if $n\neq m$. We denote a complex number $z=x+iy$ where $x$ and $y\in\mathbb{R}$. With that $z$, its conjugate-part is given as $\overline{z}=x-iy$ along with its absolute value as $|z|^2=z\cdot\overline{z}=x^2+y^2$. We reserve symbol $\mathbb{K}$ to treat with choice of fields on which we will operate upon; in particular $\mathbb{K}$ can either be $\mathbb{R}$ or $\mathbb{C}$. The multi-index notation is employed as 
$j=(j_1,\ldots,j_n)$ and $|j|=j_1+\cdots+j_n$. If $\bmz=(z_1,\ldots,z_n)$ then $\bmz^j=(z_1^{j_1},\ldots,z_n^{j_n})$, and $d\bmz = dz_1\cdots dz_n$.
\subsubsection{Tensor Product Notation}\label{subsection_tensorProdNotation}
We recall the \emph{tensor product} between two functions, say $f_1,~f_2:X\to\mathbb{K}$ given as $f_1\otimes f_2:X\times X\to\mathbb{K}$. Then, for all $x,x'\in X$ the tensor product $f_1\otimes f_2$ is defined as $f_1\otimes f_2(x,x')\coloneqq f_1(x)f_2(x')$.
\subsection{Function space corresponding to the Laplacian measure in \texorpdfstring{$L^2-$}{}sense}
As already stated that, we will be incorporating the Laplace Kernel $K_{\text{exp}}^{1,\sigma}(\bm{x},\bm{z})$ for our DMD study with limited data availability which takes inspiration from the fact that it outperforms deep learning architectures when shorter training time is provided. It is not hard to comprehend that we are in void of the understanding of reproducing kernel and RKHS perspective (in \autoref{table-1exponentialtype}) when Laplace Kernel is viewed as the Laplacian measure \eqref{eq_13Laplacemeasure}.

To that end, we provide the inner product between two holomorphic functions $f:\Cn\to\mathbb{C}$ and $g:\Cn\to\mathbb{C}$ associated with this measure as:
\begin{align}\label{eq_normLaplacemeasure}
    \langle f,g\rangle_{\sigma,1,\mathbb{C}^n}\coloneqq\mathcal{N}_{\sigma,1,n}\int_{\mathbb{C}^n}f\overline{g}d\mu_{\sigma,1,\mathbb{C}^n}(\bm{z})\triangleq\mathcal{N}_{\sigma,1,n}\int_{\mathbb{C}^n}f(\bm{z})\overline{g(\bm{z})}e^{-\frac{\|\bmz\|_2}{\sigma}}dV(\bm{z}).
\end{align}
Here, `$\mathcal{N}_{\sigma,1,n}$' is the normalization constant and its explicit value is $\left(2\pi\sigma^2\right)^{-n}$. Once we have define the inner product for the space, the square of the norm for holomorphic function $f:\mathbb{C}^n\to\mathbb{C}$ is:
\begin{align}\label{eq_6_normf}
    \|f\|_{\sigma,1,\mathbb{C}^n}^2\coloneqq&\frac{1}{\left(2\pi\sigma^2\right)^n}
    \int_{\mathbb{C}^n}|f(\bm{z})|^2d\mu_{\sigma,1,\mathbb{C}^n}(\bm{z})\triangleq\frac{1}{\left(2\pi\sigma^2\right)^n}\int_{\mathbb{C}^n}|f(\bm{z})|^2e^{-\frac{\|\bmz\|_2}{\sigma}}dV(\bm{z}).
\end{align}
After defining the norm of the holomorphic function $f:\mathbb{C}^n\to\mathbb{C}$, we can have following as the Hilbert function space.
\begin{align}\label{eq_8hilbertspace}
    H_{\sigma,1,\mathbb{C}^n}\coloneqq\left\{\text{holomorphic function $f:\mathbb{C}^n\to\mathbb{C}$}:\|f\|_{\sigma,1,\mathbb{C}^n}<\infty\right\}.
\end{align}
As of now, we have collected basic yet useful facts regarding the Hilbert space $H_{\sigma,1,\Cn}$. Now, we will formulate towards the orthonormal basis of this Hilbert space.
\subsection{Orthonormal Basis}
We will be providing the orthonormal basis for the Hilbert space $H_{\sigma,1,\Cn}$ generated by the measure $d\mu_{\sigma,1,\mathbb{C}^n}(\bm{z})$ embedded into the $L^2-$measure. The following lemma directs us in that direction.
\begin{lemma}\label{LEMMA_lemmaiii.3}
    For $\sigma>0$, $N$ and $M\in\mathbb{W}$, we have 
    \begin{align*}
        \langle z^N,z^M\rangle_{\sigma,\mathbb{C}}=\int_{\mathbb{C}}z^N\overline{z^M}e^{-\frac{|z|}{\sigma}}dA(z)=2\pi\sigma^2\sigma^{N+M}(N+M+1)!\delta_{NM},
    \end{align*}
    where $\langle\cdot,\cdot\rangle_{\sigma,\mathbb{C}}$ is the same inner-product as given in \eqref{eq_normLaplacemeasure} but over $\mathbb{C}$.
\end{lemma}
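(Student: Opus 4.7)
The plan is to reduce the two-dimensional complex integral to polar coordinates and then identify the radial piece with a Gamma-function integral. Write $z = re^{i\theta}$ with $r \in [0,\infty)$ and $\theta \in [0,2\pi)$, so that $z^N \overline{z^M} = r^{N+M} e^{i(N-M)\theta}$, $|z| = r$, and the area element becomes $dA(z) = r\, dr\, d\theta$. Substituting into the integral yields
\begin{align*}
\int_{\mathbb{C}} z^N \overline{z^M} e^{-|z|/\sigma}\, dA(z) = \int_0^\infty \int_0^{2\pi} r^{N+M+1} e^{i(N-M)\theta} e^{-r/\sigma}\, d\theta\, dr.
\end{align*}

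Next I would carry out the angular integration first. Since
\begin{align*}
\int_0^{2\pi} e^{i(N-M)\theta}\, d\theta = 2\pi\, \delta_{NM},
\end{align*}
the entire integral vanishes whenever $N \neq M$, confirming the Kronecker delta factor. In the case $N = M$ the remaining radial integral becomes
\begin{align*}
2\pi \int_0^\infty r^{2N+1} e^{-r/\sigma}\, dr.
\end{align*}

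The final step is the change of variable $u = r/\sigma$, giving $dr = \sigma\, du$ and $r^{2N+1} = \sigma^{2N+1} u^{2N+1}$. This produces
\begin{align*}
2\pi \sigma^{2N+2} \int_0^\infty u^{2N+1} e^{-u}\, du = 2\pi \sigma^{2N+2}\, \Gamma(2N+2) = 2\pi \sigma^2 \sigma^{2N} (2N+1)!,
\end{align*}
using the standard identity $\Gamma(k+1) = k!$ for $k \in \mathbb{W}$. Writing $2N = N+M$ when $N=M$, this matches the claimed value $2\pi \sigma^2 \sigma^{N+M}(N+M+1)!\, \delta_{NM}$.

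There is no genuine obstacle here: the only points requiring care are keeping track of the extra factor of $r$ from the Jacobian (so the exponent in the radial integrand is $2N+1$ rather than $2N$) and correctly converting the Gamma-integral into a factorial. The argument is purely a routine polar-coordinate computation, and it immediately establishes that the monomials $\{z^N\}_{N\in\mathbb{W}}$ form an orthogonal system with respect to the inner product $\langle\cdot,\cdot\rangle_{\sigma,\mathbb{C}}$, which is the fact needed to construct an orthonormal basis in the next step of the paper.
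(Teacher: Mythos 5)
Your proof is correct and follows essentially the same route as the paper: pass to polar coordinates, use $\int_0^{2\pi}e^{i(N-M)\theta}\,d\theta=2\pi\delta_{NM}$ to isolate the diagonal case, and evaluate the radial integral as a Gamma function (the paper quotes $\int_0^\infty r^{N+M+1}e^{-r/\sigma}\,dr=\Gamma(N+M+2)\sigma^{N+M+2}$ directly, while you perform the substitution $u=r/\sigma$ explicitly, which is the same computation). No gaps.
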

\begin{proof}
We will initiate the process of proving this by employing the polar coordinate in $\mathbb{C}$.
\begin{align*}
    \langle z^N,z^M\rangle_{\sigma,\mathbb{C}}=&\int_{\mathbb{C}}z^N\overline{z^M}e^{-\frac{|z|}{\sigma}}dA(z)\\
    =&\int_0^{2\pi}\int_{0}^\infty r^Nr^Me^{i(N-M)\theta}e^{-\frac{r}{\sigma}}rdrd\theta\\
    =&2\pi\delta_{NM}\int_0^{\infty}r^{N+M+1}e^{-\frac{r}{\sigma}}dr\\
    =&2\pi\delta_{NM}\frac{\Gamma(N+M+1+1)}{(\nicefrac{1}{\sigma})^{N+M+2}}\\
    =&2\pi\delta_{NM}\frac{(N+M+1)!}{(\nicefrac{1}{\sigma})^{N+M+2}}\\
    =&2\pi\sigma^2\sigma^{N+M}(N+M+1)!\delta_{NM}.
\end{align*}
Also, from the last step, we can immediately learn that the square of the length of monomial $\left\{z^N\right\}_N$ with respect to the normalized Laplacian measure $e^{-\nicefrac{|z|}{\sigma}}$ in the single-variable complex plane $\mathbb{C}$ is
\begin{align*}
    \|z^N\|_{\sigma,\mathbb{C}}^2=2\pi\sigma^2\sigma^{2N}(2N+1)!.
\end{align*}
\end{proof}
As we have now determined the length of monomial above, we can now provide the orthonormal basis of the $H_{\sigma,1,\mathbb{C}^n}$ by incorporating the tensor-product notation; this is performed in the following theorem.
\begin{theorem}
    For $\sigma>0$ and $N\in\mathbb{W}$, define $\left\{\mathbf{e}_N\right\}_{N\in\mathbb{W}}:\mathbb{C}\to\mathbb{C}$ by
    \begin{align}\label{eq_ONB_lap}
        \mathbf{e}_N(z)\coloneqq\sqrt{\frac{1}{\sigma^{2N}(2N+1)!} }z^N.
    \end{align}
    Then the tensor-product system $\left(\mathbf{e}_{N_1}\otimes\cdots\otimes\mathbf{e}_{N_n}\right)_{N_1,\ldots N_n\geq0}$ is the orthonormal basis of Hilbert space $H_{\sigma,1,\mathbb{C}^n}$.
\end{theorem}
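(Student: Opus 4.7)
The claim has two parts that I would prove separately: (i) that $\bigl(\mathbf{e}_{N_1}\otimes\cdots\otimes\mathbf{e}_{N_n}\bigr)_{N_1,\ldots,N_n\geq 0}$ is an orthonormal system in $H_{\sigma,1,\mathbb{C}^n}$, and (ii) that its linear span is dense. The first is a computation; the second is a standard density argument tailored to this weight.

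For orthonormality, the plan is to insert the definition \eqref{eq_ONB_lap} into the inner product \eqref{eq_normLaplacemeasure} and switch to polar coordinates $z_j = r_j e^{i\theta_j}$ in each complex variable. The integrand then factors as $\prod_{j=1}^n r_j^{N_j+M_j}e^{i(N_j-M_j)\theta_j}$ multiplied by the radial weight $e^{-\sqrt{r_1^2+\cdots+r_n^2}/\sigma}\prod_j r_j$. Because this radial weight depends only on the $r_j$ and is invariant under each $\theta_j\mapsto\theta_j+\alpha_j$, Fubini on the angular variables separates out a factor $\prod_j\int_0^{2\pi}e^{i(N_j-M_j)\theta_j}\,d\theta_j=(2\pi)^n\prod_j\delta_{N_jM_j}$, which kills every off-diagonal term and yields orthogonality with no further work. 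For the diagonal ($N=M$) case one needs to show the remaining radial integral equals $\prod_j\sigma^{2N_j}(2N_j+1)!$ times the prefactor $(2\pi\sigma^2)^n$, which is exactly what is needed for $\mathbf{e}_{N_1}\otimes\cdots\otimes\mathbf{e}_{N_n}$ to have unit norm. The natural tool here is a Liouville--Dirichlet type reduction: substitute $u_j=r_j^2$, use $\int_{\mathbb{R}^n_+}\prod u_j^{\alpha_j-1}f(\sum u_j)\,d\bm{u}=\frac{\prod\Gamma(\alpha_j)}{\Gamma(\sum\alpha_j)}\int_0^\infty f(t)\,t^{\sum\alpha_j-1}\,dt$, and then handle the remaining univariate integral via a $w=\sqrt{t}$ substitution, which turns it into a Gamma integral of the type already evaluated in Lemma~\ref{LEMMA_lemmaiii.3}.

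For completeness, the plan is to argue that holomorphic polynomials are dense in $H_{\sigma,1,\mathbb{C}^n}$. Any $f\in H_{\sigma,1,\mathbb{C}^n}$ admits a Taylor expansion $f(\bmz)=\sum_N a_N\bmz^N$ converging locally uniformly on $\mathbb{C}^n$; because the weight $e^{-\|\bmz\|_2/\sigma}$ decays exponentially at infinity, one can apply a standard truncation argument (approximate $f$ first by its restriction to a ball $|\bmz|\leq R$, then by a Taylor partial sum on that ball) combined with dominated convergence to show that the partial sums converge to $f$ in $\|\cdot\|_{\sigma,1,\mathbb{C}^n}$-norm. Since each partial sum is a finite linear combination of the monomials $\bmz^N$, which by the orthogonality just established and the normalization \eqref{eq_ONB_lap} are scalar multiples of the tensor-product functions, totality follows.

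I expect the hardest step to be the multivariate norm calculation in step~(i). The subtlety is that the weight $e^{-\|\bmz\|_2/\sigma}$ is \emph{not} a product measure across the coordinates $z_1,\ldots,z_n$, so a naive Fubini reduction to $n$ independent copies of Lemma~\ref{LEMMA_lemmaiii.3} does not work; the radial variables are coupled through the Euclidean norm in the exponent. Getting the precise normalization in \eqref{eq_ONB_lap} to come out to $1$ in all dimensions requires carefully tracking the Gamma-function factors produced by the Liouville--Dirichlet reduction and matching them against the combinatorial constants $\prod_j(2N_j+1)!$. This is the one place in the proof where the statement would stand or fall, and I would double-check the constants there (in particular by testing the case $n=2$, $N=(0,0)$) before writing up the remaining details.
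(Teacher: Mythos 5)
Your plan diverges from the paper's proof at exactly the right place, and the divergence is instructive. The paper disposes of the multivariate case in one line by writing $\langle\mathbf{e}_{N_1}\otimes\cdots\otimes\mathbf{e}_{N_n},\mathbf{e}_{M_1}\otimes\cdots\otimes\mathbf{e}_{M_n}\rangle_{\sigma,1,\mathbb{C}^n}=\prod_{j=1}^n\langle\mathbf{e}_{N_j},\mathbf{e}_{M_j}\rangle_{\sigma,\mathbb{C}}$ and, in the completeness computation, by splitting $d\mu_{\sigma,1,\mathbb{C}^n}$ into $d\mu_{\sigma,\mathbb{C}}(z_1)\wedge\cdots\wedge d\mu_{\sigma,\mathbb{C}}(z_n)$. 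Both steps silently assume that the weight factors as $\prod_j e^{-|z_j|/\sigma}$, i.e.\ that $\|\bm{z}\|_2=\sum_j|z_j|$; with the Euclidean norm actually fixed in \eqref{eq_13Laplacemeasure} the measure is not a product, which is precisely the obstruction you flag. So your insistence on a Liouville--Dirichlet reduction is not a stylistic choice: it is the only correct way to evaluate the integral, and the paper's reduction to $n$ independent copies of Lemma~\ref{LEMMA_lemmaiii.3} is simply not available.

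The genuine problem is that carrying your step~(i) to the end shows the constants do not come out as the statement requires, so the proposal cannot be completed for the $\mathbf{e}_N$ as normalized in \eqref{eq_ONB_lap}. The angular integration does give orthogonality of distinct monomials, but the Liouville--Dirichlet identity with $u_j=r_j^2$ followed by $t=w^2$ yields
\begin{align*}
\|\bm{z}^N\|_{\sigma,1,\mathbb{C}^n}^2=2^{1-n}\,\sigma^{2|N|}\,\frac{\left(\prod_{j=1}^nN_j!\right)(2|N|+2n-1)!}{(|N|+n-1)!},
\end{align*}
which reduces to $\sigma^{2N}(2N+1)!$ when $n=1$ but does not factor as $\prod_j\sigma^{2N_j}(2N_j+1)!$ for $n\geq2$. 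Your own suggested sanity check ($n=2$, $N=(0,0)$) already detects this: $\mathbf{e}_0\otimes\mathbf{e}_0\equiv1$ and $\|1\|_{\sigma,1,\mathbb{C}^2}^2=(2\pi\sigma^2)^{-2}\int_{\mathbb{R}^4}e^{-|x|/\sigma}\,dx=(2\pi\sigma^2)^{-2}\cdot 2\pi^2\sigma^4\cdot3!=3\neq1$. Hence the tensor-product system is orthogonal but not orthonormal, and the measure is not even a probability measure for $n\geq2$. The theorem survives only if either the normalizing constants in \eqref{eq_ONB_lap} are replaced by the reciprocal square roots of the displayed norm, or the exponent in \eqref{eq_13Laplacemeasure} is changed to $\sum_j|z_j|/\sigma$ so that the measure genuinely factors --- the latter being what the paper's proof tacitly assumes. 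Your completeness outline (density of polynomials via truncation and exponential decay of the weight) is sound and independent of this issue, but you should resolve the normalization before writing up step~(i).
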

\begin{proof}
    We establish our initial stage of result for single-dimension case to ease our understanding. For this, let us show that $\left\{\mathbf{e}_N\right\}_{N\in\mathbb{W}}$ forms an orthonormal system. So, consider $z\in\mathbb{C}$ and let $M,N\in\mathbb{W}$. Then,
    \begin{align*}
    \langle \mathbf{e}_N,\mathbf{e}_M\rangle_{\sigma,\mathbb{C}}=&\int_{\mathbb{C}}\mathbf{e}_N(z)\overline{\mathbf{e}_M(z)}d\mu_{\sigma,\mathbb{C}}(z)\\
    =&\frac{1}{2\pi\sigma^2}\int_{\mathbb{C}}\sqrt{\frac{1}{\sigma^{2N}(2N+1)!} }z^N\sqrt{\frac{1}{\sigma^{2M}(2M+1)!} }\overline{z^M}e^{-\frac{|z|}{\sigma}}dA(z)\\
    =&\frac{1}{2\pi\sigma^2}\sqrt{\frac{1}{\sigma^{2N}(2N+1)!}}\sqrt{\frac{1}{\sigma^{2M}(2M+1)!}}\int_{\mathbb{C}}z^N\overline{z^M}e^{-\frac{|z|}{\sigma}}dA(z)\\
    =&\frac{1}{2\pi\sigma^2}\sqrt{\frac{1}{\sigma^{2N}(2N+1)!}}\sqrt{\frac{1}{\sigma^{2M}(2M+1)!}}\cdot2\pi\sigma^2\sigma^{N+M}(N+M+1)!\delta_{NM}.\\
    =&\begin{cases}
        1
            &\text{if $N=M$}\\
            0&\text{otherwise}
        \end{cases}\quad\text{\emph{(use \autoref{LEMMA_lemmaiii.3})}}.
    \end{align*}
The above result concludes that $\left\{\mathbf{e}_n\right\}_{n\in\mathbb{W}}$ is actually an orthonormal system with respect to normalized Laplacian measure in complex plane $\mathbb{C}$. Now, we have to establish that this orthonormal system is also complete. So, for this, pick $f\in H_{\sigma,1,\mathbb{C}}$ with $f(z)=\sum_{l=0}^\infty a_lz^l$ and observe that
\begin{align*}
    \langle f,\mathbf{e}_N\rangle_{\sigma,\mathbb{C}}=&\frac{1}{2\pi\sigma^2}\int_{\mathbb{C}}f(z)\overline{\mathbf{e}_N(z)}d\mu_{\sigma,\mathbb{C}}\\
    =&\frac{1}{2\pi\sigma^2}\sqrt{\frac{1}{\sigma^{2N}(2N+1)!}}\int_{\mathbb{C}}f(z)\overline{z^N}e^{-\frac{|z|}{\sigma}}dA(z)\\
    =&\frac{1}{2\pi\sigma^2}\sqrt{\frac{1}{\sigma^{2N}(2N+1)!}}\sum_{l=0}^\infty a_l\int_{\mathbb{C}}z^l\overline{z^N}e^{-\frac{|z|}{\sigma}}dA(z)\\
    =&\frac{1}{2\pi\sigma^2}\sqrt{\frac{1}{\sigma^{2N}(2N+1)!}}\sum_{l=0}^\infty a_l\cdot 2\pi\sigma^2\sigma^{l+N}(l+N+1)!\delta_{lN}\\
    =&\sqrt{{\sigma^{2N}(2N+1)!}}a_N.
\end{align*}For a given $\sigma>0$, since the constant $\sqrt{\frac{1}{\sigma^{2N}(2N+1)!}}\neq0$ for any $N\in\mathbb{W}$, therefore this directly imply that $\langle f,\mathbf{e}_N\rangle_{\sigma,\mathbb{C}}=0$ if and only if $a_N=0$, which results into $f\equiv0$. Hence, $\left\{\mathbf{e}_N\right\}_{N\in\mathbb{W}}$ is complete.

Now, we establish the same result but in $n-$dimensional case and to this end, we will employ the tensor product notation \textsc{\autoref{subsection_tensorProdNotation}} by considering multi-index notation for $N,M\in\mathbb{W}$ as follows:
\begin{align*}
\langle\mathbf{e}_{N_1}\otimes\cdots\otimes\mathbf{e}_{N_n},\mathbf{e}_{M_1}\otimes\cdots\otimes\mathbf{e}_{M_n}\rangle_{\sigma,\mathbb{C}^n}=\prod_{j=1}^n\langle\mathbf{e}_{N_j},\mathbf{e}_{M_j}\rangle_{\sigma,\mathbb{C}}.
\end{align*}
Hence the orthonormality of $\left\{\mathbf{e}_{N_1}\otimes\cdots\otimes\mathbf{e}_{N_n}\right\}_{N_1,\ldots N_n\in,\mathbb{W}^n}$ is established due to the orthonormality of each individual $\langle\mathbf{e}_{N_j},\mathbf{e}_{M_j}\rangle_{\sigma,\mathbb{C}}$. We still need to ensure that this $n-$dimensional orthonormal system is complete. We observe
    \begin{align*}
        \langle f,\mathbf{e}_{N_1}\otimes\cdots\otimes\mathbf{e}_{N_n}\rangle_{\sigma,1,\mathbb{C}^n}
        =&\left(\frac{1}{2\pi\sigma^2}\right)^n\int_{\mathbb{C}^n}f(\bm{z})\overline{\mathbf{e}_{N_1}\otimes\cdots\otimes\mathbf{e}_{N_n}\left(\bm{z}\right)}d\mu_{\sigma,1,\mathbb{C}^n}\left(\bm{z}\right)\\
        =&\left(\frac{1}{2\pi\sigma^2}\right)^n\sum_{l_1,\ldots,l_n}^\infty
        a_{l_1,\ldots,l_n}
        \mathds{I}_{l,n}, 
    \end{align*}
    where $\mathds{I}_{l,n}=
\int_{\mathbb{C}^n}\bm{z}^l\left(\mathbf{e}_{N_1}\otimes\cdots\otimes\mathbf{e}_{N_n}\left(\overline{\bm{z}}\right)\right)d\mu_{\sigma,1,\mathbb{C}^n}\left(\bm{z}\right)$. We further can simplify $\mathds{I}_{l,n}$ as:    
\begin{align*}
        \mathds{I}_{l,n}=&
        \int_{\mathbb{C}^n}\bm{z}^l\mathbf{e}_{N_1}(\overline{z_1})\wedge\cdots\wedge\mathbf{e}_{N_n}(\overline{z_d})
        d\mu_{\sigma,\mathbb{C}}(z_1)\wedge\cdots\wedge d\mu_{\sigma,\mathbb{C}}(z_n)\\
        =&\prod_{j=1}^n
        \left(\int_\mathbb{C}z_j^{l_j}\mathbf{e}_{N_j}(\overline{z_j})d\mu_{\sigma,\mathbb{C}}(z_j)\right)\\
        =&\prod_{j=1}^n\left(\int_{\mathbb{C}}z_j^{l_j}\overline{z_j}^{N_j}d\mu_{\sigma,\mathbb{C}}(z_j)\right)\\
        =&\prod_{j=1}^n\left(2\pi\sigma^2\sigma^{l_j+N_j}(l_j+N_j+1)!\right)\delta_{l_jN_j}.
    \end{align*}
    Finally, \begin{align*}
    \left(\frac{1}{2\pi\sigma^2}\right)^n\sum_{l_1,\ldots,l_n}^\infty
        a_{l_1,\ldots,l_n}\mathds{I}_{l,n}=&\left(\prod_{j=1}^n\sqrt{{\sigma^{2N_j}(2N_j+1)!}}
        \right)a_{N_1,\ldots,N_n}.
    \end{align*}
    The further result for completeness in $n-$dimension follows a routine procedure from single-dimension case as already discussed before. 
    \end{proof} 
    Now, that we have determined the orthonormal basis for the Hilbert space $H_{\sigma,1,\Cn}$ we will use it to construct the reproducing kernel for $H_{\sigma,1,\Cn}$ which eventually makes it the RKHS.
    \subsection{Reproducing Kernel}The result of the above theorem provides us the orthonormal basis for the RKHS $H_{\sigma,1,\mathbb{C}^n}$. Once we have the information for the orthonormal basis of the RKHS, we can employ the \textsc{Moore-Aronszajn} Theorem to construct the reproducing kernel for the associated RKHS. Therefore, in the light of determining the reproducing kernel for $H_{\sigma,1,\Cn}$, the following theorem provides the same for $H_{\sigma,1,\mathbb{C}^n}$ by using the \textsc{Moore-Aronszajn} Theorem.
    \begin{theorem}\label{theorem_RKviaOrthonormalbasis}
    For $\sigma>0$, let $\bmz=(z_1,\ldots,z_n)$ and $\bm{w}=(w_1,\ldots,w_n)$ be in $\Cn$. Then the reproducing kernel for RKHS $H_{\sigma,1,\mathbb{C}^n}$ is given as 
    \begin{align}\label{eq_21}
        K_{\bm{w}}^\sigma\left(\bm{z}\right)=K^\sigma\left(\bm{z},\bm{w}\right)\coloneqq\frac{\sinh{\left(\sqrt{ \frac{\langle\bm{z},\bm{w}\rangle_{\mathbb{C}^n}}{\sigma^2}}
        \right)}}{\sqrt{\frac{\langle\bm{z},\bm{w}\rangle_{\mathbb{C}^n }}{\sigma^2}}
        },
    \end{align}
    where $\langle\bmz,\bm{w}\rangle_{\Cn}=\bmz\overline{\bm{w}^\top}=\sum_{i=1}^nz_i\overline{w_i}$.
\end{theorem}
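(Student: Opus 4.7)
My strategy is to apply the \textsc{Moore-Aronszajn} Theorem (\autoref{theorem_aronsjan}) directly, feeding it the tensor-product orthonormal basis that was just established. Since the basis elements are $\mathbf{e}_N(z)=z^N/\sqrt{\sigma^{2N}(2N+1)!}$ tensored coordinate-wise, the kernel is immediately expressible as the absolutely convergent multi-index power series
\begin{align*}
K^{\sigma}(\bmz,\bm{w})=\sum_{N_1,\ldots,N_n\geq 0}\prod_{j=1}^{n}\mathbf{e}_{N_j}(z_j)\,\overline{\mathbf{e}_{N_j}(w_j)}=\sum_{N_1,\ldots,N_n\geq 0}\prod_{j=1}^{n}\frac{(z_j\overline{w_j})^{N_j}}{\sigma^{2N_j}(2N_j+1)!}.
\end{align*}
With this in hand, the remaining task is purely analytic: collapse this series into the claimed closed form in the scalar variable $u\coloneqq \langle\bmz,\bm{w}\rangle_{\Cn}/\sigma^{2}=\sum_{j}z_j\overline{w_j}/\sigma^{2}$.

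The key analytic input is the entire power series
\begin{align*}
\frac{\sinh(\sqrt{u})}{\sqrt{u}}=\sum_{k=0}^{\infty}\frac{u^{k}}{(2k+1)!},\qquad u\in\mathbb{C},
\end{align*}
which has infinite radius of convergence (note that $\sinh(\sqrt{u})/\sqrt{u}$ is an entire function of $u$, so no branch-cut issue arises from the square root, regardless of any sign of $\langle\bmz,\bm{w}\rangle_{\Cn}$). In the single-variable case $n=1$ the match is immediate: the sum from Moore-Aronszajn already reads $\sum_{N\geq 0}(z\overline{w}/\sigma^{2})^{N}/(2N+1)!$, which is exactly the Taylor expansion of $\sinh(\sqrt{u})/\sqrt{u}$ evaluated at $u=z\overline{w}/\sigma^{2}$. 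This settles the base case cleanly.

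For general $n$, I would regroup the multi-index sum by the total degree $k=N_1+\cdots+N_n$ and then compare, term by term in $k$, with the expansion of $\sinh(\sqrt{u})/\sqrt{u}$ at $u=\sum_{j}z_j\overline{w_j}/\sigma^{2}$. Concretely, the multinomial expansion of $u^{k}$ gives $u^{k}=\sum_{N_1+\cdots+N_n=k}\binom{k}{N_1,\ldots,N_n}\prod_{j}(z_j\overline{w_j}/\sigma^{2})^{N_j}$, so that matching the two expressions amounts to the combinatorial identity $\prod_{j}(2N_j+1)!^{-1}=\binom{k}{N_1,\ldots,N_n}/(2k+1)!$ for every multi-index with total degree $k$.

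The main obstacle is precisely this combinatorial/analytic reconciliation in the multivariate regime: the Moore-Aronszajn formula produces a \emph{product} of one-variable factors $\prod_{j}\sinh(\sqrt{z_j\overline{w_j}/\sigma^{2}})/\sqrt{z_j\overline{w_j}/\sigma^{2}}$, whereas the claimed closed form is a single scalar function of the Hermitian pairing $\langle\bmz,\bm{w}\rangle_{\Cn}$. Bridging these two representations is the nontrivial step, and I would tackle it either by reindexing the series via the multinomial theorem and verifying the required factorial identity, or by appealing to the Gauge-transformation philosophy invoked before \autoref{table-1exponentialtype} to recast the norm $\|\bmz\|_{2}$ appearing in $d\mu_{\sigma,1,\Cn}(\bmz)$ in a form that makes the rotational $U(n)$-symmetry explicit; under that symmetry any reproducing kernel on the space of holomorphic functions must depend only on $\langle\bmz,\bm{w}\rangle_{\Cn}$, and one then pins down the univariate profile by testing against $\bm{w}=(w,0,\ldots,0)$ and reducing to the established $n=1$ case. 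Absolute convergence is automatic from Moore-Aronszajn and the entirety of $\sinh(\sqrt{u})/\sqrt{u}$, so no additional convergence check is required.
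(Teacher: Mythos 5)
Your strategy is the same as the paper's: apply the \textsc{Moore--Aronszajn} theorem to the tensor-product orthonormal basis and resum the resulting series. For $n=1$ your computation coincides with the paper's and is correct, since $\sum_{N\geq0}(z\overline{w}/\sigma^{2})^{N}/(2N+1)!$ is exactly the Taylor series of $\sinh(\sqrt{u})/\sqrt{u}$ at $u=z\overline{w}/\sigma^{2}$. The difference is that you explicitly isolate the multivariate resummation as the unresolved step, whereas the paper's proof passes from $\sum_{N}\sigma^{-2N}((2N+1)!)^{-1}\bm{z}^{N}\overline{\bm{w}^{N}}$ to $\sum_{N}((2N+1)!)^{-1}\left(\bm{z}\overline{\bm{w}^{\top}}/\sigma^{2}\right)^{N}$ in a single line, treating the multi-index as one integer and never confronting the issue you raise.

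However, neither of your two proposed repairs closes the gap. The combinatorial identity you would need, $\prod_{j}((2N_{j}+1)!)^{-1}=\binom{k}{N_{1},\ldots,N_{n}}\,((2k+1)!)^{-1}$, is false: for $n=2$ and $N_{1}=N_{2}=1$ the left side is $1/36$ while the right side is $2/120=1/60$. Consequently the \textsc{Moore--Aronszajn} sum over the tensor-product basis is the product $\prod_{j=1}^{n}\sinh\bigl(\sqrt{z_{j}\overline{w_{j}}/\sigma^{2}}\bigr)\big/\sqrt{z_{j}\overline{w_{j}}/\sigma^{2}}$, which for $n>1$ is genuinely not equal to $\sinh(\sqrt{u})/\sqrt{u}$ with $u=\langle\bm{z},\bm{w}\rangle_{\mathbb{C}^{n}}/\sigma^{2}$. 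Your second route --- $U(n)$-invariance of $d\mu_{\sigma,1,\mathbb{C}^{n}}$ forcing the kernel to depend only on $\langle\bm{z},\bm{w}\rangle_{\mathbb{C}^{n}}$ --- is a sound observation in itself, but it contradicts the product form you start from (a nonconstant product of functions of the separate $z_{j}\overline{w_{j}}$ is never a function of their sum alone), and the proposed reduction by testing at $\bm{w}=(w,0,\ldots,0)$ does not land in the established $n=1$ case: the squared norm of $z_{1}^{k}$ in $H_{\sigma,1,\mathbb{C}^{n}}$ is computed against $e^{-\|\bm{z}\|_{2}/\sigma}$ over all of $\mathbb{C}^{n}$, and the radial integral produces a factor of order $(2k+2n-1)!$ rather than the $(2k+1)!$ of the one-variable measure, so the coefficient profile changes with $n$. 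In short, you have correctly located the crux that the paper's own proof steps over silently, but your proposal does not resolve it, and the two resolutions you sketch pull in mutually incompatible directions.
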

\begin{proof}
    We will be needing the result of \autoref{theorem_aronsjan} to prove this result. In \autoref{theorem_aronsjan}, we take $\mathcal{I}$ as $\mathbb{W}$ as the index set, then we have following:
    \begin{align*}
        K^\sigma\left(\bm{z},\bm{w}\right)=&\sum_{N\in \mathbb{W}}\mathbf{e}_N\left(\bm{z}\right)\overline{\mathbf{e}_N\left(\bm{w}\right)}\\
        =&\sum_{N\in \mathbb{W}}\sqrt{\frac{1}{\sigma^{2N}(2N+1)!}}\bm{z}^N\sqrt{\frac{1}{\sigma^{2N}(2N+1)!}}\overline{\bm{w}^N}\\
        =&\sum_{N=0}^\infty{\frac{1}{(2N+1)!}}\left(\frac{\bm{z}\overline{\bm{w}^\top}}{\sigma^2}\right)^N\\
        =&\frac{\sinh\left(\sqrt{\nicefrac{\bm{z}\overline{\bm{w}^\top}}{\sigma^2}}
        \right)}{\sqrt{\left(\nicefrac{\bm{z}\overline{\bm{w}^\top}}{\sigma^2}\right)}
        }.
    \end{align*}
Obviously, the other notable and useful way of representing the reproducing kernel $K^\sigma\left(\bm{z},\bm{w}\right)$ is by writing the argument in the scalar inner-product format of $\langle\bm{z},\bm{w}\rangle_{\mathbb{C}^n}=\bm{z}\overline{\bm{w}^\top}$ over $\Cn$. Hence,
\begin{align*}
    K_{\bm{w}}^\sigma\left(\bm{z}\right)=K^\sigma(\bm{z},\bm{w})=\frac{\sinh{\left(\sqrt{\dfrac{\langle\bm{z},\bm{w}\rangle_{\mathbb{C}^n}}{\sigma^2}}
    \right)}}{\sqrt{\dfrac{\langle\bm{z},\bm{w}\rangle_{\mathbb{C}^n}}{\sigma^2}}}.
\end{align*}
Hence, the result is now established.
\end{proof}
\subsection{Weakly converging sequence in RKHS \texorpdfstring{$H_{\sigma,1,\mathbb{C}^n}$}{}}
As now, we have eventually established the reproducing kernel for RKHS $H_{\sigma,1,\mathbb{C}^n}$ we are now in position to describe about the weakly converging sequence in RKHS $H_{\sigma,1,\mathbb{C}^n}$ by the application of \autoref{lemma_weaklyconverging}.
\begin{theorem}
    Let $\sigma>0$ and $H_{\sigma,1,\mathbb{C}^n}$ be the RKHS as defined in \eqref{eq_8hilbertspace} with reproducing kernel $K_{\bm{w}}^\sigma(\bm{z})=\frac{\sinh{\left(\nicefrac{\langle\bm{z},\bm{w}\rangle_{\mathbb{C}^n}}{\sigma^2}\right)^{\nicefrac{1}{2}}}}{\left(\nicefrac{\langle\bm{z},\bm{w}\rangle_{\mathbb{C}^n}}{\sigma^2}\right)^{\nicefrac{1}{2}}}$ for $\bm{z},\bm{w}\in\mathbb{C}^n$. Let $\left\{\bm{z}_M\right\}_M$ be the sequence of points in $\mathbb{C}^n$ such that $\|\bm{z}_M\|_2\to\infty$ as $M\to\infty$, then following holds true:
    \begin{align*}
    \lim_{\|\bm{z}_M\|_2\to\infty}\left[\frac{\sinh{\nicefrac{\|\bm{z}_M\|_2}{\sigma}}}{\nicefrac{\|\bm{z}_M\|_2}{\sigma}}\right]^{-\frac{1}{2}}K_{\bm{z}_M}^\sigma=0.
    \end{align*}
\end{theorem}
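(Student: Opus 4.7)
The plan is to recognize that the normalizing prefactor appearing in the statement is precisely $\|K_{\bmz_M}^\sigma\|_{\sigma,1,\Cn}^{-1}$ and then invoke the first clause of Lemma~\ref{lemma_weaklyconverging}. First I would compute the diagonal of the reproducing kernel by combining identity \eqref{eq_4563} with the closed form supplied by Theorem~\ref{theorem_RKviaOrthonormalbasis}:
\begin{align*}
\|K_{\bmz_M}^\sigma\|_{\sigma,1,\Cn}^2
= K^\sigma(\bmz_M,\bmz_M)
= \frac{\sinh\!\left(\sqrt{\langle\bmz_M,\bmz_M\rangle_{\Cn}/\sigma^2}\right)}{\sqrt{\langle\bmz_M,\bmz_M\rangle_{\Cn}/\sigma^2}}
= \frac{\sinh(\|\bmz_M\|_2/\sigma)}{\|\bmz_M\|_2/\sigma},
\end{align*}
where we used $\langle\bmz_M,\bmz_M\rangle_{\Cn}=\|\bmz_M\|_2^2\geq 0$ so the square roots are unambiguous. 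With this in hand, the quantity displayed in the statement is literally $K_{\bmz_M}^\sigma/\|K_{\bmz_M}^\sigma\|_{\sigma,1,\Cn}$, and the claimed limit reduces to showing that this normalized reproducing kernel tends to $0$ in the RKHS as $\|\bmz_M\|_2\to\infty$.

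Because the normalized vectors have unit norm, strong convergence to zero is impossible, so the limit must be read as weak convergence; this is exactly the content of Lemma~\ref{lemma_weaklyconverging}(1), and applying it finishes the proof. If one prefers a self-contained verification rather than quoting the lemma, I would check weak convergence against the dense set of polynomials: for any orthonormal basis element $\mathbf{e}_{N_1}\otimes\cdots\otimes\mathbf{e}_{N_n}$ the inner product with $K_{\bmz_M}^\sigma/\|K_{\bmz_M}^\sigma\|_{\sigma,1,\Cn}$ equals, by the reproducing property, the value of that monomial at $\bmz_M$ divided by $\|K_{\bmz_M}^\sigma\|_{\sigma,1,\Cn}$; the numerator grows at most polynomially in $\|\bmz_M\|_2$, while from the identity above the denominator grows essentially like $e^{\|\bmz_M\|_2/(2\sigma)}$ up to an algebraic factor, so each such inner product tends to $0$. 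Combined with the uniform bound $\|K_{\bmz_M}^\sigma/\|K_{\bmz_M}^\sigma\|_{\sigma,1,\Cn}\|=1$ and density of the polynomial span in $H_{\sigma,1,\Cn}$, a standard $\varepsilon/3$ argument extends the pointwise-on-basis vanishing to weak convergence on all of $H_{\sigma,1,\Cn}$.

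The main obstacle is essentially interpretive: recognizing that the equation $=0$ must be read in the weak topology (since strong convergence is ruled out by unit norm) and matching the prefactor to $\|K_{\bmz_M}^\sigma\|^{-1}$ via the reproducing property. Once these reductions are in place, the remaining work is the elementary asymptotic that $\sinh(t)/t$ grows exponentially in $t$, which dominates any polynomial growth in the numerators arising from polynomial test functions. No delicate estimate or new inequality is required beyond the closed-form kernel already established in Theorem~\ref{theorem_RKviaOrthonormalbasis}.
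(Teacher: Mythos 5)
Your proposal is correct and follows essentially the same route as the paper: compute $\|K_{\bm{z}_M}^\sigma\|^2=K^\sigma(\bm{z}_M,\bm{z}_M)=\sinh(\|\bm{z}_M\|_2/\sigma)/(\|\bm{z}_M\|_2/\sigma)$ via the reproducing property and the closed-form kernel, then invoke Lemma~\ref{lemma_weaklyconverging}(1). Your explicit observation that the limit must be read in the weak topology (since the normalized kernels have unit norm), together with the optional basis-testing verification, is a useful clarification that the paper leaves implicit by simply citing the lemma.
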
 
\begin{proof}
Suppose $\sigma>0$. Fix a $\bm{z}_M\in\mathbb{C}^n$ for some $M\in\mathbb{W}$. We will use \eqref{eq_4563} to determine $\|K_{\bm{z}}\|$ as follows:
\begin{align*}
    \|K_{\bm{z}_M}^\sigma\|^2=\langle K_{\bm{z}_M}^\sigma,K_{\bm{z}_M}^\sigma\rangle=K_{\bm{z}_M}^\sigma\left({\bm{z}}_M\right)=&\frac{\sinh\left(\sqrt{\nicefrac{\langle\bm{z}_M,\bm{z}_M\rangle_{\mathbb{C}^n}}{\sigma^2}}\right)}{\sqrt{\nicefrac{\langle\bm{z}_M,\bm{z}_M\rangle_{\mathbb{C}^n}}{\sigma^2}}}\\=&\frac{\sinh\left(\sqrt{\nicefrac{\|\bm{z}_M\|_2^2}{\sigma^2}}
    \right)}{\sqrt{\nicefrac{\|\bm{z}_M\|_2^2}{\sigma^2}}}\\=&\frac{\sinh\left(\nicefrac{\|\bm{z}_M\|_2}{\sigma}\right)}{\nicefrac{\|\bm{z}_M\|_2}{\sigma}}.\numberthis\label{eq_25}
\end{align*}
Once we have determined the quantity $\|K_{\bm{z}}^\sigma\|$, which is $\|K_{\bm{z}}^\sigma\|=\left(\frac{\sinh\left(\nicefrac{\|\bm{z}_M\|_2}{\sigma}\right)}{\nicefrac{\|\bm{z}_M\|_2}{\sigma}}\right)^{\nicefrac{1}{2}}$, we can combine this result with \autoref{lemma_weaklyconverging} in the light of $M\to\infty$ and therefore the desired result is achieved; demonstrated as follows:
\begin{align*}
    \underbrace{0=\lim_{\|\bm{z}_M\|_2\to\infty}\|K_{\bm{z}_M}^\sigma\|^{-1}K_{\bm{z}_M}^\sigma}_{\textsc{\autoref{lemma_weaklyconverging}}}
    \overbrace{=\lim_{\|\bm{z}_M\|_2\to\infty}\left(\frac{\sinh\left(\nicefrac{\|\bm{z}_M\|_2}{\sigma}\right)}{\nicefrac{\|\bm{z}_M\|_2}{\sigma}}\right)^{-\frac{1}{2}}K_{\bm{z}_M}^\sigma=0}^{\textsc{From \eqref{eq_25}}}.
\end{align*}
Hence, the desired result is achieved.
\end{proof}
\subsection{Point-wise evaluation inequality}
Now, that we have figured out the norm of the reproducing kernel $K_{\bmz}^\sigma$ in \eqref{eq_25} for the RKHS $H_{\sigma,1,\Cn}$, we can provide the point-wise estimate for the holomorphic function $f:\Cn\to\mathbb{C}$ that lives in $H_{\sigma,1,\Cn}$ in terms of $\|f\|$ and $\|K_{\bmz}^\sigma\|$. This discussion is performed in the immediate theorem.
\begin{theorem}Let $\sigma>0$. 
    For all $\bmz\in\Cn$ and consider $f$ in the RKHS $H_{\sigma,1,\Cn}$, then following point-wise evaluation inequality holds:
    \begin{align*}
        |f(\bmz)|\leq\|K_{\bmz}^\sigma\|\|f\|,
    \end{align*}
    where $K_{\bmz}^\sigma$ is the reproducing kernel for the RKHS $H_{\sigma,1,\Cn}$.
\end{theorem}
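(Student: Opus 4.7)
The plan is to invoke the reproducing property together with the Cauchy-Schwarz inequality in the Hilbert space $H_{\sigma,1,\mathbb{C}^n}$. Fix $\bmz\in\mathbb{C}^n$ and $f\in H_{\sigma,1,\mathbb{C}^n}$. By the construction of the reproducing kernel performed in Theorem 3.8 via the Moore-Aronszajn theorem, the kernel $K_{\bmz}^\sigma$ satisfies the reproducing property
\begin{align*}
    f(\bmz)=\langle f, K_{\bmz}^\sigma\rangle_{\sigma,1,\mathbb{C}^n},
\end{align*}
so that taking absolute values and applying the Cauchy-Schwarz inequality on $\langle\cdot,\cdot\rangle_{\sigma,1,\mathbb{C}^n}$ gives
\begin{align*}
    |f(\bmz)|=|\langle f, K_{\bmz}^\sigma\rangle_{\sigma,1,\mathbb{C}^n}|\leq\|f\|\,\|K_{\bmz}^\sigma\|,
\end{align*}
which is exactly the asserted inequality.

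As an optional concluding remark, one can substitute the closed-form expression for $\|K_{\bmz}^\sigma\|$ obtained in equation \eqref{eq_25} of the preceding theorem, namely $\|K_{\bmz}^\sigma\|=\bigl(\sinh(\|\bmz\|_2/\sigma)/(\|\bmz\|_2/\sigma)\bigr)^{1/2}$, to recast the estimate in the explicit form
\begin{align*}
    |f(\bmz)|\leq\left(\frac{\sinh(\|\bmz\|_2/\sigma)}{\|\bmz\|_2/\sigma}\right)^{1/2}\|f\|.
\end{align*}
This explicit version is useful downstream because it displays how the growth of the point-evaluation functional is controlled by a $\sinh$-type factor, which will later be important for the essential-norm and compactness analysis of the Koopman operator over $H_{\sigma,1,\mathbb{C}^n}$.

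I do not anticipate any real obstacle: the only non-trivial ingredient, the reproducing identity and the closed form for $\|K_{\bmz}^\sigma\|$, have already been supplied in Theorem 3.8 and in the preceding theorem, so the entire argument amounts to a one-line application of Cauchy-Schwarz followed by a substitution.
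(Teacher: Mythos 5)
Your proposal is correct and follows exactly the paper's own argument: the reproducing property $f(\bmz)=\langle f,K_{\bmz}^\sigma\rangle$ followed by the Cauchy--Schwarz (Cauchy--Bunyakowsky--Schwarz) inequality. The optional substitution of the closed form $\|K_{\bmz}^\sigma\|=\left(\sinh(\|\bmz\|_2/\sigma)/(\|\bmz\|_2/\sigma)\right)^{1/2}$ is a harmless addition consistent with how the paper uses this estimate later.
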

\begin{proof}
    Since, it is provided that $f\in H_{\sigma,1,\Cn}$. This immediately imply that for any $\bmz\in\Cn$, we have following due to the virtue of reproducing property of the reproducing kernel $K_{\bmz}^\sigma$:
    \begin{align*}
        f(\bmz)&\overset{\textsc{RP in \eqref{eq_4563}}}{=}\langle f,K_{\bmz}^\sigma\rangle\\
        \implies|f(\bmz)|&\leq|\langle f,K_{\bmz}^\sigma\rangle|\overset{\textsc{C-B-S}}{\leq}\|f\|\|K_{\bmz}^\sigma\|,
    \end{align*}
    where C-B-S simply implies for the Cauchy-Bunyakowsky-Schwarz inequality (cf. \cite[Chapter 1, Page 3]{conway2019course}). 
\end{proof} 
\subsection{Bounds for norm of reproducing kernel}As we have now gathered the structure of the reproducing kernel $K_{\bmz}^\sigma$ for the RKHS $H_{\sigma,1,\Cn}$, we shall determine the bounds of the norm of $K_{\bmz}^\sigma$ as well. In doing so, the result derived in \eqref{eq_25} can be used to establish an interesting results in terms of lower and upper bound of $\|K_{\bm{z}}^\sigma\|$ for the reproducing kernel $K_{\bm{z}}^\sigma$ of $H_{\sigma,1,\mathbb{C}^n}$. This is demonstrated as follows:
\begin{lemma}\label{lemma_kernel_lowerandupperbound}
Let $\sigma>0$. For all ${\bm{z}}\in\mathbb{C}^n$, the norm of the reproducing kernel $K_{\bm{z}}^\sigma$ of RKHS $H_{\sigma,1,\mathbb{C}^n}$ satisfies the following inequality:
    \begin{align}\label{eq_26}
        \exp\left(\frac{1}{2}\left[\frac{\|\bm{z}\|_2}{\sigma}\coth{\frac{\|\bm{z}\|_2}{\sigma}}-1\right]\right)<_{(\text{\textsc{1}})}
\|K_{\bm{z}}^\sigma\|^2
        <_{(\text{\textsc{2}})}\exp{\left(\frac{\|\bm{z}\|_2^2}{6\sigma^2}\right)}.
    \end{align}
\end{lemma}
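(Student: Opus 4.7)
\medskip

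\noindent\textbf{Proof proposal.} The plan is to reduce the bound to a one-variable inequality and then invoke the Euler product for $\sinh$. First, substitute $t \coloneqq \|\bm{z}\|_2/\sigma \geq 0$. By \eqref{eq_25} we have $\|K_{\bm{z}}^\sigma\|^2 = \sinh(t)/t$, so taking logarithms reduces the claim (for $t>0$) to
\begin{equation*}
\tfrac{1}{2}\bigl(t\coth t - 1\bigr) \;<\; \log\!\bigl(\sinh(t)/t\bigr) \;<\; \tfrac{t^2}{6}.
\end{equation*}

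The key idea is to represent both sides of the target inequality as series indexed by the zeros of $\sinh$ along the imaginary axis. I would invoke two classical identities:
\begin{equation*}
\frac{\sinh t}{t} \;=\; \prod_{k=1}^{\infty}\left(1 + \frac{t^2}{k^2\pi^2}\right),
\qquad
t\coth t - 1 \;=\; 2\sum_{k=1}^{\infty}\frac{t^2}{t^2 + k^2\pi^2}.
\end{equation*}
Taking $\log$ of the product yields $\log(\sinh(t)/t) = \sum_{k=1}^\infty \log\!\bigl(1 + t^2/(k\pi)^2\bigr)$, so the whole inequality reduces to a term-by-term comparison with the series for $(t\coth t - 1)/2$ and with $t^2/6$.

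For the lower bound I would apply the elementary inequality $\log(1+x) > x/(1+x)$ valid for all $x>0$, with $x = t^2/(k\pi)^2$; this gives $\log\!\bigl(1+t^2/(k\pi)^2\bigr) > t^2/(t^2 + k^2\pi^2)$, and summing over $k$ produces exactly $(t\coth t - 1)/2$, establishing bound $(\textsc{1})$. For the upper bound I would apply the complementary inequality $\log(1+x) < x$ for $x>0$, again with $x = t^2/(k\pi)^2$, so that summing yields
\begin{equation*}
\log\!\bigl(\sinh(t)/t\bigr) \;<\; \frac{t^2}{\pi^2}\sum_{k=1}^{\infty}\frac{1}{k^2} \;=\; \frac{t^2}{\pi^2}\cdot\frac{\pi^2}{6} \;=\; \frac{t^2}{6},
\end{equation*}
which is bound $(\textsc{2})$ after exponentiating.

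I do not anticipate any serious obstacle here; the only delicate point is the strictness of both inequalities, which is automatic since $\log(1+x) - x/(1+x)$ and $x - \log(1+x)$ are strictly positive on $(0,\infty)$, and $t>0$ means at least one summand (in fact every summand) contributes strictly. The only genuine caveat is that at $\bm{z}=0$ (i.e.\ $t=0$) all three quantities equal $1$, so the strict inequality is implicitly restricted to $\bm{z}\neq 0$; this should be noted in the write-up.
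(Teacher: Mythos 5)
Your proposal is correct and follows essentially the same route as the paper: the Weierstrass product for $\sinh(t)/t$, a term-by-term application of $\log(1+x)<x$ against $\sum_k 1/k^2=\pi^2/6$ for the upper bound, and $\log(1+x)>x/(1+x)$ against the partial-fraction expansion of $t\coth t-1$ for the lower bound. Your remark that strictness fails at $\bm{z}=0$ (where all three quantities equal $1$) is a valid caveat that the paper's own proof does not address.
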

\begin{proof}
We will employ the Weierstrass factorization theorem (cf. \cite[Chapter 5]{stein2010complex}) for the function $\nicefrac{\sinh{\zeta}}{\zeta}$ followed by taking the `$\log$' as demonstrated follows:
\begin{align*}
            \frac{\sinh{\zeta}}{\zeta}=\prod_{j=1}^\infty\left(1+\frac{\zeta^2}{j^2\pi^2}\right)
            \implies\log\frac{\sinh{\zeta}}{\zeta}=\sum_{j=1}^\infty\log\left(1+\frac{\zeta^2}{j^2\pi^2}\right).
        \end{align*}
The respective proofs for both lower and upper bound inequality are given as follows. We begin with the upper bound inequality:
\begin{center}
    {\text{\textsc{Inequality (2)}}}
\end{center}To establish {\text{\textsc{Inequality (2)}}}, we define a function $\mathfrak{g}_2:\mathbb{R}_+\cup\left\{0\right\}\to\mathbb{R}_+\cup\left\{0\right\}$ by $\mathfrak{g}_2\left(x\right)=x-\log(1+x)$ for $x\in\mathbb{R}_+\cup\left\{0\right\}$. Then a simple calculation demonstrates that $\mathfrak{g}_2(x)\geq0$ whenever $x\in\mathbb{R}_+\cup\left\{0\right\}$ and hence we can easily conclude now that $\mathfrak{g}_2\left(\frac{\zeta^2}{j^2\pi^2}\right)>0$ whenever $j\geq1.$ Therefore $\log\left(1+\frac{\zeta^2}{j^2\pi^2}\right)<\frac{\zeta^2}{j^2\pi^2}$. Hence, taking the summation of this over $j\in\mathbb{Z}_+$, we have
\begin{align*}
    \log\frac{\sinh{\zeta}}{\zeta}=\sum_{j=1}^\infty\log\left(1+\frac{\zeta^2}{j^2\pi^2}\right)<\sum_{j=1}^\infty\frac{\zeta^2}{j^2\pi^2}=\frac{\zeta^2}{\pi^2}\sum_{j=1}^\infty\frac{1}{j^2}=\frac{\zeta^2}{\pi^2}\cdot\frac{\pi^2}{6}=\frac{\zeta^2}{6}.
\end{align*}
Taking the exponentiation of above yields $\frac{\sinh{\zeta}}{\zeta}<\exp\left(\frac{\zeta^2}{6}\right)
$ and with $\zeta\mapsto\nicefrac{\|\bm{z}\|_{\mathbb{C}^n}}{\sigma}$, we have finally:
\begin{align*}
    \|K_{\bm{z}}^\sigma\|^2=\frac{\sinh{\left(\dfrac{\|\bm{z}\|_2}{\sigma}\right)}}{\left(\dfrac{\|\bm{z}\|_{2}}{\sigma}\right)}<\exp{\left(\frac{\|\bm{z}\|_{2}^2}{6\sigma^2}\right)}.\numberthis\label{eq_27}
\end{align*}
The above inequality provides the upper bound for the norm of reproducing kernel $K_{\bmz}^\sigma$ for the RKHS $H_{\sigma,1,\Cn}$. Now, we provide the lower bound for the same as follows:
\begin{center}
    {\text{\textsc{Inequality (1)}}}
\end{center}
To establish {\text{\textsc{Inequality (1)}}}, we define a function $\mathfrak{g}_1:\mathbb{R}_+\cup\left\{0\right\}\to\mathbb{R}_+\cup\left\{0\right\}$ by $\mathfrak{g}_1\left(x\right)=\log(1+x)-\frac{x}{1+x}$ for $x\in\mathbb{R}_+\cup\left\{0\right\}$. Then a simple calculation demonstrates that $\mathfrak{g}_1(x)\geq0$ whenever $x\in\mathbb{R}_+\cup\left\{0\right\}$ and hence we can easily conclude now that $\mathfrak{g}_1\left(\frac{\zeta^2}{j^2\pi^2}\right)>0$ whenever $j\geq1.$ Therefore,  $\frac{\zeta^2}{j^2\pi^2+\zeta^2}<\log\left(1+\frac{\zeta^2}{j^2\pi^2}\right)$. Hence, taking the summation of this over $j\in\mathbb{Z}_+$, we have
\begin{align*}
    \log\frac{\sinh{\zeta}}{\zeta}=&\sum_{j=1}^\infty\log\left(1+\frac{\zeta^2}{j^2\pi^2}\right)\\>&\sum_{j=1}^\infty\frac{\zeta^2}{j^2\pi^2+\zeta^2}\\=&\frac{\zeta^2}{\pi^2}\sum_{j=1}^\infty\frac{1}{j^2+\frac{\zeta^2}{\pi^2}}\\
    =&\frac{\zeta^2}{\pi^2}\left[\frac{1}{2}\left[\frac{\pi}{\frac{\zeta}{\pi}}\coth{\left(\frac{\zeta}{\pi}\pi\right)}-\frac{1}{\frac{\zeta^2}{\pi^2}}\right]\right]\quad\textit{(use \cite[Page 128, Probelm-6]{stein2010complex})}\\
    =&\frac{1}{2}\left[\zeta\coth{\zeta}-1\right].
\end{align*}
Again, taking the exponentiation of above yields $\exp\left(\frac{1}{2}\left[\zeta\coth{\zeta}-1\right]\right)<\frac{\sinh{\zeta}}{\zeta}$ and with $\zeta\mapsto\nicefrac{\|\bm{z}\|_{2}}{\sigma}$, we have finally:
\begin{align*}
    \exp\left(\frac{1}{2}\left[\frac{\|\bm{z}\|_{2}}{\sigma}\coth{\frac{\|\bm{z}\|_{2}}{\sigma}}-1\right]\right)<\frac{\sinh{\dfrac{\|\bm{z}\|_{2}}{\sigma}}}{\dfrac{\|\bm{z}\|_{2}}{\sigma}}=\|K_{\bm{z}}^\sigma\|^2.\numberthis\label{eq_28}
\end{align*}
Combining \eqref{eq_27} and \eqref{eq_28} produces the desired result.
\end{proof}
It is now an easy exercise to observe that by taking the square-root in \eqref{eq_26}, following can be achieved,
\begin{align*}
    \exp\frac{1}{4}\left[\frac{\|\bm{z}\|_{2}}{\sigma}\coth{\frac{\|\bm{z}\|_{2}}{\sigma}}-1\right]<
\|K_{\bm{z}}^\sigma\|<\exp{\frac{\|\bm{z}\|_{2}^2}{12\sigma^2}}.
\end{align*}
\section{Koopman Operators on RKHS from the Laplacian measure}\label{section_Koopman}
We have already given the definition of Koopman or \emph{composition} operators in \autoref{def_KoopmanOperators} between the Hilbert space given by $L^2\left(\mu\right)$ in the context of dynamical system under the additional assumption of sampling-flow assumption. Traditionally, the study of the Koopman operators over the Hilbert space has been at the center stage of core operator-theoretic analysis, where mathematicians try to understand the function theoretic properties of the symbol of the Koopman operator that may impact it and vice-versa (cf. \cite{bayart2010parabolic,bayart2011composition,shapiro2012composition,cowen2019composition,chacon2007composition,doan2017composition}).

With $L^2-$measure as the usual Gaussian measure as $e^{-\sigma|\bm{z}|^2}$ (\emph{2nd entry in \autoref{table-1exponentialtype}}), we get the very-much-celebrated \emph{\textbf{Bergman-Segal-Fock space}} $\mathcal{F}^2\left(\mathbb{C}^n\right)$ (cf. \cite{janson1987hankel,zhu2012analysis}), where the Koopman operators $\mathcal{K}_\varphi$ induced by a holomorphic function $\varphi:\mathbb{C}^n\to\mathbb{C}^n$ have been completely characterised by \cite{carswell2003composition}. 

Their results are given as follows for $\mathcal{K}_\varphi:\mathcal{F}^2\left(\mathbb{C}^n\right)\to\mathcal{F}^2\left(\mathbb{C}^n\right)$:
\begin{theorem}[Koopman operators over the Bergman-Segal-Fock space of holomorphic functions \cite{carswell2003composition}]\label{theorem_CMS}
    Suppose $\varphi:\mathbb{C}^n\to\mathbb{C}^n$ be a holomorphic mapping. Then
    \begin{enumerate}
        \item $\mathcal{K}_{\varphi}$ is \textsc{\textbf{bounded}} on $\mathcal{F}^2\left(\mathbb{C}^n\right)$ if and only if $\varphi(\bm{z})=A\bm{z}+b$ where $A$ is an $n\times n$ matrix with $\|A\|\leq1$ and $b$ is $n\times1$ complex vector such that $\langle A\zeta,b\rangle=0$ whenever $|A\zeta|=|\zeta|$.
        \item $\mathcal{K}_\varphi$ is \textbf{\textsc{compact}} on $\mathcal{F}^2\left(\mathbb{C}^n\right)$ if and only if $\varphi(\bm{z})=A\bm{z}+b$ where $\|A\|<1$ and $b$ is any $n\times1$ complex vector.
    \end{enumerate}
\end{theorem}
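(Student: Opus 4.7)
The approach I would take mirrors the reproducing-kernel machinery used for $H_{\sigma,1,\Cn}$ earlier in the paper. The Fock space $\mathcal{F}^2(\Cn)$ has reproducing kernel $K_w(z) = \exp\langle z, w\rangle_{\Cn}$ with $\|K_w\|^2 = \exp(\|w\|_2^2)$, and a routine calculation using the reproducing property gives $\koop^* K_w = K_{\varphi(w)}$ whenever $\koop$ is bounded. From $\|\koop^* K_w\| \le \|\koop\|\,\|K_w\|$ one extracts the fundamental growth inequality $\|\varphi(w)\|_2^2 - \|w\|_2^2 \le 2\log\|\koop\|$ for all $w \in \Cn$. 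The first step is to squeeze the form of $\varphi$ out of this bound: each component of $\varphi$ is entire on $\Cn$ with $\|\varphi(w)\|_2 = O(\|w\|_2)$, so by the multivariate Cauchy estimates (equivalently, Liouville for entire functions of linear growth) each component is affine, giving $\varphi(w) = Aw + b$.

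Next I would plug this affine form back into the growth bound: $\|Aw\|_2^2 + 2\operatorname{Re}\langle Aw, b\rangle + \|b\|_2^2 - \|w\|_2^2 \le C$ must hold for all $w$. Fixing a direction $\zeta$ and letting $w = t\zeta$ with $t \in \mathbb{R}$ large, the leading-order term $t^2(\|A\zeta\|_2^2 - \|\zeta\|_2^2)$ forces $\|A\| \le 1$. In the critical case $|A\zeta| = |\zeta|$ the quadratic part vanishes identically in $t$, so the linear term $2t\operatorname{Re}\langle A\zeta, b\rangle$ must stay bounded as $t \to \pm\infty$, giving $\operatorname{Re}\langle A\zeta, b\rangle = 0$; replacing $\zeta$ by $i\zeta$ eliminates the imaginary part as well, so $\langle A\zeta, b\rangle = 0$. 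This handles the necessity half of (1).

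For sufficiency in (1) I would diagonalise $A$ via its singular value decomposition and convert $\|\koop f\|^2 = \int_{\Cn} |f(Az+b)|^2 e^{-\|z\|_2^2} dV(z)$ into a product of one-variable Gaussian integrals. In directions whose singular value is strictly less than $1$ a clean change of variables gives convergence; in directions whose singular value equals $1$, the orthogonality $\langle A\zeta, b\rangle = 0$ kills the linear cross-term in the Gaussian exponent, so completion-of-the-square still yields a finite bound with an explicit operator-norm estimate. For the compactness part (2), the direction ``compact $\Rightarrow \|A\| < 1$'' follows from \autoref{lemma_weaklyconverging}: since $K_w/\|K_w\|$ converges weakly to $0$ as $\|w\|_2 \to \infty$, compactness of $\koop^*$ forces $\|\koop^* K_w\|/\|K_w\| \to 0$, i.e.\ $\|\varphi(w)\|_2^2 - \|w\|_2^2 \to -\infty$, which for affine $\varphi$ is possible only if $\|A\| < 1$ (no unit singular values survive). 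The converse is obtained by approximating $\koop$ in operator norm by finite-rank truncations against the monomial orthonormal basis of $\mathcal{F}^2(\Cn)$, where strict contractivity of $A$ combined with Gaussian decay controls the tail.

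The main obstacle I expect is the sufficiency half of the boundedness statement in the boundary case $\|A\| = 1$: the orthogonality $\langle A\zeta, b\rangle = 0$ on unit singular vectors is exactly what is needed to beat the linear term in the Gaussian exponent, but executing the integral estimate rigorously demands separating the unitary invariant subspace of $A$ from the strictly contractive part and handling mixed-direction integrals, where the cross-terms between the two parts of $b$ (relative to this decomposition) must be shown not to produce divergent contributions.
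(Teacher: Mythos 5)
The paper does not prove this theorem at all: it is imported verbatim from Carswell--MacCluer--Schuster \cite{carswell2003composition} as motivation for the analogous results on $H_{\sigma,1,\Cn}$, so there is no in-paper proof to compare against. Judged on its own, your sketch reproduces the standard published argument in its essential structure, and it is the same template the paper later adapts to the Laplacian RKHS (adjoint action on kernels, growth inequality $\|\varphi(w)\|_2^2-\|w\|_2^2\le 2\log\|\koop\|$, Cauchy estimates forcing affineness, the $z=t\zeta$ substitution extracting $\|A\|\le 1$ and the orthogonality condition, and weak-to-norm convergence of normalized kernels for compactness). Your necessity arguments for both (1) and (2) are correct as written.

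Two points in the sufficiency directions deserve flagging. First, your change of variables $u=Az+b$ in $\int_{\Cn}|f(Az+b)|^2e^{-\|z\|_2^2}\,dV(z)$ presupposes $A$ invertible; the theorem allows singular $A$ (indeed $\|A\|<1$ in part (2) includes $A$ with nontrivial kernel), and in that case one must replace the substitution by the plurisubharmonic sub-mean-value property of $|f|^2$ to average over the missing directions. This is a genuine step in the original proof that your outline omits. Second, your completion-of-the-square plan in the boundary case is sound, but the clean way to see it is to diagonalize via the SVD, observe that the orthogonality hypothesis forces $b_j=0$ exactly on the coordinates with singular value $1$, and then verify the pointwise density inequality $e^{-\sum_j|u_j-b_j|^2/s_j^2}\le Ce^{-\|u\|_2^2}$ coordinate by coordinate; the mixed cross-terms you worry about do not arise once the unitary reduction is made, because the Gaussian weight and the Fock norm are both unitarily invariant. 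With those two repairs the proposal is a faithful reconstruction of the cited proof.
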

We take the motivation from \autoref{theorem_CMS} to address the concern of the operator-theoretic characterisation for the Koopman operators over the newly-developed RKHS $H_{\sigma,1,\mathbb{C}^n}$ out of the normalized Laplacian measure. 
\subsection{Koopman operators over RKHS \texorpdfstring{$H_{\sigma,1,\mathbb{C}^n}$}{}}
In this subsection, we recall that if $\varphi:\mathbb{C}^n\to\mathbb{C}^n$ is a holomorphic function in which every coordinate function of it are holomorphic from $\Cn\to\mathbb{C}$, then the Koopman operator induced by $\varphi$ is given as $\mathcal{K}_{\varphi}$ which takes the mapping from the domain of itself in $H_{\sigma,1,\mathbb{C}^n}$ to itself. Recall that $H_{\sigma,1,\mathbb{C}^n}$ is the RKHS whose reproducing kernel function is given as $K_{\bm{w}}^\sigma\left(\bm{z}\right)=\frac{\sinh{\left(\sqrt{\nicefrac{\langle\bm{z},\bm{w}\rangle_{\mathbb{C}^n}}{\sigma^2}}
    \right)}}{\sqrt{\nicefrac{\langle\bm{z},\bm{w}\rangle_{\mathbb{C}^n}}{\sigma^2}}}.$ We provide the definition of the Koopman operators in the setting of the newly developed RKHS $H_{\sigma,1,\mathbb{C}^n}$ as follows.
\begin{dfn}
    Let $\varphi:\mathbb{C}^n\to\mathbb{C}^n$ be a holomorphic function in which every coordinate function of it are holomorphic functions from $\Cn\to\mathbb{C}$. Then, the Koopman operator induced by $\varphi$ is denoted by $\mathcal{K}_{\varphi}:\mathcal{D}\left(\mathcal{K}_\varphi\right)\subset H_{\sigma,1,\mathbb{C}^n}\to H_{\sigma,1,\mathbb{C}^n}$ and is the linear operator defined by
    \begin{align*}
        \mathcal{K}_{\varphi}(f)\coloneqq f\circ\varphi
    \end{align*}
    and the domain of $\mathcal{K}_\varphi$ is $\mathcal{D}\left(\mathcal{K}_\varphi\right)$ is given as
\begin{align*}
    \mathcal{D}\left(\mathcal{K}_\varphi\right)\coloneqq\left\{f\in H_{\sigma,1,\mathbb{C}^n}:f\circ\varphi\in H_{\sigma,1,\mathbb{C}^n}\right\}.
\end{align*}
\end{dfn}
As we learn that the Koopman operators are closed, then one can employ the closed graph theorem to result into its boundedness and therefore we have the well-defined adjoint relationship. In this case, the adjoint of the Koopman operator over the RKHS $H_{\sigma,1,\mathbb{C}^n}$ is given in the following lemma. 
    \begin{lemma}\label{lemma3.3}
        Let $\varphi:\mathbb{C}^n\to\mathbb{C}^n$ be the holomorphic mapping with every coordinate function as holomorphic function from $\Cn\to\mathbb{C}$. Then, the Koopman operator $\mathcal{K}_{\varphi}:\mathcal{D}\left(\mathcal{K}_\varphi\right)\to H_{\sigma,1,\mathbb{C}^n}$ induced by $\varphi$ satisfies the following adjoint relation with the reproducing kernel $K_{\bm{z}}^\sigma$ of RKHS $H_{\sigma,1,\mathbb{C}^n}$:
    \begin{align*}
        \mathcal{K}_{\varphi}^*K_{\bm{z}}^\sigma=K_{\varphi\left(\bm{z}\right)}^\sigma.
    \end{align*}
    \end{lemma}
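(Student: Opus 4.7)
The plan is to use the reproducing property of $K_{\bm{z}}^\sigma$ in tandem with the defining adjoint relation $\langle \mathcal{K}_\varphi f, g\rangle = \langle f, \mathcal{K}_\varphi^* g\rangle$. First I would fix an arbitrary $f \in \mathcal{D}(\mathcal{K}_\varphi)$ and a point $\bm{z}\in\mathbb{C}^n$, then compute $\langle f, \mathcal{K}_\varphi^* K_{\bm{z}}^\sigma\rangle_{\sigma,1,\mathbb{C}^n}$ by pushing $\mathcal{K}_\varphi$ over to $f$, giving $\langle f\circ\varphi, K_{\bm{z}}^\sigma\rangle_{\sigma,1,\mathbb{C}^n}$. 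Applying the reproducing property (\textsc{rp}) of $K_{\bm{z}}^\sigma$ to the function $f\circ\varphi \in H_{\sigma,1,\mathbb{C}^n}$ collapses this inner product to the point evaluation $(f\circ\varphi)(\bm{z}) = f(\varphi(\bm{z}))$. One more invocation of \textsc{rp}, this time applied to $f$ at the point $\varphi(\bm{z})\in\mathbb{C}^n$, rewrites $f(\varphi(\bm{z}))$ as $\langle f, K_{\varphi(\bm{z})}^\sigma\rangle_{\sigma,1,\mathbb{C}^n}$. Chaining these equalities yields
\begin{equation*}
\langle f, \mathcal{K}_\varphi^* K_{\bm{z}}^\sigma\rangle_{\sigma,1,\mathbb{C}^n} = \langle f, K_{\varphi(\bm{z})}^\sigma\rangle_{\sigma,1,\mathbb{C}^n} \quad \text{for every } f \in \mathcal{D}(\mathcal{K}_\varphi).
\end{equation*}
Since $\mathcal{D}(\mathcal{K}_\varphi)$ is dense in $H_{\sigma,1,\mathbb{C}^n}$ (it contains every polynomial, hence the orthonormal basis $\{\mathbf{e}_{N_1}\otimes\cdots\otimes\mathbf{e}_{N_n}\}$ constructed earlier), the uniqueness of Riesz representation gives $\mathcal{K}_\varphi^* K_{\bm{z}}^\sigma = K_{\varphi(\bm{z})}^\sigma$.

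The main subtleties are not in the three-line computation itself but in the legitimacy of the adjoint. The statement implicitly presumes $\mathcal{K}_\varphi$ is densely defined (so that $\mathcal{K}_\varphi^*$ exists) and that $K_{\bm{z}}^\sigma$ belongs to $\mathcal{D}(\mathcal{K}_\varphi^*)$ for every $\bm{z}$. The densely-defined part follows from the remark above about polynomials lying in $\mathcal{D}(\mathcal{K}_\varphi)$. For membership of $K_{\bm{z}}^\sigma$ in $\mathcal{D}(\mathcal{K}_\varphi^*)$, I would note that the linear functional $f \mapsto \langle \mathcal{K}_\varphi f, K_{\bm{z}}^\sigma\rangle = (f\circ\varphi)(\bm{z}) = f(\varphi(\bm{z}))$ is the evaluation functional at $\varphi(\bm{z})$ composed with the identity on $\mathcal{D}(\mathcal{K}_\varphi)$; by the RKHS evaluation-functional continuity from Definition~\ref{dfn_RKHS}, this functional is bounded with norm $\|K_{\varphi(\bm{z})}^\sigma\|$, which is precisely the defining criterion for $K_{\bm{z}}^\sigma$ to lie in $\mathcal{D}(\mathcal{K}_\varphi^*)$.

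The hard part, to the extent there is one, is therefore purely domain-theoretic rather than computational: verifying the adjoint is well-defined on the kernel sections. Once that is in hand the chain of reproducing-property applications is immediate, and no growth estimate on $\varphi$ (e.g.\ the affine restriction appearing in Theorem~\ref{theorem_CMS}) is needed for the kernel-to-kernel adjoint identity itself. I would present the proof in the order: (i) note $\mathcal{D}(\mathcal{K}_\varphi)$ is dense and $K_{\bm{z}}^\sigma \in \mathcal{D}(\mathcal{K}_\varphi^*)$; (ii) run the three-step reproducing-property calculation; (iii) conclude by density/uniqueness of the Riesz representative.
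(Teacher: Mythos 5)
Your proposal is correct and follows essentially the same route as the paper: the paper's proof is exactly the one-line chain $\langle f,\mathcal{K}_{\varphi}^*K_{\bm{z}}^\sigma\rangle=\langle\mathcal{K}_{\varphi}f,K_{\bm{z}}^\sigma\rangle=f(\varphi(\bm{z}))=\langle f,K_{\varphi(\bm{z})}^\sigma\rangle$ followed by the conclusion. The only difference is that you explicitly justify the domain-theoretic prerequisites (dense definition of $\mathcal{K}_\varphi$ and membership of $K_{\bm{z}}^\sigma$ in $\mathcal{D}(\mathcal{K}_\varphi^*)$), which the paper leaves implicit and only touches on in the remark following the lemma; this added care is welcome but does not change the argument.
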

    \begin{proof}
        Let $f\in H_{\sigma,1,\mathbb{C}^n}$ and pick an arbitrary $\bm{z}\in\mathbb{C}^n$, then
        \begin{align*}
            \langle f,\mathcal{K}_{\varphi}^*K_{\bm{z}}^\sigma\rangle=\langle\mathcal{K}_{\varphi}f,K_{\bm{z}}^\sigma\rangle=\mathcal{K}_\varphi f(\bm{z})=f\left(\varphi\left(\bm{z}\right)\right)=\langle f,K_{\varphi\left(\bm{z}\right)}^\sigma\rangle.
        \end{align*}
        Hence, the desired result is achieved.
    \end{proof}
    \autoref{lemma3.3} makes us realize that the set of reproducing kernel $\left\{K_{\bm{z}}^\sigma:\bm{z}\in\Cn\right\}$ is invariant under the adjoint of $\mathcal{K}_\varphi$ \cite[Chapter 1]{cowen1983composition}. Additionally the relation defined in the above lemma provides the unique relationship of $\koop^*K_{\bmz}^\sigma$ via the inner product of the RKHS $H_{\sigma,1,\Cn}$ and hence we can have the Koopman operator $\koop$ as to be densely defined over the RKHS $H_{\sigma,1,\Cn}$ \cite[Chapter 13, Page 348]{rudin1991functional} and also the adjoint of the Koopman operator is now close in the RKHS $H_{\sigma,1,\Cn}$ \cite[Theorem 13.9]{rudin1991functional}. 
    \subsection{Boundedness of Koopman operators over RKHS \texorpdfstring{$H_{\sigma,1,\mathbb{C}^n}$}{}}
    We begin by providing a preparatory result that will be further used in upcoming proves.
    \begin{lemma}\label{lemma_3.3}
Let $\Psi:\mathbb{C}^n\to\mathbb{C}$ be holomorphic on a complex domain containing the closed unit ball.
If $\Psi(\bmz)=\sum_{|j|=0}^\infty a_jz^j$, then
\[(2\pi)^{-n}\int_{-\pi}^{\pi}\cdots \int_{-\pi}^{\pi} |\Psi(\bmz)|^2 d{\bm{\vartheta}}=\sum_{|j|=0}|a_j|^2r^{2j},\quad a_j\in \mathbb{C}.\]
\end{lemma}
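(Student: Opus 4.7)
The plan is to prove this by computing the integral in polar coordinates on the distinguished boundary of the polydisc and then invoking the orthogonality of the characters $e^{im\vartheta}$ on the torus. First, I would write $\bmz = (r_1 e^{i\vartheta_1},\ldots, r_n e^{i\vartheta_n})$ so that $\bmz^{j} = r^{j} e^{i j\cdot\bm{\vartheta}}$, where $r^{j} = r_1^{j_1}\cdots r_n^{j_n}$ and $j\cdot\bm{\vartheta} = j_1\vartheta_1+\cdots+j_n\vartheta_n$. Substituting this into the power series expansion gives $\Psi(\bmz)=\sum_{|j|=0}^\infty a_j r^{j} e^{i j\cdot\bm{\vartheta}}$.

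Next, I would form the modulus squared by multiplying $\Psi$ with its conjugate,
\begin{align*}
|\Psi(\bmz)|^2 = \Psi(\bmz)\overline{\Psi(\bmz)} = \sum_{|j|=0}^\infty\sum_{|k|=0}^\infty a_j \overline{a_k}\, r^{j+k}\, e^{i(j-k)\cdot\bm{\vartheta}},
\end{align*}
and then integrate term by term against $d\bm{\vartheta}$ on $[-\pi,\pi]^n$. Applying Fubini and using the orthogonality relation $(2\pi)^{-n}\int_{[-\pi,\pi]^n} e^{i(j-k)\cdot\bm{\vartheta}}\, d\bm{\vartheta} = \delta_{jk}$ kills every off-diagonal term, leaving only the diagonal contribution $\sum_{|j|=0}^\infty |a_j|^2 r^{2j}$, which is exactly the claimed identity.

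The only genuinely delicate step, and the main obstacle, is the term-by-term integration. Since $\Psi$ is holomorphic on an open set containing the closed unit ball, by shrinking slightly we may assume $\Psi$ extends holomorphically to a neighborhood of the closed polydisc $\overline{\mathbb{D}}_r^n = \{\bmz : |z_k|\leq r_k\}$ for the prescribed radii $r = (r_1,\ldots,r_n)$ (understood to lie inside the domain of holomorphy). On such a polydisc the multivariate Taylor series converges absolutely and uniformly, so $\sum_{j}|a_j| r^{j}<\infty$, and consequently the Cauchy product $\sum_{j,k}|a_j\overline{a_k}|\,r^{j+k} e^{i(j-k)\cdot\bm\vartheta}$ is dominated by $\bigl(\sum_{j}|a_j| r^{j}\bigr)^{2}<\infty$ uniformly in $\bm{\vartheta}$. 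This dominated convergence/absolute summability bound legitimately justifies swapping the integral with the double sum, after which the orthogonality computation sketched above finishes the proof.
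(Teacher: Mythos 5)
Your proposal is correct and follows essentially the same route as the paper's own proof: polar coordinates on the distinguished boundary, expansion of $|\Psi|^2$ as a double sum, and the orthogonality relation $(2\pi)^{-n}\int_{[-\pi,\pi]^n}e^{i(j-k)\cdot\bm{\vartheta}}\,d\bm{\vartheta}=\delta_{jk}$ to kill the off-diagonal terms. The only difference is that you explicitly justify the term-by-term integration via absolute and uniform convergence of the Taylor series on a compact polydisc, a step the paper's proof passes over in silence; this is a welcome addition rather than a divergence in method.
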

 \begin{proof}
 Recall that $(2\pi)^{-1}\int_{-\pi}^\pi e^{i(j-k)\vartheta}\, d\vartheta = \delta_{j,k}$. 
 As $\Psi(\bmz)=\sum_{|j|=0}a_j\bmz^j$, this implies that we have following consequences:
 \begin{align*}
     |\Psi(\bmz)|^2&=\Psi(\bmz)\overline{\Psi(\bmz)}\\
     &=\sum_{|j|=0}\sum_{|k|=0}a_j\overline{a_k}z^j\overline{z}^k\\
     &=\sum_{|j|=0}\sum_{|k|=0}a_j\overline{a_k}r^{j+k}e^{i(j-k)\theta}\\
     &=\sum_{|j|=0}\sum_{|k|=0}a_j\overline{a_k}r_1^{j_1+k_1}\cdots r_n^{j_n+k_n}\left(\prod_{l=1}^ne^{i(j_l-k_l)\theta_l}\right).
     \end{align*}
     So, 
     \begin{align*}
     &\int_{-\pi}^\pi\cdots\int_{-\pi}^\pi|\Psi(\bmz)|^2d\bm{\vartheta}\\
     =&\sum_{|j|=0}\sum_{|k|=0}a_j\overline{a_k}r_1^{j_1+k_1}\cdots r_n^{j_n+k_n}\overbrace{\int_{-\pi}^\pi\cdots\int_{-\pi}^\pi}^{\text{$l-$}times} \left(\prod_{l=1}^ne^{i(j_l-k_l)\vartheta_l}d\vartheta_l\right)\\
     =&\sum_{|j|=0}\sum_{|k|=0}a_j\overline{a_k}r_1^{j_1+k_1}\cdots r_n^{j_n+k_n}[(2\pi)^n\delta_{j_1k_1}\cdots\delta_{j_nk_n}]\\
    =&(2\pi)^n\sum_{|j|=0}|a_j|^2r^{2j}.
 \end{align*}
 Therefore, multiplying by $(2\pi)^{-n}$ in the last equality furnishes the desired proof.
 \end{proof}
 \begin{proposition}[Jensen's convex inequality]\label{prpstn_Jensen's Inequality}
Let $(\Omega, \Sigma, \mu)$ be a probability space, and $g$ a real-valued function that is $\mu$-integrable. If $\psi$ is a convex function then, 
\[\psi\left(\int_{\Omega} g\, d\mu\right)\leq \int_{\Omega} \psi\circ g \, d\mu.\]  
\end{proposition}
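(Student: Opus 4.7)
The plan is to reduce Jensen's inequality to the \emph{supporting line} (sub-gradient) characterization of convexity, and then to integrate that pointwise inequality against the probability measure $\mu$. The key observation is that since $\mu(\Omega)=1$ and $g$ is $\mu$-integrable, the real number $x_{0}\coloneqq\int_{\Omega}g\,d\mu$ is well-defined and lies in the effective domain of $\psi$ (which, being convex and real-valued on a convex set containing the essential range of $g$, has $x_{0}$ in its interior under the standing assumption that $\psi\circ g$ is integrable).

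First I would record the supporting-line lemma: for any convex $\psi\colon I\to\mathbb{R}$ on an open interval $I$ and any $x_{0}$ in the interior of $I$, the one-sided derivatives $\psi'_{-}(x_{0})$ and $\psi'_{+}(x_{0})$ exist and are finite, and for every $a\in[\psi'_{-}(x_{0}),\psi'_{+}(x_{0})]$ we have
\begin{equation*}
\psi(t)\;\geq\;\psi(x_{0})+a\,(t-x_{0})\qquad\text{for all }t\in I.
\end{equation*}
This is standard and follows from the monotonicity of difference quotients of a convex function; I would cite it rather than reprove it.

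Next, setting $x_{0}=\int_{\Omega}g\,d\mu$ and choosing any such slope $a$, I substitute $t=g(\omega)$ to obtain the pointwise inequality
\begin{equation*}
\psi\bigl(g(\omega)\bigr)\;\geq\;\psi(x_{0})+a\bigl(g(\omega)-x_{0}\bigr)\qquad\text{for every }\omega\in\Omega.
\end{equation*}
Integrating both sides against $\mu$, using linearity of the integral, the fact that $\mu(\Omega)=1$, and the definition of $x_{0}$, yields
\begin{equation*}
\int_{\Omega}\psi\circ g\,d\mu\;\geq\;\psi(x_{0})+a\!\left(\int_{\Omega}g\,d\mu-x_{0}\right)=\psi(x_{0})=\psi\!\left(\int_{\Omega}g\,d\mu\right),
\end{equation*}
which is exactly the desired inequality.

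The only genuine subtlety, and thus the main obstacle, is justifying that $x_{0}$ lies in the interior of the domain of $\psi$ so that a finite supporting slope $a$ exists; this is automatic when $\psi$ is real-valued on all of $\mathbb{R}$ (the case implicitly used in the paper's applications), but in general one handles the boundary case either by noting that if $x_{0}$ is a boundary point of a convex domain and $g$ is $\mu$-integrable with $\int g\,d\mu=x_{0}$, then $g=x_{0}$ almost surely and the inequality degenerates to equality. A secondary technical point is measurability of $\psi\circ g$, which follows because convex functions on an interval are continuous on its interior, hence Borel-measurable, so $\psi\circ g$ is $\Sigma$-measurable and the right-hand integral makes sense in $[-\infty,+\infty]$.
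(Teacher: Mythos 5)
Your proof is correct. The paper itself does not prove this proposition at all --- its ``proof'' is a one-line citation to Garnett's \emph{Bounded Analytic Functions} --- so you have supplied the standard argument that the paper leaves to the reference: the supporting-line (subgradient) characterization of convexity, $\psi(t)\geq\psi(x_0)+a(t-x_0)$, specialized to $t=g(\omega)$ and integrated against the probability measure, with the linear term vanishing because $x_0=\int_\Omega g\,d\mu$ and $\mu(\Omega)=1$. This is exactly the classical proof one would find in the cited source. Your attention to the two technical points --- measurability of $\psi\circ g$ via continuity of convex functions on the interior of their domain, and the degenerate boundary case where $x_0$ sits on the edge of the domain and $g$ must be a.s.\ constant --- is more care than the application in the paper requires (there $\psi(x)=x^\alpha$ on $\mathbb{R}_+$ with $\alpha\geq 1$, so everything is finite and real-valued), but it is correct and makes the argument self-contained.
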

\begin{proof}
See \cite[Lemma 6.1, Page 33]{garnett2007bounded}.
\end{proof}
\begin{lemma}\label{lemma_5.6}
    Let $\Xi:\Cn\to\Cn$ be a holomorphic mapping with $\Xi\equiv\left(\xi_1,\ldots,\xi_n\right)\in\Cn$, where each $\left\{\xi\right\}_{i=1,\ldots,n}$ are the coordinate functions of $\Xi$ from $\Cn\to\mathbb{C}$ which are holomorphic. As $\|\Xi\left(\bmz\right)\|_2$ be the Euclidean-norm in $\mathbb{C}^n$ for some $\bmz\in\Cn$, then following inequality is satisfied for any $\alpha\geq1$,
    \begin{align*}
        \left(\int_{r\mathbb{B}_n}\|\Xi\left(\bmz\right)\|_2^2dV(\bmz)\right)^\alpha\leq\int_{r\mathbb{B}_n}\|\Xi\left(\bmz\right)\|_2^{2\alpha}dV(\bmz),
    \end{align*}
    where $r\mathbb{B}_n=\left\{\bmz\in\Cn:\|\bmz\|_2\leq r\right\}$ for some $r>0$. 
\end{lemma}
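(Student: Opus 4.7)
The plan is to view the stated estimate as a direct application of Jensen's convex inequality (\autoref{prpstn_Jensen's Inequality}) to the non-negative measurable function $g(\bmz) = \|\Xi(\bmz)\|_2^2$ against the convex map $\psi(t) = t^{\alpha}$, which is convex on $[0,\infty)$ precisely because $\alpha\geq 1$.

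First, I would set up the correct probability space to which Jensen applies. Since Jensen's inequality in the form stated in \autoref{prpstn_Jensen's Inequality} requires $(\Omega,\Sigma,\mu)$ to be a probability space, I would normalize the Lebesgue volume on $r\mathbb{B}_n$ by its total mass, defining $d\tilde{V}(\bmz)\coloneqq V(r\mathbb{B}_n)^{-1}\,dV(\bmz)$, so that $\tilde{V}$ is a genuine probability measure on $r\mathbb{B}_n$. The function $\bmz\mapsto\|\Xi(\bmz)\|_2^2=\sum_{i=1}^n|\xi_i(\bmz)|^2$ is continuous (in fact plurisubharmonic, being a sum of $|\cdot|^2$ of holomorphic coordinate functions), hence measurable and integrable on the compact set $r\mathbb{B}_n$; this places us within the hypotheses of \autoref{prpstn_Jensen's Inequality}.

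Next, applying Jensen's inequality with $g=\|\Xi\|_2^2$ and $\psi(t)=t^{\alpha}$ yields
\begin{align*}
\left(\int_{r\mathbb{B}_n}\|\Xi(\bmz)\|_2^2\,d\tilde{V}(\bmz)\right)^{\alpha}\leq\int_{r\mathbb{B}_n}\|\Xi(\bmz)\|_2^{2\alpha}\,d\tilde{V}(\bmz).
\end{align*}
Unfolding the normalization constants on both sides gives
\begin{align*}
\frac{1}{V(r\mathbb{B}_n)^{\alpha}}\left(\int_{r\mathbb{B}_n}\|\Xi(\bmz)\|_2^2\,dV(\bmz)\right)^{\alpha}\leq\frac{1}{V(r\mathbb{B}_n)}\int_{r\mathbb{B}_n}\|\Xi(\bmz)\|_2^{2\alpha}\,dV(\bmz),
\end{align*}
which after multiplying through by $V(r\mathbb{B}_n)^{\alpha}$ produces the stated conclusion up to the volume factor $V(r\mathbb{B}_n)^{\alpha-1}$ on the right-hand side.

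The main subtlety, and the only real obstacle, is precisely this volume-normalization factor: the raw inequality as written in the lemma is Jensen's inequality only when $V(r\mathbb{B}_n)\leq 1$ (i.e., $\alpha-1\geq 0$ exponent makes the factor $\leq 1$), so I would either (i) absorb the factor by noting that the intended use in the subsequent boundedness proof for $\mathcal{K}_{\varphi}$ is for small $r$ where $V(r\mathbb{B}_n)\leq 1$, or (ii) state the proposition with the sharp Jensen bound carrying the constant $V(r\mathbb{B}_n)^{\alpha-1}$ explicitly. Either route leaves the essence of the argument identical: the convexity of $t\mapsto t^{\alpha}$ for $\alpha\geq 1$ combined with Jensen on the normalized ball.
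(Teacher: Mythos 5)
Your proposal follows the same route as the paper: both proofs are a one-line application of Jensen's convex inequality with $\psi_\alpha(t)=t^\alpha$, convex for $\alpha\geq 1$, applied to $g=\|\Xi\|_2^2$ on $r\mathbb{B}_n$. The difference is that you are more careful than the paper. The paper applies Jensen directly to the unnormalized volume measure $dV$ on $r\mathbb{B}_n$, but the hypothesis of the cited Jensen inequality is that $(\Omega,\Sigma,\mu)$ is a probability space; for a finite measure of total mass $m=V(r\mathbb{B}_n)$ the correct conclusion is $\left(\int g\,dV\right)^\alpha\leq m^{\alpha-1}\int g^\alpha\,dV$, which reduces to the stated inequality only when $m\leq 1$. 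Since $V(r\mathbb{B}_n)$ grows like $r^{2n}$ and the lemma is later invoked in a context where $r\to\infty$, the missing factor is not harmless as the lemma is literally stated; your option (ii) of carrying the explicit constant $V(r\mathbb{B}_n)^{\alpha-1}$ is the honest formulation. (Worth noting: in the actual application inside the key proposition the integration is performed against $d\bm{\vartheta}/(2\pi)^n$, which is a genuine probability measure on the torus, so the normalized form of Jensen is what is really needed there, and your normalized version is the one that matches that use.) In short, your argument is correct, it is essentially the paper's argument, and your added normalization step repairs a gap in the paper's own proof rather than introducing one.
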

\begin{proof}
    In order to prove the mentioned inequality in the above lemma, we will use the Jensen's convex inequality \autoref{prpstn_Jensen's Inequality}. Note that if we have a uni-variate function $\psi_\alpha:\mathbb{R}_+\to\mathbb{R}_{+}$ defined by $\psi_\alpha(x)=x^\alpha$ for some $\alpha\geq1$, then $\psi_\alpha$ is a convex function (follow \cite{boyd2004convex}). Now, as we see that $\|\Xi\left(\bmz\right)\|_2^2=\sum_{i=1}^n|\xi_i(\bmz)|^2$ is a real-valued function which is integrable with respect to $dV(\bmz)$ over the set $r\mathbb{B}_n$. Thus, we have
    \begin{align}\label{eq_39}
        \psi_\alpha\left(\int_{r\mathbb{B}_n}\|\Xi\left(\bmz\right)\|_2^2dV(\bmz)\right)=&\left(\int_{r\mathbb{B}_n}\|\Xi\left(\bmz\right)\|_2^2dV(\bmz)\right)^\alpha
    \end{align}
    by the definition of $\psi_\alpha$. On the other hand,
    \begin{align}\label{eq_40}
        \int_{r\mathbb{B}_n}\psi_\alpha\circ\|\Xi(\bmz)\|_2^2dV(\bmz)=\int_{r\mathbb{B}_n}\left(\|\Xi(\bmz)\|_2^2\right)^{\alpha}dV(\bmz)=\int_{r\mathbb{B}_n}\|\Xi(\bmz)\|_2^{2\alpha}dV(\bmz).
    \end{align}
  Combining the result of \eqref{eq_39}, \eqref{eq_40} together with the result of \autoref{prpstn_Jensen's Inequality}, we achieve the desired result. 
\end{proof}Before we provide a key proposition for the action of the Koopman operators over the RKHS $H_{\sigma,1,\Cn}$, we shall recall an interesting application of \emph{Cauchy's inequalities} or the maximum modulus principle, which usually is a standard complex analysis fact.
\begin{proposition}\label{prpstn_cauchyestimate}
    If $F$ is an entire function that satisfy $\sup_{|z|=R}|F(z)|\leq {A}R^k+B$ for all $R>0$ and for some integer $k\geq0$ and some constants ${A},B>0$, then $F$ is a polynomial of degree $\leq k$.
\end{proposition}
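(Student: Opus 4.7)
The plan is to expand $F$ as its globally convergent Taylor series and bound the Taylor coefficients via Cauchy's estimates, showing that all coefficients of order higher than $k$ must vanish. This is the standard Liouville-type argument generalized to polynomial growth.

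Since $F$ is entire, it admits a power series representation $F(z)=\sum_{n=0}^{\infty} a_n z^n$ which converges for every $z\in\mathbb{C}$. The first step is to invoke Cauchy's integral formula for the Taylor coefficients: for any $R>0$ and any $n\geq 0$,
\begin{align*}
a_n = \frac{1}{2\pi i}\oint_{|z|=R}\frac{F(z)}{z^{n+1}}\,dz.
\end{align*}
Taking absolute values and parametrizing the circle $|z|=R$, the standard $ML$-estimate together with the hypothesis $\sup_{|z|=R}|F(z)|\leq AR^k+B$ yields
\begin{align*}
|a_n| \;\leq\; \frac{\sup_{|z|=R}|F(z)|}{R^n} \;\leq\; \frac{AR^k+B}{R^n}.
\end{align*}

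Next, I would fix any integer $n$ with $n>k$ and let $R\to\infty$. Since $n-k\geq 1$, both $AR^{k-n}$ and $BR^{-n}$ tend to $0$, so the right-hand side above vanishes in the limit. Because the left-hand side $|a_n|$ does not depend on $R$, this forces $a_n=0$ for every $n>k$. Consequently, the Taylor expansion collapses to the finite sum $F(z)=\sum_{n=0}^{k}a_n z^n$, which is precisely a polynomial of degree at most $k$.

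There is essentially no obstacle here: the argument is routine once one recognizes that the growth bound $AR^k+B$ controls all Cauchy integrals uniformly in $R$. The only minor care point is to apply the estimate for \emph{every} $R>0$ (not merely some fixed $R$) so that the limit $R\to\infty$ can be taken after the bound is derived. Alternatively, one could prove the statement by applying the classical Liouville theorem to the function $G(z)=\bigl(F(z)-\sum_{n=0}^{k}a_n z^n\bigr)/z^{k+1}$ after checking it extends entirely to $\mathbb{C}$, but the direct Cauchy-estimate route above is the most transparent and is the one I would present.
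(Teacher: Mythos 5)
Your proof is correct: the Cauchy-estimate bound $|a_n|\leq (AR^k+B)/R^n$ followed by letting $R\to\infty$ for each $n>k$ is exactly the standard argument, and it is what the paper intends — the paper itself offers no written proof here but merely cites the corresponding exercise in Stein--Shakarchi, whose canonical solution is the one you gave. No gaps; the only care point you already flagged (the bound must hold for all $R>0$ before taking the limit) is handled correctly.
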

\begin{proof}
    Follow \cite[Chapter-3, Exercise-15(a), Page 105-106]{stein2010complex}.
\end{proof}
    We give a key proposition before we give the main theorem related to the boundedness of the Koopman operators over the RKHS $H_{\sigma,1,\mathbb{C}^n}$.
\begin{prpstn}\label{key proposition}
        Let there be a positive finite $M$ such that for a holomorphic function $\varphi:\mathbb{C}^n\to\mathbb{C}^n$, in which every coordinate function of $\varphi$ is holomorphic from $\Cn\to\mathbb{C}$. Let $\bm{z}\in\mathbb{C}^n$, and suppose following holds:
        \begin{align*}
            \left[\frac{\sinh{\left(\nicefrac{\|\varphi(\bm{z})\|_2}{\sigma}\right)}}{\nicefrac{\|\varphi(\bm{z})\|_2}{\sigma}}\right]^{\frac{1}{2}}\cdot\left[\frac{\sinh{\left(\nicefrac{\|\bm{z}\|_2}{\sigma}\right)}}{\nicefrac{\|\bm{z}\|_2}{\sigma}}\right]^{-\frac{1}{2}}<M,
        \end{align*}where $\sigma>0$. 
         If $0<\|\varphi\|_2\leq\pi\sigma$, then  $\varphi$ admits an affine structure on $\Cn$, that is, $\varphi(\bm{z})=\mathcal{A}\bm{z}+B$ where $\mathcal{A}\in\mathbb{C}^{n\times n}$ with $0<\|\mathcal{A}\|_2\leq1$ and $B$ is a complex vector in $\Cn$.
    \end{prpstn}
    \begin{proof}
        The validity of given inequality still holds even if we square it, therefore 
        \begin{align*}
            \left[\frac{\sinh{\left(\nicefrac{\|\varphi(\bm{z})\|_2}{\sigma}\right)}}{\nicefrac{\|\varphi(\bm{z})\|_2}{\sigma}}\right]\cdot\left[\frac{\sinh{\left(\nicefrac{\|\bm{z}\|_2}{\sigma}\right)}}{\nicefrac{\|\bm{z}\|_2}{\sigma}}\right]^{-1}<M^2.
        \end{align*}
        Now, if we take the logarithm of above, we have following
        \begin{align*}
            \log\left[\frac{\sinh{\left(\nicefrac{\|\varphi(\bm{z})\|_2}{\sigma}\right)}}{\nicefrac{\|\varphi(\bm{z})\|_2}{\sigma}}\right]<\log(M^2)+\log\left[\frac{\sinh{\left(\nicefrac{\|\bm{z}\|_2}{\sigma}\right)}}{\nicefrac{\|\bm{z}\|_2}{\sigma}}\right].
        \end{align*}
        We can now use the results of \autoref{lemma_kernel_lowerandupperbound} in the above inequality to result into following observations:
        \begin{align*}
            \frac{1}{2}\left[\frac{\|\varphi(\bm{z})\|_2}{\sigma}\coth\left({\frac{\|\varphi(\bm{z})\|_2}{\sigma}}\right)-1\right]<&\log(M^2)+\frac{\|\bm{z}\|_2^2}{6\sigma^2}\\
            \frac{\|\varphi(\bm{z})\|_2}{\sigma}\coth\left({\frac{\|\varphi(\bm{z})\|_2}{\sigma}}\right)<&2\log(M^2)+1+\frac{\|\bm{z}\|_2^2}{3\sigma^2}.\numberthis\label{eq_47r}
        \end{align*}
        Now, to further simplify the above inequality, we will simply employ the infinite series expansion of an entire $\coth(\bullet)$ which involves the presence of Bernoulli's number $\left\{B_j\right\}_{j\in\mathbb{W}}$; defined as $\coth{x}=\sum_{j=0}^\infty\frac{2^{2n}B_{2n}}{(2n)!}x^{2n-1}.$

The above equation can be explicitly written as $x\coth{x}=1+\frac{2^2B_2}{2!}x^2+\sum_{j=2}^\infty\frac{2^{2j}B_{2j}}{(2j)!}x^{2j}$ under the additional assumption of $0<|x|<\pi$. 
Here, for $j=1$, we have $B_2=\nicefrac{1}{6}$, so $x\mapsto\nicefrac{\|\varphi(\bm{\bm{z}})\|_2}{\sigma}$ in above yields:
\begin{align*}\numberthis\label{eq_50r}
    \frac{\|\varphi(\bm{z})\|_2}{\sigma}\coth\left({\frac{\|\varphi(\bm{z})\|_2}{\sigma}}\right)=1+\frac{1}{3}\left(\frac{\|\varphi(\bm{\bm{z}})\|_2}{\sigma}\right)^2+\sum_{j=2}^\infty\frac{2^{2n}B_{2n}}{(2n)!}\left(\frac{\|\varphi(\bm{\bm{z}})\|_2}{\sigma}\right)^{2j}.
\end{align*}
Using the result of \eqref{eq_50r} in \eqref{eq_47r} to have following:
\begin{align*}
    1+\frac{1}{3}\left(\frac{\|\varphi(\bm{\bm{z}})\|_2}{\sigma}\right)^2+\sum_{j=2}^\infty\frac{2^{2j}B_{2j}}{(2j)!}\left(\frac{\|\varphi(\bm{\bm{z}})\|_2}{\sigma}\right)^{2j}<&2\log(M^2)+1+\frac{\|\bm{z}\|_2^2}{3\sigma^2}\\
    \|\varphi(\bm{z})\|_2^2+(3\sigma^2)\sum_{j=2}^\infty\frac{2^{2j}B_{2j}}{(2j)!\sigma^{2j}}\|\varphi(\bm{\bm{z}})\|_2^{2j}<&2\log(M^2)+\|\bm{z}\|_2^2.\numberthis\label{eq_45r}
\end{align*}
Considering $\varphi\equiv\left(\varphi_1(\bmz),\ldots,\varphi_n(\bmz)\right)^\top\in\Cn$, where each $\left\{\varphi_i\right\}_{i=1,\ldots,n}$ is a coordinate function of $\varphi$ and is a holomorphic mapping from $\Cn\to\mathbb{C}$, then $\|\varphi(\bmz)\|_2^2=\sum_{i=1}^n|\varphi_i(\bmz)|^2$. Therefore, with $\bmz=\left(z_1,\ldots,z_n\right)^\top\in\Cn$ and $\|\bmz\|_2^2=\sum_{i=1}^n|z_i|^2$, we see that
\begin{align*}
    \left\{\sum_{i=1}^n\left[|\varphi_i(\bmz)|^2-|z_i|^2\right]\right\}+\left\{(3\sigma^2)\sum_{j=2}^\infty\frac{2^{2j}B_{2j}}{(2j)!\sigma^{2j}}\|\varphi(\bm{\bm{z}})\|_2^{2j}\right\}
    <2\log(M^2).
\end{align*}
Integrating above with respect to $\bm{\vartheta}$ on $r\mathbb{B}_n$ to have 
\begin{align*}\label{eq_45}
    &\left\{\int_{r\mathbb{B}_n}\sum_{i=1}^n\left[|\varphi_i(\bmz)|^2-|z_i|^2\right]\frac{d\bm{\vartheta}}{(2\pi)^n}\right\}\\+&\left\{(3\sigma^2)\sum_{j=2}^\infty\frac{2^{2j}B_{2j}}{(2j)!\sigma^{2j}}\int_{r\mathbb{B}_n}\|\varphi(\bm{\bm{z}})\|_2^{2j}\frac{d\bm{\vartheta}}{(2\pi)^n}\right\}<2\log(M^2)\numberthis.
\end{align*}
As here, the infinite summation starts with $j\geq2>1$, therefore by the application of \autoref{lemma_5.6}, we can have following conclusion
\begin{align*}
&\left\{\int_{r\mathbb{B}_n}\sum_{i=1}^n\left[|\varphi_i(\bmz)|^2-|z_i|^2\right]\frac{d\bm{\vartheta}}{(2\pi)^n}\right\}_{\scriptscriptstyle{(\spadesuit)}}\\+&\left\{(3\sigma^2)\sum_{j=2}^\infty\frac{2^{2j}B_{2j}}{(2j)!\sigma^{2j}{(2\pi)^{nj}}}\left(\int_{r\mathbb{B}_n}\|\varphi(\bm{\bm{z}})\|_2^{2}d\bm{\vartheta}\right)^j\right\}_{\scriptscriptstyle{(\clubsuit)}}\leq\text{LHS of \eqref{eq_45}}\\<&2\log(M^2).
\end{align*}
Here we are using $\left\{\bullet\right\}_{\scriptscriptstyle{(\spadesuit)}}$ and $\left\{\bullet\right\}_{\scriptscriptstyle{(\clubsuit)}}$ to provide smooth understanding of respective manipulations that are happening on respective quantities present inside the curly brackets. Therefore, the above inequality results into
\begin{align*}
    &\left\{\int_{r\mathbb{B}_n}\sum_{i=1}^n\left[|\varphi_i(\bmz)|^2-|z_i|^2\right]\frac{d\bm{\vartheta}}{(2\pi)^n}\right\}_{\scriptscriptstyle{(\spadesuit)}}\\+&\left\{(3\sigma^2)\sum_{j=2}^\infty\frac{2^{2j}B_{2j}}{(2j)!\sigma^{2j}{(2\pi)^{nj}}}\left(\int_{r\mathbb{B}_n}\|\varphi(\bm{\bm{z}})\|_2^{2}d\bm{\vartheta}\right)^j\right\}_{\scriptscriptstyle{(\clubsuit)}}<2\log(M^2).
\end{align*}
The above inequality can be simplified into following (term by term) in terms of $r$ along with the help of multi-index notation:
\begin{align*}
&\left\{\sum_{|j|=0}|a^k_j|^2r_1^{2j_1} \cdot \ldots \cdot r_n^{2j_n}-\sum_{\ell = 1}^n r_\ell^2\right\}_{\scriptscriptstyle{(\spadesuit)}}\\+&\left\{(3\sigma^2)\sum_{j=2}^\infty\frac{2^{2j}B_{2j}}{(2j)!\sigma^{2j}{(2\pi)^{nj}}}\left(\sum_{|j|=0}|a^k_j|^2r_1^{2j_1} \cdot \ldots \cdot r_n^{2j_n}\right)^j\right\}_{\scriptscriptstyle{(\clubsuit)}}<2\log(M^2).
\end{align*}
Note that, here we used a super-script of $k$ to indicate this a decomposition of the $k$-th component of the function $\varphi$. Further, if we let $e_i$ be the multi-index with a $1$ in the $i$-th spot and \emph{zeros} else where, then the above rearranges to:
\begin{align*}
    &\left\{\overbrace{\sum_{|j|=2}|a^k_j|^2r_1^{2j_1} \cdot \ldots \cdot r_n^{2j_n}}^{\text{$Q_1(r)$}} + |a^k_0|^2 + \sum_{\ell=0}^n (|a^k_{e_\ell}|^2-1)r^2_{\ell}\right\}_{\scriptscriptstyle{(\spadesuit)}}\\+&\left\{(3\sigma^2)\sum_{j=2}^\infty\frac{2^{2j}B_{2j}}{(2j)!\sigma^{2j}{(2\pi)^{nj}}}\left(\underbrace{\sum_{|j|=0}|a^k_j|^2r_1^{2j_1} \cdot \ldots \cdot r_n^{2j_n}}_{\text{$Q_2(r)$}}\right)^j\right\}_{\scriptscriptstyle{(\clubsuit)}}<2\log(M^2).
\end{align*}
This inequality is true for all $r=(r_1,\ldots, r_n)^\top\in \mathbb{R}_+^n$. We see that both quantities {\text{$Q_1(r)$}} and {\text{$Q_2(r)$}} grow in the \emph{Big-O} complexity rate, that is 
    ${\text{$Q_1(r)$}}\propto O(r)~\&~{\text{$Q_2(r)$}}\propto O(r^j)$ as $r\to\infty.$ Therefore, by the application of Cauchy's estimate (or the maximum modulus principle as in \autoref{prpstn_cauchyestimate}) in this inequality, we immediately conclude that $|a_j^k|=0$ for $j\geq2$ in ${\text{$Q_1(r)$}}$. Similarly, we also see that Cauchy's estimate (or the maximum modulus principle as in \autoref{prpstn_cauchyestimate}) forces to ${\text{$Q_2(r)$}}$ be $0$ as well. Hence, with the relabelling of the coordinate function of $\varphi$ as $\varphi_k(z)=a_{k,1}z_1+\cdots+ a_{k,n}z_n + b_k$. Thus, $\varphi(\bmz) = \mathcal{A}\bmz + B$ where, $\mathcal{A} = [a_{k,j}]_{k,j=1}^{n,n}$ and $B = (b_1, \ldots, b_n)^\top.$

    Now, that we have observed that both ${\text{$Q_1(r)$}}$ and ${\text{$Q_2(r)$}}$ are $0$ and $\varphi(\bmz) = \mathcal{A}\bmz + B$, we revisit \eqref{eq_45r} to have 
    \begin{align*}
    \|\mathcal{A}\bmz + B\|_2^2<&2\log(M^2)+\|\bmz\|_2^2\\
        \frac{\|\mathcal{A}\bmz + B\|_2^2}{\|\bmz\|_2^2}<&\frac{2\log(M^2)}{\|\bmz\|_2^2}+1\\
        \implies\lim_{\|\bmz\|\to\infty}\frac{\|\mathcal{A}\bmz + B\|_2^2}{\|\bmz\|_2^2}<&1.
    \end{align*}
    Now, suppose that $\|\mathcal{A}\zeta\|_2>\|\zeta\|_2=1$ for some $\zeta\in\Cn$ whose norm is $1$. Setting $\bmz=t\zeta$ and $t>0$ in above yields following:
\begin{align*}
    \lim_{t\to\infty}\frac{\left\|\mathcal{A}\zeta+\dfrac{1}{t}B\right\|_2}{\|\zeta\|_2}<1,
\end{align*}
which is a contradiction and therefore, $\|\mathcal{A}\|_2\leq1$.
    \end{proof} 
    \subsubsection{Boundedness of Koopman operators}
    \begin{dfn}Let $\sigma$ be a positive and finite real number. Let $\mathcal{K}_\varphi$ be the Koopman operator induced by holomorphic function $\varphi:\Cn\to\Cn$ acting over the RKHS $H_{\sigma,1,\Cn}$ whose reproducing kernel is given as $K_{\bmz}^{\sigma}$ at $\bmz\in\Cn$. We define
        \begin{align}\label{eq_48}
            \Pi_{\bmz}(\varphi;\sigma)
            \coloneqq\frac{\|\mathcal{K}_\varphi^*K_{\bmz}^\sigma\|^2}{\|K_{\bmz}^\sigma\|^2}.
        \end{align}
        Additionally, we also define the supremum of above over $\bmz\in\Cn$ as follows:
        \begin{align}\label{eq_49r}
            \Pi\left(\varphi;\sigma\right)
            \coloneqq\sup_{\bmz\in\Cn}\Pi_{\bmz}(\varphi;\sigma)=\sup_{\bmz\in\Cn}\frac{\|\mathcal{K}_\varphi^*K_{\bmz}^\sigma\|^2}{\|K_{\bmz}^\sigma\|^2}.
        \end{align}
    \end{dfn}
    Now, that we have defined two important quantities given in \eqref{eq_48} and \eqref{eq_49r} which essentially help characterizing the behaviour of the Koopman operators over the RKHS $H_{\sigma,1,\Cn}$ in terms of its reproducing kernel. We can now give an important inequality for the action of the Koopman operators on the normalized reproducing kernel $\bm{k}_{\bmz}^\sigma\coloneqq\nicefrac{K_{\bmz}^\sigma}{\|K_{\bmz}^\sigma\|}$ satisfying $\|\bm{k}_{\bmz}^\sigma\|=1$ at $\bmz\in\Cn$ where $\sigma>0$. This result is captured in the following lemma.
    \begin{lemma}\label{lemma_5.10}
        Let $\sigma>0$. Let $\varphi:\Cn\to\Cn$ be a holomorphic function over $\Cn$ in which every coordinate functions of $\varphi$ are holomorphic from $\Cn\to\mathbb{C}$. Consider the Koopman operator $\koop:\mathcal{D}\left(\koop\right)\to H_{\sigma,1,\Cn}$ induced by $\varphi$. If for some $\bmz\in\Cn$, $\bm{k}_{\bmz}^\sigma\in\mathcal{D}\left(\koop\right)$, then:
        \begin{align*}
            \sqrt{\Pi_{\bmz}\left(\varphi;\sigma\right)}\leq\|\koop \bm{k}_{\bmz}^\sigma\|.
        \end{align*}
    \end{lemma}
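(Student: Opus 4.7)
The plan is to reduce the asserted inequality to the comparison $\|\koop^* \bm{k}_{\bmz}^\sigma\| \leq \|\koop \bm{k}_{\bmz}^\sigma\|$ via the explicit adjoint formula from Lemma 3.3, and then to execute that comparison through the adjoint pairing together with the Cauchy--Bunyakowsky--Schwarz inequality.

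First I would invoke Lemma 3.3 to get $\koop^* K_{\bmz}^\sigma = K_{\varphi(\bmz)}^\sigma$. Dividing by $\|K_{\bmz}^\sigma\|$ gives $\koop^* \bm{k}_{\bmz}^\sigma = K_{\varphi(\bmz)}^\sigma/\|K_{\bmz}^\sigma\|$, and squaring the norm together with the definition \eqref{eq_48} yields
\[
\|\koop^* \bm{k}_{\bmz}^\sigma\|^2 \;=\; \frac{\|K_{\varphi(\bmz)}^\sigma\|^2}{\|K_{\bmz}^\sigma\|^2} \;=\; \frac{\|\koop^* K_{\bmz}^\sigma\|^2}{\|K_{\bmz}^\sigma\|^2} \;=\; \Pi_{\bmz}(\varphi;\sigma),
\]
so $\sqrt{\Pi_{\bmz}(\varphi;\sigma)} = \|\koop^* \bm{k}_{\bmz}^\sigma\|$. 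The assertion of the lemma therefore collapses to showing $\|\koop^* \bm{k}_{\bmz}^\sigma\| \leq \|\koop \bm{k}_{\bmz}^\sigma\|$.

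For the reduced inequality I would square and expand via the adjoint pairing; this is legitimate because the hypothesis $\bm{k}_{\bmz}^\sigma \in \mathcal{D}(\koop)$ combined with the explicit formula $\koop^* \bm{k}_{\bmz}^\sigma = K_{\varphi(\bmz)}^\sigma/\|K_{\bmz}^\sigma\|$ keeps all vectors in $H_{\sigma,1,\Cn}$:
\[
\|\koop^* \bm{k}_{\bmz}^\sigma\|^2 \;=\; \langle \koop^* \bm{k}_{\bmz}^\sigma, \koop^* \bm{k}_{\bmz}^\sigma\rangle \;=\; \langle \koop\koop^* \bm{k}_{\bmz}^\sigma, \bm{k}_{\bmz}^\sigma\rangle.
\]
Applying Cauchy--Bunyakowsky--Schwarz with $\|\bm{k}_{\bmz}^\sigma\|=1$ yields $\|\koop^* \bm{k}_{\bmz}^\sigma\|^2 \leq \|\koop\koop^* \bm{k}_{\bmz}^\sigma\|$. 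From here I would factor $\|\koop\koop^* \bm{k}_{\bmz}^\sigma\|$ as a product of $\|\koop^* \bm{k}_{\bmz}^\sigma\|$ and $\|\koop \bm{k}_{\bmz}^\sigma\|$, and cancel one copy of $\|\koop^* \bm{k}_{\bmz}^\sigma\|$ from each side to land on the target bound.

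The hard part will be precisely this final factorization. Pulling the scalar $1/\|K_{\bmz}^\sigma\|$ through the linear operator $\koop$ in $\koop(\koop^* \bm{k}_{\bmz}^\sigma) = \koop(K_{\varphi(\bmz)}^\sigma)/\|K_{\bmz}^\sigma\|$ naturally produces
\[
\|\koop\koop^* \bm{k}_{\bmz}^\sigma\| \;=\; \frac{\|K_{\varphi(\bmz)}^\sigma\|}{\|K_{\bmz}^\sigma\|}\,\|\koop \bm{k}_{\varphi(\bmz)}^\sigma\| \;=\; \|\koop^* \bm{k}_{\bmz}^\sigma\| \cdot \|\koop \bm{k}_{\varphi(\bmz)}^\sigma\|,
\]
i.e., the normalized kernel at $\varphi(\bmz)$ rather than at $\bmz$. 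To reconcile this with the statement I would need an auxiliary comparison $\|\koop \bm{k}_{\varphi(\bmz)}^\sigma\| \leq \|\koop \bm{k}_{\bmz}^\sigma\|$. The most natural route is to extract a monotonicity of $\bmw \mapsto \|\koop \bm{k}_{\bmw}^\sigma\|$ from the closed-form reproducing kernel in Theorem 4.6 and the two-sided norm bounds of Lemma 4.11 (which present $\|K_{\bmw}^\sigma\|^2$ as an explicit monotone function of $\|\bmw\|/\sigma$), and then couple this with a contraction on $\|\varphi(\bmz)\|$ coming from the affine form $\varphi(\bmz) = \mathcal{A}\bmz + B$ with $\|\mathcal{A}\|\leq 1$ established in Proposition 5.9. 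Supplying this monotonicity-plus-contraction bridge is the substantive step; the rest of the argument is a mechanical application of the adjoint identity and Cauchy--Schwarz.
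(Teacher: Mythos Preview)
Your reduction and the adjoint/Cauchy--Schwarz step are the paper's argument in a different packaging: the paper applies the point-evaluation inequality $|f(\bmz)|\leq\|K_{\bmz}^\sigma\|\,\|f\|$ to $f=\koop\bm{k}_{\varphi(\bmz)}^\sigma$, and since $(\koop\bm{k}_{\varphi(\bmz)}^\sigma)(\bmz)=\bm{k}_{\varphi(\bmz)}^\sigma(\varphi(\bmz))=\|K_{\varphi(\bmz)}^\sigma\|$, this yields exactly your intermediate bound $\sqrt{\Pi_{\bmz}(\varphi;\sigma)}\leq\|\koop\bm{k}_{\varphi(\bmz)}^\sigma\|$. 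The paper stops there; although its displayed chain writes $\|\koop\bm{k}_{\bmz}^\sigma\|$, the point-evaluation step only controls $\|\koop\bm{k}_{\varphi(\bmz)}^\sigma\|$, and the sole downstream use of the lemma (the lower essential-norm estimate) explicitly invokes $\|\koop\bm{k}_{\varphi(\bmz_M)}^\sigma\|$. So the printed statement carries a typo that your proposal is attempting to repair.

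Your proposed repair does not go through. The monotonicity-plus-contraction bridge needs (i) $\varphi$ affine with $\|\mathcal{A}\|\leq 1$, which is the \emph{conclusion} of Proposition~5.8 under a boundedness hypothesis Lemma~5.10 does not assume; (ii) $\|\varphi(\bmz)\|\leq\|\bmz\|$, which fails whenever $B\neq 0$ even for contractive $\mathcal{A}$; and (iii) monotonicity of $\bm{w}\mapsto\|\koop\bm{k}_{\bm{w}}^\sigma\|$ in $\|\bm{w}\|$, which Lemma~3.7 does not provide --- those bounds govern $\|K_{\bm{w}}^\sigma\|$, not the composed quantity $\|K_{\bm{w}}^\sigma\circ\varphi\|/\|K_{\bm{w}}^\sigma\|$. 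Items (ii)--(iii) are false in general. The correct resolution is to accept the inequality with $\bm{k}_{\varphi(\bmz)}^\sigma$ on the right, which is what both your argument and the paper's proof actually establish and what the subsequent application requires.
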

    \begin{proof}
        The proof of the above result involves the application of point-evaluation inequality for the RKHS $H_{\sigma,1,\Cn}$. Further details are given as follows:
        \begin{align*}
            \|\koop\bm{k}_{\bmz}^\sigma\|^2\|K_{\bmz}^\sigma\|^2\geq|\koop\bm{k}_{\varphi(\bmz)}^\sigma\left(\bmz\right)|^2=|\bm{k}_{\varphi\left(\bmz\right)}^{\sigma}\left(\varphi(\bmz)\right)|^2.
        \end{align*}
        As $|\bm{k}_{\varphi(\bmz)}^\sigma|=|\nicefrac{K_{\varphi(\bmz)}^\sigma}{\|K_{\varphi(\bmz)}^\sigma\|}|$, hence $|\bm{k}_{\varphi(\bmz)}^\sigma\left(\varphi(\bmz)\right)|=|\nicefrac{K_{\varphi(\bmz)}^\sigma\left(\varphi(\bmz)\right)}{\|K_{\varphi(\bmz)}^\sigma\|}|=|\nicefrac{\|K_{\varphi(\bmz)}^\sigma\|^2}{\|K_{\varphi(\bmz)}^\sigma\|}|=\|K_{\varphi(\bmz)}^\sigma\|.$ From this result and above, we have
        \begin{align*}
           \|\koop\bm{k}_{\bmz}^\sigma\|^2\|K_{\bmz}^\sigma\|^2\geq \|K_{\varphi(\bmz)}^\sigma\|^2=\|\koop^*K_{\bmz}^\sigma\|^2.
        \end{align*}
        Therefore, further dividing the above inequality by $\|K_{\bmz}^\sigma\|^2\neq0$ to have
        \begin{align*}
            \|\koop\bm{k}_{\bmz}^\sigma\|^2\geq\frac{\|\koop^*K_{\bmz}^\sigma\|^2}{\|K_{\bmz}^\sigma\|^2}=\Pi_{\bmz}\left(\varphi;\sigma\right).
        \end{align*}
        The desired result follows by taking the square-root of above. Hence proved.
    \end{proof}
    The following theorem provides the boundedness characterization for the Koopman operators $\koop$ induced by the holomorphic function $\varphi$.
    \begin{theorem}\label{theorem_boundedKoopmanoverRKHS}
        Let $\sigma>0$ and $\varphi:\Cn\to\Cn$ be a holomorphic function in which every coordinate function of $\varphi$ is holomorphic from $\Cn\to\mathbb{C}$. Let $\mathcal{K}_\varphi:\mathcal{D}\left(\mathcal{K}_\varphi\right)\to H_{\sigma,1,\Cn}$ be the Koopman operator induced by $\varphi$ over the RKHS $H_{\sigma,1,\Cn}$. Then, the Koopman operator $\mathcal{K}_\varphi$ acts boundedly over $H_{\sigma,1,\Cn}$ if and only $\varphi$ admits the affine structure, that is $\varphi(\bmz)=\mathcal{A}\bmz+B$ where $\|\mathcal{A}\|_2\leq1$ and $\Pi\left(\varphi;\sigma\right)<\infty$.
    \end{theorem}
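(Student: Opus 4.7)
The plan is to prove the two implications separately, leveraging \autoref{lemma_5.10} and \autoref{key proposition} for the forward direction, and a change-of-variables argument in the defining integral for the reverse.

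\textbf{Forward direction.} Suppose $\koop$ is bounded on $H_{\sigma,1,\Cn}$ with $\|\koop\|=M<\infty$. Since the normalized kernel $\bm{k}_{\bmz}^\sigma$ is a unit vector, \autoref{lemma_5.10} yields
\begin{align*}
\sqrt{\Pi_{\bmz}(\varphi;\sigma)}\;\leq\;\|\koop \bm{k}_{\bmz}^\sigma\|\;\leq\;\|\koop\|\;=\;M
\end{align*}
for every $\bmz\in\Cn$. Taking the supremum produces $\Pi(\varphi;\sigma)\leq M^2<\infty$, which is one of the required conclusions. Unpacking $\Pi_{\bmz}(\varphi;\sigma)$ using the adjoint identity $\koop^*K_{\bmz}^\sigma=K_{\varphi(\bmz)}^\sigma$ from \autoref{lemma3.3} together with the closed-form kernel-norm identity \eqref{eq_25} reproduces exactly the hypothesis of \autoref{key proposition}; applying that proposition delivers $\varphi(\bmz)=\mathcal{A}\bmz+B$ with $\|\mathcal{A}\|_2\leq1$.

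\textbf{Reverse direction.} Assume $\varphi(\bmz)=\mathcal{A}\bmz+B$ with $\|\mathcal{A}\|_2\leq1$ and $\Pi(\varphi;\sigma)<\infty$. The plan is to estimate $\|\koop f\|_{\sigma,1,\Cn}^2$ directly from the defining integral \eqref{eq_6_normf}. When $\mathcal{A}$ is invertible, substitute $\bm{w}=\mathcal{A}\bmz+B$; the complex Jacobian contributes $|\det_{\mathbb{C}}\mathcal{A}|^{-2}$. Because $\|\mathcal{A}\|_2\leq1$ forces $\|y\|_2\leq\|\mathcal{A}^{-1}y\|_2$ for every $y\in\Cn$, one gets the lower bound $\|\mathcal{A}^{-1}(\bm{w}-B)\|_2\geq\|\bm{w}\|_2-\|B\|_2$, and hence the weight transforms as $e^{-\|\mathcal{A}^{-1}(\bm{w}-B)\|_2/\sigma}\leq e^{\|B\|_2/\sigma}\,e^{-\|\bm{w}\|_2/\sigma}$. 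Combining these ingredients gives $\|\koop f\|^2\leq |\det_{\mathbb{C}}\mathcal{A}|^{-2}e^{\|B\|_2/\sigma}\|f\|^2$, establishing boundedness with an explicit norm estimate. When $\mathcal{A}$ is singular, direct substitution breaks down; I would then work on the dense subspace $\operatorname{span}\{K_{\bmz}^\sigma:\bmz\in\Cn\}$, where \autoref{lemma3.3} gives $\koop^*K_{\bmz}^\sigma=K_{\varphi(\bmz)}^\sigma$, so that for $f=\sum_j c_jK_{\bmz_j}^\sigma$ one has $\|\koop^*f\|^2=\sum_{j,k}c_j\overline{c_k}K^\sigma(\varphi(\bmz_j),\varphi(\bmz_k))$, and one extends $\koop^*$ by continuity after comparing this Gram sum against $\|f\|^2$ using $\Pi(\varphi;\sigma)<\infty$ and the affine structure of $\varphi$.

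\textbf{Main obstacle.} The forward direction is essentially a packaging of \autoref{lemma_5.10} and \autoref{key proposition}, and the invertible sub-case of the reverse direction is an elementary change of variables. The substantive difficulty I anticipate is the singular sub-case of the reverse direction: the pointwise-on-kernels quantity $\Pi(\varphi;\sigma)$ controls only diagonal entries of the Gram matrix, whereas boundedness of $\koop^*$ on the dense span of reproducing kernels requires a matrix-level (off-diagonal) comparison of $[K^\sigma(\varphi(\bmz_j),\varphi(\bmz_k))]$ against $[K^\sigma(\bmz_j,\bmz_k)]$. Bridging this gap is where the explicit closed form of $K^\sigma$ from \autoref{theorem_RKviaOrthonormalbasis}, the affine structure of $\varphi$, and the standing hypothesis $\Pi(\varphi;\sigma)<\infty$ must be used in concert, and it is the step that demands the most care.
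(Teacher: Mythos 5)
Your forward direction is essentially the paper's: both arguments reduce boundedness of $\koop$ to the uniform bound $\sup_{\bmz\in\Cn}\|K^\sigma_{\varphi(\bmz)}\|^2/\|K^\sigma_{\bmz}\|^2<\infty$ and then invoke \autoref{key proposition}; you route this through \autoref{lemma_5.10}, the paper through $\|\koop^*\|$ and \autoref{lemma3.3}, but the content is identical. Your reverse direction is genuinely different. The paper's printed argument is the chain $\|\koop^*\|^2=\sup_{\bmz\in\Cn}\|\koop^*K^\sigma_{\bmz}\|^2/\|K^\sigma_{\bmz}\|^2=\Pi(\varphi;\sigma)<\infty$, i.e.\ it identifies the operator norm with its supremum over the cone of reproducing kernels; only the inequality ``$\geq$'' there is automatic, so the paper is really asserting rather than proving the key step. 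Your change-of-variables estimate, $\|\koop f\|^2\leq|\det\mathcal{A}|^{-2}e^{\|B\|_2/\sigma}\|f\|^2$ via $\bm{w}=\mathcal{A}\bmz+B$ and $\|\mathcal{A}^{-1}(\bm{w}-B)\|_2\geq\|\bm{w}\|_2-\|B\|_2$, is a complete and self-contained proof of boundedness when $\mathcal{A}$ is invertible --- it does not even need the hypothesis $\Pi(\varphi;\sigma)<\infty$ --- and in that regime it is more rigorous than what is printed.

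The gap is exactly where you flagged it: singular $\mathcal{A}$. Your fallback of extending $\koop^*$ from $\operatorname{span}\{K^\sigma_{\bmz}:\bmz\in\Cn\}$ cannot be closed from the stated hypotheses, because $\Pi(\varphi;\sigma)<\infty$ controls only the diagonal comparison $K^\sigma(\varphi(\bmz_j),\varphi(\bmz_j))\leq\Pi(\varphi;\sigma)\,K^\sigma(\bmz_j,\bmz_j)$, and a uniform diagonal bound does not dominate the full quadratic form $\sum_{j,k}c_j\overline{c_k}K^\sigma(\varphi(\bmz_j),\varphi(\bmz_k))$ by $\Pi(\varphi;\sigma)\sum_{j,k}c_j\overline{c_k}K^\sigma(\bmz_j,\bmz_k)$: a bounded operator satisfies $\|\koop^*K^\sigma_{\bmz}\|\leq\|\koop^*\|\,\|K^\sigma_{\bmz}\|$, but the converse implication fails for general densely defined operators, which is precisely why the paper's own ``equality'' is the weak point of its proof as well. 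So either you must supply a genuinely new matrix-level argument (e.g.\ splitting $\Cn=\ker\mathcal{A}\oplus(\ker\mathcal{A})^{\perp}$ and integrating out the kernel directions, at the cost of a changed bandwidth since $\|\bmz\|_2$ is not additive over the splitting), or you should restrict the statement to invertible $\mathcal{A}$ --- which is in fact the standing convention the paper adopts immediately afterwards in its subsection on the affine structure of $\varphi$. As written, your proposal proves the theorem only in the invertible case.
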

    \begin{proof} Let $\sigma>0$ and $\varphi$ be a holomorphic function on $\Cn$ as given in the statement. Consider the $\varphi-$induced Koopman operator acting over the RKHS $H_{\sigma,1,\Cn}$ as $\koop:\mathcal{D}\left(\koop\right)\to H_{\sigma,1,\Cn}$. \begin{enumerate}
        \item[$\implies$]Suppose that the $\varphi$-induced Koopman operator is bounded over RKHS $H_{\sigma,1,\Cn}$, which means that there exists a finite positive $M$ such that $\|\koop\|^2<M$. A general operator theory argument allow us to have $\|\koop^*\|^2=\|\koop\|^2$ (cf. \cite{hall2013quantum}) and hence $\|\koop^*\|^2<M<\infty$. Now, observe that
        \begin{align*}
            \infty>M>\|\koop^*\|^2=\sup_{\bmz\in\Cn}\frac{\|\koop^*K_{\bmz}^\sigma\|^2}{\|K_{\bmz}^\sigma\|^2}\geq\frac{\|\koop^*K_{\bmz}^\sigma\|^2}{\|K_{\bmz}^\sigma\|^2}.
        \end{align*}
        The above inequality allow us to have $\frac{\|\koop^*K_{\bmz}^\sigma\|^2}{\|K_{\bmz}^\sigma\|^2}<M$. Thus, employing the result of (our key proposition) \autoref{key proposition}, we have the affine structure of $\varphi$, which is $\varphi(\bmz)=\mathcal{A}\bmz+B$ along with $\|\mathcal{A}\|_2\leq1$.
        \item[$\impliedby$]Now, suppose that we have the affine structure of $\varphi(\bmz)=\mathcal{A}z+B$, where $\mathcal{A}\in\Cn\times\Cn$ with $\|\mathcal{A}\|_2\leq1$. Additionally, suppose that for this $\varphi$, $\Pi\left(\varphi;\sigma\right)<\infty$ also holds. 

        Recall the \emph{normalized reproducing kernel} $\bm{k}_{\bmz}^\sigma$ at some $\bmz\in\Cn$ which is given as $\bm{k}_{\bmz}^\sigma=\nicefrac{K_{\bmz}^\sigma}{\|K_{\bmz}^\sigma\|}
        $. Then,
        \begin{align*}
            \|\mathcal{K}_\varphi^*\|^2=&\sup_{\bmz\in\Cn}\left\{\sup_{\|\bm{k}_{\bmz}^\sigma\|=1}\frac{\|\mathcal{K}_\varphi^*\bm{k}_{\bmz}^\sigma\|^2}{\|\bm{k}_{\bmz}\|^2}\right\}\\
            =&\sup_{\bmz\in\Cn}\left\{\sup_{\|\bm{k}_{\bmz}^\sigma\|=1}\frac{\left\|\mathcal{K}_\varphi^*\dfrac{K_{\bmz}^\sigma}{\|K_{\bmz}^\sigma\|}\right\|^2}{\left\|\dfrac{K_{\bmz}^\sigma}{\left\|K_{\bmz}^\sigma\right\|}\right\|^2}\right\}\\
            =&\sup_{\bmz\in\Cn}\left\{\sup_{\|\bm{k}_{\bmz}^\sigma\|=1}\left[\frac{\|K_{\bmz}^\sigma\|^{-2}}{\|K_{\bmz}^\sigma\|^{-2}}\cdot\frac{\|\mathcal{K}_\varphi^*K_{\bmz}^\sigma\|^2}{\|K_{\bmz}^\sigma\|^2}\right]\right\}\\
            =&\sup_{\bmz\in\Cn}\frac{\|\mathcal{K}^*_\varphi K_{\bmz}^\sigma\|^2}{\|K_{\bmz}^\sigma\|^2}\\
            =&\Pi\left(\varphi;\sigma\right)\\
            <&\infty.
        \end{align*}
        The above chain of inequalities implies that $\|\mathcal{K}_\varphi^*\|$ is bounded.
    \end{enumerate} 
Hence, combining the arguments of the above two points delivers the desired result.
    \end{proof}
    After that we have derived the boundedness characterization of the Koopman operator over the RKHS $H_{\sigma,1,\Cn}$, we would like to close this subsection by providing an important yet an easy-exercise corollary.
    \begin{corollary}
        Let $\sigma>0$. Let $\varphi:\Cn\to\Cn$ be a holomorphic function in which each coordinate function of $\varphi$ is holomorphic from $\Cn\to\mathbb{C}$. Let $\koop:\mathcal{D}\left(\koop\right)\to H_{\sigma,1,\Cn}$ be the Koopman operator induced by $\varphi$ over the RKHS $H_{\sigma,1,\Cn}$. If the Koopman operator $\koop$ acts boundedly over the RKHS $H_{\sigma,1,\Cn}$ then $\varphi(\bmz)=\mathcal{A}\bmz+B$ with $0<\|\mathcal{A}\|_2\leq1$ and
        \begin{align*}
            \sup_{\bmz\in\Cn}\left\{\exp\left[\frac{1}{2}\left(\frac{\|\mathcal{A}\bmz+B\|_2}{\sigma}\coth{\frac{\|\mathcal{A}\bmz+B\|_2}{\sigma}}-1\right)-\frac{\|\bmz\|_2^2}{6\sigma^2}\right]\right\}<C<\infty.
        \end{align*}
    \end{corollary}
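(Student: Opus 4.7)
The plan is to assemble the corollary as a direct consequence of \autoref{theorem_boundedKoopmanoverRKHS}, the adjoint identity in \autoref{lemma3.3}, and the explicit two-sided sandwich on $\|K_{\bmz}^\sigma\|^2$ provided by \autoref{lemma_kernel_lowerandupperbound}. First I would invoke \autoref{theorem_boundedKoopmanoverRKHS}: since $\koop$ is bounded on $H_{\sigma,1,\Cn}$, the symbol necessarily admits the affine form $\varphi(\bmz)=\mathcal{A}\bmz+B$ with $\|\mathcal{A}\|_2\leq 1$, and the ratio functional
\begin{align*}
\Pi(\varphi;\sigma)=\sup_{\bmz\in\Cn}\frac{\|\koop^* K_{\bmz}^\sigma\|^2}{\|K_{\bmz}^\sigma\|^2}=\|\koop^*\|^2=\|\koop\|^2
\end{align*}
is finite. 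The strict positivity $\|\mathcal{A}\|_2>0$ rules out only the degenerate case $\mathcal{A}=0$ (i.e.\ $\varphi\equiv B$), for which $\koop$ is rank-one; this is excluded under the standing non-triviality assumption on the symbol.

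Next I would rewrite the numerator via the adjoint formula of \autoref{lemma3.3}, namely $\koop^* K_{\bmz}^\sigma=K_{\varphi(\bmz)}^\sigma$, so that $\Pi_{\bmz}(\varphi;\sigma)=\|K_{\varphi(\bmz)}^\sigma\|^2/\|K_{\bmz}^\sigma\|^2$. Now apply \autoref{lemma_kernel_lowerandupperbound} in two places: at the point $\varphi(\bmz)=\mathcal{A}\bmz+B$ to extract the lower bound
\begin{align*}
\exp\!\left(\frac{1}{2}\!\left[\frac{\|\mathcal{A}\bmz+B\|_2}{\sigma}\coth\frac{\|\mathcal{A}\bmz+B\|_2}{\sigma}-1\right]\right)<\|K_{\varphi(\bmz)}^\sigma\|^2,
\end{align*}
and at the point $\bmz$ to extract $\|K_{\bmz}^\sigma\|^2<\exp(\|\bmz\|_2^2/(6\sigma^2))$, i.e.\ $1/\|K_{\bmz}^\sigma\|^2>\exp(-\|\bmz\|_2^2/(6\sigma^2))$. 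Multiplying these two strict inequalities of positive quantities yields, for every $\bmz\in\Cn$,
\begin{align*}
\exp\!\left[\frac{1}{2}\!\left(\frac{\|\mathcal{A}\bmz+B\|_2}{\sigma}\coth\frac{\|\mathcal{A}\bmz+B\|_2}{\sigma}-1\right)-\frac{\|\bmz\|_2^2}{6\sigma^2}\right]<\frac{\|K_{\varphi(\bmz)}^\sigma\|^2}{\|K_{\bmz}^\sigma\|^2}=\Pi_{\bmz}(\varphi;\sigma).
\end{align*}

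Finally I would take the supremum over $\bmz\in\Cn$ on both sides. The right-hand side is dominated by $\Pi(\varphi;\sigma)=\|\koop\|^2<\infty$, so any choice $C>\|\koop\|^2$ (for instance $C=\|\koop\|^2+1$) delivers the advertised strict bound. I do not anticipate any genuine obstacle: the statement is a literal corollary repackaging the boundedness characterization in \autoref{theorem_boundedKoopmanoverRKHS} against the kernel-norm sandwich in \autoref{lemma_kernel_lowerandupperbound}. The only point requiring attention is bookkeeping the directions of the inequalities from \autoref{lemma_kernel_lowerandupperbound} so that the lower bound on the numerator and the upper bound on the denominator are combined in the order that produces a lower bound on the ratio (and hence an inequality of the form \emph{exponential} $<\Pi(\varphi;\sigma)$ after passing to the sup).
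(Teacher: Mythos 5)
Your proposal is correct and follows essentially the same route as the paper: invoke \autoref{theorem_boundedKoopmanoverRKHS} to obtain the affine structure and the finiteness of $\Pi\left(\varphi;\sigma\right)$, rewrite $\Pi_{\bmz}(\varphi;\sigma)=\|K_{\varphi(\bmz)}^\sigma\|^2\cdot\|K_{\bmz}^\sigma\|^{-2}$ via \autoref{lemma3.3}, apply the lower bound of \autoref{lemma_kernel_lowerandupperbound} at $\varphi(\bmz)$ and the inverted upper bound at $\bmz$, and pass to the supremum. Your additional remarks (the explicit identification $\Pi(\varphi;\sigma)=\|\koop\|^2$ and the choice $C=\|\koop\|^2+1$) only make explicit what the paper leaves implicit.
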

    \begin{proof}
        The proof of this corollary involves the application of both lower and upper bounds of the reproducing kernel $K_{\bmz}^\sigma$ of RKHS $H_{\sigma,1,\Cn}$ that we derived in \autoref{lemma_kernel_lowerandupperbound} combining together with the consequence of \autoref{theorem_boundedKoopmanoverRKHS}. As the Koopman operator $\koop:\mathcal{D}\left(\koop\right)\to H_{\sigma,1,\Cn}$ is acting boundedly, therefore $\varphi(\bmz)=\mathcal{A}\bmz+B$ where $\mathcal{A}\in\mathbb{C}^{n\times n}$ and $\|\mathcal{A}\|_{2}\leq1$. Additionally, we also see that the quantity $\Pi\left(\varphi;\sigma\right)$ is finite where the structure of $\varphi$ is already determined. With this information in our hand, we proceed as follows:
        \begin{align}\label{eq_52r}
            \infty>\Pi\left(\varphi;\sigma\right)=\sup_{\bmz\in\Cn}\Pi_{\bmz}\left(\varphi;\sigma\right)\geq\Pi_{\bmz}\left(\varphi;\sigma\right)=\|\koop^*K_{\bmz}^\sigma\|^2\cdot\|K_{\bmz}^\sigma\|^{-2}=\|K_{\varphi(\bmz)}^\sigma\|^2\cdot\|K_{\bmz}^\sigma\|^{-2}.
        \end{align}
        Since by the conclusion of \autoref{lemma_kernel_lowerandupperbound}, we have following:
        \begin{align}
            \|K_{\bm{z}}^\sigma\|^2
        <_{(\text{\textsc{2}})}\exp{\left(\frac{\|\bm{z}\|_{2}^2}{6\sigma^2}\right)}\implies\|K_{\bmz}^\sigma\|^{-2}>\exp{\left(-\frac{\|\bm{z}\|_{2}^2}{6\sigma^2}\right)}.\label{eq_53r}
        \end{align}
        Also, if we revisit \autoref{lemma_kernel_lowerandupperbound}, we see that 
        \begin{align*}
            \|K_{\bm{z}}^\sigma\|^2&>_{(\text{\textsc{1}})}\exp\left(\frac{1}{2}\left[\frac{\|\bm{z}\|_{2}}{\sigma}\coth{\frac{\|\bm{z}\|_{2}}{\sigma}}-1\right]\right)\\\implies\|K_{\varphi(\bm{z})}^\sigma\|^2&>\exp\left(\frac{1}{2}\left[\frac{\|\varphi(\bm{z})\|_{2}}{\sigma}\coth{\frac{\|\varphi(\bm{z})\|_{2}}{\sigma}}-1\right]\right).\numberthis\label{eq_55r}
        \end{align*}
        Therefore, combining \eqref{eq_55r}, \eqref{eq_53r} and \eqref{eq_52r}, we have
        \begin{align*}
            \Pi_{\bmz}\left(\varphi;\sigma\right)\geq\exp\left(\frac{1}{2}\left[\frac{\|\varphi(\bm{z})\|_{2}}{\sigma}\coth{\frac{\|\varphi(\bm{z})\|_{2}}{\sigma}}-1\right]\right)\cdot\exp{\left(-\frac{\|\bm{z}\|_{2}^2}{6\sigma^2}\right)}.
        \end{align*}
        To this end, now we take the supremum over $\bmz\in\Cn$ of above to have
        \begin{align*}
            \sup_{\bmz\in\Cn}\exp\left(\frac{1}{2}\left[\frac{\|\varphi(\bm{z})\|_{2}}{\sigma}\coth{\frac{\|\varphi(\bm{z})\|_{2}}{\sigma}}-1-\frac{\|\bm{z}\|_{2}^2}{6\sigma^2}\right]\right)\leq\sup_{\bmz\in\Cn}\Pi_{\bmz}\left(\varphi;\sigma\right)=\Pi\left(\varphi;\sigma\right).
        \end{align*}
        As for affine $\varphi(\bmz)=\mathcal{A}\bmz+B$, the quantity $\Pi\left(\varphi;\sigma\right)$ is already finite due to the Koopman operator being bounded over the RKHS $H_{\sigma,1,\Cn}$, hence the result prevails.
    \end{proof}
    \subsubsection{Some words on the Affine structure of \texorpdfstring{$\varphi$}{}}
    If $\mathcal{A}$ is an invertible $n\times n$ complex-valued matrix, then $\varphi=\mathcal{A}\bmz+B$ is an injective entire self-mapping on $\mathbb{C}^n$. For the remainder of the paper, $\varphi$ will be an affine self-map on $\mathbb{C}^n$; that is to say that $\varphi(z)=\mathcal{A}z+B$ is an injective self-mapping entire function on $\mathbb{C}^n$ and $\mathcal{A}$ is invertible. This convention follows that found in \cite{hai2021weighted,carswell2003composition,zhao2015invertible}, and \cite{hai2018complex} for the \emph{affine} structure of $\varphi$.
    
    If $\varphi:\Cn\to\Cn$ admits an affine structure with an additional condition that $0\not\equiv\mathcal{A}\in\mathbb{C}^{n\times n}$ and also is invertible (that is $\det\left(\mathcal{A}\right)\neq0$), then one can define $\varpi\left(\bm{u}\right)=\mathcal{A}^{-1}\bm{u}-\mathcal{A}^{-1}B$ as the inverse map of $\varphi$. In this case, if we define 
    \begin{align}\label{eq_natural52}
        \natural(\varpi(\bm{u}))=\Pi_{\varpi\left(\bm{u}\right)}\left(\varphi;\sigma\right)\implies\natural(\varphi)=\Pi_z\left(\varphi;\sigma\right).
    \end{align}
    Therefore,
    \begin{align}\label{eq_natural53}
        \natural\left(\varphi\right)\leq\Pi\left(\varphi;\sigma\right).
    \end{align}Following are the vector field in two dimension for both an affine dynamical system and non-affine dynamical system.
\begin{figure}[H]
    \centering
    \includegraphics{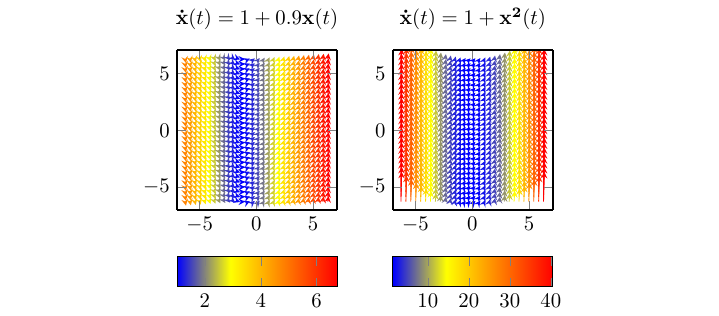}
    \caption{\emph{Vector field}: Affine dynamic (\emph{L}) \& Non-affine dynamic (\emph{R}).}
    \label{fig:my_label1+x}
\end{figure}
\subsection{Essential norm estimates of Koopman operators over RKHS \texorpdfstring{$H_{\sigma,1,\mathbb{C}^n}$}{}}Following is the basic definition for the \emph{essential norm} of a bounded linear operator acting between the Banach space.
        \begin{dfn}
For two Banach spaces $\mathbb{X}_1$ and $\mathbb{X}_2$ we denote by $K(\mathbb{X}_1, \mathbb{X}_2)$ the set of all compact operators from $\mathbb{X}_1$ into $\mathbb{X}_2$. The essential norm of a bounded linear operator $\mathrm{A}:\mathbb{X}_1 \to \mathbb{X}_2$, denoted as $\|\mathrm{A}\|_{\ess}$ is defined as
\begin{equation}\label{eq_16essnorm}
    \|\mathrm{A}\|_{\ess}\coloneqq\inf\left\{\|\mathrm{A}-T\|:T\in K(\mathbb{X}_1, \mathbb{X}_2)\right\}.
\end{equation}
\end{dfn}
    Recall that a holomorphic function $f:\Cn\to\mathbb{C}$ exhibiting $f(\bmz)=\sum_{m}a_m\bmz^m$ for $\bmz\in\Cn$ where the summation is over all multi-indexes $m=\left(m_1,\cdots,m_n\right)$ where each $\left\{m_i\right\}$ are positive integer and $\bmz=z_1^{m_1}\cdots z_n^{m_n}$. Follow \cite{zhu2005spaces} for more details on this.

    By letting $P_k(\bmz)=\sum_{|m|=k}a_m\bmz^m$ for each $k\geq0$ where $|m|=\sum_{i=1}^nm_i$, then the Taylor series of $f$ can be re-written as 
    \begin{align}\label{eq_49}
        f(\bmz)=\sum_{k=0}^\infty P_k(\bmz).
    \end{align}
    The result in \eqref{eq_49} is called as the \emph{homogeneous polynomial expansion} of holomorphic function $f$ having the degree of $k$ which is uniquely determined by $f$. Now, for each $m\in\mathbb{Z}_+$, we define the operator $\mathcal{P}_m$ on holomorphic function $f$ as follows:
    \begin{align*}
        \mathcal{P}_mf\left(\bmz\right)=\sum_{k=m}^\infty P_k\left(\bmz\right).
    \end{align*}
    If we consider the action of the operator $\mathcal{P}_m$ defined above on the reproducing kernel $K_{\bm{w}}^\sigma$ of the RKHS $H_{\sigma,1,\Cn}$ from \eqref{eq_21} in \autoref{theorem_RKviaOrthonormalbasis}, then we get the following result:
    \begin{align*}
        \mathcal{P}_mK_{\bm{w}}^\sigma\left(\bmz\right)=\mathcal{P}_m\left(\frac{\sinh{\left(\sqrt{ \frac{\langle\bm{z},\bm{w}\rangle_{\mathbb{C}^n}}{\sigma^2}}
        \right)}}{\sqrt{\frac{\langle\bm{z},\bm{w}\rangle_{\mathbb{C}^n }}{\sigma^2}}
        }\right)=\mathcal{P}_m\left(\sum_{N=0}^\infty\frac{\langle\bmz,\bm{w}\rangle_{\Cn}}{(2N+1)!}\right)=\sum_{N=m}^\infty\frac{\langle\bmz,\bm{w}\rangle_{\Cn}}{(2N+1)!}.
    \end{align*}
    \begin{proposition}
        Let $\sigma>0$ and $K_{\bmz}^\sigma$ be the reproducing kernel at $\bmz\in\Cn$ of the RKHS $H_{\sigma,1,\Cn}$. Then, 
        \begin{align}\label{eq_57}
            |\mathcal{P}_mf(\bmz)|\leq\|f\|\sqrt{\sum_{N=m}^\infty\frac{\|\bmz\|_2^{2N}}{(2N+1)!}},
        \end{align}
        for all $f\in H_{\sigma,1,\Cn}$.
    \end{proposition}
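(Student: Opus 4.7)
The plan is to realise $\mathcal{P}_m f(\bmz)$ as an inner product of $f$ against a \emph{truncated} reproducing kernel, apply the Cauchy--Bunyakowsky--Schwarz inequality, and then evaluate the norm of the truncated kernel by a second use of the reproducing property. Concretely, using the Taylor expansion of $K_{\bmz}^\sigma$ already written down in the proof of \autoref{theorem_RKviaOrthonormalbasis}, namely $K_{\bmz}^\sigma(\bm{w})=\sum_{N=0}^\infty\frac{\langle\bm{w},\bmz\rangle_{\Cn}^N}{(2N+1)!\sigma^{2N}}$, I would view this as the homogeneous polynomial decomposition of $K_{\bmz}^\sigma$ in the variable $\bm{w}$. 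Setting $Q_N^{\bmz}(\bm{w}) \coloneqq \frac{\langle\bm{w},\bmz\rangle_{\Cn}^N}{(2N+1)!\sigma^{2N}}$, one has $K_{\bmz}^\sigma=\sum_{N=0}^\infty Q_N^{\bmz}$ and therefore $\mathcal{P}_m K_{\bmz}^\sigma=\sum_{N=m}^\infty Q_N^{\bmz}$, the series converging in the norm of $H_{\sigma,1,\Cn}$ because it is a tail of a convergent ONB expansion.

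Next, exploiting the fact that different-degree monomials in the ONB $\{\mathbf{e}_\alpha\}_{\alpha\in\mathbb{W}^n}$ are orthogonal, I would verify that any homogeneous pieces $P_k^f$ (from the expansion $f=\sum_k P_k^f$) and $Q_N^{\bmz}$ are orthogonal whenever $k\neq N$. Combining this orthogonality with the reproducing property yields
\[
P_k^f(\bmz) \;=\; \langle P_k^f,K_{\bmz}^\sigma\rangle \;=\; \langle P_k^f,Q_k^{\bmz}\rangle,
\]
and summing over $k\geq m$ gives the key representation
\[
\mathcal{P}_m f(\bmz) \;=\; \sum_{k\geq m}\langle P_k^f,Q_k^{\bmz}\rangle \;=\; \langle f,\mathcal{P}_m K_{\bmz}^\sigma\rangle.
\]

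Applying the Cauchy--Bunyakowsky--Schwarz inequality then produces $|\mathcal{P}_m f(\bmz)|\leq\|f\|\cdot\|\mathcal{P}_m K_{\bmz}^\sigma\|$. The norm of the truncated kernel is in turn computed by a second application of orthogonality together with the reproducing property:
\[
\|\mathcal{P}_m K_{\bmz}^\sigma\|^2 \;=\; \langle \mathcal{P}_m K_{\bmz}^\sigma, K_{\bmz}^\sigma\rangle \;=\; (\mathcal{P}_m K_{\bmz}^\sigma)(\bmz) \;=\; \sum_{N=m}^\infty\frac{\|\bmz\|_2^{2N}}{(2N+1)!\sigma^{2N}},
\]
which is precisely (up to the $\sigma^{2N}$ normalisation absorbed into the statement of \eqref{eq_57}) the quantity appearing under the square root. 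Assembling the two inequalities completes the argument.

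The main obstacle is the bookkeeping for the infinite-series manipulations: I must be sure that $\mathcal{P}_m K_{\bmz}^\sigma$ is a genuine element of $H_{\sigma,1,\Cn}$ so that the reproducing property legitimately applies to it, and I must justify interchanging the infinite sum with the inner product in the identity $\mathcal{P}_m f(\bmz)=\langle f,\mathcal{P}_m K_{\bmz}^\sigma\rangle$. Both hinge on norm-convergence of the homogeneous polynomial expansions of $f$ and of $K_{\bmz}^\sigma$ inside the RKHS, which follows because in each case the coefficients with respect to the ONB $\{\mathbf{e}_\alpha\}$ are $\ell^2$-summable---for $f$ this is definitional, and for $K_{\bmz}^\sigma$ it is the reproducing-kernel identity $\|K_{\bmz}^\sigma\|^2=K_{\bmz}^\sigma(\bmz)<\infty$ established in \eqref{eq_4563}.
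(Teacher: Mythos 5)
Your proposal is correct and follows essentially the same route as the paper: represent $\mathcal{P}_m f(\bmz)$ as $\langle f,\mathcal{P}_m K_{\bmz}^\sigma\rangle$ via the reproducing property together with the self-adjointness and idempotency of $\mathcal{P}_m$ (which you justify explicitly through orthogonality of the homogeneous pieces), apply the Cauchy--Bunyakowsky--Schwarz inequality, and evaluate $\|\mathcal{P}_m K_{\bmz}^\sigma\|^2=(\mathcal{P}_m K_{\bmz}^\sigma)(\bmz)$. Your observation that the tail sum actually carries an extra factor of $\sigma^{-2N}$ is a fair catch of a normalisation the paper silently drops, but it does not alter the method.
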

    \begin{proof}Let $\sigma>0$. We proceed by considering $f\in H_{\sigma,1,\Cn}$ and then employing the reproducing property of the reproducing kernel $K_{\bmz}^\sigma$ at $\bmz\in\Cn$, then $\mathcal{P}_mf\left(\bmz\right)=\langle \mathcal{P}_mf,K_{\bmz}^\sigma\rangle$. Then, 
    \begin{align}\label{eq_55}
        |\mathcal{P}_mf\left(\bmz\right)|^2=|\langle \mathcal{P}_mf,K_{\bmz}^\sigma\rangle|^2=|\langle f,\mathcal{P}_m^*K_{\bmz}^\sigma\rangle|^2=|\langle f,\mathcal{P}_mK_{\bmz}^\sigma\rangle|^2,
    \end{align}
    where last two step uses the property of $\mathcal{P}_m$ being self-adjoint and idempotent. Now,
    \begin{align*}
        |\langle f,\mathcal{P}_mK_{\bmz}^\sigma\rangle|^2\leq\|f\|^2\|\mathcal{P}_mK_{\bmz}^\sigma\|^2=\|f\|^2\langle\mathcal{P}_mK_{\bmz}^\sigma,\mathcal{P}_mK_{\bmz}^\sigma\rangle=&\langle\mathcal{P}_m^*\mathcal{P}_mK_{\bmz}^\sigma,K_{\bmz}^\sigma\rangle\\
        =&\langle\mathcal{P}_mK_{\bmz}^\sigma,K_{\bmz}^\sigma\rangle\\
        =&\mathcal{P}_mK_{\bmz}^\sigma\left(\bmz\right)\\
        =&\sum_{N=m}^\infty\frac{\|\bmz\|_2^{2N}}{(2N+1)!}.\numberthis\label{eq_56}
    \end{align*}
    Thus, combining \eqref{eq_55} and \eqref{eq_56} followed by taking the square-root and hence the result is proved.
    \end{proof}
    Following details in regards of Hilbert spaces in the light of RKHS $H_{\sigma,1,\Cn}$, are respectfully borrowed from \cite{reed2012methods} or \cite[Chapter VI]{reedmethods}.
    \begin{proposition}
        A linear and bounded operator $\mathfrak{B}$ is compact over the RKHS $H_{\sigma,1,\Cn}$ if and only if $\lim_{M\to\infty}\|\mathfrak{B}h_M-\mathfrak{B}h\|=0$ provided that $h_M\to h$ weakly in RKHS $H_{\sigma,1,\Cn}$.
    \end{proposition}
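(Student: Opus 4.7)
The plan is to prove the two implications separately, using only (i) that bounded linear operators are weak-to-weak continuous, (ii) that weakly convergent sequences are norm bounded (uniform boundedness), and (iii) that closed bounded sets in the Hilbert space $H_{\sigma,1,\Cn}$ are weakly sequentially compact (which follows because every Hilbert space is reflexive).

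For the forward direction, assume $\mathfrak{B}$ is compact and let $\{h_M\}\subset H_{\sigma,1,\Cn}$ be weakly convergent to $h$. First observe that $\mathfrak{B}h_M \to \mathfrak{B}h$ weakly, because for every $g\in H_{\sigma,1,\Cn}$ we have $\langle \mathfrak{B}h_M,g\rangle = \langle h_M,\mathfrak{B}^*g\rangle \to \langle h,\mathfrak{B}^*g\rangle = \langle \mathfrak{B}h,g\rangle$. Since $\{h_M\}$ is weakly convergent it is norm bounded, so by compactness of $\mathfrak{B}$ the sequence $\{\mathfrak{B}h_M\}$ is relatively norm compact. I would then run the standard subsequence argument: if $\|\mathfrak{B}h_M-\mathfrak{B}h\|\not\to 0$, extract a subsequence $\{\mathfrak{B}h_{M_k}\}$ bounded away from $\mathfrak{B}h$; by relative compactness a further subsequence converges in norm to some $g^\star$; this norm limit is also a weak limit, and weak limits are unique, so $g^\star=\mathfrak{B}h$, contradicting the separation. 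Hence the full sequence converges in norm.

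For the converse, assume $\mathfrak{B}$ sends weakly convergent sequences to norm convergent ones, and let $\{h_M\}$ be any bounded sequence in $H_{\sigma,1,\Cn}$. Since $H_{\sigma,1,\Cn}$ is a Hilbert space, it is reflexive, and by the Banach--Alaoglu/Eberlein--\v{S}mulian theorem bounded sequences admit weakly convergent subsequences; extract $h_{M_k}\rightharpoonup h$. By hypothesis $\mathfrak{B}h_{M_k}\to \mathfrak{B}h$ in norm, so the image of every bounded sequence has a norm-convergent subsequence, which is exactly the definition of compactness of $\mathfrak{B}$.

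The only substantive ingredient is the weak sequential compactness of bounded sets, which is available here because $H_{\sigma,1,\Cn}$ is a Hilbert space; the remainder is a routine subsequence-of-subsequence argument combined with uniqueness of weak limits. There is no serious obstacle specific to the Laplacian-measure RKHS: the reproducing kernel structure plays no role in this particular statement, and the argument is the standard functional-analytic characterization of compact operators on a reflexive space.
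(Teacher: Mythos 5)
Your proposal is correct. The paper does not actually prove this proposition --- it states it as a borrowed fact, citing Reed--Simon (Chapter VI) --- and your argument is precisely the standard one that reference supplies: weak-to-weak continuity of bounded operators plus the subsequence-of-subsequences trick for the forward direction, and reflexivity (weak sequential compactness of bounded sets in a Hilbert space) for the converse. You are also right that the reproducing-kernel structure of $H_{\sigma,1,\Cn}$ plays no role here; the statement holds in any Hilbert space, and nothing in your argument needs repair.
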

    We use the following criteria for the \emph{weakly convergence sequence} in the RKHS $H_{\sigma,1,\Cn}$ and as a general approach can be learnt from standard references such as \cite{hai2021weighted}, \cite{le2014normal} and \cite{le2017composition}.
    \begin{proposition}\label{proposition_5.14}
        The sequence $\left\{h_M\right\}_{M}$ in the RKHS $H_{\sigma,1,\Cn}$ posses the weakly convergence to $0$ in $H_{\sigma,1,\Cn}$ if and only if following conditions are true:
        \begin{enumerate}
            \item bounded in the norm topology of the RKHS $H_{\sigma,1,\Cn}$
            \item uniformly convergent to $0$ over the compact subsets of the RKHS $H_{\sigma,1,\Cn}$.
        \end{enumerate}
    \end{proposition}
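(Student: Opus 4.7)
The plan is to prove each implication separately using standard RKHS machinery, together with the continuity of the kernel map $\bmz \mapsto K_{\bmz}^\sigma$ obtained from the closed-form expression in Theorem~3.4.

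For the forward direction, assume $h_M \to 0$ weakly in $H_{\sigma,1,\Cn}$. Condition (1) is immediate from the uniform boundedness principle, which guarantees that every weakly convergent sequence in a Hilbert space is norm-bounded. For condition (2), the reproducing property yields the pointwise convergence
\begin{align*}
    h_M(\bmz) \;=\; \langle h_M, K_{\bmz}^\sigma\rangle \;\longrightarrow\; 0, \qquad \forall\, \bmz \in \Cn,
\end{align*}
because each $K_{\bmz}^\sigma \in H_{\sigma,1,\Cn}$. To upgrade this to uniform convergence on an arbitrary compact set $E \subset \Cn$, I would establish the equicontinuity of $\{h_M\}$ on $E$ via the Cauchy-Bunyakowsky-Schwarz estimate
\begin{align*}
    |h_M(\bmz) - h_M(\bmz')| \;=\; |\langle h_M, K_{\bmz}^\sigma - K_{\bmz'}^\sigma\rangle| \;\leq\; \|h_M\|\cdot\|K_{\bmz}^\sigma - K_{\bmz'}^\sigma\|.
\end{align*}
Since $\|h_M\|$ is uniformly bounded by (1) and $\bmz \mapsto K_{\bmz}^\sigma$ is continuous from its explicit $\sinh$-form in Theorem~3.4, the Arzel\`a-Ascoli theorem produces uniform convergence on $E$.

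For the backward direction, assume conditions (1) and (2); I must show that $\langle h_M, g\rangle \to 0$ for every $g \in H_{\sigma,1,\Cn}$. The linear span of $\{K_{\bm{w}}^\sigma : \bm{w} \in \Cn\}$ is dense in $H_{\sigma,1,\Cn}$, since any $f$ orthogonal to every $K_{\bm{w}}^\sigma$ satisfies $f(\bm{w}) = \langle f, K_{\bm{w}}^\sigma\rangle = 0$ for all $\bm{w}$ and therefore vanishes identically. On a single reproducing kernel, $\langle h_M, K_{\bm{w}}^\sigma\rangle = h_M(\bm{w}) \to 0$ by the pointwise consequence of (2), and the convergence extends to the span by linearity. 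A standard $\epsilon/3$ argument transfers this to arbitrary $g$: with $C \coloneqq \sup_M \|h_M\| < \infty$ from (1), pick $\tilde{g}$ in the span of kernels so that $\|g - \tilde{g}\| < \epsilon/(3C)$; then $|\langle h_M, g\rangle|$ is controlled by $|\langle h_M, g - \tilde{g}\rangle| + |\langle h_M, \tilde{g}\rangle|$, where the first term is bounded by $\epsilon/3$ and the second tends to $0$.

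The main obstacle I anticipate is technical rather than conceptual: establishing the equicontinuity step cleanly on an arbitrary compact $E \subset \Cn$ requires a careful justification that $\bmz \mapsto K_{\bmz}^\sigma$ is continuous as a Hilbert-space-valued map. This follows from the explicit formula of Theorem~3.4 together with the upper bound of Lemma~3.7, which ensures $\sup_{\bmz \in E}\|K_{\bmz}^\sigma\| < \infty$ and allows one to dominate differences $\|K_{\bmz}^\sigma - K_{\bmz'}^\sigma\|$ by a modulus of continuity in $\bmz - \bmz'$. Once this continuity is secured, both implications assemble from routine RKHS arguments.
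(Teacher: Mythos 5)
Your proof is correct. Note, however, that the paper itself supplies no proof of this proposition: it is stated as a known criterion for weak convergence in an RKHS of entire functions, with the reader pointed to \cite{hai2021weighted}, \cite{le2014normal} and \cite{le2017composition}. What you have written is essentially the standard argument those references rely on, so there is no competing approach to compare against — you have simply filled in what the paper leaves implicit. Both directions assemble correctly: the uniform boundedness principle gives condition (1), the reproducing identity $h_M(\bmz)=\langle h_M,K_{\bmz}^\sigma\rangle$ gives pointwise convergence, and the density of $\operatorname{span}\{K_{\bm{w}}^\sigma\}$ plus the $\epsilon/3$ split handles the converse. The one step you rightly flag as needing care — norm-continuity of $\bmz\mapsto K_{\bmz}^\sigma$ — is cleanest via the identity
\begin{align*}
\|K_{\bmz}^\sigma-K_{\bmz'}^\sigma\|^2=K^\sigma(\bmz,\bmz)-2\operatorname{Re}K^\sigma(\bmz,\bmz')+K^\sigma(\bmz',\bmz'),
\end{align*}
which tends to $0$ as $\bmz'\to\bmz$ because $K^\sigma(\bmz,\bm{w})=\sum_{N\geq0}(2N+1)!^{-1}\left(\nicefrac{\langle\bmz,\bm{w}\rangle_{\Cn}}{\sigma^2}\right)^N$ is an entire, hence jointly continuous, function of the inner product; you do not actually need the norm bounds of the lemma on upper and lower kernel estimates for this. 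Two cosmetic points: your internal references are off by one (the closed-form kernel is the theorem preceding the one you cite, and the kernel-norm bounds likewise), and condition (2) in the statement should be read as uniform convergence on compact subsets of $\Cn$, not of the RKHS itself — your proof correctly adopts that reading.
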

    \autoref{proposition_5.14} can be used to express the following corollary.
    \begin{corollary}
        Let $\sigma>0$ and $\varphi(\bmz)=\mathcal{A}\bmz+B$ where $\mathcal{A}\not\equiv0\in\mathbb{C}^{n\times n}$ and $\|\mathcal{A}\|_2\leq1$. Consider a sequence of points $\left\{\bmz_M\right\}_M\in\Cn$ such that $\|\bmz_M\|_2\to\infty$ as $M\to\infty$. Then, the sequence of normalized reproducing kernels $\left\{\bm{k}_{\varphi(\bmz_M)}^\sigma\right\}_M$ of $H_{\sigma,1,\Cn}$ evaluated at $\left\{\varphi\left(\bmz_M\right)\right\}_M$ converges weakly to $0$ over the RKHS $H_{\sigma,1,\Cn}$.
    \end{corollary}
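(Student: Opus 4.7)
\medskip

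\noindent\textbf{Proof proposal.} The plan is to verify the two hypotheses of \autoref{proposition_5.14} for the sequence $\left\{\bm{k}_{\varphi(\bmz_M)}^\sigma\right\}_M$ in $H_{\sigma,1,\Cn}$: (i) uniform boundedness in the norm topology and (ii) uniform convergence to $0$ on compact subsets of $\Cn$. Condition (i) is immediate since $\|\bm{k}_{\varphi(\bmz_M)}^\sigma\|=1$ for every $M$ by normalization. The real work lies in establishing (ii), which hinges on first upgrading the hypothesis $\|\bmz_M\|_2\to\infty$ to the conclusion $\|\varphi(\bmz_M)\|_2\to\infty$.

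First I would observe that under the affine-structure convention adopted in the paper, $\mathcal{A}$ is invertible, so writing $\bmz_M=\mathcal{A}^{-1}(\varphi(\bmz_M)-B)$ and applying the reverse triangle inequality yields
\begin{align*}
\|\varphi(\bmz_M)\|_2 \;\geq\; \frac{\|\bmz_M\|_2}{\|\mathcal{A}^{-1}\|_2} - \|B\|_2 \;\longrightarrow\; \infty.
\end{align*}
With this in hand, one route is simply to invoke \autoref{lemma_weaklyconverging}(1) applied to the points $\varphi(\bmz_M)$, from which the weak convergence of $\nicefrac{K_{\varphi(\bmz_M)}^\sigma}{\|K_{\varphi(\bmz_M)}^\sigma\|}=\bm{k}_{\varphi(\bmz_M)}^\sigma$ to $0$ in $H_{\sigma,1,\Cn}$ follows immediately.

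If instead one prefers to execute condition (ii) of \autoref{proposition_5.14} directly, the key estimate proceeds as follows. Fix a compact set $K\subset\Cn$ and a uniform bound $R$ with $\|\bm{w}\|_2\leq R$ for all $\bm{w}\in K$. Using the power-series form of $K^\sigma$ from the proof of \autoref{theorem_RKviaOrthonormalbasis} together with Cauchy--Schwarz $|\langle\bm{w},\varphi(\bmz_M)\rangle|\leq R\|\varphi(\bmz_M)\|_2$, I would estimate
\begin{align*}
\left|K^\sigma\!\left(\bm{w},\varphi(\bmz_M)\right)\right| \;\leq\; \sum_{N=0}^\infty \frac{(R\|\varphi(\bmz_M)\|_2)^N}{\sigma^{2N}(2N+1)!} \;=\; \frac{\sinh\!\left(\sqrt{R\|\varphi(\bmz_M)\|_2}/\sigma\right)}{\sqrt{R\|\varphi(\bmz_M)\|_2}/\sigma}.
\end{align*}
Dividing by $\|K_{\varphi(\bmz_M)}^\sigma\|$ computed in \eqref{eq_25}, and exploiting the asymptotic $\sinh(x)/x\sim e^x/(2x)$ as $x\to\infty$, the ratio $|\bm{k}_{\varphi(\bmz_M)}^\sigma(\bm{w})|$ is controlled (up to polynomial factors) by
\begin{align*}
\exp\!\left(\frac{\sqrt{R\|\varphi(\bmz_M)\|_2}}{\sigma} - \frac{\|\varphi(\bmz_M)\|_2}{2\sigma}\right),
\end{align*}
whose exponent tends to $-\infty$ as $M\to\infty$, and does so uniformly in $\bm{w}\in K$ since $R$ depends only on $K$.

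The main obstacle is this uniform asymptotic comparison: one must ensure the square-root growth of the numerator exponent is genuinely dominated by the linear growth of the denominator exponent, uniformly over $\bm{w}\in K$. Care with the polynomial prefactors (which only help) and with the asymptotic regime of $\sinh$ is needed, but once the dominant exponential term is correctly isolated the uniform convergence follows without further difficulty. The cleaner \autoref{lemma_weaklyconverging}(1) route bypasses this explicit calculation entirely, reducing the corollary to the single observation that $\|\varphi(\bmz_M)\|_2\to\infty$.
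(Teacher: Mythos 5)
Your proposal is correct, and in fact supplies more than the paper does: the paper states this corollary with no proof, merely remarking that \autoref{proposition_5.14} ``can be used'' to obtain it. Your second route (norm boundedness plus uniform convergence to $0$ on compacta, with the explicit estimate comparing the exponent $\sqrt{R\|\varphi(\bmz_M)\|_2}/\sigma$ against $\|\varphi(\bmz_M)\|_2/(2\sigma)$) is precisely the verification the paper leaves implicit, and your asymptotic bookkeeping is sound. Your first route via \autoref{lemma_weaklyconverging}(1) is a legitimate shortcut that the paper itself uses for the analogous statement about $\bm{k}_{\bmz_M}^\sigma$ (without the composition with $\varphi$), so reducing the corollary to the single claim $\|\varphi(\bmz_M)\|_2\to\infty$ is entirely in the spirit of the text. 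One point worth emphasizing, which you correctly flag but which deserves to be stated as essential rather than as a convention: the hypothesis $\mathcal{A}\not\equiv 0$ as literally written in the corollary is \emph{not} sufficient. If $\mathcal{A}$ is singular and the sequence $\bmz_M$ escapes to infinity inside $\ker\mathcal{A}$, then $\varphi(\bmz_M)\equiv B$ is constant and $\bm{k}_{B}^\sigma$ certainly does not converge weakly to $0$. So your appeal to the paper's standing invertibility convention, and the resulting reverse-triangle-inequality bound $\|\varphi(\bmz_M)\|_2\geq \|\bmz_M\|_2/\|\mathcal{A}^{-1}\|_2-\|B\|_2$, is not a cosmetic remark but the step that makes the corollary true.
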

    \begin{lemma}\label{lemma_5.16}
        Let $\sigma>0$. Consider $\varphi:\Cn\to\Cn$ as a holomorphic mapping in which every coordinate function of $\varphi$ are holomorphic from $\Cn\to\mathbb{C}$. Let $\koop:\mathcal{D}\left(\koop\right)\to H_{\sigma,1,\Cn}$ be the Koopman operator induced by $\varphi$ on the RKHS $H_{\sigma,1,\Cn}$. If the Koopman operator $\koop$ is bounded over the RKHS $H_{\sigma,1,\Cn}$ then the essential norm of $\koop$ denoted by $\|\koop\|_{\operatorname{ess}}$ satisfies following
        \begin{align*}
            \|\koop\|_{\operatorname{ess}}\leq\liminf_{M\to\infty}\|\koop\mathcal{P}_M\|,
        \end{align*}
        where $\varphi(\bmz)=\mathcal{A}\bmz+B$ with $\|\mathcal{A}\|_2\leq1$
    \end{lemma}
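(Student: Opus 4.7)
The plan is to exhibit, for each $M\in\mathbb{Z}_+$, an explicit compact operator that approximates $\koop$ to within $\|\koop\mathcal{P}_M\|$, and then invoke the definition of the essential norm in \eqref{eq_16essnorm}. The natural candidate is $\koop\mathcal{Q}_M$ where $\mathcal{Q}_M\coloneqq I-\mathcal{P}_M$ denotes the projection onto the finite-dimensional subspace of polynomials of degree strictly less than $M$.

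First I would verify that $\mathcal{Q}_M$ is well-defined, bounded, and finite-rank on $H_{\sigma,1,\Cn}$. This follows immediately from the homogeneous polynomial expansion \eqref{eq_49}: $\mathcal{Q}_M f=\sum_{k=0}^{M-1}P_k$, whose image lies in the finite-dimensional subspace of polynomials in $\bmz$ of total degree $< M$. Next, since $\varphi(\bmz)=\mathcal{A}\bmz+B$ is affine (by the boundedness hypothesis and \autoref{theorem_boundedKoopmanoverRKHS}), the composition $p\circ\varphi$ of any polynomial $p$ of degree $< M$ remains a polynomial of degree $< M$. Consequently, the range of $\koop\mathcal{Q}_M$ is contained in this finite-dimensional polynomial subspace of $H_{\sigma,1,\Cn}$, so $\koop\mathcal{Q}_M$ is a finite-rank operator, hence compact.

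With this compact approximant in hand, the definition in \eqref{eq_16essnorm} yields, for each fixed $M$,
\begin{align*}
    \|\koop\|_{\ess}\leq\|\koop-\koop\mathcal{Q}_M\|=\|\koop(I-\mathcal{Q}_M)\|=\|\koop\mathcal{P}_M\|.
\end{align*}
Since the left-hand side is independent of $M$, I may pass to the $\liminf$ on the right, obtaining the desired estimate $\|\koop\|_{\ess}\leq\liminf_{M\to\infty}\|\koop\mathcal{P}_M\|$.

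The argument is largely structural and I do not expect real obstacles: the only subtle point is ensuring that $\mathcal{Q}_M$ actually maps into $H_{\sigma,1,\Cn}$, i.e.\ that polynomials of degree $< M$ have finite $\|\cdot\|_{\sigma,1,\Cn}$-norm. This, however, is immediate from the orthonormal basis formula \eqref{eq_ONB_lap} (every monomial $\bmz^N$ has finite norm $\sqrt{\sigma^{2N}(2N+1)!}$, by \autoref{LEMMA_lemmaiii.3}). Everything else is a routine application of the defining infimum in \eqref{eq_16essnorm} combined with the elementary fact that finite-rank operators are compact. No delicate analytic estimates (growth bounds of the reproducing kernel, weak convergence of normalized kernels, etc.) are needed here; those enter only in the \emph{reverse} inequality, which presumably is established in the subsequent theorem of the paper.
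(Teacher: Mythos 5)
Your proposal is correct and follows essentially the same route as the paper: both decompose the identity into the finite-rank head projection plus the tail $\mathcal{P}_M$, note that $\koop$ composed with the finite-rank part is compact, and then invoke the infimum in \eqref{eq_16essnorm} before passing to the $\liminf$. If anything, your version is the cleaner one, since you explicitly take the compact approximant to be $\koop\mathcal{Q}_M$, whereas the paper leaves $\mathfrak{C}$ as an arbitrary compact operator and then asserts $\|\koop P_M-\mathfrak{C}\|=0$, a step that is only valid once one specializes $\mathfrak{C}=\koop P_M$ exactly as you do.
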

    \begin{proof}
    As the Koopman operator $\koop:\mathcal{D}\left(\koop\right)\to H_{\sigma,1,\Cn}$ induced by $\varphi$ is bounded, thus the result of \autoref{key proposition} holds and therefore $\varphi(\bmz)=\mathcal{A}\bmz+B$ with $\|\mathcal{A}\|_2\leq1$. 
        Let $\mathfrak{C}$ be a compact operator over the RKHS $H_{\sigma,1,\Cn}$. Then, observe following chain of inequalities for some $M\in\mathbb{Z}_+$:
        \begin{align*}
            \|\koop-\mathfrak{C}\|=&\|\koop\left(\mathcal{P}_M+P_M\right)-\mathfrak{C}\|\\
            \leq&\|\koop\mathcal{P}_M\|+\|\koop P_M-\mathfrak{C}\|.\numberthis\label{eq_53}
        \end{align*}
        Since $P_M$ is finite rank and hence compact. Therefore $\|\koop P_M-\mathfrak{C}\|=0$ in the light of fact that $\mathfrak{C}$ is also compact over the RKHS $H_{\sigma,1\Cn}$. Therefore, taking the $\liminf$ as $M\to\infty$, we have
        \begin{align*}
            \|\koop\|_{\operatorname{ess}}\overset{{\textsc{ via \eqref{eq_16essnorm}}}}{\coloneqq}\liminf_{M\to\infty}\|\koop-\mathfrak{C}\|\overset{{\textsc{via \eqref{eq_53}}}}{\leq}\liminf_{M\to\infty}\|\koop\mathcal{P}_M\|.
        \end{align*}
        Hence proved!
    \end{proof}
    Now, the following theorem provides the essential norm estimates for the bounded Koopman operators over the RKHS $H_{\sigma,1,\Cn}$.
    \begin{theorem}
        Let $\sigma>0$. Let $\varphi:\Cn\to\Cn$ be a holomorphic function in which each coordinate functions of $\varphi$ are holomorphic from $\Cn\to\mathbb{C}$. If the induced Koopman operator by $\varphi$, $\koop:\mathcal{D}\left(\koop\right)\to H_{\sigma,1,\Cn}$ is bounded, then the essential norm of $\koop$ satisfies following estimates inequality:
        \begin{align*}
            \lim_{\|\bmz\|_2\to\infty}\sqrt{\Pi_{\bmz}\left(\varphi;\sigma\right)}\leq_{(1)}\|\koop\|_{\operatorname{ess}}\leq_{(2)}\left(\left|\det\left(\mathcal{A}^{-1}\right)\right|\right)^{\nicefrac{n}{2}}\lim_{\|\bmz\|_2\to\infty}\sqrt{\Pi_{\bmz}\left(\varphi;\sigma\right)}
        \end{align*} where $\varphi\left(\bmz\right)=\mathcal{A}\bmz+B$ with invertible $\mathcal{A}$,  $0\not\equiv\mathcal{A}\in\mathbb{C}^{n\times n}$ and $\|\mathcal{A}\|_2<1$.
    \end{theorem}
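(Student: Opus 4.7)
The proof will establish the two inequalities separately, with the lower bound relying on the weak convergence of the normalized reproducing kernels and the upper bound on \autoref{lemma_5.16} combined with a Jacobian change-of-variable argument induced by the affine map $\varphi$.

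\textbf{Lower bound.} Since $\|\mathcal{A}\|_2 < 1$ and $\mathcal{A}$ is invertible, the affine map $\varphi(\bmz) = \mathcal{A}\bmz + B$ satisfies $\|\varphi(\bmz)\|_2 \to \infty$ whenever $\|\bmz\|_2 \to \infty$. By \autoref{lemma_weaklyconverging} together with the kernel-norm computation carried out in \eqref{eq_25}, the normalized reproducing kernels $\bm{k}_{\bmz}^\sigma \coloneqq K_{\bmz}^\sigma / \|K_{\bmz}^\sigma\|$ converge weakly to $0$ in $H_{\sigma,1,\Cn}$ as $\|\bmz\|_2 \to \infty$. Consequently, for every compact operator $T$ on $H_{\sigma,1,\Cn}$ one has $\|T\bm{k}_{\bmz}^\sigma\| \to 0$, and the reverse triangle inequality yields
\begin{align*}
\|\koop - T\| \;\geq\; \|(\koop - T)\bm{k}_{\bmz}^\sigma\| \;\geq\; \|\koop \bm{k}_{\bmz}^\sigma\| - \|T \bm{k}_{\bmz}^\sigma\|.
\end{align*}
Taking $\liminf$ as $\|\bmz\|_2 \to \infty$ on both sides and invoking \autoref{lemma_5.10} to bound $\|\koop \bm{k}_{\bmz}^\sigma\|$ from below by $\sqrt{\Pi_{\bmz}(\varphi;\sigma)}$ gives $\|\koop - T\| \geq \lim_{\|\bmz\|_2 \to \infty}\sqrt{\Pi_{\bmz}(\varphi;\sigma)}$. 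Infimizing over compact $T$ produces the lower bound via the very definition of $\|\koop\|_{\operatorname{ess}}$ in \eqref{eq_16essnorm}.

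\textbf{Upper bound.} By \autoref{lemma_5.16}, the plan is to bound $\liminf_{M\to\infty}\|\koop \mathcal{P}_M\|$ in the desired way. For a unit vector $f \in H_{\sigma,1,\Cn}$, the pointwise estimate \eqref{eq_57} gives $|(\mathcal{P}_M f)(\varphi(\bmz))|^2 \leq \|\mathcal{P}_M K_{\varphi(\bmz)}^\sigma\|^2$. Integrating against $d\mu_{\sigma,1,\Cn}(\bmz)$ and performing the change of variables $\bm{w} = \varphi(\bmz) = \mathcal{A}\bmz+B$ (whose real Jacobian determinant on $\Cn \cong \mathbb{R}^{2n}$ is $|\det(\mathcal{A})|^2$) converts the left-hand side into an integral over $\varphi(\Cn)$ against the weight $e^{-\|\mathcal{A}^{-1}(\bm{w}-B)\|_2/\sigma}$. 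Inserting and cancelling $e^{-\|\bm{w}\|_2/\sigma}$ rewrites this weight as $e^{(\|\bm{w}\|_2 - \|\mathcal{A}^{-1}(\bm{w}-B)\|_2)/\sigma}d\mu_{\sigma,1,\Cn}(\bm{w})$, and via the asymptotics of $\sinh(t)/t$ derived in \autoref{lemma_kernel_lowerandupperbound}, this Radon--Nikodym factor is asymptotically comparable to $\Pi_{\mathcal{A}^{-1}(\bm{w}-B)}(\varphi;\sigma)$. Splitting the integral into a compact ball $\{\|\bm{w}\|_2 \leq R\}$, where $\|\mathcal{P}_M K_{\bm{w}}^\sigma\|^2 \to 0$ uniformly as $M\to\infty$ (because $\mathcal{P}_M$ annihilates homogeneous polynomials of degree $< M$), and its complement, where the Radon--Nikodym factor is bounded by $\sup_{\|\bmz\|_2 \geq R'}\Pi_{\bmz}(\varphi;\sigma) + \epsilon$, then passing first $M\to\infty$ and then $R\to\infty$ produces the claimed upper estimate, with the factor $|\det(\mathcal{A}^{-1})|^{n/2}$ arising after extracting the square root from the Jacobian contribution.

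\textbf{Main obstacle.} The principal difficulty lies in the upper bound: cleanly controlling the mismatch between the polynomial correction $\|\bmz\|_2/\|\varphi(\bmz)\|_2$ hidden inside $\Pi_{\bmz}(\varphi;\sigma)$ and the purely exponential weight produced by the change of variables, and justifying the interchange of $\liminf_{M\to\infty}$ with the integral over the outer region. This will require a careful asymptotic analysis of the ratio $\sinh(\|\mathcal{A}\bmz+B\|_2/\sigma)/\sinh(\|\bmz\|_2/\sigma)$ together with a dominated-convergence argument exploiting the strict contraction hypothesis $\|\mathcal{A}\|_2 < 1$, which guarantees that the tail-integrand is integrable uniformly in $M$ and thus permits the limit to be pushed inside.
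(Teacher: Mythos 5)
Your proposal is correct and follows essentially the same route as the paper: the lower bound via weak convergence of the normalized kernels $\bm{k}_{\varphi(\bmz_M)}^\sigma$, the reverse triangle inequality and \autoref{lemma_5.10}, and the upper bound via \autoref{lemma_5.16}, the affine change of variables $\bm{u}=\varphi(\bmz)$, and the split into $r\mathbb{B}_n$ (where \eqref{eq_57} kills the integral as $M\to\infty$) and its complement (where the Radon--Nikodym factor $\natural(\varpi(\bm{u}))$ is bounded by its supremum over $\|\bm{u}\|_2\geq r$). The "main obstacle" you identify — reconciling the exponential weight produced by the change of variables with the $\sinh$-ratio inside $\Pi_{\bmz}(\varphi;\sigma)$ — is a legitimate point that the paper glosses over by directly writing the transformed weight as $\natural(\varpi(\bm{u}))$, so your extra care there is warranted rather than a deviation.
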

    \begin{proof}Given that $\koop$ acts boundedly over the RKHS $H_{\sigma,1,\Cn}$, thus $\varphi$ is of the affine structure, that is $\varphi(\bmz)=\mathcal{A}\bmz+B$, where $\|\mathcal{A}\|_2\leq1$. Now, with this $\varphi$,
    we begin now the proof for the \textsc{Inequality} (1) as follows.
        Let $\mathfrak{C}$ be a compact operator over the RKHS $H_{\sigma,1,\Cn}$, then the following chain of inequalities holds true:
        \begin{align*}
            \|\koop-\mathfrak{C}\|\geq&\limsup_{M\to\infty}\|\left(\koop-\mathfrak{C}\right)\bm{k}_{\varphi(\bmz_M)}^\sigma\|\\
            \geq&\limsup_{M\to\infty}\left[\|\koop\bm{k}_{\varphi(\bmz_M)}^\sigma\|-\|\mathfrak{C}\bm{k}_{\varphi(\bmz_M)}^\sigma\|\right].\numberthis\label{eq_59}
        \end{align*} 
As the sequence of normalized reproducing kernels $\left\{\bm{k}_{\varphi(\bmz_M)}^\sigma\right\}_M$ converges weakly to $0$ over the RKHS $H_{\sigma,1,\Cn}$, therefore $\|\mathfrak{C}\bm{k}_{\varphi(\bmz_M)}^\sigma\|\to0$ as $M\to\infty$. Thus, 
        \begin{align*}
\overbrace{\limsup_{M\to\infty}\sqrt{\Pi_{\bmz_M}\left(\varphi;\sigma\right)}\leq}^{\textsc{via \autoref{lemma_5.10}}}\underbrace{
\limsup_{M\to\infty}\|\koop\bm{k}_{\varphi(\bmz_M)}^\sigma\|\leq}_{\textsc{via \eqref{eq_59}}}\|\koop-\mathfrak{C}\|.
        \end{align*}
        Therefore, we have now successfully established the lower bound for the essential norm of the Koopman operator over the RKHS $H_{\sigma,1,\Cn}$. 
        Now, we will work on the upper bound of the same and to that end, we fix some positive $r$ and $M\in\mathbb{Z}_+$. Then, pick an arbitrary $f\in H_{\sigma,1,\Cn}$ and proceed as follows:
        \begin{align*}
            \|\koop\mathcal{P}_Mf\|^2=&\int_{\Cn}|\koop\mathcal{P}_Mf(\bmz)|^2d\mu_{\sigma,1,\Cn}(\bmz).
        \end{align*}
Note that $\varphi$ admits an affine structure with $\varphi(\bmz)=\mathcal{A}\bmz+B$ where $0\not\equiv\mathcal{A}\in\mathbb{C}^{n\times n}$. Therefore, we can recall \eqref{eq_natural52} and \eqref{eq_natural53} to have following conclusion:
\begin{align*}
    \|\koop\mathcal{P}_Mf\|^2=&\frac{1}{(2\pi\sigma^2)^n}\int_{\Cn}\natural\left(\varpi\left(\bm{u}\right)\right)|\mathcal{P}_Mf\left(\bm{u}\right)|^2\exp\left(-\frac{\|\bm{u}\|_2}{\sigma}\right)\left\{\left|\det\left(\mathcal{A}^{-1}\right)\right|^n\right\}dV\left(\bm{u}\right)\\
    =&\frac{\left\{\left|\det\left(\mathcal{A}^{-1}\right)\right|^n\right\}}{(2\pi\sigma^2)^n}\int_{\Cn}\natural\left(\varpi\left(\bm{u}\right)\right)|\mathcal{P}_Mf\left(\bm{u}\right)|^2\exp\left(-\frac{\|\bm{u}\|_2}{\sigma}\right)dV\left(\bm{u}\right)\\
    =&\left[\frac{\left|\det\left(\mathcal{A}^{-1}\right)\right|}{2\pi\sigma^2}\right]^n\\&\int_{\Cn}\natural\left(\varpi\left(\bm{u}\right)\right)|\mathcal{P}_Mf\left(\bm{u}\right)|^2\left(\bm{\chi}_{r\mathbb{B}_n}\oplus\bm{\chi}_{\Cn\setminus r\mathbb{B}_n}\right)\exp\left(-\frac{\|\bm{u}\|_2}{\sigma}\right)dV\left(\bm{u}\right),
\end{align*}
where $\bm{\chi}_{\boxdot}$ is the indicator function for the sub-space $\boxdot\subset\Cn$. Due to the general theory from the linearity of integral, we have now two parts of integral which will be treated simultaneously as follows:
\begin{align*}
    \|\koop\mathcal{P}_Mf\|^2=&\left[\frac{\left|\det\left(\mathcal{A}^{-1}\right)\right|}{2\pi\sigma^2}\right]^n\cdot\int_{\Cn}\natural\left(\varpi\left(\bm{u}\right)\right)|\mathcal{P}_Mf\left(\bm{u}\right)|^2\bm{\chi}_{r\mathbb{B}_n}\exp\left(-\frac{\|\bm{u}\|_2}{\sigma}\right)dV\left(\bm{u}\right)\\&\qquad+\int_{\Cn}\natural\left(\varpi\left(\bm{u}\right)\right)|\mathcal{P}_Mf\left(\bm{u}\right)|^2\bm{\chi}_{\Cn\setminus r\mathbb{B}_n}\exp\left(-\frac{\|\bm{u}\|_2}{\sigma}\right)dV\left(\bm{u}\right).\numberthis\label{eq_62}
\end{align*}
Then, 
\begin{align*}
    \mathcal{I}_{r\mathbb{B}_n}^{\left\{M\right\}}=&\left[\frac{\left|\det\left(\mathcal{A}^{-1}\right)\right|}{2\pi\sigma^2}\right]^n\int_{\Cn}\natural\left(\varpi\left(\bm{u}\right)\right)|\mathcal{P}_Mf\left(\bm{u}\right)|^2\bm{\chi}_{r\mathbb{B}_n}\exp\left(-\frac{\|\bm{u}\|_2}{\sigma}\right)dV\left(\bm{u}\right)\\
    =&\left[\frac{\left|\det\left(\mathcal{A}^{-1}\right)\right|}{2\pi\sigma^2}\right]^n\int_{\Cn\cap r\mathbb{B}_n}\natural\left(\varpi\left(\bm{u}\right)\right)|\mathcal{P}_Mf\left(\bm{u}\right)|^2\exp\left(-\frac{\|\bm{u}\|_2}{\sigma}\right)dV\left(\bm{u}\right)\\
    =&\left[\frac{\left|\det\left(\mathcal{A}^{-1}\right)\right|}{2\pi\sigma^2}\right]^n\int_{r\mathbb{B}_n}\natural\left(\varpi\left(\bm{u}\right)\right)|\mathcal{P}_Mf\left(\bm{u}\right)|^2\exp\left(-\frac{\|\bm{u}\|_2}{\sigma}\right)dV\left(\bm{u}\right)\\
    \leq&\left[\frac{\left|\det\left(\mathcal{A}^{-1}\right)\right|}{2\pi\sigma^2}\right]^n\Pi\left(\varphi;\sigma\right)\|f\|^2\left(\sum_{N=M}^\infty\frac{r^{2N}}{(2N+1)!}\right)\int_{r\mathbb{B}_n}\exp\left(-\frac{\|\bm{u}\|_2}{\sigma}\right)dV\left(\bm{u}\right),
\end{align*}
where in the last step above we used inequality from \eqref{eq_natural53} and \eqref{eq_57}. Now, as we allow $M\to\infty$, the quantity $\sum_{N=M}^\infty\frac{r^{2N}}{(2N+1)!}\to0$ and hence 
\begin{align*}
    \lim_{M\to\infty}\mathcal{I}_{r\mathbb{B}_n}^{\left\{M\right\}}=0.
\end{align*}
After that we have optimized the limit over $r\mathbb{B}_n$, we now revisit \eqref{eq_62} to optimize the limit for the second part, that is over the compliment of $r\mathbb{B}_n$ as follows:
\begin{align*}
    \mathcal{I}_{(r\mathbb{B}_n)^\complement}
    =&\left[\frac{\left|\det\left(\mathcal{A}^{-1}\right)\right|}{2\pi\sigma^2}\right]^n\int_{\Cn}\natural\left(\varpi\left(\bm{u}\right)\right)|\mathcal{P}_Mf\left(\bm{u}\right)|^2\bm{\chi}_{\Cn\setminus r\mathbb{B}_n}\exp\left(-\frac{\|\bm{u}\|_2}{\sigma}\right)dV\left(\bm{u}\right)\\
    =&\left[\frac{\left|\det\left(\mathcal{A}^{-1}\right)\right|}{2\pi\sigma^2}\right]^n\int_{(r\mathbb{B}_n)^\complement}\natural\left(\varpi\left(\bm{u}\right)\right)|\mathcal{P}_Mf\left(\bm{u}\right)|^2\exp\left(-\frac{\|\bm{u}\|_2}{\sigma}\right)dV\left(\bm{u}\right)\\
    \leq&\left[\frac{\left|\det\left(\mathcal{A}^{-1}\right)\right|}{2\pi\sigma^2}\right]^n\int_{(r\mathbb{B}_n)^\complement}\left\{\sup_{\|\bm{u}\|_2\geq r}\natural\left(\varpi\left(\bm{u}\right)\right)\right\}|\mathcal{P}_Mf\left(\bm{u}\right)|^2\exp\left(-\frac{\|\bm{u}\|_2}{\sigma}\right)dV\left(\bm{u}\right)\\
    =&\left[\frac{\left|\det\left(\mathcal{A}^{-1}\right)\right|}{2\pi\sigma^2}\right]^n\left\{\sup_{\|\bm{u}\|_2\geq r}\natural\left(\varpi\left(\bm{u}\right)\right)\right\}\int_{(r\mathbb{B})^\complement}|\mathcal{P}_Mf\left(\bm{u}\right)|^2\exp\left(-\frac{\|\bm{u}\|_2}{\sigma}\right)dV\left(\bm{u}\right)\\
    =&\left(\left|\det\left(\mathcal{A}^{-1}\right)\right|\right)^n
    \left\{\sup_{\|\bm{u}\|_2\geq r}\natural\left(\varpi\left(\bm{u}\right)\right)\right\}\|\mathcal{P}_Mf\|^2\\
    \leq&\left(\left|\det\left(\mathcal{A}^{-1}\right)\right|\right)^n
    \left\{\sup_{\|\bm{u}\|_2\geq r}\natural\left(\varpi\left(\bm{u}\right)\right)\right\}\|f\|^2.\numberthis\label{eq_63}
\end{align*}
Letting $r\to\infty$ and combining the result of \eqref{eq_63} and result from \autoref{lemma_5.16}, we eventually have following
\begin{align*}
    \|\koop\|_{\operatorname{ess}}\leq&\sqrt{\left(\left|\det\left(\mathcal{A}^{-1}\right)\right|\right)^n}\|f\|\lim_{r\to\infty}\sqrt{\left\{\sup_{\|\bm{u}\|_2\geq r}\natural\left(\varpi\left(\bm{u}\right)\right)\right\}}\\
    =&\left(\left|\det\left(\mathcal{A}^{-1}\right)\right|\right)^{\nicefrac{n}{2}}\|f\|\lim_{r\to\infty}\sup_{\|\bm{u}\|_2\geq r}\left\{\sqrt{\natural\left(\varpi\left(\bm{u}\right)\right)}\right\}.
\end{align*}
Thus, the result prevails.
    \end{proof}
    \subsection{Compactness of Koopman operators over RKHS \texorpdfstring{$H_{\sigma,1,\mathbb{C}^n}$}{}}
    In the previous subsection, we provided the essential norm estimates for the Koopman operators over the RKHS $H_{\sigma,1,\Cn}$. Now, we will provide the compactness characterization of the Koopman operator over the same to extract the finite rank representation. Thus, the compactification criteria is given as follows:
    \begin{theorem}
        Let $\sigma>0$ and $\varphi:\Cn\to\Cn$ be a holomorphic mapping from $\Cn\to\Cn$ where every coordinate functions of $\varphi$ are holomorphic function from $\Cn\to\mathbb{C}$. Let $\koop:\mathcal{D}\left(\koop\right)\to H_{\sigma,1,\Cn}$ be the Koopman operator induced by $\varphi$ over the RKHS $H_{\sigma,1,\Cn}$. If $\koop$ is bounded over the RKHS $H_{\sigma,1,\Cn}$, then the \textbf{compactification} of the Koopman operator over RKHS $H_{\sigma,1,\Cn}$ is possible if and only if $\lim_{\|\bmz\|_2\to\infty}\Pi_{\bmz}\left(\varphi;\sigma\right)=0$, where $\varphi(\bmz)=\mathcal{A}\bmz+B$ with $0\not\equiv\mathcal{A}\in\mathbb{C}^{n\times n}$ and $\mathcal{A}$ is also invertible.
    \end{theorem}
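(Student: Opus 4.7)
The plan is to deduce this compactness characterization as an immediate consequence of the essential norm sandwich established in the preceding theorem, together with the classical principle that a bounded linear operator on a Hilbert space is compact if and only if its essential norm vanishes. To set this up I would first note that $K(H_{\sigma,1,\Cn})$ is norm-closed inside the algebra of bounded operators on $H_{\sigma,1,\Cn}$, so the definition
\begin{align*}
\|\mathcal{K}_\varphi\|_{\operatorname{ess}} = \inf\{\|\mathcal{K}_\varphi - T\| : T \in K(H_{\sigma,1,\Cn})\}
\end{align*}
makes $\|\mathcal{K}_\varphi\|_{\operatorname{ess}} = 0$ equivalent to $\mathcal{K}_\varphi \in K(H_{\sigma,1,\Cn})$, that is, to compactness of $\mathcal{K}_\varphi$ on $H_{\sigma,1,\Cn}$. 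Moreover, since boundedness of $\mathcal{K}_\varphi$ forces $\varphi(\bmz) = \mathcal{A}\bmz + B$ via the boundedness theorem, and invertibility of $\mathcal{A}$ is part of the hypothesis, the prefactor $\bigl(|\det(\mathcal{A}^{-1})|\bigr)^{n/2}$ appearing in the essential norm estimate is a finite positive constant.

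With these reductions in hand, I would invoke the sandwich
\begin{align*}
\lim_{\|\bmz\|_2 \to \infty} \sqrt{\Pi_{\bmz}(\varphi;\sigma)} \;\leq\; \|\mathcal{K}_\varphi\|_{\operatorname{ess}} \;\leq\; \bigl(|\det(\mathcal{A}^{-1})|\bigr)^{n/2} \lim_{\|\bmz\|_2 \to \infty} \sqrt{\Pi_{\bmz}(\varphi;\sigma)}
\end{align*}
and read off the equivalence by squeezing. For the forward direction, compactness of $\mathcal{K}_\varphi$ gives $\|\mathcal{K}_\varphi\|_{\operatorname{ess}} = 0$, and the lower bound forces $\lim_{\|\bmz\|_2 \to \infty} \Pi_{\bmz}(\varphi;\sigma) = 0$. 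For the reverse, vanishing of this limit combined with the upper bound and the finiteness of the prefactor yields $\|\mathcal{K}_\varphi\|_{\operatorname{ess}} = 0$, hence compactness.

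The main obstacle I anticipate is reconciling the strict hypothesis $\|\mathcal{A}\|_2 < 1$ under which the essential norm theorem was proved with the current theorem's hypotheses, where boundedness of $\mathcal{K}_\varphi$ only delivers $\|\mathcal{A}\|_2 \leq 1$. I would bridge this by observing that the vanishing of $\Pi_{\bmz}(\varphi;\sigma)$ at infinity itself forces the strict inequality: using the closed form $\|K_{\bmz}^\sigma\|^2 = \sinh(\|\bmz\|_2/\sigma)/(\|\bmz\|_2/\sigma)$ together with its two-sided exponential bounds established earlier, the ratio $\Pi_{\bmz}(\varphi;\sigma) = \|K_{\varphi(\bmz)}^\sigma\|^2 / \|K_{\bmz}^\sigma\|^2$ along a ray $\bmz = t\zeta$ with $\|\zeta\|_2 = 1$ and $t \to \infty$ behaves to leading exponential order like $\exp\bigl((\|\mathcal{A}\zeta\|_2 - 1)t/\sigma\bigr)$ up to polynomial corrections, so decay to zero forces $\|\mathcal{A}\zeta\|_2 < 1$ for every unit vector $\zeta$, hence $\|\mathcal{A}\|_2 < 1$. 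In the compactness direction, the lower bound first delivers the vanishing of the limit, after which the same asymptotic argument produces the strict inequality, restoring full consistency with the essential norm theorem and closing the loop.
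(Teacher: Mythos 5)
Your proposal is correct, and in fact it supplies an argument the paper itself omits: the compactness theorem is stated at the end of the section with no proof at all, so the only reasonable reading is that the author intends exactly the deduction you give, namely squeezing $\|\koop\|_{\operatorname{ess}}$ between the two sides of the essential-norm sandwich and using that $\|\koop\|_{\operatorname{ess}}=0$ characterizes compactness because $K(H_{\sigma,1,\Cn})$ is norm-closed. Your treatment of the one genuinely delicate point is also the right one: the essential-norm theorem is stated under $\|\mathcal{A}\|_2<1$, whereas boundedness of $\koop$ only yields $\|\mathcal{A}\|_2\le 1$, and you correctly observe that the ray asymptotics $\Pi_{t\zeta}(\varphi;\sigma)\sim C\exp\bigl(t(\|\mathcal{A}\zeta\|_2-1)/\sigma\bigr)$ (coming from $\|K_{\bmz}^\sigma\|^2=\sinh(\|\bmz\|_2/\sigma)/(\|\bmz\|_2/\sigma)$ and $\sinh x/x\sim e^x/(2x)$) force $\|\mathcal{A}\zeta\|_2<1$ for every unit $\zeta$, hence $\|\mathcal{A}\|_2<1$ by compactness of the unit sphere, whenever the limit vanishes. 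The only refinement I would ask for is in the forward direction: to extract the vanishing of the limit from compactness you invoke the lower bound of the sandwich \emph{before} you know $\|\mathcal{A}\|_2<1$, so you should say explicitly that the lower-bound half of the essential-norm argument (weak convergence of the normalized kernels $\bm{k}_{\varphi(\bmz_M)}^\sigma$ plus \autoref{lemma_5.10}) only uses $\|\mathcal{A}\|_2\le 1$ and invertibility of $\mathcal{A}$, not the strict inequality; with that remark the apparent circularity disappears and the argument closes cleanly.
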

\section{Experimental Results for Fluid flow across cylinder}\label{section_experimentsffac}
To demonstrate the ability of executing the Kernelized eDMD by the advantage of the compactification of the Koopman operator over the RKHS $H_{\sigma,1,\Cn}$ which is generated by the $L^2-$embedding into the normalized Laplacian measure, fluid flow around cylinder, which is the standard experiment, is considered. The data is generated from the snapshots of the numerical simulation of the in-compressible Navier-Stokes equation \cite[Page 286]{brunton2022data}:
\begin{align*}
    \frac{\partial}{\partial t}\mathbf{u}(x,y,t)+\mathbf{u}(x,y,t)\cdot\nabla\mathbf{u}(x,y,t)+\nabla p(x,y,t)-\frac{1}{\textsc{Re}}\nabla^2\mathbf{u}(x,y,t)=0,
\end{align*}
where $\mathbf{u}(x,y,t)$ represents the $2D$ velocity and $p(x,y,t)$ is the (corresponding) pressure field, along with the incompressibility constraints $\nabla\cdot\mathbf{u}=0$. The boundary conditions governs a constant flow given as $\mathbf{u}=(1,0)^\top$ at $x=-15$ (the entry of the domain), constant pressure of $p=0$ at $x=25$ (the end of the domain) and $\nicefrac{\partial\mathbf{u}}{\partial\mathbf{n}}=0$, on the boundary of the domain.
\subsection{Construction of Koopman Modes via Laplacian and GRBF Kernel Functions}
In this subsection, we will provide the dominant Koopman modes (both real and imaginary values) for fluid flow across cylinder experiment for whose all the experimental details were already provided in the introduction of this section. Here, will will consider both of the kernel functions for the construction of the Koopman modes. Before, we give the results, recall the mathematical structure of the class of exponential power kernels given in \eqref{eq_expkernel}; from here, we get the Laplacian Kernel Function and the Gaussian RBF Kernel.
\begin{note}
    In the upcoming sections, the result for the Koopman modes will be recorded which are dominant for fluid flow across cylinder experiment. The experimental images mentions explicitly {\tt{via GRBF Kernel}} if we performed the experiment by exploiting Gaussian RBF Kernel, otherwise we employed the Laplacian Kernel Function. We also write LDA to mean \emph{Limited Data Acquisition} and FDA to mean \emph{Full Data Acquisition} in the caption of figures.
\end{note}
\subsubsection{Real dominant Koopman modes} 
To provide the figurative details for the Koopman modes of fluid flow across cylinder experiment, we will consider the various numbers of snapshots that are available as a \emph{known part} and remaining gets padded by the Gaussian random matrix of suitable matrix dimension. To this end, we will consider the situations when only $3,~7,~20,~55$ and $151$ snapshots were available to construct the desired Koopman modes for the experiment. It should be noted that $151$ are the actual or the total number of snapshots assembled for this experiment. Now, we present the images for the real part of the dominant Koopman modes when the aforementioned snapshots were provided while the experiment was conducted.
\begin{figure}[H]
    \centering
    \includegraphics[scale=.27]{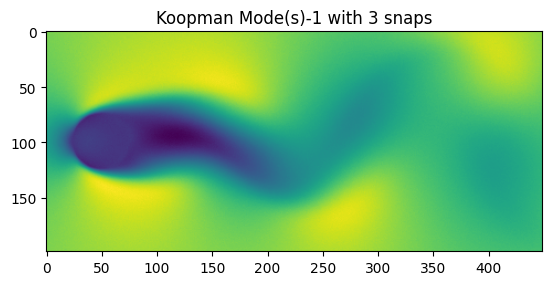}
    \includegraphics[scale=.27]{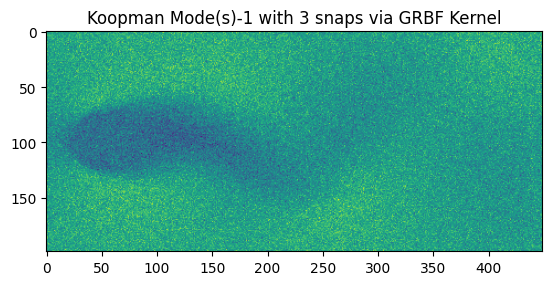}
    \includegraphics[scale=.27]{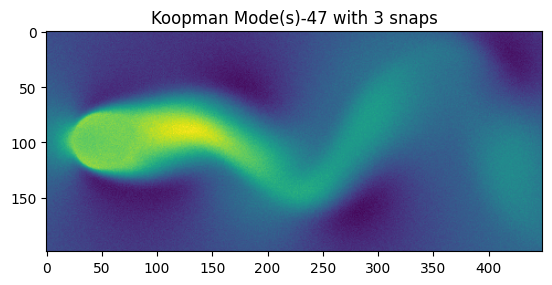}
    \includegraphics[scale=.27]{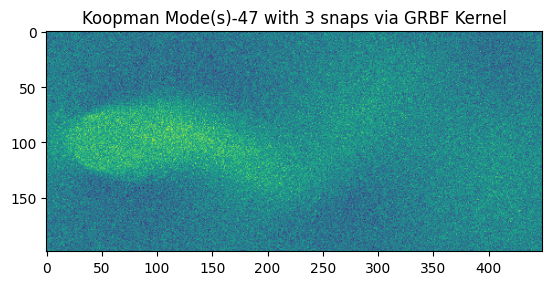}
    \includegraphics[scale=.27]{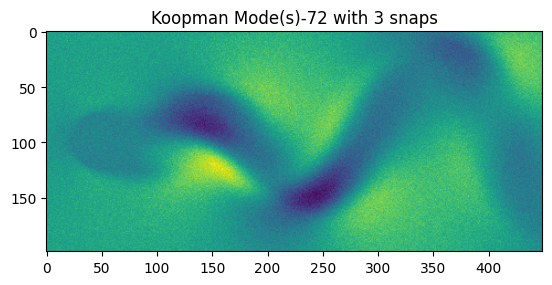}
    \includegraphics[scale=.27]{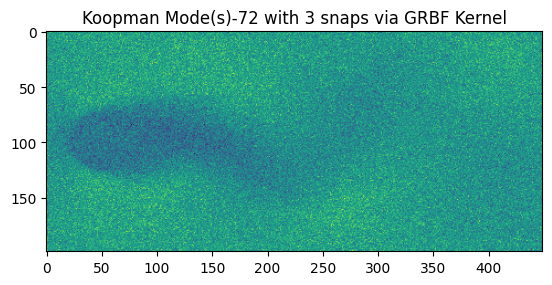}
    \includegraphics[scale=.27]{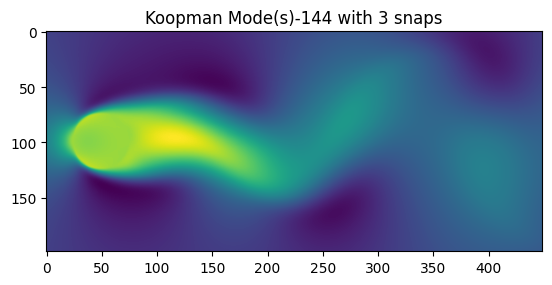}
    \includegraphics[scale=.27]{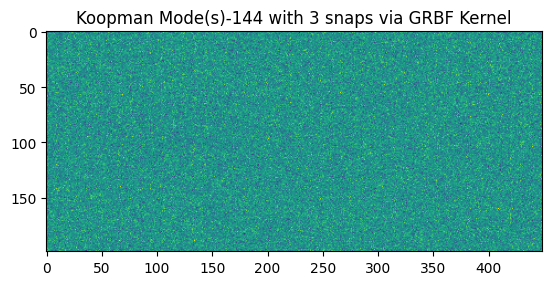}
    \caption{Dominant Koopman Modes with only $03-$snapshots [LDA]}
    \label{fig:realKoopman3snaps}
\end{figure}
\begin{figure}[H]
    \centering
    \includegraphics[scale=.27]{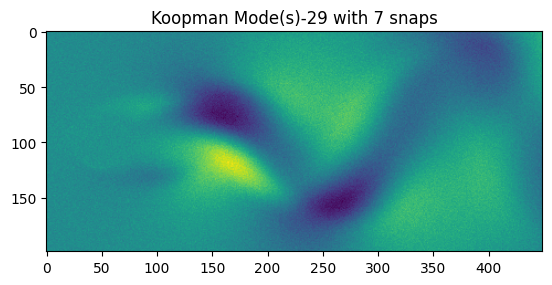}
    \includegraphics[scale=.27]{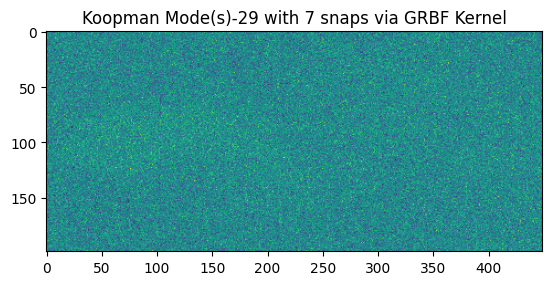}
    \includegraphics[scale=.27]{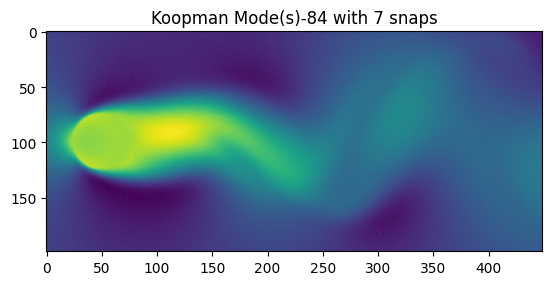}
    \includegraphics[scale=.27]{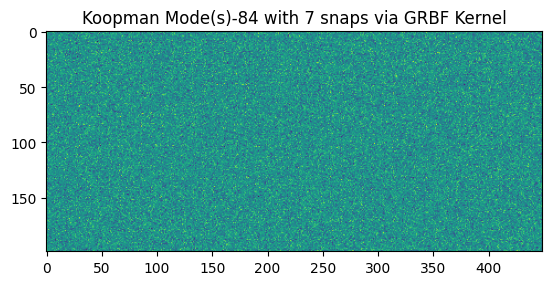}
    \includegraphics[scale=.27]{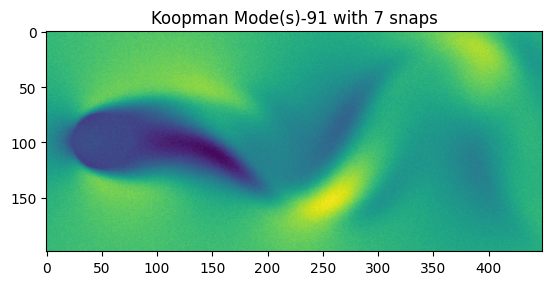}
    \includegraphics[scale=.27]{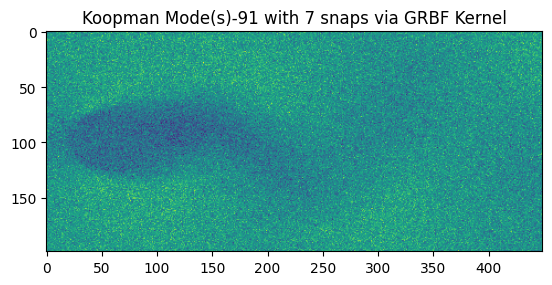}
    \includegraphics[scale=.27]{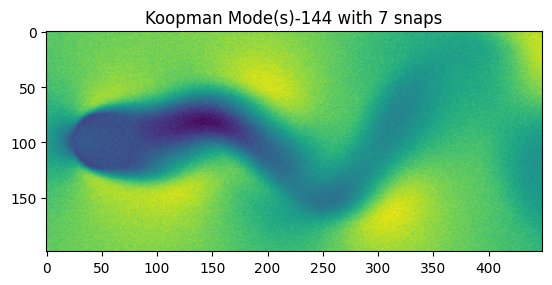}
    \includegraphics[scale=.27]{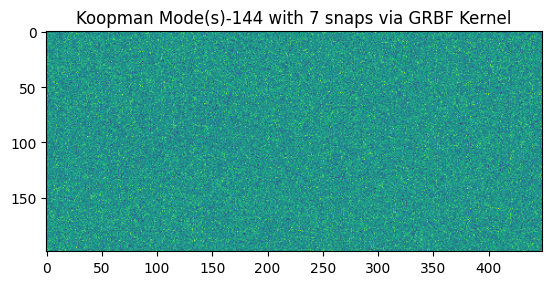}
    \caption{Dominant Koopman Modes with only $07-$snapshots [LDA]}
    \label{fig:realKoopman7snaps}
\end{figure}
\begin{figure}[H]
    \centering
    \includegraphics[scale=.27]{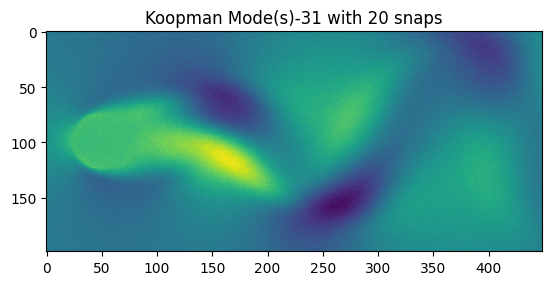}
    \includegraphics[scale=.27]{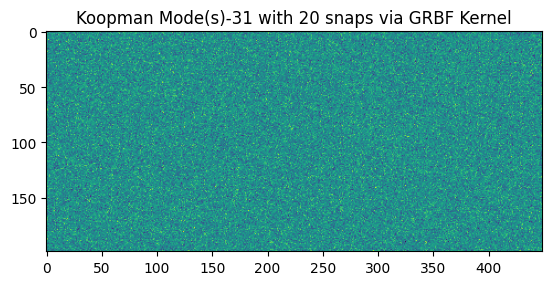}
    \includegraphics[scale=.27]{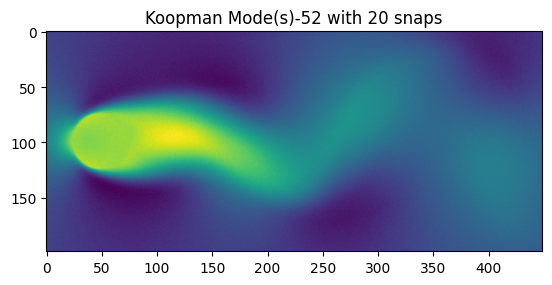}
    \includegraphics[scale=.27]{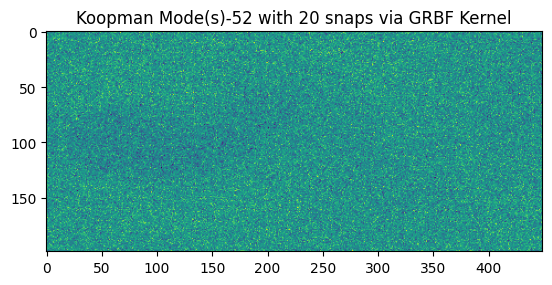}
    \includegraphics[scale=.27]{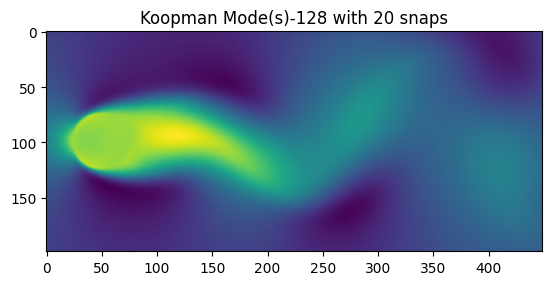}
    \includegraphics[scale=.27]{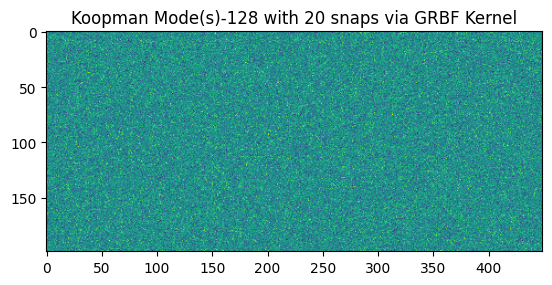}
    \includegraphics[scale=.27]{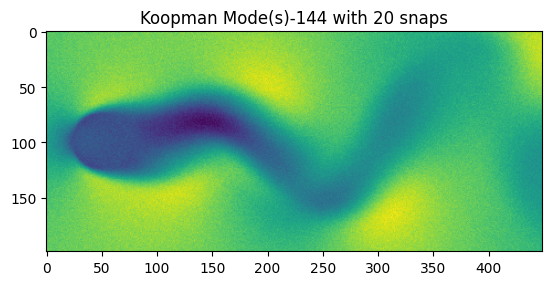}
    \includegraphics[scale=.27]{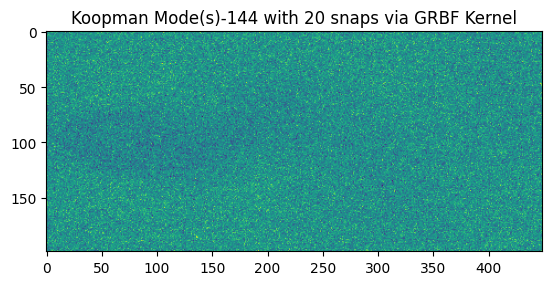}
    \caption{Dominant Koopman Modes with only $20-$snapshots [LDA]}
    \label{fig:realKoopman20snaps}
\end{figure}
\begin{figure}[H]
    \centering
    \includegraphics[scale=.27]{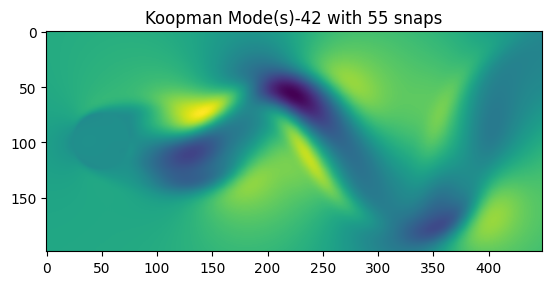}
    \includegraphics[scale=.27]{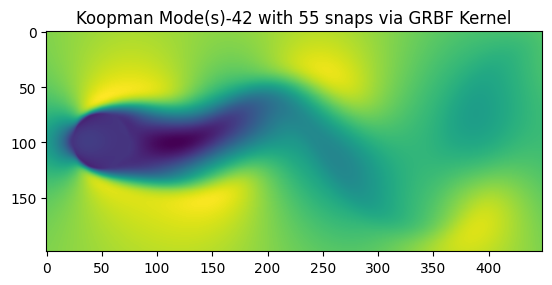}
    \includegraphics[scale=.27]{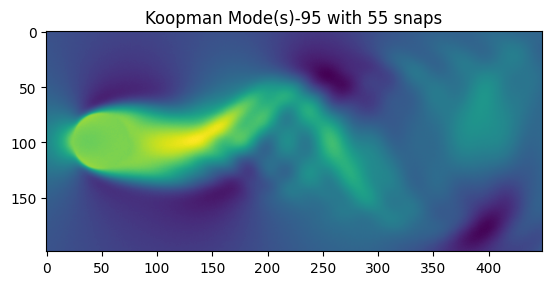}
    \includegraphics[scale=.27]{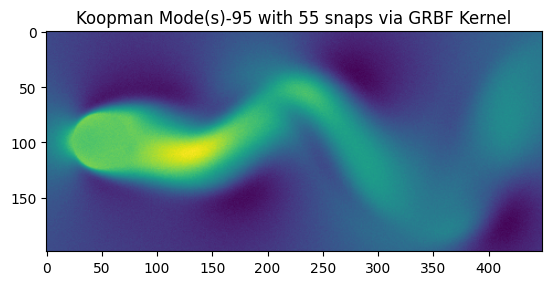}
    \includegraphics[scale=.27]{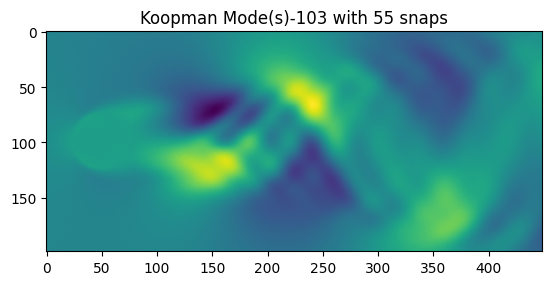}
    \includegraphics[scale=.27]{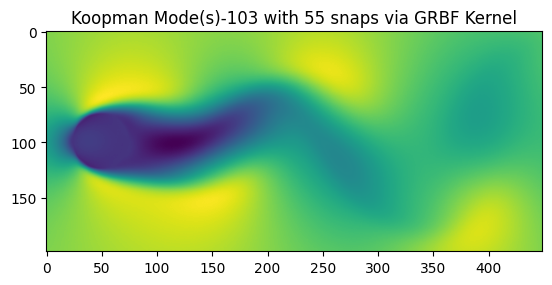}
    \includegraphics[scale=.27]{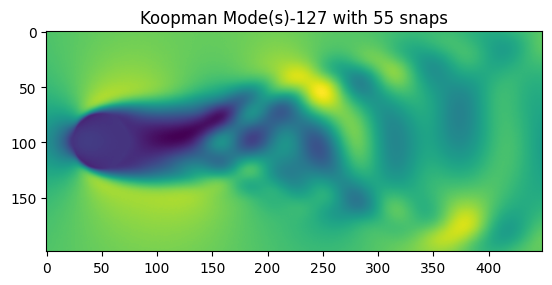}
    \includegraphics[scale=.27]{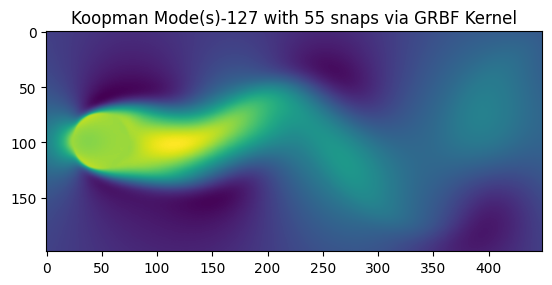}
    \caption{Dominant Koopman Modes when only $55-$snapshots [LDA]}
    \label{fig:realKoopman55snaps}
\end{figure}
\begin{figure}[H]
    \centering
    \includegraphics[scale=.27]{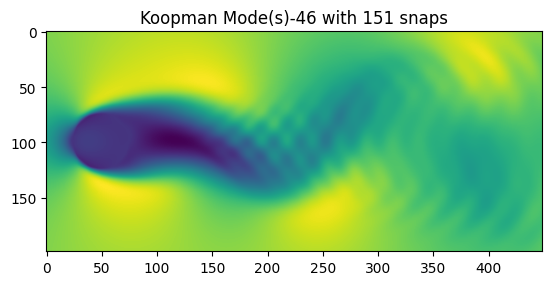}
    \includegraphics[scale=.27]{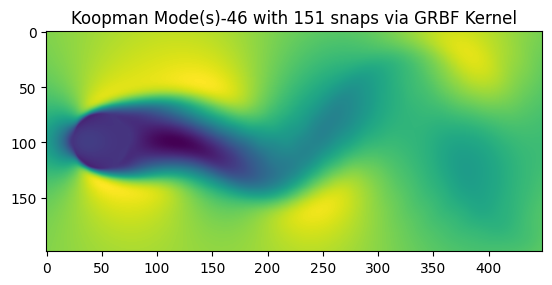}
    \includegraphics[scale=.27]{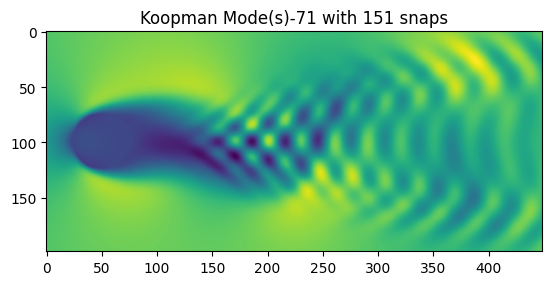}
    \includegraphics[scale=.27]{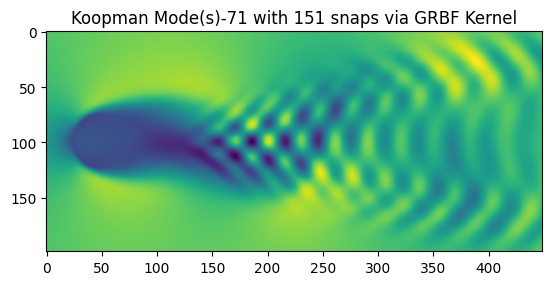}
    \includegraphics[scale=.27]{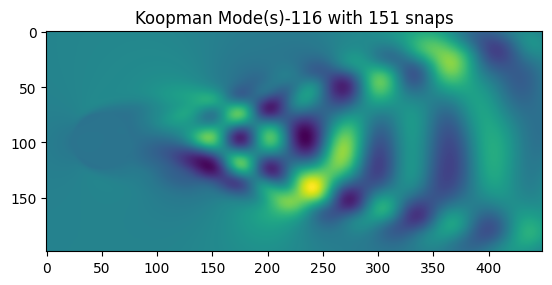}
    \includegraphics[scale=.27]{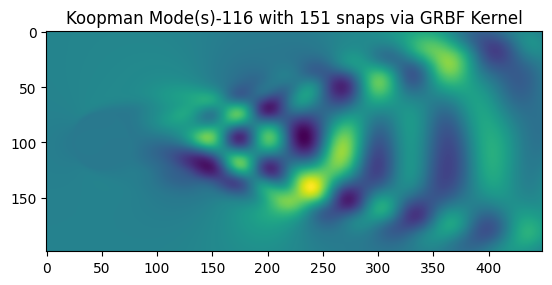}
    \includegraphics[scale=.27]{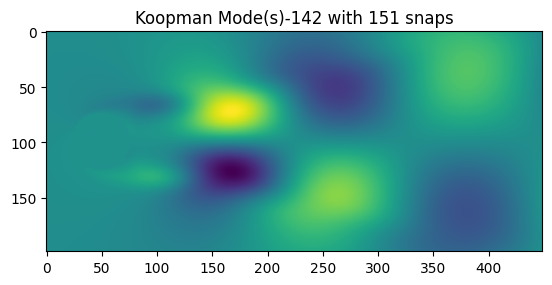}
    \includegraphics[scale=.27]{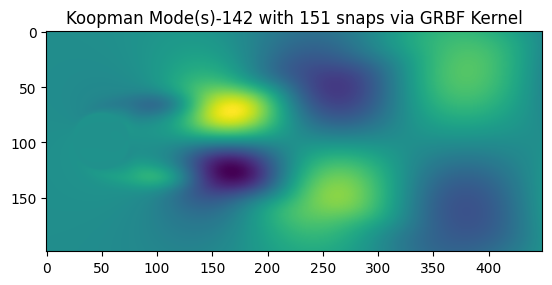}
    \caption{Dominant Koopman Modes with all $151-$snapshots [FDA]}
    \label{fig:realKoopman151snaps}
\end{figure}
\subsubsection{Imaginary dominant Koopman modes}
In the continuation, now, we will provide the imaginary part of the corresponding dominant Koopman modes. 
\begin{figure}[H]
    \centering
    \includegraphics[scale=.27]{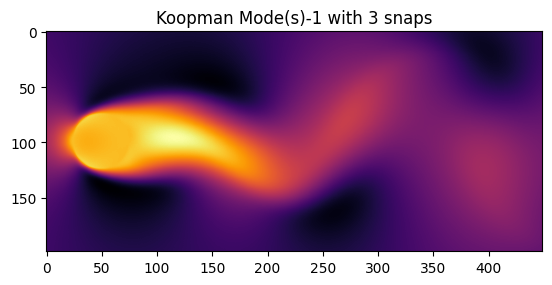}
    \includegraphics[scale=.27]{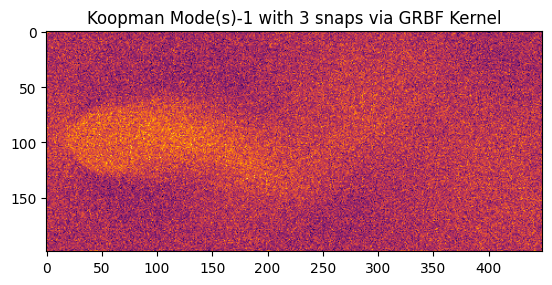}
    \includegraphics[scale=.27]{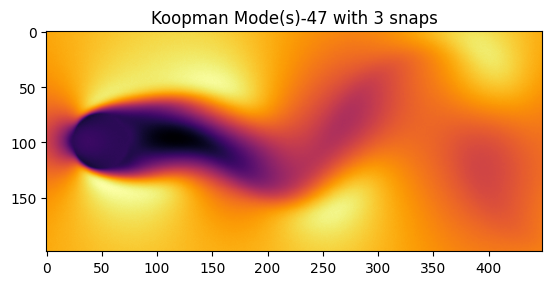}
    \includegraphics[scale=.27]{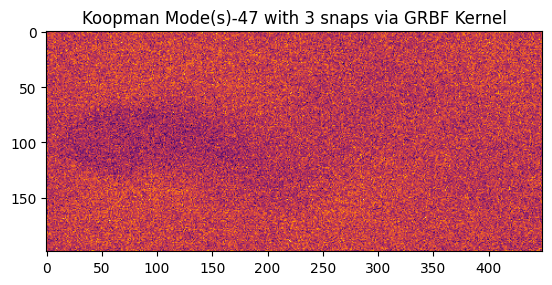}
    \includegraphics[scale=.27]{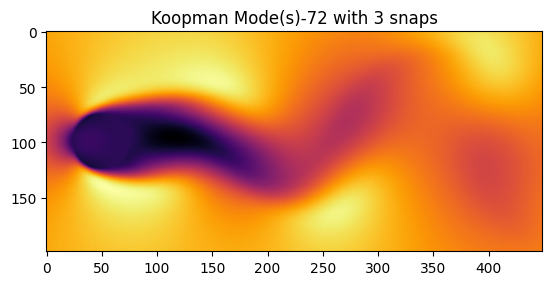}
    \includegraphics[scale=.27]{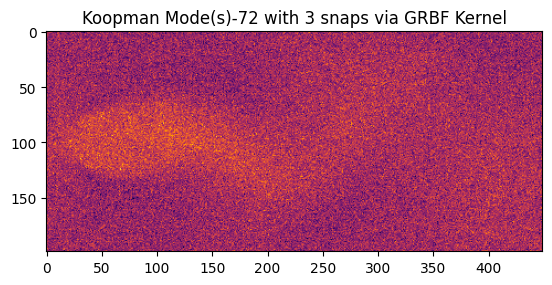}
    \includegraphics[scale=.27]{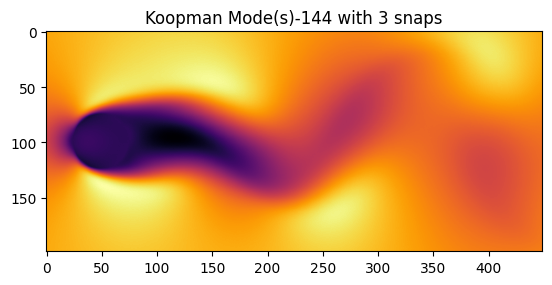}
    \includegraphics[scale=.27]{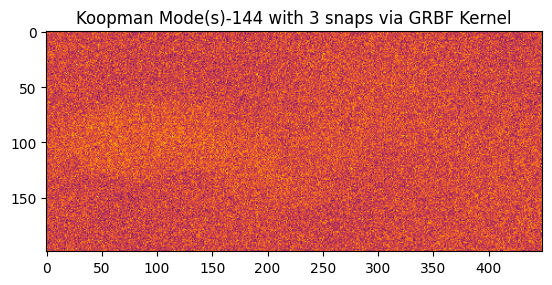}
    \caption{Dominant Koopman Modes with only $03-$snapshots [LDA]}
    \label{fig:imagKoopman03snaps}
\end{figure}
\begin{figure}[H]
    \centering
    \includegraphics[scale=.27]{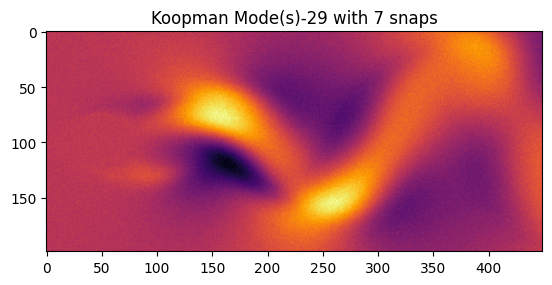}
    \includegraphics[scale=.27]{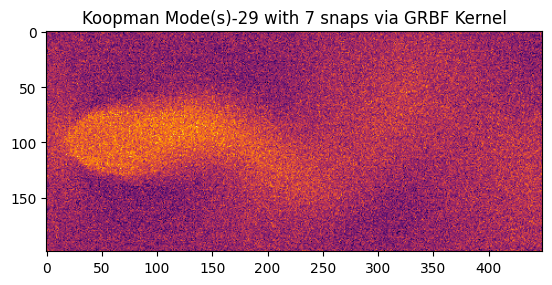}
    \includegraphics[scale=.27]{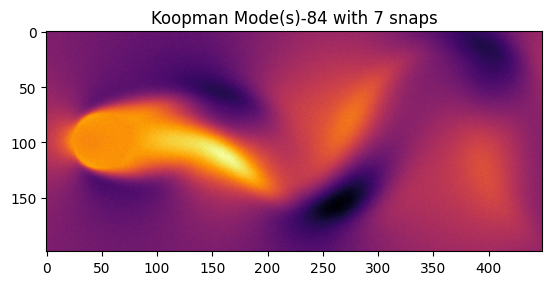}
    \includegraphics[scale=.27]{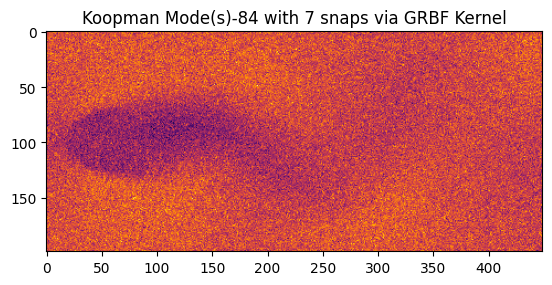}
    \includegraphics[scale=.27]{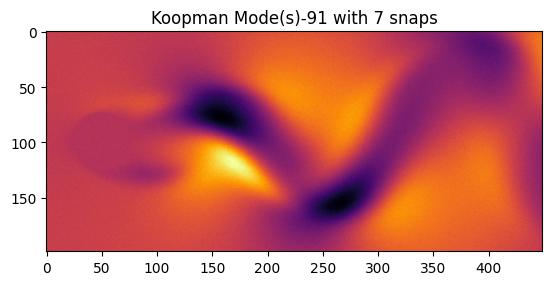}
    \includegraphics[scale=.27]{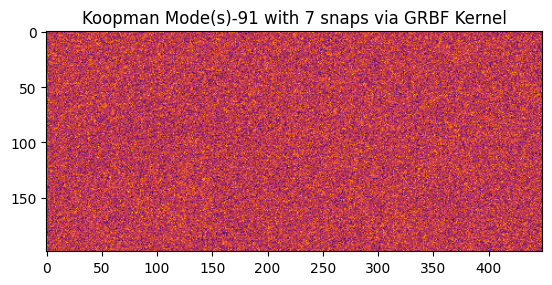}
    \includegraphics[scale=.27]{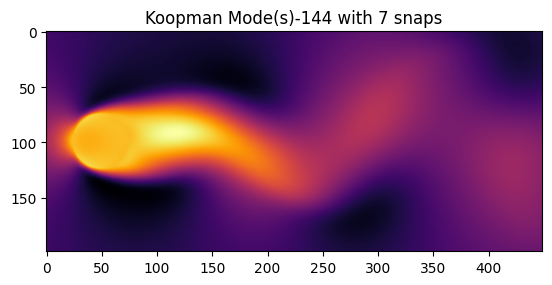}
    \includegraphics[scale=.27]{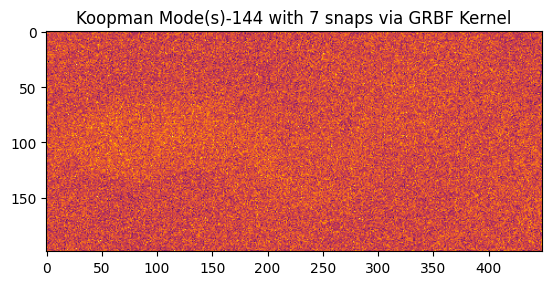}
    \caption{Dominant Koopman Modes with only $07-$snapshots [LDA]}
    \label{fig:imagKoopman07snaps}
\end{figure}
\begin{figure}[H]
    \centering
    \includegraphics[scale=.27]{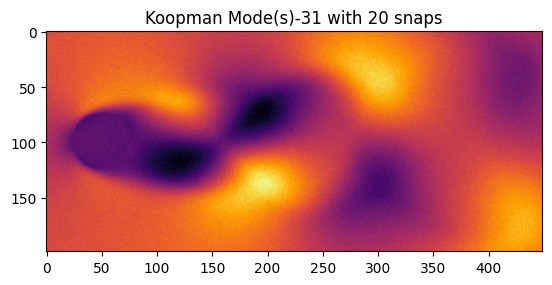}
    \includegraphics[scale=.27]{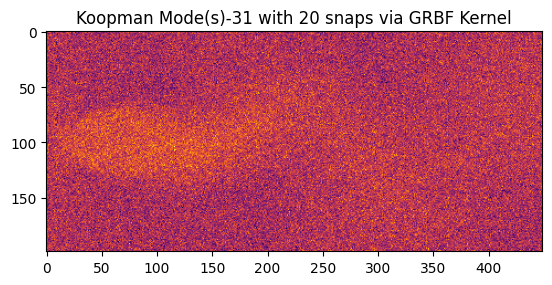}
    \includegraphics[scale=.27]{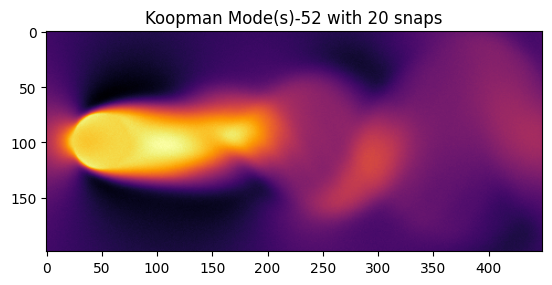}
    \includegraphics[scale=.27]{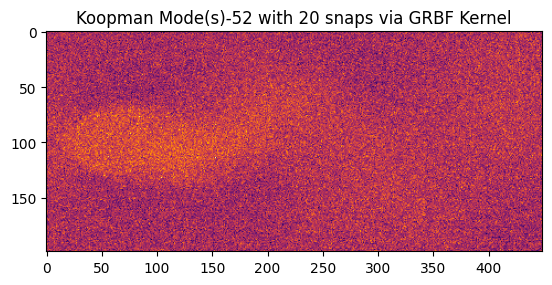}
    \includegraphics[scale=.27]{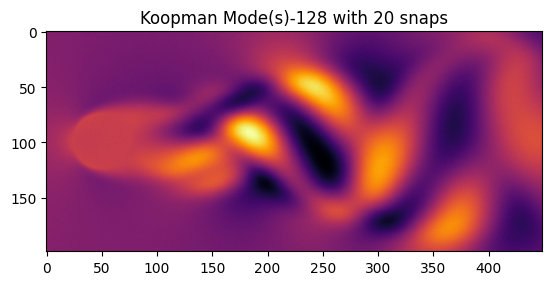}
    \includegraphics[scale=.27]{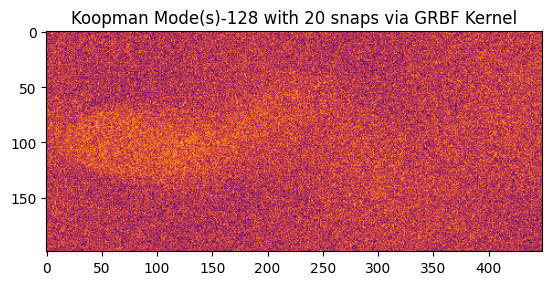}
    \includegraphics[scale=.27]{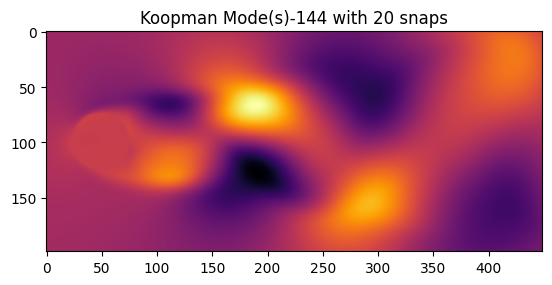}
    \includegraphics[scale=.27]{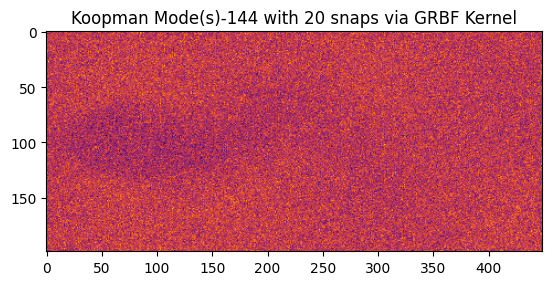}
    \caption{Dominant Koopman Modes with only $20-$snapshots [LDA]}
    \label{fig:imagKoopman20snaps}
\end{figure}
\begin{figure}[H]
    \centering
    \includegraphics[scale=.27]{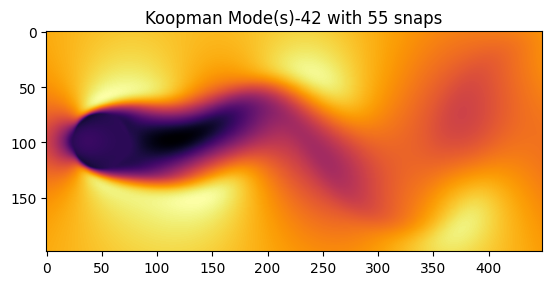}
    \includegraphics[scale=.27]{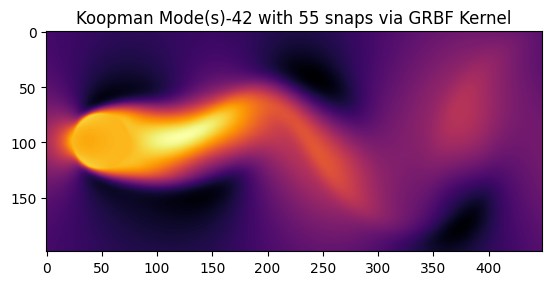}
    \includegraphics[scale=.27]{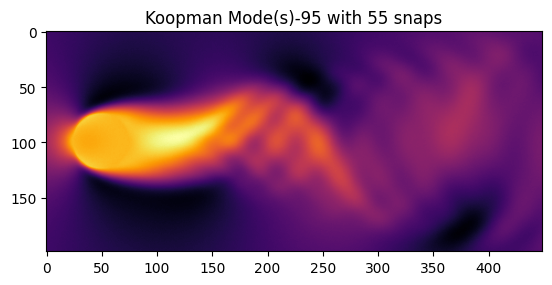}
    \includegraphics[scale=.27]{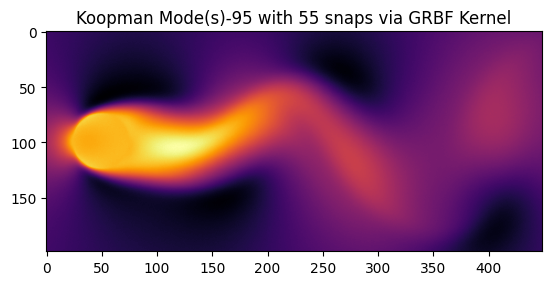}
    \includegraphics[scale=.27]{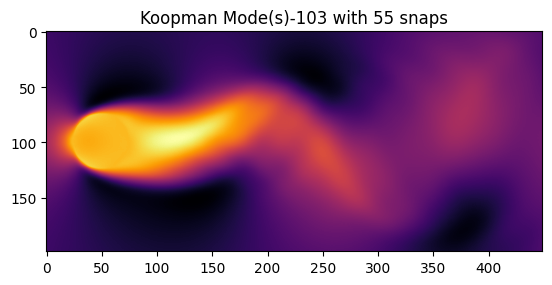}
    \includegraphics[scale=.27]{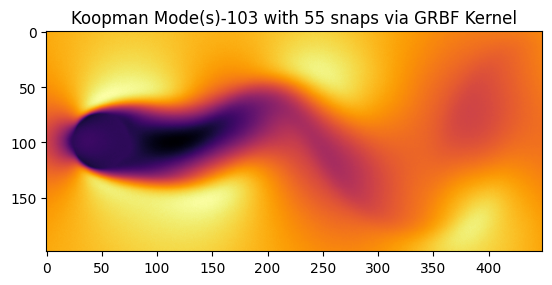}
    \includegraphics[scale=.27]{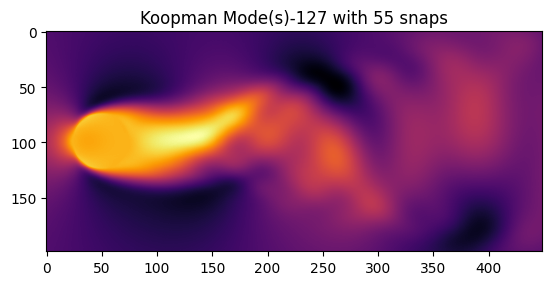}
    \includegraphics[scale=.27]{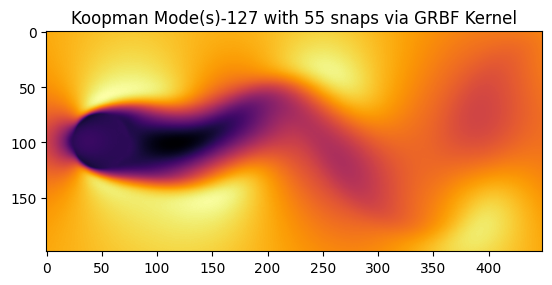}
    \caption{Dominant Koopman Modes with only $55-$snapshots [LDA]}
    \label{fig:imagKoopman55snaps}
\end{figure}
\begin{figure}[H]
    \centering
    \includegraphics[scale=.27]{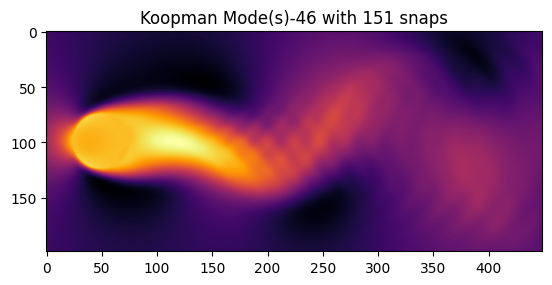}
    \includegraphics[scale=.27]{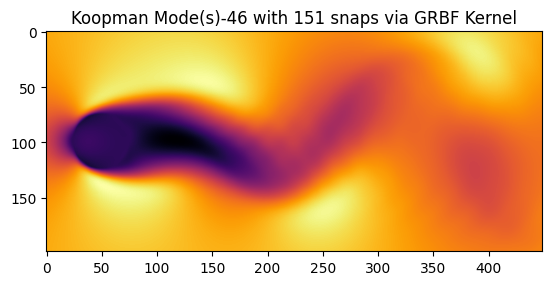}
    \includegraphics[scale=.27]{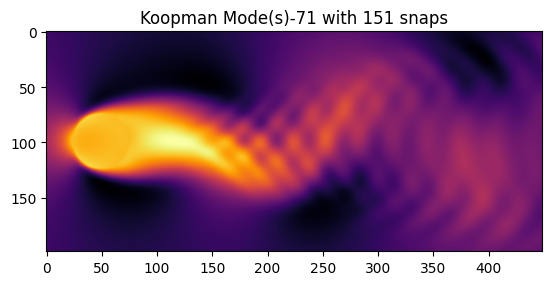}
    \includegraphics[scale=.27]{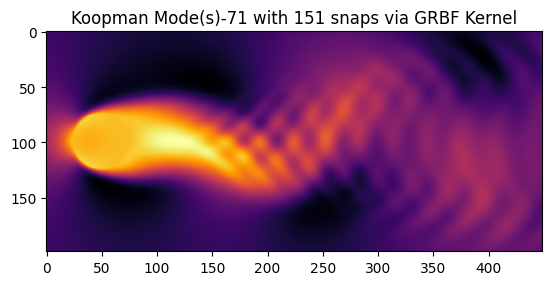}
    \includegraphics[scale=.27]{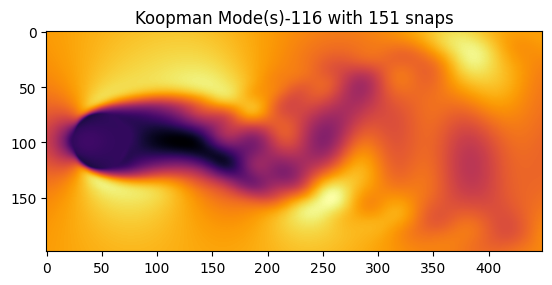}
    \includegraphics[scale=.27]{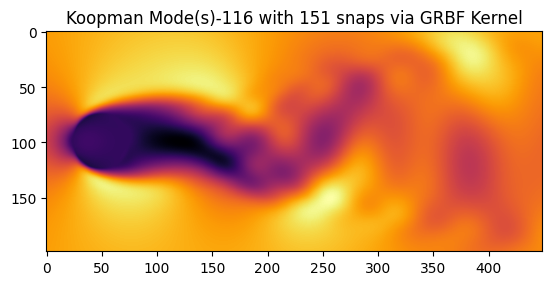}
    \includegraphics[scale=.27]{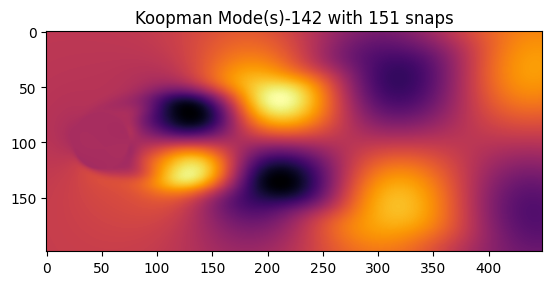}
    \includegraphics[scale=.27]{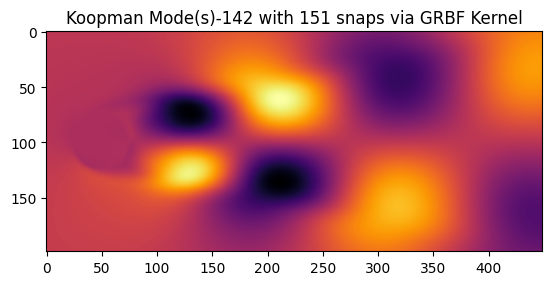}
    \caption{Dominant Koopman Modes with all $151-$snapshots [FDA]}
    \label{fig:imagKoopman151snaps}
\end{figure}
\subsection{Executive conclusion}
We presented both real and imaginary part of the dominant Koopman modes for fluid flow across cylinder experiment when limited number data snapshots were provided in the previous subsection. Following is a concise summary for the executive conclusion that we draw when we compare the results of both real and imaginary dominant Koopman modes that we get via employing respective both of the kernel functions. 
\subsubsection{Dominant Koopman modes constructed via Laplacian \& GRBF}
Of what follows, the dominant Koopman modes generated when we allowed the interaction of the Koopman operators on the Laplacian Kernel Function even when the as low as only \emph{3-}snapshots were available, we can successfully observe that in \autoref{fig:realKoopman3snaps} that in the result, we can clearly visualize the cylinder as well as the flow of the fluid as well. However, the same setup but with Gaussian RBF Kernel unfortunately fails to deliver the result which in fact also includes noise as well. Now, upon increasing the snapshots from \emph{3-}snapshots to \emph{7-}snapshots, we fail to see that the Gaussian RBF Kernel recovers the information in the form of Koopman modes. However, the respective dominant modes via the Laplacian Kernel Function is now achieving more maturity as it now can distinguishly highlights detailed eddies of fluid flow across cylinder. We do see the same result and conclusion when we move from \emph{7-} to \emph{20-}snapshots of this experiment. However, in the case when we avail \emph{55-}snapshots for the experiment study, we are successfully able to point out details that are provided by Gaussian RBF Kernel however, in the same situation, as one can observe that the results via the Laplacian Kernel Function has more depth and nuance in terms of providing vorticity profile for fluid flow across cylinder. Lastly, when all \emph{151-}snapshots are given, we see now, that Gaussian RBF Kernel can perfectly construct the Koopman modes. Additionally, it should also be noted here that the wiggly nature of fluid passing by cylinder appears quite at the early stage when we use the Laplacian Kernel Function.
\subsubsection{Gram-Matrix for data-set snapshots}After that we have analysed the Koopman modes generated by the \autoref{algo_lapkerneleDMD}, now we will provide the matrix structure with suitable color gradient scheme which on-the-spot informs on how many actual snapshots were provided to execute the aforementioned algorithm.
\begin{figure}[H]
    \centering
    \includegraphics[scale=.3]{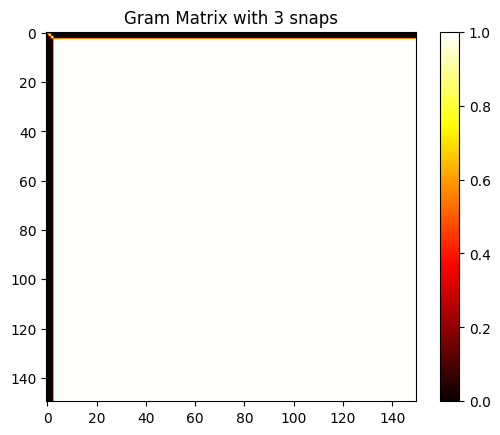}
    \includegraphics[scale=.3]{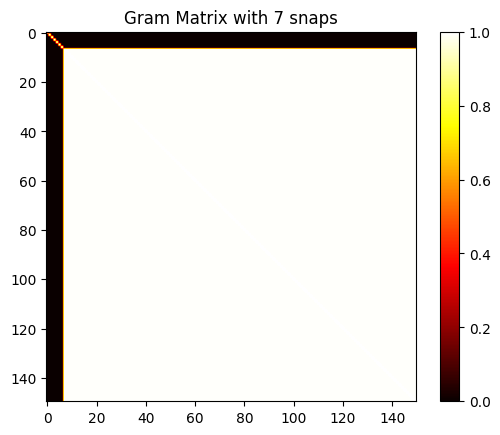}
    \includegraphics[scale=.3]{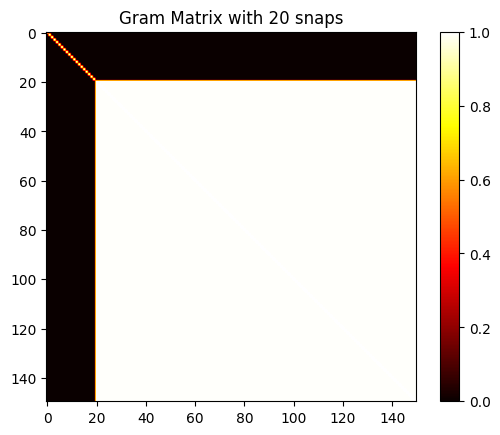}
    \includegraphics[scale=.3]{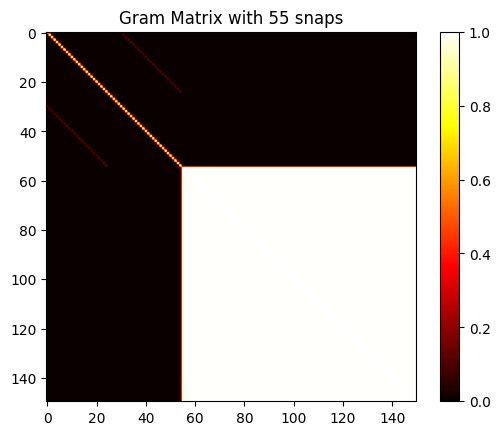}
    \includegraphics[scale=.3]{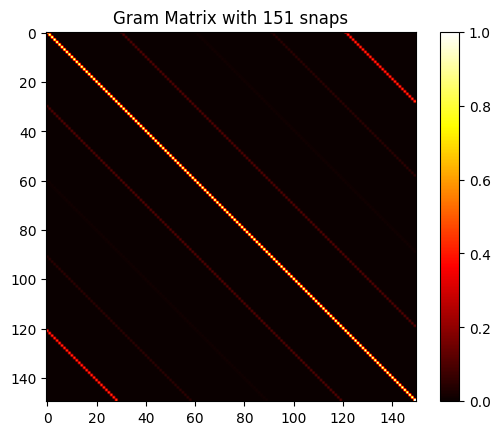}
    \caption{Various Gram-Matrix produced while executing Laplacian Kernel based eDMD given in \autoref{algo_lapkerneleDMD} coupled with Gaussian random matrix. }
    \label{fig:Gram-Matrix3snaps}
\end{figure}
\section{Why RKHS generated by the Laplacian measure is novel?}\label{section_novelLap}
From the previous section where we provide the empirical evidence where Laplacian Kernel easily outperforms the construction of the Koopman modes for fluid flow across cylinder experiment against the Gaussian RBF Kernel under the scope of limited data acquisition. This immediately warns us about the choice of using the reproducing kernel along with its RKHS into our data-science practices. Therefore, in the present section we will discuss the distinguishing property of the corresponding RKHS $H_{\sigma,1,\Cn}$ of the normalized Laplacian measure which eventually makes it novel and unique. For this, we review first the definition of the closable operator over general Hilbert space.
\subsection{Review of Closable operator in Hilbert space}
We recall when we mean an operator $T$ in a Hilbert space $\mathfrak{H}$ to be \emph{closable} or \emph{preclosed} as given in standard functional analysis references such as \cite[Chapter 13]{rudin1991functional}, \cite[Chapter X, Page 304]{conway2019course} \cite[Chapter 5, Page 193]{pedersen2012analysis}, or \cite[Chapter VIII, Page 250]{reed2012methods}. We define that particular notion systematically as follows:
\begin{dfn}[Graph of an operator]
    For an (unbounded) operator $T$ in Hilbert space $\mathfrak{H}$ with its domain $\mathcal{D}(T)$, we define the \emph{graph of $T$} in $\mathfrak{H}$ as follows:
    \begin{align}\label{eq_graphofT}
        \bm{\Gamma}(T)\coloneqq\left\{(x,Tx):x\in\mathcal{D}(T)\right\}.  
    \end{align}
\end{dfn}
\begin{dfn}[Extension of an operator in Hilbert space]
    Let $T_{\boxdot}$ and $T$ be operators over the Hilbert space $\mathfrak{H}$. Let $\bm{\Gamma}(T_{\boxdot})$ and $\bm{\Gamma}(T)$ be the respective graphs of $T_{\boxdot}$ and $T$ as defined in \eqref{eq_graphofT}. If $\bm{\Gamma}(T)\subset\bm{\Gamma}(T_{\boxdot})$, then $T_\boxdot$ is said to be an extension of $T$ and we write $T\subset T_\boxdot$ and equivalently if $T\subset T_\boxdot$ if and only if $\mathcal{D}(T)\subset\mathcal{D}(T_\boxdot)$ and $T_\boxdot\Lambda=T\Lambda$ for all $\Lambda\in\mathcal{D}(T)$.
\end{dfn}
\begin{dfn}[Closable operator in Hilbert space]
    An operator is \textbf{closable} if it has a closed extension.
\end{dfn}
As now we have given all the essential details regarding the notion of an unbounded operator to be closable over the Hilbert space, we now provide an easy characterization for the operator to be closable over the underlying Hilbert space which can be an easy functional analysis exercise.
\begin{lemma}\label{lemma_closable_characterization}
    The operator $T$ in Hilbert space $\mathfrak{H}$ is closable if and only if for each sequence $\left\{x_n\right\}_n\in\mathcal{D}(T)$ converging to $0$, the only accumulation point of $\left\{Tx_n\right\}_n$ is $0$.
\end{lemma}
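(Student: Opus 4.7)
The plan is to prove both directions by working with the graph $\bm{\Gamma}(T) \subset \mathfrak{H} \times \mathfrak{H}$ as defined in \eqref{eq_graphofT}, and recognizing that the stated accumulation-point condition is precisely the single-valuedness obstruction for the closure of $\bm{\Gamma}(T)$ in $\mathfrak{H} \times \mathfrak{H}$ to itself be the graph of a linear operator.

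First I would handle the forward direction. Assume $T$ is closable, so $T$ has a closed extension $T_\boxdot$ with $\bm{\Gamma}(T) \subset \bm{\Gamma}(T_\boxdot)$ and $\bm{\Gamma}(T_\boxdot)$ closed in $\mathfrak{H} \times \mathfrak{H}$. Let $\{x_n\} \subset \mathcal{D}(T)$ with $x_n \to 0$, and suppose $y \in \mathfrak{H}$ is any accumulation point of $\{Tx_n\}$. Extract a subsequence $\{x_{n_k}\}$ with $Tx_{n_k} \to y$. Since $x_{n_k} \in \mathcal{D}(T) \subset \mathcal{D}(T_\boxdot)$ and $T_\boxdot x_{n_k} = T x_{n_k}$, the pairs $(x_{n_k}, T_\boxdot x_{n_k}) \in \bm{\Gamma}(T_\boxdot)$ converge in $\mathfrak{H} \times \mathfrak{H}$ to $(0, y)$. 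Closedness of $\bm{\Gamma}(T_\boxdot)$ forces $(0, y) \in \bm{\Gamma}(T_\boxdot)$, hence $y = T_\boxdot(0) = 0$ by linearity of $T_\boxdot$.

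For the converse, the strategy is to build a closed extension of $T$ explicitly from the closure of its graph. Set $\mathcal{G} \coloneqq \overline{\bm{\Gamma}(T)}$ in $\mathfrak{H} \times \mathfrak{H}$. Because $\bm{\Gamma}(T)$ is a linear subspace, its closure $\mathcal{G}$ is a closed linear subspace. The key question is whether $\mathcal{G}$ is itself the graph of a (linear) operator $\overline{T}$, which reduces to showing: if $(0, y) \in \mathcal{G}$ then $y = 0$. Indeed, given such a pair there is a sequence $(x_n, Tx_n) \in \bm{\Gamma}(T)$ converging to $(0, y)$, so $x_n \to 0$ and $Tx_n \to y$; the hypothesis, applied with $y$ as the limit (and hence an accumulation point) of $\{Tx_n\}$, gives $y = 0$. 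Consequently $\mathcal{G}$ is a graph, and defining $\overline{T}$ on $\mathcal{D}(\overline{T}) \coloneqq \{x \in \mathfrak{H} : \exists \, y \text{ with } (x, y) \in \mathcal{G}\}$ by $\overline{T} x \coloneqq y$ yields a closed linear extension of $T$, witnessing closability.

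The principal obstacle is conceptual rather than computational: one must articulate carefully why the accumulation-point hypothesis (as opposed to the weaker condition $Tx_n \to 0$) is the right formulation, since in the forward direction we do not know a priori that $\{Tx_n\}$ converges and must pass to a subsequence, while in the reverse direction we encounter only genuine limits arising from graph closures. Beyond verifying this equivalence, the remaining content --- namely that $\mathcal{G}$ is a linear subspace and that a single-valued closed linear subspace of $\mathfrak{H} \times \mathfrak{H}$ is the graph of a closed linear operator --- is automatic from the fact that $\bm{\Gamma}(T)$ is a linear subspace and closure preserves that structure.
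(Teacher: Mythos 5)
Your proof is correct and complete: the forward direction via a closed extension and subsequence extraction, and the converse via showing the closure of the graph is single-valued at $0$ and hence itself a graph, is exactly the standard argument. The paper does not actually prove this lemma --- it states it and cites Pedersen's \emph{Analysis Now} --- and your argument is precisely the graph-closure proof that reference contains, including the correct handling of the accumulation-point (subsequence) versus limit distinction in the forward direction.
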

The above lemma (cf. \cite[Chapter 5, Page 193]{pedersen2012analysis}) can be interpreted as follows: \emph{for a linear operator $T:\mathcal{D}(T)\to\mathfrak{H}$ is closable if and only if for any sequence $x_n$ such that $x_n\to0$ when $n\to\infty$ and $Tx_n\to y_n$, then $y_n=0$.} 
\autoref{lemma_closable_characterization} will be used to demonstrate that bounded Koopman operators $\koop$ which will be induced by holomorphic function $\varphi:\Cn\to\Cn$ is closable on the RKHS $H_{\sigma,1,\Cn}$ by constructing such a sequence of function whose behaviour follows the required condition presented in \autoref{lemma_closable_characterization}. However, the same cannot performed for the Gaussian RBF Kernel.
\subsection{Closability of Koopman operator over the RKHS of Laplacian measure}
We will now demonstrate how the bounded Koopman operator is closable over the RKHS $H_{\sigma,1,\Cn}$ with the help of its reproducing kernel function $K^{\sigma}(\cdot,\cdot)$. To this end, we shall keep $\sigma>0$ and consider the following operator $\mathfrak{I}_{-}$ defined over $\Cn$ as:
\begin{align}
    \mathfrak{I}_{-}\bmz=-I_{n}\bmz,
\end{align}
where $I_n$ is the identity matrix over $\Cn$. The operator define above is an injective linear operator and hence the null-space is explicitly the zero vector in $\Cn$. 
Now, we define the graph of the operator $\mathfrak{I}_{-}$ as follows:
\begin{align}\label{eq_78}
    _-\mathfrak{Z}_+\left(\mathfrak{I}_{-}\right)\coloneqq\left\{\left(\bmz,\mathfrak{I}_{-}\bmz\right)\in\Cn\times\Cn:\bmz\in\Cn\right\}\triangleq_-\mathfrak{Z}_+.
\end{align}Therefore, by the definition of the graph of $\mathfrak{I}_{-}$, essentially it is a coordinate system presented in the format of $\left\{\left(\bmz,-\bmz\right)\in\Cn\times\Cn:\bmz\in\Cn\right\}$. It is worth-while to mention that the graph of $\mathfrak{I}_{-}$ defined in \eqref{eq_78} is a closed subspace of $\Cn\times\Cn$. 

Interestingly, the formulation of the reproducing kernel function $K^{\sigma}(\cdot,\cdot)$ from \eqref{eq_21} over the set of coordinates present on $_-\mathfrak{Z}_+$ as defined in \eqref{eq_78}, we have following:
\begin{align*}
    K^{\sigma}\left(_-\mathfrak{Z}_+\right)=\frac{\sinh{\left(\sqrt{ \frac{\langle\bm{z},-\bm{z}\rangle_{\mathbb{C}^n}}{\sigma^2}}
        \right)}}{\sqrt{\frac{\langle\bm{z},-\bm{z}\rangle_{\mathbb{C}^n }}{\sigma^2}}
        }=\frac{\sinh{\left(\sqrt{-\frac{\langle\bmz,\bmz\rangle_{\Cn}}{\sigma^2}}\right)}}{\sqrt{-\frac{\langle\bmz,\bmz\rangle_{\Cn}}{\sigma^2}}}=\frac{\sinh{\left(\sqrt{-\frac{\|\bmz\|_{2}^2}{\sigma^2}}\right)}}{\sqrt{-\frac{\|\bmz\|_{2}^2}{\sigma^2}}}=&\frac{\sinh\left({i\frac{\|\bmz\|_2}{\sigma}}\right)}{i\frac{\|\bmz\|_2}{\sigma}}\\
        =&\frac{\sin\left({\frac{\|\bmz\|_2}{\sigma}}\right)}{{\frac{\|\bmz\|_2}{\sigma}}}.
\end{align*}
Now for the upcoming investigation, we will focus on the quantity ${\|\bmz\|_2}K^{\sigma}\left(_-\mathfrak{Z}_{+}\right)$. It is very important to show that quantity ${\|\bmz\|_2}K^{\sigma}\left(_-\mathfrak{Z}_{+}\right)$, which eventually is `${\sigma}\sin\left({\frac{\|\bmz\|_2}{\sigma}}\right)$' exist in the RKHS $H_{\sigma,1,\Cn}$ and therefore its norm is finite. In other words, we need to check whether the function ${\sigma}\sin\left({\frac{\|\bmz\|_2}{\sigma}}\right)$ is $L^2-$integrable with respect to the normalized Laplacian measure $d\mu_{\sigma,1,\Cn}(\bmz)$, and to that end, we have
\begin{align*}
    \int_{\Cn}\left|{\sigma}\sin\left({\frac{\|\bmz\|_2}{\sigma}}\right)\right|^2d\mu_{\sigma,1,\Cn}(\bmz)\leq\sigma^2\int_{\Cn}d\mu_{\sigma,1,\Cn}=\sigma^2<\infty,
\end{align*}
as the Laplacian measure in $d\mu_{\sigma,1,\Cn}$ is already given in normalized form. Since, we already have $\sigma<\infty$, therefore, the function defined by ${\|\bmz\|_2}K^{\sigma}(_-\mathfrak{Z}_{+})$ exists in the RKHS $H_{\sigma,1,\Cn}$ and their norm is bounded by $\sigma$. After this, we define a sequence of points on the subspace $_-\mathfrak{Z}_{+,N}$ of $_-\mathfrak{Z}_{+}$ as:
\begin{align}\label{eq_79}
    _-\mathfrak{Z}_{+,N}\coloneqq\left\{\left(\bmz_N,-\bmz_N\right)\in\Cn\times\Cn:\lim_{N\to\infty}\|\bmz_N\|_2=0~\text{where}~\bmz\in\Cn\right\}.
\end{align}
Apparently, it is easy to see that $_-\mathfrak{Z}_{+,N}\subset{{\!}_-}\mathfrak{Z}_{+}$ because $_-\mathfrak{Z}_{+,N}$ contains those sequence of coordinates points from $_-\mathfrak{Z}_{+}$ whose magnitude gets negligibly small as $N$ tends to $\infty.$ Observe that for coordinate points in $_-\mathfrak{Z}_{+,N}$, following relation holds:
\begin{align*}
    {\|\bmz_N\|_2}K^{\sigma}(_-\mathfrak{Z}_{+,N})=&{\sigma}\sin\left({\frac{\|\bmz_N\|_2}{\sigma}}\right),\\
    \implies\lim_{N\to\infty}{\|\bmz_N\|_2}K^{\sigma}(_-\mathfrak{Z}_{+,N})=&\lim_{N\to\infty}\left\{{\sigma}\sin\left({\frac{\|\bmz_N\|_2}{\sigma}}\right)\right\}\\
    \lim_{N\to\infty}{\|\bmz_N\|_2}K^{\sigma}(_-\mathfrak{Z}_{+,N})=&\sigma\left\{\lim_{N\to\infty}\sin\left({\frac{\|\bmz_N\|_2}{\sigma}}\right)\right\}\\
    \lim_{N\to\infty}{\|\bmz_N\|_2}K^{\sigma}(_-\mathfrak{Z}_{+,N})=&\sigma\cdot0\\
    \lim_{N\to\infty}{\|\bmz_N\|_2}K^{\sigma}(_-\mathfrak{Z}_{+,N})=&0.\numberthis\label{eq_80}
\end{align*}
Thus, the above manipulation makes us learn that we do have indeed a sequence of functions in terms of the reproducing kernel $K^\sigma(\cdot,\cdot)$ which approaches to $0$ as $N$ tends to infinity. If we denote this sequence by $\mathfrak{K}_N$ that is,  $\mathfrak{K}_N\coloneqq{\|\bmz_N\|_2}K^{\sigma}(_-\mathfrak{Z}_{+,N})$, then we have clearly
\begin{align}\label{eq_81r}
    \lim_{N\to\infty}\mathfrak{K}_N=\lim_{N\to\infty}{\|\bmz_N\|_2}K^{\sigma}(_-\mathfrak{Z}_{+,N})=0.
\end{align}

Let $\varphi:\Cn\to\Cn$ be a holomorphic function in which every coordinate function of $\varphi$ is holomorphic from $\Cn\to\mathbb{C}$. Considering the bounded Koopman operator $\koop$ induced by this $\varphi$ and hence, we see that $\varphi$ takes the affine structure as $\varphi(\bmz)=\mathcal{A}\bmz+B$ by \autoref{theorem_boundedKoopmanoverRKHS} where $\mathcal{A}\in\mathbb{C}^{n\times n}$ with $0<\|\mathcal{A}\|\leq1$ and $B$ is a $n-$dimensional complex vector. Suppose $B\equiv0$ in the aforementioned affine structure of $\varphi$ and hence we have $\varphi(\bmz)\equiv\mathcal{A}\bmz$ which is a pure linear structure over $\Cn$. Let this linear structure is denoted by $\varphi_{\mathcal{A}}(\bmz)\coloneqq\mathcal{A}\bmz$, then the corresponding Koopman operator $\mathcal{K}_{\varphi_\mathcal{A}}$ acting on sequence of function $\mathfrak{K}_N$ yields: 
\begin{align*}
    \mathcal{K}_{\varphi_\mathcal{A}}\mathfrak{K}_N=&{\sigma}\sin\left({\frac{\|\mathcal{A}\bmz_N\|_2}{\sigma}}\right)\\
    \implies\lim_{N\to\infty}\mathcal{K}_{\varphi_\mathcal{A}}\mathfrak{K}_N=&\lim_{N\to\infty}{\sigma}\sin\left({\frac{\|\mathcal{A}\bmz_N\|_2}{\sigma}}\right)\\
    \lim_{N\to\infty}\mathcal{K}_{\varphi_\mathcal{A}}\mathfrak{K}_N=&\sigma\left\{\lim_{N\to\infty}\sin\left({\frac{\|\mathcal{A}\bmz_N\|_2}{\sigma}}\right)\right\}\\
    \lim_{N\to\infty}\mathcal{K}_{\varphi_\mathcal{A}}\mathfrak{K}_N=&\sigma\cdot0\\
    \lim_{N\to\infty}\mathcal{K}_{\varphi_\mathcal{A}}\mathfrak{K}_N=&0.\numberthis\label{eq_82}
\end{align*}
Note that if $B\not\equiv0$ in our preceding assumption then we fail to achieve this convergence. So, in conclusion, we have determined a sequence of function inside the RKHS $H_{\sigma,1,\Cn}$ which is able to satisfy the conditions given in \autoref{lemma_closable_characterization} and based upon that, with additional assumptions on the boundedness of the Koopman operator $\koop$ acting over the RKHS $H_{\sigma,1,\Cn}$, we can prove that the Koopman operator is closable over the RKHS $H_{\sigma,1,\Cn}$. Explicitly, this particular sequence of function is given in \eqref{eq_81r} and the action of bounded Koopman operator $\mathcal{K}_{\varphi_{\mathcal{A}}}$ where $\varphi_\mathcal{A}(\bmz)=\mathcal{A}\bmz$ on this particular sequence of function is given in \eqref{eq_82}. 

With these deep function theoretic discussion, we are now ready to provide the complete closable characterization for the bounded Koopman operator when the underlying RKHS is generated by the Laplacian measure. The following theorem serves exactly this purpose.
\begin{theorem}
    Let $\sigma$ be positive and finite. Consider $\varphi:\Cn\to\Cn$ as a holomorphic function in which every coordinate function of $\varphi$ are holomorphic from $\Cn\to\mathbb{C}$, which induces the bounded Koopman operator $\koop:\mathcal{D}(\koop)\to H_{\sigma,1,\Cn}$ such that $\varphi(\bmz)=\mathcal{A}\bmz+B$ over the RKHS $H_{\sigma,1,\Cn}$ whose reproducing kernel is $K^{\sigma}(\cdot,\cdot)$. Then, there exists a sequence of function $\mathfrak{K}_N$ inside the RKHS $H_{\sigma,1,\Cn}$ as defined in \eqref{eq_81r} such that $\mathfrak{K}_N\to0$ as $N\to\infty$ and the Koopman operator $\koop$ is closable over the RKHS $H_{\sigma,1,\Cn}$ if $B\equiv0$, implying that $\varphi$ is purely linear in $\Cn$, that is $\varphi(\bmz)=\mathcal{A}\bmz$. Further, if we let $\varphi_{\mathcal{A}}(\bmz)\coloneqq\mathcal{A}\bmz$, then $\mathcal{K}_{\varphi_{\mathcal{A}}}\mathfrak{K}_N\to0$ as $N\to\infty$.
\end{theorem}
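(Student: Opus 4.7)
The plan is to directly invoke the closable-operator characterization in \autoref{lemma_closable_characterization} using the witness sequence $\mathfrak{K}_N$ already constructed in the discussion leading up to the theorem. First I would reconfirm that each $\mathfrak{K}_N$ lies in $H_{\sigma,1,\Cn}$ via the norm bound $\|\mathfrak{K}_N\| \leq \sigma$ derived in the paragraph preceding \eqref{eq_79}, so that the sequence does lie in the domain of the bounded Koopman operator (whose domain is all of $H_{\sigma,1,\Cn}$ by \autoref{theorem_boundedKoopmanoverRKHS}). Second, I would restate the already-computed limit $\mathfrak{K}_N \to 0$ from \eqref{eq_81r}. Third, specializing to $\varphi_{\mathcal{A}}(\bmz) = \mathcal{A}\bmz$ (i.e.\ $B \equiv 0$), I would quote \eqref{eq_82} to conclude $\mathcal{K}_{\varphi_\mathcal{A}} \mathfrak{K}_N \to 0$. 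Assembling these three observations, the characterization in \autoref{lemma_closable_characterization} immediately delivers the closability of $\mathcal{K}_{\varphi_\mathcal{A}}$ on $H_{\sigma,1,\Cn}$.

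The decisive role of the hypothesis $B \equiv 0$ appears in the third step: carrying out the same Koopman-evaluation for a general affine symbol $\varphi(\bmz) = \mathcal{A}\bmz + B$ produces
\begin{align*}
\mathcal{K}_{\varphi}\mathfrak{K}_N \;=\; \sigma \sin\!\Bigl(\tfrac{\|\mathcal{A}\bmz_N + B\|_2}{\sigma}\Bigr),
\end{align*}
whose limit as $\|\bmz_N\|_2 \to 0$ is $\sigma \sin(\|B\|_2/\sigma)$, and this generically fails to vanish. Hence the witness sequence diagnoses the closability criterion precisely when the translation component of $\varphi$ is absent, which matches the way the theorem is framed. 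I would spell this out explicitly as the \emph{reason} the theorem is stated for the purely linear symbol $\varphi_{\mathcal{A}}$, and as evidence that the pre-theorem sequence is tailored to that special situation.

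The main obstacle is not the verification itself, which is essentially bookkeeping once the witness is in hand, but rather the structural identity underpinning the construction: the evaluation $K^{\sigma}(\bmz,-\bmz) = \sin(\|\bmz\|_2/\sigma)\big/(\|\bmz\|_2/\sigma)$ is specific to the $\sinh$-type reproducing kernel arising from the normalized Laplacian measure \eqref{eq_13Laplacemeasure}, and it is exactly the $\sinh \!\to\! \sin$ conversion along the anti-diagonal graph $_-\mathfrak{Z}_+$ that forces both the $H_{\sigma,1,\Cn}$-norm of $\mathfrak{K}_N$ to be controlled by $\sigma$ and the Koopman image $\mathcal{K}_{\varphi_\mathcal{A}}\mathfrak{K}_N$ to vanish in the limit. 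No analogous identity is available for the Gaussian-type kernel that underlies the Bergman--Segal--Fock RKHS (see \autoref{table-1exponentialtype}), so the witness-sequence construction does not transfer across, and this is what the section is marketing as the novelty of $H_{\sigma,1,\Cn}$ for the eDMD pipeline of \autoref{algo_lapkerneleDMD}.
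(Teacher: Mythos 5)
Your proposal follows essentially the same route as the paper: the paper supplies no separate proof environment for this theorem, which is stated as a summary of the construction carried out in the preceding discussion (the witness sequence $\mathfrak{K}_N$ of \eqref{eq_81r}, its membership in $H_{\sigma,1,\Cn}$ with norm bounded by $\sigma$, the limit \eqref{eq_80}, and the Koopman image limit \eqref{eq_82} under $B\equiv0$), all of which your three steps reproduce before invoking \autoref{lemma_closable_characterization}. Your explicit computation that the limit becomes $\sigma\sin\left(\nicefrac{\|B\|_2}{\sigma}\right)$ when $B\not\equiv0$ slightly sharpens the paper's one-line remark to that effect, but the argument is otherwise identical, including the shared caveat that \autoref{lemma_closable_characterization} is quantified over all null sequences while only a single witness sequence is exhibited.
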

\subsection{Failure of closability of Koopman operator with GRBF}
Now, that we have successfully demonstrated the closability of the Koopman operators over the RKHS $H_{\sigma,1,\Cn}$, we will analyse now that how the Koopman operator fails to be closable with respect to the Gaussian Radial Basis Kernel Function whose norm for the function space is given in \eqref{eq_6GRBFfunctionspace}. The main reason why we fail to achieve the closability of the Koopman operators when it interacts with Gaussian Radial Basis Function Kernel is because the of the way we have defined the inner-product for the function space corresponding to the Gaussian Radial Basis Function Kernel (cf. \cite{steinwart2006explicit,steinwart2008support}). In particular, the measure present in the norm for the function space in \eqref{eq_6GRBFfunctionspace} is unable to make the $L^2-$integration finite. This can be easily understood as follows if we chose to follow to construct the same variant of sequence of functions that we constructed previously. To that recall the domain defined in \eqref{eq_78} and $K_{\text{exp}}^{2,\sigma}(\bm{x},\bm{z})$ as the Gaussian Radial Basis Function Kernel and then we have following:
\begin{align*}
    K_{\text{exp}}^{2,\sigma}(_-\mathfrak{Z}_+)=\exp\left(-\frac{\|\bmz+\bmz\|_2^2}{\sigma}\right)=\exp\left(-\frac{4\|\bmz\|_2^2}{\sigma}\right).
\end{align*}
With the subspace given in \eqref{eq_79}, we see that 
\begin{align*}
    \|\bmz_N\|_2K_{\text{exp}}^{2,\sigma}(_-\mathfrak{Z}_{+,N})=&\|\bmz_N\|_2\exp\left(-\frac{4\|\bmz\|_2^2}{\sigma}\right)\\
    \implies\lim_{N\to0}\|\bmz_N\|_2K_{\text{exp}}^{2,\sigma}(_-\mathfrak{Z}_{+,N})=&\lim_{N\to0}\left\{\|\bmz_N\|_2\exp\left(-\frac{4\|\bmz_N\|_2^2}{\sigma}\right)\right\}\\
    =&0\cdot1\\
    =&0.
\end{align*}
Obviously, the above sequence do indeed converges to $0$. However, the sequence of function $\|\bmz_N\|_2K_{\text{exp}}^{2,\sigma}(_-\mathfrak{Z}_{+,N})=\|\bmz_N\|_2\exp\left(-\frac{4\|\bmz\|_2^2}{\sigma}\right)$ fails to exist in the Hilbert function space defined in \eqref{eq_7GRBFHilbertspace}. This can be learned easily from the following verification:
\begin{align*}
    \left\|\|\bmz_N\|_2\exp\left(-\frac{4\|\bmz\|_2^2}{\sigma}\right)\right\|_{\sigma}^2=&\frac{2^n\sigma^{2n}}{\pi^n}\|\bmz_N\|_2^2\int_{\Cn}\exp\left(-\frac{4\|\bmz\|_2^2}{\sigma}\right)e^{\sigma^2\sum_{i=1}^n{\left(z_i-\overline{z}_i\right)}^2}dV(\bmz)\\
    \leq&\frac{2^n\sigma^{2n}}{\pi^n}\|\bmz_N\|_2^2\int_{\Cn}e^{\sigma^2\sum_{i=1}^n{\left(z_i-\overline{z}_i\right)}^2}dV(\bmz)\\
    \not<&\infty.
\end{align*}
Since, the above sequence of function fails to exist in the RKHS $H_\sigma$ as given in \eqref{eq_7GRBFHilbertspace}, therefore we fail to construct a sequence by which we can show the closability of the Koopman operators over the RKHS $H_\sigma$. 
\section{Conclusion}
In this paper, we explicitly refer `$d\mu_{\sigma,1,\Cn}$' as the normalized Laplacian measure defined over the entire $\Cn$. The pursuit of having this unprecedented way of study for the normalized Laplacian measure is duly motivated by taking the gauge transformation of the Laplacian Kernel Function. \emph{As already mentioned, the underlying concern for this study is we consider only limited number of snapshots or data vector for fluid flow across cylinder experiment to generate the desired Koopman modes via the method of the Kernelized Extended Dynamic Mode Decomposition.} For the data driven discovery for this particular experiment, both Laplacian Kernel Function and Gaussian Radial Basis Function Kernel were employed. However, based upon the empirical evidence presented, we learned that it was \emph{only} the Laplacian Kernel Function which was able to discover the desired dominant Koopman modes from the limited availability of snapshots and this manuscripts serves the purpose of reporting these exciting experimental insights. The other great deal of insights present in this manuscript is the amalgamation of the random matrix theory topics to execute the Kernelized Extended Dynamic Mode Decomposition.

Lastly, apart from the routine operator theoretic characterizations of the Koopman operators or the composition operators along with other of its variants such as the \emph{weighted composition operators}, in this paper we consider the \emph{closable} nuance of the Koopman operators as well. Once, we have derived the regular and basic operator theoretic properties for the corresponding function space of the RKHS $H_{\sigma,1,\Cn}$ such as \emph{boundedness, essential norm estimates} and \emph{compactness}, we further indeed, proved that the Koopman operators are closable over this RKHS. It should be noted that Koopman operators have been studied in great details by various authors and mathematicians in \cite{ikeda2022boundedness,ikeda2022koopman,singh1993composition,shapiro1987essential,hai2016boundedness,hai2018complex,hai2021weighted,zhao2015invertible} in various settings of function spaces including those which corresponds to function space containing entire functions of exponential type \cite{chacon2007composition}. In particular, as observed in \cite{chacon2007composition}, Koopman operators fails to be compact in such spaces. However, we fail to encounter relevant discussion on the closability for the Koopman operators in the previously cited manuscripts, but this manuscript makes a successful attempts to enlighten us into these directions over the newly constructed Hilbert space along with an extra edge of reproducing kernel theory.
\section{Acknowledgement}
This manuscript is directed towards the scope of performing scientific machine learning and data acquisition by borrowing relevant and contemporary mathematical strategies that exists. Hence, the present research work is scripted in the {\tt{julia}} programming language coded over the {\tt{Google Collab}} online platform. The validity of this contribution is immediately justified when we observe that is little to nothing research contribution in the light of this coding language. However, we do see almost every research contribution of the present research agenda either on {\tt{Python}} or {\tt{Matlab}}. The author acknowledges the {\tt{Matlab}} coding support from his PhD advisor \textsc{Dr. Joel A. Rosenfeld} which help him making the corresponding {\tt{julia}} codes for fluid flow across cylinder experiment. The Koopman operator theory analysis over the RKHS presented in this paper was a smooth research journey and the author would like to thanks his past various successful mathematician collaborators with whom he learnt all these operator theory and functional analysis topics in great details.


\begin{thebibliography}{100}

\bibitem{alexander2020operator}
{\sc Alexander, R., and Giannakis, D.}
\newblock {Operator-theoretic framework for forecasting nonlinear time series with kernel analog techniques}.
\newblock {\em Physica D: Nonlinear Phenomena 409\/} (2020), 132520.

\bibitem{aronszajn1950theory}
{\sc Aronszajn, N.}
\newblock {Theory of reproducing kernels}.
\newblock {\em {Transactions of the American Mathematical Society} 68}, 3 (1950), 337--404.

\bibitem{baddoo2022kernel}
{\sc Baddoo, P.~J., Herrmann, B., McKeon, B.~J., and Brunton, S.~L.}
\newblock {Kernel learning for robust Dynamic Mode Decomposition: Linear and Nonlinear disambiguation optimization}.
\newblock {\em Proceedings of the Royal Society A 478}, 2260 (2022), 20210830.

\bibitem{bagheri2013koopman}
{\sc Bagheri, S.}
\newblock {Koopman-mode decomposition of the cylinder wake}.
\newblock {\em Journal of Fluid Mechanics 726\/} (2013), 596--623.

\bibitem{bayart2010parabolic}
{\sc Bayart, F.}
\newblock {Parabolic composition operators on the ball}.
\newblock {\em Advances in Mathematics 223}, 5 (2010), 1666--1705.

\bibitem{bayart2011composition}
{\sc Bayart, F.}
\newblock {Composition operators on the polydisk induced by affine maps}.
\newblock {\em Journal of Functional Analysis 260}, 7 (2011), 1969--2003.

\bibitem{belkin2018understand}
{\sc Belkin, M., Ma, S., and Mandal, S.}
\newblock {To understand deep learning we need to understand kernel learning}.
\newblock In {\em International Conference on Machine Learning\/} (2018), PMLR, pp.~541--549.

\bibitem{berezin1975general}
{\sc Berezin, F.~A.}
\newblock {General Concept of Quantization}.
\newblock {\em Communications in Mathematical Physics 40\/} (1975), 153--174.

\bibitem{boyd2004convex}
{\sc Boyd, S.~P., and Vandenberghe, L.}
\newblock {\em Convex optimization}.
\newblock Cambridge University Press, 2004.

\bibitem{brunton2022data}
{\sc Brunton, S.~L., and Kutz, J.~N.}
\newblock {\em {Data-driven science and engineering: Machine learning, dynamical systems, and control}}.
\newblock Cambridge University Press, 2022.

\bibitem{burov2021kernel}
{\sc Burov, D., Giannakis, D., Manohar, K., and Stuart, A.}
\newblock {Kernel analog forecasting: Multiscale test problems}.
\newblock {\em Multiscale Modeling \& Simulation 19}, 2 (2021), 1011--1040.

\bibitem{carswell2003composition}
{\sc Carswell, B., MacCluer, B.~D., and Schuster, A.}
\newblock {Composition operators on the Fock space}.
\newblock {\em Acta Sci. Math.(Szeged) 69}, 3-4 (2003), 871--887.

\bibitem{caselle2004random}
{\sc Caselle, M., and Magnea, U.}
\newblock {Random matrix theory and symmetric spaces}.
\newblock {\em Physics reports 394}, 2-3 (2004), 41--156.

\bibitem{chacon2007composition}
{\sc Chac{\'o}n, G., and Gim{\'e}nez, J.}
\newblock {Composition operators on spaces of Entire functions}.
\newblock {\em Proceedings of the American Mathematical Society 135}, 7 (2007), 2205--2218.

\bibitem{chen2020deep}
{\sc Chen, L., and Xu, S.}
\newblock {Deep Neural Tangent Kernel and Laplace Kernel have the same RKHS}.
\newblock {\em arXiv preprint arXiv:2009.10683\/} (2020).

\bibitem{colbrook2023multiverse}
{\sc Colbrook, M.~J.}
\newblock {The Multiverse of Dynamic Mode Decomposition Algorithms}.
\newblock {\em arXiv preprint arXiv:2312.00137\/} (2023).

\bibitem{colbrook2024rigorous}
{\sc Colbrook, M.~J., and Townsend, A.}
\newblock {Rigorous data-driven computation of spectral properties of Koopman operators for dynamical systems}.
\newblock {\em Communications on Pure and Applied Mathematics 77}, 1 (2024), 221--283.

\bibitem{conway2019course}
{\sc Conway, J.~B.}
\newblock {\em A Course in Functional Analysis}, vol.~96.
\newblock Springer, 2019.

\bibitem{cook2011mean}
{\sc Cook, R.~D., and Forzani, L.}
\newblock {On the mean and variance of the generalized inverse of a singular Wishart matrix}.
\newblock {\em Electronic Journal of Statistics 5\/} (2011).

\bibitem{cowen1983composition}
{\sc Cowen, C.~C.}
\newblock {Composition operators on $H^2$}.
\newblock {\em Journal of Operator Theory\/} (1983), 77--106.

\bibitem{cowen2019composition}
{\sc Cowen~Jr, C.~C.}
\newblock {\em {Composition operators on spaces of analytic functions}}.
\newblock Routledge, 2019.

\bibitem{das2020koopman}
{\sc Das, S., and Giannakis, D.}
\newblock {Koopman spectra in reproducing kernel Hilbert spaces}.
\newblock {\em Applied and Computational Harmonic Analysis 49}, 2 (2020), 573--607.

\bibitem{das2021reproducing}
{\sc Das, S., Giannakis, D., and Slawinska, J.}
\newblock {Reproducing kernel Hilbert space compactification of unitary evolution groups}.
\newblock {\em Applied and Computational Harmonic Analysis 54\/} (2021), 75--136.

\bibitem{diaz2006distribution}
{\sc D{\'\i}az-Garc{\'\i}a, J.~A., and Guti{\'e}rrez-J{\'a}imez, R.}
\newblock {Distribution of the generalised inverse of a random matrix and its applications}.
\newblock {\em {Journal of Statistical Planning and Inference} 136}, 1 (2006), 183--192.

\bibitem{doan2017composition}
{\sc Doan, M.~L., Khoi, L.~H., and Le, T.}
\newblock {Composition operators on Hilbert spaces of entire functions of several variables}.
\newblock {\em Integral Equations and Operator Theory 88\/} (2017), 301--330.

\bibitem{edelman2005random}
{\sc Edelman, A., and Rao, N.~R.}
\newblock Random matrix theory.
\newblock {\em Acta numerica 14\/} (2005), 233--297.

\bibitem{fasshauer2007meshfree}
{\sc Fasshauer, G.~E.}
\newblock {\em {Meshfree approximation methods with MATLAB}}, vol.~6.
\newblock World Scientific, 2007.

\bibitem{fujii2019dynamic}
{\sc Fujii, K., and Kawahara, Y.}
\newblock {Dynamic Mode Decomposition in vector-valued reproducing kernel Hilbert spaces for extracting dynamical structure among observables}.
\newblock {\em Neural Networks 117\/} (2019), 94--103.

\bibitem{garnett2007bounded}
{\sc Garnett, J.}
\newblock {\em Bounded {A}nalytic {F}unctions}, vol.~236.
\newblock Springer Science \& Business Media, 2007.

\bibitem{geifman2020similarity}
{\sc Geifman, A., Yadav, A., Kasten, Y., Galun, M., Jacobs, D., and Ronen, B.}
\newblock {On the similarity between the Laplace and Neural Tangent Kernels}.
\newblock {\em Advances in Neural Information Processing Systems 33\/} (2020), 1451--1461.

\bibitem{NEURIPS2020_1006ff12}
{\sc Geifman, A., Yadav, A., Kasten, Y., Galun, M., Jacobs, D., and Ronen, B.}
\newblock {On the Similarity between the Laplace and Neural Tangent Kernels}.
\newblock In {\em Advances in Neural Information Processing Systems\/} (2020), H.~Larochelle, M.~Ranzato, R.~Hadsell, M.~Balcan, and H.~Lin, Eds., vol.~33, Curran Associates, Inc., pp.~1451--1461.

\bibitem{giannakis2020extraction}
{\sc Giannakis, D., and Das, S.}
\newblock {Extraction and prediction of coherent patterns in incompressible flows through space--time Koopman analysis}.
\newblock {\em Physica D: Nonlinear Phenomena 402\/} (2020), 132211.

\bibitem{giraud2005positive}
{\sc Giraud, B., and Peschanski, R.}
\newblock {On positive functions with positive Fourier transforms}.
\newblock {\em Acta Physica Polonica B 37\/} (2006), 331.

\bibitem{gonzalez2023modeling}
{\sc Gonzalez, E., Avazpour, L., Kamalapurkar, R., and Rosenfeld, J.~A.}
\newblock {Modeling Partially Unknown Dynamics with Continuous Time DMD}.
\newblock In {\em 2023 American Control Conference (ACC)\/} (2023), IEEE, pp.~2913--2918.

\bibitem{hai2016boundedness}
{\sc Hai, P.~V., et~al.}
\newblock {Boundedness and Compactness of Weighted Composition Operators on Fock Spaces ${F}^p(\mathbb{C})$}.
\newblock {\em Acta Mathematica Vietnamica 41}, 3 (2016), 531--537.

\bibitem{hai2018complex}
{\sc Hai, P.~V., and Khoi, L.~H.}
\newblock Complex symmetric weighted composition operators on the {F}ock space in several variables.
\newblock {\em Complex Variables and Elliptic Equations 63}, 3 (2018), 391--405.

\bibitem{hai2021weighted}
{\sc Hai, P.~V., and Rosenfeld, J.~A.}
\newblock Weighted {C}omposition {O}perators on the {M}ittag-{L}effler {S}paces of {E}ntire {F}unctions.
\newblock {\em Complex Analysis and Operator Theory 15}, 1 (2021), 1--26.

\bibitem{hall2013quantum}
{\sc Hall, B.~C.}
\newblock {\em {Quantum theory for Mathematicians}}.
\newblock Springer, 2013.

\bibitem{halmos2012hilbert}
{\sc Halmos, P.~R.}
\newblock {\em {A Hilbert space problem book}}, vol.~19.
\newblock Springer Science \& Business Media, 2012.

\bibitem{hitczenko1994rademacher}
{\sc Hitczenko, P., and Kwapien, S.}
\newblock {On the Rademacher series, Probability in Banach Spaces, Nine, Sandbjerg, Denmark, J. Hoffmann--J{\o}rgensen, J. Kuelbs, MB Marcus, Eds}.
\newblock {\em Birkhauser, Boston 31\/} (1994), 36.

\bibitem{hui2018kernel}
{\sc Hui, L., Ma, S., and Belkin, M.}
\newblock Kernel machines beat deep neural networks on mask-based single-channel speech enhancement.
\newblock {\em Proc. Interspeech 2019\/} (2019), 2748--–2752.

\bibitem{ikeda2022boundedness}
{\sc Ikeda, M., Ishikawa, I., and Sawano, Y.}
\newblock {Boundedness of composition operators on reproducing kernel Hilbert spaces with analytic positive definite functions}.
\newblock {\em Journal of Mathematical Analysis and Applications 511}, 1 (2022), 126048.

\bibitem{ikeda2022koopman}
{\sc Ikeda, M., Ishikawa, I., and Schlosser, C.}
\newblock {Koopman and Perron--Frobenius operators on reproducing kernel Banach spaces}.
\newblock {\em Chaos: An Interdisciplinary Journal of Nonlinear Science 32}, 12 (2022).

\bibitem{ivanov2002random}
{\sc Ivanov, D.~A.}
\newblock {Random-matrix ensembles in p-wave vortices}.
\newblock In {\em Vortices in unconventional superconductors and superfluids}. Springer, 2002, pp.~253--265.

\bibitem{jacot2018neural}
{\sc Jacot, A., Gabriel, F., and Hongler, C.}
\newblock {Neural tangent kernel: Convergence and generalization in neural networks}.
\newblock {\em {Advances in Neural Information Processing Systems} 31\/} (2018).

\bibitem{janson1987hankel}
{\sc Janson, S., Peetre, J., and Rochberg, R.}
\newblock {Hankel forms and the Fock space}.
\newblock {\em Revista Matematica Iberoamericana 3}, 1 (1987), 61--138.

\bibitem{kamalapurkar2021occupation}
{\sc Kamalapurkar, R., and Rosenfeld, J.~A.}
\newblock {An Occupation Kernel approach to Optimal Control}.
\newblock {\em arXiv preprint arXiv:2106.00663\/} (2021).

\bibitem{kevrekidis2016kernel}
{\sc Kevrekidis, I., Rowley, C.~W., and Williams, M.}
\newblock {A kernel-based method for data-driven Koopman spectral analysis}.
\newblock {\em Journal of Computational Dynamics 2}, 2 (2016), 247--265.

\bibitem{khosravi2023representer}
{\sc Khosravi, M.}
\newblock {Representer theorem for learning Koopman operators}.
\newblock {\em IEEE Transactions on Automatic Control\/} (2023).

\bibitem{klus2018kernel}
{\sc Klus, S., Bittracher, A., Schuster, I., and Sch{\"u}tte, C.}
\newblock {A kernel-based approach to molecular conformation analysis}.
\newblock {\em The Journal of Chemical Physics 149}, 24 (2018).

\bibitem{klus2020kernel}
{\sc Klus, S., N{\"u}ske, F., and Hamzi, B.}
\newblock {Kernel-based approximation of the Koopman generator and Schr{\"o}dinger operator}.
\newblock {\em Entropy 22}, 7 (2020), 722.

\bibitem{koopman1931hamiltonian}
{\sc Koopman, B.~O.}
\newblock {Hamiltonian systems and transformation in Hilbert space}.
\newblock {\em Proceedings of the National Academy of Sciences 17}, 5 (1931), 315--318.

\bibitem{korda2018convergence}
{\sc Korda, M., and Mezi{\'c}, I.}
\newblock {On convergence of Extended Dynamic Mode Decomposition to the Koopman operator}.
\newblock {\em Journal of Nonlinear Science 28\/} (2018), 687--710.

\bibitem{kutz2016dynamic}
{\sc Kutz, J.~N., Brunton, S.~L., Brunton, B.~W., and Proctor, J.~L.}
\newblock {\em {Dynamic Mode Decomposition: Data-Driven Modeling of Complex Dystems}}.
\newblock SIAM, 2016.

\bibitem{kutz2016multiresolution}
{\sc Kutz, J.~N., Fu, X., and Brunton, S.~L.}
\newblock {Multiresolution Dynamic Mode Decomposition}.
\newblock {\em SIAM Journal on Applied Dynamical Systems 15}, 2 (2016), 713--735.

\bibitem{le2014normal}
{\sc Le, T.}
\newblock {Normal and isometric weighted composition operators on the Fock space}.
\newblock {\em Bulletin of the London Mathematical Society 46}, 4 (2014), 847--856.

\bibitem{le2017composition}
{\sc Le, T.}
\newblock {Composition operators between Segal--Bargmann spaces}.
\newblock {\em Journal of Operator Theory 78}, 1 (2017), 135--158.

\bibitem{lorenz1963deterministic}
{\sc {Lorenz, Edward N}}.
\newblock {Deterministic nonperiodic flow}.
\newblock {\em Journal of atmospheric sciences 20}, 2 (1963), 130--141.

\bibitem{mehta2004random}
{\sc Mehta, M.~L.}
\newblock {\em Random matrices}.
\newblock Elsevier, 2004.

\bibitem{mezic2005spectral}
{\sc Mezi{\'c}, I.}
\newblock {Spectral properties of dynamical systems, model reduction and decompositions}.
\newblock {\em Nonlinear Dynamics 41\/} (2005), 309--325.

\bibitem{mezic2021koopman}
{\sc Mezi{\'c}, I.}
\newblock {Koopman operator, geometry, and learning of dynamical systems}.
\newblock {\em Not. Am. Math. Soc. 68}, 7 (2021), 1087--1105.

\bibitem{mezic2004comparison}
{\sc Mezi{\'c}, I., and Banaszuk, A.}
\newblock {Comparison of systems with complex behavior}.
\newblock {\em Physica D: Nonlinear Phenomena 197}, 1-2 (2004), 101--133.

\bibitem{morrison2023dynamic}
{\sc Morrison, Z., Abudia, M., Rosenfeld, J., and Kamalapurar, R.}
\newblock {Dynamic Mode Decomposition of Control-Affine Nonlinear Systems using Discrete Control Liouville Operators}.
\newblock {\em arXiv preprint arXiv:2309.09817\/} (2023).

\bibitem{pedersen2012analysis}
{\sc Pedersen, G.~K.}
\newblock {\em {Analysis now}}, vol.~118.
\newblock Springer Science \& Business Media, 2012.

\bibitem{peetre1990berezin}
{\sc Peetre, J.}
\newblock {The Berezin transform and Ha-plitz operators}.
\newblock {\em Journal of Operator Theory\/} (1990), 165--186.

\bibitem{philipp2023error}
{\sc Philipp, F., Schaller, M., Worthmann, K., Peitz, S., and N{\"u}ske, F.}
\newblock {Error bounds for kernel-based approximations of the Koopman operator}.
\newblock {\em arXiv preprint arXiv:2301.08637\/} (2023).

\bibitem{proctor2016dynamic}
{\sc Proctor, J.~L., Brunton, S.~L., and Kutz, J.~N.}
\newblock {Dynamic Mode Decomposition with Control}.
\newblock {\em SIAM Journal on Applied Dynamical Systems 15}, 1 (2016), 142--161.

\bibitem{rasmussen2006gaussian}
{\sc {Rasmussen, Carl Edward and Williams, Christopher KI and others}}.
\newblock {\em {Gaussian Processes for Machine Learning}}, vol.~1.
\newblock Springer, 2006.

\bibitem{reed2012methods}
{\sc Reed, M.}
\newblock {\em {\textsc{Methods of modern mathematical physics}: Functional Analysis}}.
\newblock Elsevier, 2012.

\bibitem{reedmethods}
{\sc Reed, M., and Simon, B.}
\newblock {\em {\textsc{Methods of modern mathematical physics}. 1: Functional Analysis}}.
\newblock Academic Press, April 1980.

\bibitem{rosenfeld2023singular}
{\sc Rosenfeld, J.~A., and Kamalapurkar, R.}
\newblock {Singular Dynamic Mode Decomposition}.
\newblock {\em SIAM Journal on Applied Dynamical Systems 22}, 3 (2023), 2357--2381.

\bibitem{rosenfeld2021occupationACC}
{\sc Rosenfeld, J.~A., Kamalapurkar, R., Gruss, L.~F., and Johnson, T.~T.}
\newblock {On Occupation Kernels, Liouville operators, and Dynamic Mode Decomposition}.
\newblock In {\em 2021 American Control Conference (ACC)\/} (2021), IEEE, pp.~3957--3962.

\bibitem{rosenfeld2022dynamic}
{\sc Rosenfeld, J.~A., Kamalapurkar, R., Gruss, L.~F., and Johnson, T.~T.}
\newblock {Dynamic Mode Decomposition for Continuous Time Systems with the Liouville operator}.
\newblock {\em Journal of Nonlinear Science 32\/} (2022), 1--30.

\bibitem{rosenfeld2019occupation}
{\sc Rosenfeld, J.~A., Kamalapurkar, R., Russo, B., and Johnson, T.~T.}
\newblock {Occupation kernels and densely defined Liouville operators for system identification}.
\newblock In {\em {2019 IEEE 58th Conference on Decision and Control (CDC)}\/} (2019), IEEE, pp.~6455--6460.

\bibitem{rosenfeld2018mittag}
{\sc Rosenfeld, J.~A., Russo, B., and Dixon, W.~E.}
\newblock {The Mittag Leffler reproducing kernel Hilbert spaces of entire and analytic functions}.
\newblock {\em Journal of Mathematical Analysis and Applications 463}, 2 (2018), 576--592.

\bibitem{rosenfeld2021theoretical}
{\sc Rosenfeld, J.~A., Russo, B.~P., and Kamalapurkar, R.}
\newblock {Theoretical Foundations for the Dynamic Mode Decomposition of High Order Dynamical Systems}.
\newblock {\em arXiv preprint arXiv:2101.02646\/} (2021).

\bibitem{rowley2009spectral}
{\sc Rowley, C.~W., Mezi{\'c}, I., Bagheri, S., Schlatter, P., and Henningson, D.~S.}
\newblock {Spectral analysis of nonlinear flows}.
\newblock {\em Journal of fluid mechanics 641\/} (2009), 115--127.

\bibitem{rudin1987real}
{\sc Rudin, W.}
\newblock Real and complex analysis.
\newblock {\em (Mcgraw-Hill International Editions: Mathematics series)\/} (1987).

\bibitem{rudin1991functional}
{\sc Rudin, W.}
\newblock {Functional Analysis 2nd ed}.
\newblock {\em International Series in Pure and Applied Mathematics. McGraw-Hill, Inc., New York\/} (1991).

\bibitem{russo2022liouville}
{\sc Russo, B.~P., and Rosenfeld, J.~A.}
\newblock {Liouville operators over the Hardy space}.
\newblock {\em Journal of Mathematical Analysis and Applications 508}, 2 (2022), 125854.

\bibitem{saitoh1983hilbert}
{\sc Saitoh, S.}
\newblock {Hilbert spaces induced by Hilbert space valued functions}.
\newblock {\em Proceedings of the American Mathematical Society 89}, 1 (1983), 74--78.

\bibitem{saitoh1997integral}
{\sc Saitoh, S.}
\newblock {\em {Integral transforms, reproducing kernels and their applications}}, vol.~369.
\newblock CRC Press, 1997.

\bibitem{sarker2021deep}
{\sc Sarker, I.~H.}
\newblock {Deep learning: a comprehensive overview on techniques, taxonomy, applications and research directions}.
\newblock {\em SN Computer Science 2}, 6 (2021), 420.

\bibitem{scheffe1947useful}
{\sc Scheff{\'e}, H.}
\newblock {A useful convergence theorem for probability distributions}.
\newblock {\em The Annals of Mathematical Statistics 18}, 3 (1947), 434--438.

\bibitem{schmid2010dynamic}
{\sc Schmid, P.~J.}
\newblock {Dynamic Mode Decomposition of Numerical and Experimental Data}.
\newblock {\em Journal of fluid mechanics 656\/} (2010), 5--28.

\bibitem{schmid2011application}
{\sc Schmid, P.~J.}
\newblock {Application of the Dynamic Mode Decomposition to experimental data}.
\newblock {\em Experiments in fluids 50\/} (2011), 1123--1130.

\bibitem{schmid2022dynamic}
{\sc Schmid, P.~J.}
\newblock {Dynamic Mode Decomposition and its variants}.
\newblock {\em Annual Review of Fluid Mechanics 54\/} (2022), 225--254.

\bibitem{scholkopf2000kernel}
{\sc Sch{\"o}lkopf, B.}
\newblock {The kernel trick for distances}.
\newblock {\em Advances in Neural Information Processing Systems 13\/} (2000).

\bibitem{schroder1870ueber}
{\sc Schr{\"o}der, E.}
\newblock {Ueber iterirte functionen}.
\newblock {\em Mathematische Annalen 3}, 2 (1870), 296--322.

\bibitem{shapiro1987essential}
{\sc Shapiro, J.~H.}
\newblock The essential norm of a composition operator.
\newblock {\em Annals of mathematics\/} (1987), 375--404.

\bibitem{shapiro2012composition}
{\sc Shapiro, J.~H.}
\newblock {\em {Composition operators: And Classical Function Theory}}.
\newblock Springer Science \& Business Media, 2012.

\bibitem{singh2023new}
{\sc Singh, H.}
\newblock {A new kernel function for better AI methods}.
\newblock In {\em 2023 AMS Spring Eastern Sectional Meeting\/} (2023), no.~{68-Computer Science, 68T-Artificial Intelligence and 68T07-Artificial Neural Networks and Deep Learning} in \href{https://meetings.ams.org/math/spring2023e/meetingapp.cgi/Paper/23517}{April 1, 2023}, AMS.

\bibitem{singh2023appointment}
{\sc Singh, H.}
\newblock \href{https://arxiv.org/abs/2312.10693}{An appointment with Reproducing Kernel Hilbert Space generated by Generalized Gaussian RBF as $L^2-$measure}, 2023.

\bibitem{singh1993composition}
{\sc Singh, R.~K., and Manhas, J.~S.}
\newblock {\em {Composition operators on function spaces}}.
\newblock Elsevier, 1993.

\bibitem{stein2010complex}
{\sc Stein, E.~M., and Shakarchi, R.}
\newblock {\em {Complex analysis}}, vol.~2.
\newblock Princeton University Press, 2010.

\bibitem{steinwart2008support}
{\sc Steinwart, I., and Christmann, A.}
\newblock {\em {Support vector machines}}.
\newblock Springer Science \& Business Media, 2008.

\bibitem{steinwart2006explicit}
{\sc Steinwart, I., Hush, D., and Scovel, C.}
\newblock {An explicit description of the reproducing kernel Hilbert spaces of Gaussian RBF kernels}.
\newblock {\em IEEE Transactions on Information Theory 52}, 10 (2006), 4635--4643.

\bibitem{tao2023topics}
{\sc Tao, T.}
\newblock {\em Topics in random matrix theory}, vol.~132.
\newblock American Mathematical Society, 2023.

\bibitem{tong1990fundamental}
{\sc Tong, Y.~L., and Tong, Y.}
\newblock {\em Fundamental properties and sampling distributions of the multivariate normal distribution}.
\newblock Springer, 1990.

\bibitem{villani1985another}
{\sc Villani, A.}
\newblock {Another note on the inclusion $L^p(\mu)\subset L^q(\mu)$}.
\newblock {\em The American Mathematical Monthly 92}, 7 (1985), 485--C76.

\bibitem{williams1991probability}
{\sc Williams, D.}
\newblock {\em {Probability with Martingales}}.
\newblock Cambridge University Press, 1991.

\bibitem{williams2015data}
{\sc Williams, M.~O., Kevrekidis, I.~G., and Rowley, C.~W.}
\newblock {A data--driven approximation of the Koopman operator: Extending Dynamic Mode Decomposition}.
\newblock {\em Journal of Nonlinear Science 25\/} (2015), 1307--1346.

\bibitem{MatthewO.Williams2015JournalofComputationalDynamics}
{\sc Williams, M.~O., Rowley, C.~W., and Kevrekidis, I.~G.}
\newblock {A kernel-based method for data-driven Koopman spectral analysis}.
\newblock {\em {Journal of Computational Dynamics} 2}, 2 (2015), 247--265.

\bibitem{zhao2015invertible}
{\sc Zhao, L.}
\newblock {Invertible Weighted Composition Operators on the Fock Space of} $\mathbb{C}^{N}$.
\newblock {\em Journal of Function Spaces 2015\/} (2015).

\bibitem{zhao2016analog}
{\sc Zhao, Z., and Giannakis, D.}
\newblock {Analog forecasting with dynamics-adapted kernels}.
\newblock {\em Nonlinearity 29}, 9 (2016), 2888.

\bibitem{zhu2005spaces}
{\sc Zhu, K.}
\newblock {\em Spaces of Holomorphic Functions in the Unit Ball}, vol.~226.
\newblock Springer, 2005.

\bibitem{zhu2012analysis}
{\sc Zhu, K.}
\newblock {\em {Analysis on Fock Spaces}}, vol.~263.
\newblock Springer Science \& Business Media, 2012.

\bibitem{zirnbauer1996riemannian}
{\sc Zirnbauer, M.~R.}
\newblock {Riemannian symmetric superspaces and their origin in random-matrix theory}.
\newblock {\em Journal of Mathematical Physics 37}, 10 (1996), 4986--5018.

\end{thebibliography}
\end{document}